\newtheorem{theorem}[equation]{Theorem}
\newtheorem{lemma}[equation]{Lemma}
\newtheorem{proposition}[equation]{Proposition}
\newtheorem{corollary}[equation]{Corollary}
\theoremstyle{definition}
\newtheorem{definition}[equation]{Definition}
\theoremstyle{remark}
\newtheorem{remark}[equation]{Remark}
\numberwithin{equation}{subsection}
\newcommand{\ds}{\displaystyle}
\newcommand{\fitt}{\textrm{Fitt}}
\def\XXint#1#2#3{{\setbox0=\hbox{$#1{#2#3}{\int}$ }
\vcenter{\hbox{$#2#3$ }}\kern-.6\wd0}}
\DeclareMathOperator{\mspec}{MSpec}
\newcommand{\Fq}{\mathbb{F}_{q}}
\newcommand{\cM}{\mathcal{M}}
\newcommand{\cO}{\mathcal{O}}
\newcommand{\cK}{\mathcal{K}}
\newcommand{\cC}{\mathcal{C}}
\newcommand{\cR}{\mathcal{R}}
\newcommand{\cF}{\mathcal{F}}
\DeclareMathAlphabet{\matheur}{U}{eur}{m}{n}
\newcommand{\fm}{\mathfrak{m}}
 \DeclareMathOperator{\Gal}{Gal}
\DeclareMathOperator{\Vol}{Vol}
\DeclareMathOperator{\Fitt}{Fitt}
\newcommand{\OX}{\mathcal{O}}
\newcommand{\power}[2]{{#1 [\![ #2 ]\!]}}
\newcommand{\laurent}[2]{{#1 (\!( #2 )\!)}}
\definecolor{ForestGreen}{rgb}{0.0, 0.5, 0.0}
\newcommand{\C}{\ensuremath \mathbb{C}}
\newcommand{\Z}{\ensuremath \mathbb{Z}}
\newcommand{\F}{\ensuremath \mathbb{F}}
\newcommand{\cS}{\mathcal{S}}
\newcommand{\cU}{\mathcal{U}}
\newcommand{\fp}{\mathfrak{p}}
\newcommand{\inv}{\ensuremath ^{-1}}
\newcommand{\CO}{\mathcal{O}}
\def\XXint#1#2#3{{\setbox0=\hbox{$#1{#2#3}{\int}$ }
\vcenter{\hbox{$#2#3$ }}\kern-.6\wd0}}
\title[An Equivariant Tamagawa Number Formula for Drinfeld Modules]{An Equivariant Tamagawa Number Formula \linebreak for Drinfeld Modules and Applications}
\author{Joseph Ferrara}
\author{Nathan Green}
\author{Zach Higgins}
\author{Cristian D. Popescu}
\address{Dept. of Mathematics, University of California at San Diego, San Diego, CA 92093, USA}
\email{jferrara@ucsd.edu}
\email{n2green@ucsd.edu}
\email{zhiggins@ucsd.edu}
\email{cpopescu@math.ucsd.edu}
\keywords{Drinfeld Modules, Motivic $L$--functions, Equivariant Tamagawa Number Formula, Brumer--Stark Conjecture}
\subjclass[2010]{11G09, 11M38, 11F80}
\thanks{The first and third author were partially supported by NSF RTGrant DMS-1502651}
\date{}
\begin{document}

\begin{abstract} We fix motivic data $(K/F, E)$ consisting of a Galois extension $K/F$ of characteristic $p$ global fields with arbitrary abelian Galois group $G$ and a Drinfeld module $E$ defined over a certain Dedekind subring of $F$. For this data, we  define a
$G$--equivariant motivic $L$--function $\Theta_{K/F}^E$ and prove an equivariant Tamagawa number formula for appropriate Euler product completions of its special value $\Theta_{K/F}^E(0)$.
This generalizes to an equivariant setting the class number formula proved by Taelman in 2012 for the value $\zeta_F^E(0)$ of the Goss zeta function $\zeta_F^E$ associated to the pair $(F, E)$. (See also Mornev's 2018 work for a generalization in a very different, non--equivariant direction.) Taelman's result is obtained from ours by setting $K=F$.
As a notable consequence, we prove a perfect Drinfeld module analogue of the classical (number field) refined Brumer--Stark conjecture, relating a certain $G$--Fitting ideal
of Taelman's class group $H(E/K)$ to the special value $\Theta_{K/F}^E(0)$ in question.
\end{abstract}

\keywords{Drinfeld Modules, Motivic $L$--functions, Equivariant Tamagawa Number Formula, Brumer--Stark Conjecture}
\date{\today}
\maketitle

\setcounter{tocdepth}{1}
\tableofcontents

\section{Introduction}

 In \cite{T12} Taelman proved a beautiful class number formula for the special value at $s=0$ of the $\Bbb C_\infty$--valued Goss zeta function $\zeta_F^E(s)$ associated to a characteristic $p$ global field $F$ and a Drinfeld module $E$ defined over a certain Dedekind domain  $\cO_F\subseteq F$.
 Since Taelman's formula establishes an equality between the special value $\zeta_F^E(0)$ and a quotient of (what we interpret below as) volumes of two compact topological groups canonically associated to the pair $(F, E)$, it can be naturally viewed as a ``Tamagawa number formula'' for the pair.

In this paper we consider a pair $(K/F, E)$, where $K/F$ is a Galois extension of characteristic $p$ global fields of abelian Galois group $G$ and a Drinfeld module $E$ defined over the ring $\cO_F$.
To the pair $(K/F, E)$ we associate a $G$--equivariant version $\Theta_{K/F}^E(s)$ of the Goss zeta function, which takes values in the group ring $\Bbb C_\infty[G]$ associated to $G$.
We extend and refine Taelman's techniques to this $G$--equivariant setting and prove an equality between appropriate Euler-completed versions of $\Theta_{K/F}^E(0)$ and the quotient of the $G$--equivariant volumes of (properly modified versions of) Taelman's topological groups, now endowed with a natural $G$--action. We obtain in this way a $G$-equivariant Tamagawa number formula in this setting, generalizing Taelman's class number formula.
(See Theorem \ref{T:ETNF}.) A generalization of Taelman's formula in a very different, non--equivariant direction, was obtained by Mornev in \cite{Mornev}.

Further, we use our main result to prove a Drinfeld module analogue of the far reaching classical (number field) refined Brumer--Stark conjecture, which relates the Euler--completed versions of the special value $\Theta_{K/F}^E(0)$ to a certain $G$--Fitting ideal of Taelman's class group $H(E/K).$ (See Theorem \ref{T:BrSt}.) While important in its own right, this result also opens the door to developing an Iwasawa theory for Taelman's class group--like invariants and their generalizations arising from a similar study of the special values $\Theta_{K/F}^E(n)$, with $n\in\Bbb Z_{\geq 1}$.  We should also mention that particular cases of these special values feature prominently in log-algebraicity theorems, as in \cite{CEGP18}, for example.

Unlike previous approaches to a $G$--equivariant theory (see \cite{AT15} and \cite{Fang15}), we consider a general abelian group $G$, whose order is allowed to be divisible by the characteristic prime $p$. This makes it impossible for us to work on a character-by-character basis (as there are no $p$--power order characters of $G$ with values in characteristic $p$) and therefore the ensuing theory is truly $G$--equivariant. In addition, this creates serious $G$--cohomological obstructions, mostly due to the presence of wildly ramified primes in $K/F$. These obstructions justify our need to construct certain taming modules $\cM$ for $K/F$, which lead to the appropriate Euler--completed versions $\Theta_{K/F}^{E, \cM}(0)$ of $\Theta_{K/F}^E(0)$.

In this introduction, we define more precisely the arithmetic data $(K/F, E)$, construct its associated Galois equivariant $L$--function
$\Theta^E_{K/F}$ and its Euler--completed versions, give an infinite product formula for their special values at $s=0$, describe the relevant class of $G$--equivariant compact topological groups, briefly describe a $G$--equivariant volume  function on this class, and state the main results of this paper.

Our suggestion to the reader is to read the Introduction, followed by the Appendix (where several algebraic tools, mostly of homological nature, are developed), then read sections 2--6 in their natural order. Section 6 contains the proofs of the main theorems.

\subsection{The arithmetic data $(K/F, E)$}\label{S:intro-data} In what follows, $p$ is a fixed prime number, $q$ is a fixed power of $p$, and $\F_q(t)$ is the rational function field in one variable $t$ over the finite field of $q$ elements $\F_q$. For any characteristic $p$ field $K$, we denote by $\overline K$ its separable closure and by $G_K:={\rm Gal}(\overline K/K)$ its absolute Galois group.  For any commutative $\F_q$--algebra $R$, we denote by $\tau:=\tau_q$ the $q$--power Frobenius of $R$, i.e. the $\F_q$--algebra endomorphism $\tau:R\to R$ sending $x\to x^q$.  As usual $R\{\tau\}$ denotes the twisted polynomial ring in $\tau$, with relations
$$\tau\cdot x=x^q\cdot\tau, \text{  for all }x\in R.$$
The ring $R\{\tau\}$ is the largest $\F_q$--subalgebra of the endomorphism ring ${\rm End}_R(\Bbb G_a)=R\{\tau_p\}$ for the affine line $\Bbb G_a$, viewed as a group scheme over ${\rm Spec}(R).$
\medskip

Let $F$ be a finite, separable extension of $\F_q(t)$ and let $K$ be a finite abelian extension of $F$,  such that $K\cap  \overline{\F}_q = F\cap  \overline{\F}_q = \F_q$.  Let $G: = \Gal(K/F)$ be the Galois group of $K/F$ and let  $\cO_F$ and $\cO_K$ be the integral closure of $A:=\F_q[t]$ in $F$ and $K$, respectively. These are Dedekind domains, consisting of all the elements in $F$ and $K$, respectively, which are integral at all valuations which do not extend $\infty$ (the normalized valuation on $\F_q(t)$ of uniformizer $1/t$). For $v\in{\rm MSpec}(\cO_F)$ we fix a decomposition group $\widetilde{G_v}\subseteq G_F$, an inertia group
$\widetilde{I_v}\subseteq \widetilde{G_v}$ and a Frobenius morphism $\widetilde{\sigma_v}\in\widetilde{G_v}$ for $v$. We let $G_v$, $I_v$ and $\sigma_v$ denote their projections via the Galois restriction map $G_F\twoheadrightarrow G$. These are the decomposition group, inertia group, and a Frobenius morphism, respectively, associated to $v$ in $K/F$.
\medskip

Next, we consider a Drinfeld module $E$ of rank $r\in\Bbb Z_{\geq 0}$, defined on $A=\F_q[t]$ with values in $\cO_F\{\tau\}$. We remind the reader that $E$ is given by an $\F_q$--algebra morphism
$$\varphi_E: \F_q[t]\to \cO_F\{\tau\}, \qquad \varphi_e(t)=t\cdot\tau^0+a_1\cdot\tau^1+\cdots +a_r\cdot\tau^r,$$
where $a_i\in\cO_F$ and $a_r\ne 0$. The Drinfeld module $E$ gives a natural functor
$$E: \left(\cO_F\{\tau\}[G]{\rm -modules}\right)\to \left(\F_q[t][G]{\rm -modules}\right), \qquad M\to E(M).$$
Of course, for any $\cO_F\{\tau\}[G]$--module $M$, the $\F_q[G]$--module structures of $M$ and $E(M)$ are identical, while the $\F_q[t]$--module structure of $E(M)$ is given by
$$t\ast m= \varphi_e(t)(m)= t\cdot m+a_1\cdot\tau^1(m)+\cdots +a_r\cdot\tau^r(m).$$

\noindent {\bf Examples.} Natural examples of the correspondence $M\to E(M)$ as above are
$$\cO_K\to E(\cO_K), \qquad \cO_K/v\to E(\cO_K/v), \qquad K_\infty:=K\otimes_{\F_q(t)}\F_q((t^{-1}))\to E(K_\infty), $$
where $v\in {\rm MSpec}(\cO_F)$ is any maximal ideal of $\cO_F$ and $K_\infty$ is the direct sum of the completions of $K$ with respect to all its valuations extending $\infty.$ Note that $ \F_q((t^{-1}))$ is the completion of $\F_q(t)$ with respect to $\infty$.

\subsection {The associated Galois representations ${\rm H}_{v_0}^1(E, G)$.}\label{S:intro-galoisrepr} For arithmetic data $(K/F, E)$ as above, any $v_0\in{\rm MSpec}(A)$ and $n\in\Bbb Z_{\geq 0}$, we let
$$E[v_0^n]:=E(\overline F)[v_0^n], \qquad T_{v_0}(E):=\varprojlim_{n}E[v_0^n]$$
be the usual $A_{v_0}$--modules of $v_0^n$--torsion points and the $v_0$--adic Tate module of $E$, endowed with the obvious $A_{v_0}$--linear, continuous $G_F$--actions. Here, $A_{v_0}$ denotes the $v_0$--adic completion of $A$ at $v_0$. Since the rank of $E$ is $r$, we have $A_{v_0}$--linear topological  isomorphisms
$$E[v_0^n]\simeq (A/v_0^n)^r, \qquad T_{v_0}(E)\simeq A_{v_0}^r.$$
Let $v\in{\rm MSpec}(\cO_F)$, such that $v\nmid v_0$. If the Drinfeld module $E$ has good reduction at $v$ (i.e. the coefficient $a_r$ of $\varphi_E(t)$ is a $v$--adic unit), then the $G_F$--representation $T_{v_0}(E)$ is unramified at $v$ and the polynomial
$$P_v(X):={\rm det}_{A_{v_0}}(X\cdot I_r-\widetilde{\sigma_v}\mid T_{v_0}(E))$$
is independent of $v_0$ and has coefficients in $A$. (See \cite{Ge91}.)

Following Goss \cite[\S 8.6]{Goss}, we let
$${\rm H}^1_{v_0}(E):=T_{v_0}(E)^\ast:={\rm Hom}_{A_{v_0}}(T_{v_0}(E), A_{v_0}),$$
endowed with the dual $G_F$--action. In analogy with abelian varieties, one should think of ${\rm H}^1_{v_0}(E)$ as the first \'etale cohomology group of $E$ with coefficients in $A_{v_0}$.
\begin{definition} We define the $G$--equivariant first \'etale cohomology groups of $E$ by
$${\rm H}^1_{v_0}(E, G):= {\rm H}^1_{v_0}(E)\otimes_{A_{v_0}}A_{v_0}[G],\qquad v_0\in{\rm MSpec}(A),$$
endowed with the diagonal $G_F$--action, where $G_F$ acts on ${\rm H}^1_{v_0}(E)$ as described above and on $A_{v_0}[G]$ via the projection
$G_F\twoheadrightarrow G$ given by Galois restriction.
\end{definition}
Note that we have an isomorphism of $A_{v_0}[G]$--modules ${\rm H}^1_{v_0}(E, G)\simeq A_{v_0}[G]^r$, for all $v_0$. The family of $A_{v_0}[G]$--linear $G_F$--representations  $\{{\rm H}^1_{v_0}(E, G)\}_{v_0}$ satisfies the properties listed in the following proposition.

\begin{proposition}\label{P:G-etale}  Let $v\in{\rm MSpec}(\cO_F)$  such that $E$ has good reduction at $v$. Let $v_0\in{\rm MSpec}(A)$, such that $v\nmid v_0$. Then the following hold.
\begin{enumerate}
\item  ${\rm H}^1_{v_0}(E, G)$ is ramified at $v$ if and only if  $v$ is ramified in $K/F$.
\item Assume that $v$ is tamely ramified in $K/F$. Then ${\rm H}^1_{v_0}(E, G)^{\widetilde I_v}$ is a finitely generated projective $A_{v_0}[G]$--module
and we have an equality
$$P_v^{\ast, G}(X):={\rm det}_{A_{v_0}[G]}(X\cdot{\rm id}-\widetilde{\sigma_v}\mid {\rm H}^1_{v_0}(E, G)^{\widetilde I_v})=\frac{X^r\cdot P_v(\sigma_v{\bf e}_v\cdot X^{-1})}{P_v(0)},$$
where ${\bf e}_v:=1/|I_v|\sum_{\sigma\in I_v}\sigma$ is the idempotent of the trivial character of $I_v$ in $A[G]$.
\item The polynomial $P_v^{\ast, G}(X)$ is independent of $v_0$ and $Nv\cdot P_v^{\ast, G}(X)\in A[G][X]$, where $Nv$ is the unique monic generator
of the ideal norm of $v$ down to $A=\Bbb F_q[t]$.
\end{enumerate}
\end{proposition}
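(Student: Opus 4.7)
The plan is to deduce all three claims from good--reduction information about $T_{v_0}(E)$ together with the idempotent structure coming from tame ramification.

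For \emph{(1)}, I would first note that since $E$ has good reduction at $v$ and $v \nmid v_0$, standard theory (\cite{Ge91}) gives that $T_{v_0}(E)$, and hence its dual $V := {\rm H}^1_{v_0}(E) = T_{v_0}(E)^{\ast}$, is unramified at $v$; thus $\widetilde{I_v}$ acts trivially on $V$. The diagonal $\widetilde{I_v}$--action on ${\rm H}^1_{v_0}(E, G) = V \otimes_{A_{v_0}} A_{v_0}[G]$ therefore factors through left multiplication by the image $I_v \subseteq G$ on the second tensor factor, which is trivial iff $I_v = \{1\}$, i.e.\ iff $v$ is unramified in $K/F$.

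For \emph{(2)}, tame ramification gives $p \nmid |I_v|$, so $\mathbf{e}_v$ is a central (as $G$ is abelian) idempotent in $A[G] \subseteq A_{v_0}[G]$, producing a ring decomposition $A_{v_0}[G] = R_1 \times R_2$ with $R_1 := \mathbf{e}_v A_{v_0}[G]$, $R_2 := (1-\mathbf{e}_v) A_{v_0}[G]$. Together with the trivial inertia action on $V$ from (1), one obtains
\[ {\rm H}^1_{v_0}(E, G)^{\widetilde{I_v}} = V \otimes_{A_{v_0}} \mathbf{e}_v A_{v_0}[G], \]
which is free of rank $r$ over $R_1$ and so a direct summand (hence projective) of the free $A_{v_0}[G]$--module $V \otimes A_{v_0}[G]$. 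To compute $P_v^{\ast, G}(X)$, I would interpret $\det_{A_{v_0}[G]}$ via the natural convention of extending $\widetilde{\sigma_v}$ by zero on the complementary free summand $V \otimes R_2$, and evaluate the determinant factor--by--factor in $R_1[X] \times R_2[X]$. Using an $A_{v_0}$--basis of $V$ in which $\widetilde{\sigma_v}$ has matrix $M^{\ast}$, the map $\widetilde{\sigma_v}$ on $V \otimes R_1$ has matrix $\sigma_v M^{\ast}$; invoking the duality identity $\det(XI - M^{\ast}) = X^r P_v(X^{-1})/P_v(0)$ and a factor--out--$\sigma_v$ step gives
\[ \det_{R_1}(XI - \sigma_v M^{\ast}) = X^r P_v(\sigma_v X^{-1})/P_v(0), \]
while on $V \otimes R_2$ the zero--extended determinant is $X^r$. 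Combining across the product $R_1 \times R_2$,
\[ P_v^{\ast, G}(X) = \mathbf{e}_v \cdot \frac{X^r P_v(\sigma_v X^{-1})}{P_v(0)} + (1-\mathbf{e}_v) X^r, \]
and a direct expansion exploiting $(\sigma_v \mathbf{e}_v)^i = \sigma_v^i \mathbf{e}_v$ for $i \geq 1$ rewrites this as $X^r P_v(\sigma_v \mathbf{e}_v X^{-1})/P_v(0)$, as claimed.

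For \emph{(3)}, independence of $v_0$ is immediate from the formula in (2), since $P_v(X) \in A[X]$ (by \cite{Ge91}) and $\sigma_v, \mathbf{e}_v \in A[G]$ are all $v_0$--free; the integrality $Nv \cdot P_v^{\ast, G}(X) \in A[G][X]$ reduces to the known fact that $P_v(0)$ divides $Nv$ in $A$ for Drinfeld modules of good reduction at $v$, which clears the denominator. The main obstacle lies in part (2): giving a rigorous meaning to $\det_{A_{v_0}[G]}$ on the projective non--free module ${\rm H}^1_{v_0}(E, G)^{\widetilde{I_v}}$ and verifying that the ``extension--by--zero on the complementary free summand'' convention reproduces exactly the stated closed form---rather than, say, the naive $\mathbf{e}_v \cdot (\cdots) + (1-\mathbf{e}_v)$ alternative---is the source of the corrective $(1-\mathbf{e}_v)X^r$ summand and is the step requiring the most care.
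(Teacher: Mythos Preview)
Your proof is correct and follows essentially the same route as the paper: unramifiedness of $T_{v_0}(E)$ at $v$ for (1), the idempotent splitting $H^1_{v_0}(E,G)=\mathbf{e}_vH^1_{v_0}(E,G)\oplus(1-\mathbf{e}_v)H^1_{v_0}(E,G)$ together with the matrix computation $\widetilde{\sigma_v}\leftrightarrow\sigma_v\mathbf{e}_v\cdot(M^{-1})^{t}$ for (2), and Gekeler's result that $P_v(0)$ and $Nv$ are associates in $A$ for (3). Your explicit handling of the determinant convention on the complementary summand---extending $\widetilde{\sigma_v}$ by zero so that the $(1-\mathbf{e}_v)$-component contributes $X^r$---is more careful than the paper's terse reference to \eqref{E:det-projective}, and is precisely what is needed to recover the stated closed form.
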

\begin{proof} (1) follows from the fact that  ${\rm H}^1_{v_0}(E)=T_{v_0}(E)^\ast $ is unramified at $v$ (see above) and the definition
of the $G_F$--action on ${\rm H}^1_{v_0}(E, G)$.

(2) It is clear that ${\rm H}^1_{v_0}(E, G)^{\widetilde I_v}={\bf e}_v\cdot {\rm H}^1_{v_0}(E, G)$. Now, projectivity and finite generatedeness follow from
the isomorphism and equality of $A_{v_0}[G]$--modules
\begin{equation}\label {projective-cohomology}
A_{v_0}[G]^r\simeq {\rm H}^1_{v_0}(E, G)={\bf e}_v{\rm H}^1_{v_0}(E, G)\oplus(1-{\bf e}_v){\rm H}^1_{v_0}(E, G).
\end{equation}
Since the $A_{v_0}[G]$--module ${\rm H}^1_{v_0}(E, G)^{\widetilde I_v}$ is projective and finitely generated, the determinant
defining  $P_v^{\ast, G}(X)$ makes sense in $A_{v_0}[G][X]$ (see \eqref{E:det-projective} for the definition.) Now, the equality in (2) follows from \eqref{projective-cohomology}
and the remark that if $M$ is the matrix of $\widetilde{\sigma_v}$ in an $A_{v_0}$--basis $\{e_i\}_i$ of $T_{v_0}(E)$, then
the matrix of  $\widetilde{\sigma_v}$ in the ${\bf e}_vA_{v_0}[G]$--basis $\{e_i^\ast\otimes {\bf e}_v\}_i$ of
${\rm H}^1_{v_0}(E, G)^{\widetilde I_v}$ is $(\sigma_v {\bf e}_v\cdot (M^{-1})^t)$.

(3) follows from (2) and a result of Gekeler (see \cite[Thm 5.1]{Ge91}) saying that $P_v(0)$ and $Nv$ generate the same ideal in $A$.
\end{proof}

\begin{definition}\label{D:Fitt}
Let $M$ be an $A[G]$-module which is free of rank $m$ as an $\F_q[G]$-module. Then, by Proposition \ref{P:fittidealgen} in the Appendix, the Fitting ideal $\fitt^0_{A[G]}(M)$ is principal and has a unique monic generator $f_M(t)$ (viewed
as a polynomial in $t$  in $A[G]=\Bbb F_q[G][t])$ of degree equal to $m$. We define the $A[G]$--size of $M$ to be
$$|M|_{G}: = f_M(t)\in \F_q[G][t].$$
\end{definition}

The following describes a class of modules $M$ as above which will be very relevant for us.

\begin{proposition}\label{P:EF-values} For data $(K/F, E)$ as above, let $v\in{\rm MSpec}(\cO_F)$ be a prime which is tamely ramified in $K/F$.
Then the following hold.

\begin{enumerate}
\item $\cO_K/v$ and $E(\cO_K/v)$ are free $\F_q[G]$-modules of rank $[\cO_F/v:\Bbb F_q]$.
\item If $E$ has good reduction at $v$, then
$$P_{v}^{\ast, G}(1)=\frac{|E(\cO_K/v)|_G}{|\cO_K/v|_G}\in (1+t^{-1}\Bbb F_q[G][[t^{-1}]]).$$
\end{enumerate}
\end{proposition}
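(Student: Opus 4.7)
For (1), I would localize at $v$ and apply Noether's theorem on tamely ramified extensions of Dedekind domains: $\cO_K\otimes_{\cO_F}\cO_{F,v}$ is free of rank one over $\cO_{F,v}[G]$ because $v$ is tame in $K/F$. Reducing modulo $v$ gives $\cO_K/v$ free of rank one over $(\cO_F/v)[G]$, and hence free of rank $d:=[\cO_F/v:\F_q]$ over $\F_q[G]$. Since by construction $E(\cO_K/v)$ shares its underlying $\F_q[G]$-module structure with $\cO_K/v$, the corresponding assertion for $E(\cO_K/v)$ follows.

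For (2), I would combine (1) with Definition~\ref{D:Fitt} to interpret $|M|_G$ for $M\in\{\cO_K/v,E(\cO_K/v)\}$ as the characteristic polynomial in $t$ of ``multiplication by $t$'', viewed as an $\F_q[G]$-linear endomorphism of $M$. For $M=\cO_K/v$ this is multiplication by $\bar t\in\cO_F/v$, which acts only on the first factor of the decomposition $\cO_K/v\cong(\cO_F/v)\otimes_{\F_q}\F_q[G]$; hence $|\cO_K/v|_G$ equals the $\F_q$-characteristic polynomial of $\bar t$ on $\cO_F/v$, namely $Nv$. For $|E(\cO_K/v)|_G$, tameness makes $\mathbf{e}_v$ a well-defined idempotent of $\F_q[G]$, so I would decompose $\cO_K/v=\mathbf{e}_v(\cO_K/v)\oplus(1-\mathbf{e}_v)(\cO_K/v)$ (a decomposition preserved by $\varphi_E(t)$ because it commutes with the $G$-action), each summand free of rank $d$ over the corresponding block of $\F_q[G]$, and compute the characteristic polynomial of $\varphi_E(t)$ on each summand separately.

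On the $(1-\mathbf{e}_v)$-part, the key input is the $\mathfrak{P}$-adic filtration on the ramified piece (here $\mathfrak{P}:=\prod_{\mathfrak{P}_i|v}\mathfrak{P}_i$, so that $\mathfrak{P}/v=(1-\mathbf{e}_v)(\cO_K/v)$): since $\tau(\mathfrak{P}^i)\subseteq\mathfrak{P}^{iq}\subseteq\mathfrak{P}^{i+1}$ for every $i\ge 1$ and $q\ge 2$, the operator $\varphi_E(t)-\bar t\cdot\mathrm{id}=\sum_{j\ge 1}a_j\tau^j$ strictly raises the filtration, so the characteristic polynomial of $\varphi_E(t)$ coincides with that of $\bar t$, which after refining the computation to the $I_v$-isotypic blocks of $(1-\mathbf{e}_v)\F_q[G]$ (each free of rank $d$ via the $\cO_F/v$-scalar structure) contributes $(1-\mathbf{e}_v)Nv$. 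On the $\mathbf{e}_v$-part, I would identify $(\cO_K/v)^{I_v}=\Ind_{G_v/I_v}^{G/I_v}(\cO_K/\mathfrak{P}_0)$ for a fixed prime $\mathfrak{P}_0\mid v$ and reduce to the residue field $\cO_K/\mathfrak{P}_0$, where $\sigma_v=\tau^d$ generates $G_v/I_v$; a $G_v/I_v$-equivariant version of Gekeler's finite-field formula --- obtained from the Lang exact sequence for the reduced Drinfeld module $\bar E$, which identifies the Frobenius endomorphism $\pi_v=\tau^d$ on $T_{v_0}(E)$ with the Galois action of $\sigma_v$ on the residue field --- yields the characteristic polynomial $c^{-1}\mathbf{e}_vP_v(\sigma_v)$, where $c\in\F_q^\times$ is defined by $P_v(0)=c\cdot Nv$ (Gekeler, as used in Proposition~\ref{P:G-etale}(3)).

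Assembling the two contributions and using the elementary identity $P_v(\sigma_v\mathbf{e}_v)=\mathbf{e}_vP_v(\sigma_v)+(1-\mathbf{e}_v)P_v(0)$ (immediate from $(\sigma_v\mathbf{e}_v)^n=\sigma_v^n\mathbf{e}_v$ for $n\ge 1$) gives
\[
|E(\cO_K/v)|_G=c^{-1}\mathbf{e}_vP_v(\sigma_v)+(1-\mathbf{e}_v)Nv=Nv\cdot\frac{P_v(\sigma_v\mathbf{e}_v)}{P_v(0)}=Nv\cdot P_v^{\ast,G}(1),
\]
invoking Proposition~\ref{P:G-etale}(2) in the last step. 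Dividing by $|\cO_K/v|_G=Nv$ gives the claimed identity, and the containment in $1+t^{-1}\F_q[G][[t^{-1}]]$ follows because both $|E(\cO_K/v)|_G$ and $|\cO_K/v|_G$ are monic polynomials in $t$ of the same degree $d$. The main obstacle will be the $G_v/I_v$-equivariant Gekeler formula on the $\mathbf{e}_v$-part, which requires careful bookkeeping to pass between the Frobenius endomorphism $\tau^d$ on the reduced Drinfeld module and the Galois element $\sigma_v$ acting on the residue field.
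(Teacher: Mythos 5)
Your part (1) is correct and matches the paper's route: localize at $v$, invoke E.~Noether's theorem on tame Galois extensions to get $\cO_K\otimes_{\cO_F}\cO_v\simeq\cO_v[G]$, and reduce mod $v$; the paper delegates this to Proposition~\ref{P:EF-Carlitz}(1), itself a consequence of Proposition~\ref{P:taming-module}(3).

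For part (2) you attempt a full proof for an arbitrary Drinfeld module $E$, whereas the paper \emph{explicitly declines to do so}: the authors write that they ``will not prove the equality in part (2) for all Drinfeld modules $E$ here, as the proof is technical and practically irrelevant for the rest of the paper,'' and give the argument only for the Carlitz module (via an explicit matrix computation over $\overline{\F}_q$ in Proposition~\ref{P:EF-Carlitz}(3)), together with the monic-degree argument for the containment in $1+t^{-1}\F_q[G][[t^{-1}]]$. Your structural outline is plausible and internally consistent: I checked that the identity $P_v(\sigma_v\mathbf{e}_v)=\mathbf{e}_v P_v(\sigma_v)+(1-\mathbf{e}_v)P_v(0)$ is correct, that the assembly recovers $P_v^{\ast,G}(1)$ via Proposition~\ref{P:G-etale}(2), and that in the Carlitz case (where $P_v(X)=X-Nv$, so $c=-1$) your formula $c^{-1}\mathbf{e}_v P_v(\sigma_v)+(1-\mathbf{e}_v)Nv$ specializes to $Nv-\mathbf{e}_v\sigma_v$, in agreement with the Appendix.

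There is, however, a genuine gap exactly where you flag ``the main obstacle.'' The claimed $G_v/I_v$-equivariant Gekeler formula $|E(\cO_K/\mathfrak{P}_0)|_{G_v/I_v}=c^{-1}P_v(\sigma_v)$ is asserted, not proved. The Lang exact sequence does relate $\bar E(\overline{\F}_q)^{\pi^{f}=1}$ to the Frobenius isogeny, and the Néron--Ogg--Shafarevich-type comparison does identify $\pi_v=\tau^{d}$ on $T_{v_0}(E)$ with the Galois action of $\sigma_v$; but converting this into an equality of monic Fitting-ideal generators in $\F_q[G_v/I_v][t]$, and matching the result against the characteristic polynomial $P_v$ of Frobenius on the Tate module, requires a real Euler-characteristic/trace argument (precisely the kind of technical step the authors chose to omit). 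A secondary point: your filtration argument on the $(1-\mathbf{e}_v)$-piece silently uses that each graded piece $\mathfrak{P}^i/\mathfrak{P}^{i+1}$ ($1\le i<e_v$) is $G$-cohomologically trivial, so that Lemma~\ref{L:FitSequence} multiplies $G$-sizes along the filtration. This is true---tameness forces the $p$-Sylow of $G_v$ into $G_v/I_v$, which acts on the residue field through Frobenius, and additive Hilbert 90 gives cohomological triviality---but it has to be argued, not assumed. As it stands, your proposal is a reasonable programme for the general case, not a complete proof; the paper's own proof only handles the Carlitz module.
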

\begin{proof} (Sketch) Part (1) is Proposition \ref{P:EF-Carlitz}(1) of the Appendix.

We will not prove the equality in part (2) for all Drinfeld modules $E$ here, as the proof is technical and practically irrelevant for the rest of the paper. However, we give a short proof in the case where $E:=C$ is the (rank $1$) Carlitz module given by  $\varphi(t)=t+\tau$, which has good reduction at all primes of ${\rm MSpec}(\cO_F)$. In this case, it is not difficult to see that
$$P_v(X)=X-Nv.$$
According to Proposition \ref{P:G-etale}(2) above, we have
$$P_v^{\ast, G}(1)=\frac{\sigma_v{\bf e}_v-Nv}{-Nv}=\frac{Nv-\sigma_v{\bf e}_v}{Nv}.$$
Now, Proposition \ref{P:EF-Carlitz}(3) in the Appendix shows that $|C(\cO_K/v)|_G=(Nv-\sigma_v{\bf e}_v)$ and $|\cO_K/v|_G=Nv$, which concludes the proof in this case.
 Now,  for any $E$ we have
$$\frac{|E(\cO_K/v)|_G}{|\cO_K/v|_G}\in (1+t^{-1}\Bbb F_q[G][[t^{-1}]])$$
as the monic polynomials $|E(\cO_K/v)|_G$ and
$|\cO_K/v|_G$ have the same degree $[\cO_F/v:\Bbb F_q]$.
\end{proof}

\subsection{The associated $L$-functions and their special values}\label{S:intro-Lfunctions} To the data $(K/F, E)$ we associate a class of $G$--equivariant $L$--functions, generalizing
the Goss zeta function for $(F, E)$ (see \cite[\S 3]{CEGP18} for a detailed account of the relation between Goss zeta function and non-equivarient $L$-values).  In what follows, $\laurent{\F_q}{t\inv}$ is viewed as the completion of $\Fq(t)$ in the valuation at $\infty$ and $\C_\infty$ denotes the completion of an algebraic closure of $\laurent{\F_q}{t\inv}$.  For $s\in \C_\infty^\times\times \Z_p$ (Goss's space) and $f\in\F_q[t]$ monic, we let $f^s\in\C_\infty$ denote Goss's exponential (see \cite[\S 8.2]{Goss}). Under Goss's natural embedding $\Bbb Z\subseteq \C_\infty^\times\times\Z_p$ (see loc.cit.),  $f^n$ has the usual meaning for all $n\in\Z$ and $f$ as above. In particular $f^0=1$.
\begin{definition} Let $(K/F, E)$ be data as above. Its $G$-equivariant
$L$-function is given by
$$\widetilde{\Theta}^E_{K/F}: (\C_\infty^\times\times\Z_p)^+\to \C_\infty[G], \qquad \widetilde{\Theta}^E_{K/F}(s):=\widetilde{\prod_v}P_v^{\ast, G}(Nv^{-s})^{-1},$$
where the product $\widetilde{\prod}$ is taken over all $v\in{\rm MSpec}(\cO_F)$ which are {\it  tamely ramified in $K/F$} and such that {\it $E$ has good reduction at $v$}. Here $(\C_\infty^\times\times\Z_p)^+$ is a certain ``half plane'' of Goss's space, which contains $\Bbb Z_{\geq 0}$.
\end{definition}
The infinite product above converges on $(\C_\infty^\times\times\Z_p)^+$. We will not address these convergence aspects here, as we will be interested only in (a modified version of) the special value $\widetilde\Theta^E_{K/F}(0)$. According to Proposition \ref{P:EF-values}(2) above,
this special value is given by
$$\widetilde\Theta^E_{K/F}(0)=\widetilde{\prod_v}P_v^{\ast, G}(1)^{-1}=\widetilde{\prod_{v}}\frac{|\cO_K/v|_G}{|E(\cO_K/v)|_G}\in \left(1+t^{-1}\F_q[G][[t^{-1}]]\right),$$
and the convergence of the last product will emerge naturally from the proofs of our main results below. However, as Proposition \ref{P:EF-values}(2) shows, one can also consider the following convergent infinite product, taken over all
$v\in{\rm MSpec}(\cO_F)$ which are  tamely ramified in $K/F$  (regardless of the reduction type of $E$ at $v$).
$$\Theta^E_{K/F}(0):=\prod_{v \text{ tame}}\frac{|\cO_K/v|_G}{|E(\cO_K/v)|_G}\in \left(1+t^{-1}\F_q[G][[t^{-1}]]\right).$$
As it turns out, this still incomplete Euler product is not well behaved from a functional analysis point of view. As a consequence, we need to complete it by throwing in some Euler factors at those primes $v$
which are not tamely ramified in $K/F$. This is done in the following manner (see \S\ref{S:projective-modules} for details.)

\begin{definition}\label{D:taming-modules} An $\cO_F[G]\{\tau\}$--submodule $\cM$ of $\cO_K$ is called a {\it taming module for $K/F$,} or simply {\it taming module}, if it satisfies
the following properties.
\begin{enumerate}
\item $\cM$ is a projective $\cO_F[G]$--module.
\item The quotient $\cO_K/\cM$ is finite and supported only at primes $v\in{\rm MSpec}(\cO_F)$ which are not tamely ramified in $K/F$.
\end{enumerate}
\end{definition}

\begin{remark} Note that if $K/F$ is tame, then (2) above forces $\cM=\cO_K$. A well known theorem of E. Noether (see \S\ref{S:projective-modules}) shows that $\cM=\cO_K$  satisfies (1) in that case. For the existence and construction of such modules $\cM$ in general, see Proposition \ref{P:taming-module}.
\end{remark}

As shown in Proposition \ref{P:taming-module}, any taming module $\cM$ as above satisfies the following additional properties.
\begin{enumerate}
\item[(1')] $\cM/v $ is $\F_q[G]$--free of rank $[\cO_F/v:\F_q]$, for all $v\in{\rm MSpec}(\cO_F)$.
\item[(2')] $\cM/v=\cO_K/v$, for all $v$ tamely ramified in $K/F$.
\end{enumerate}
Consequently, for every taming module $\cM$ one can consider the following {\it complete} infinite Euler product, taken over all primes $v\in{\rm MSpec}(\cO_F)$.
$$\Theta^{E, \cM}_{K/F}(0):=\prod_{v}\frac{|\cM/v|_G}{|E(\cM/v)|_G}=\Theta^E_{K/F}(0)\cdot\prod_{v\text{ wild}}\frac{|\cM/v|_G}{|E(\cM/v)|_G}.$$
Note that although $\Theta^{E, \cM}_{K/F}(0)$ and $\Theta^E_{K/F}(0)$ are elements in $(1+t^{-1}\F_q[G][[t^{-1}]])$, so in general they are transcendental over $\F_q(t)[G]$, we have
$$\Theta^{E, \cM}_{K/F}(0)/\Theta^E_{K/F}(0)\in\F_q(t)[G]^\times.$$
Obviously, if $K/F$ is tame then $\Theta^{E, \cM}_{K/F}(0)=\Theta^{E}_{K/F}(0)$, as $\cM=\cO_K$ in that case.

\subsection{The associated compact $A[G]$--modules and their volumes}\label{S:intro-volumes} To the arithmetic data $(K/F, E)$, we associate a class of compact $A[G]$--modules on which we define
a multiplicative measure (volume) with values in $\F_q((t^{-1}))[G]^+$, the {\it subgroup of monic elements in $\F_q((t^{-1}))[G]^\times$,} to be defined in \S\ref{S:monic}. Recall that $A:=\F_q[t]$.

As before, $K_\infty := K \otimes_{\F_q(t)} \laurent{\F_q}{t\inv}$ is the product of the completions of $K$ at all primes above $\infty$, endowed with the usual (product) topology. It is a locally compact
$\F_q$--algebra, endowed with a natural topological $\laurent{\F_q}{t\inv}[G]$--module structure. The additive Hilbert theorem 90 shows that one has an isomorphism of (topological) $\laurent{\F_q}{t\inv}[G]$--modules
\begin{equation}\label{iso}
K_\infty\simeq \laurent{\F_q}{t\inv}[G]^n,
\end{equation}
where $n:=[F:\F_q(t)]$. Therefore $K_\infty$ is $G$-c.t. (Throughout, $G$-c.t. stands for $G$--cohomologically trivial, see \S\ref{S:appendix} for the definition.)  The ring $\cO_K$ sits naturally inside $K_\infty$ (diagonally embedded into the completions) as a discrete, cocompact $A[G]$--submodule (see \cite{T12}). Unless $K/F$ is tame, $\cO_K$ is not $G$-c.t. Further, since $K_\infty$ and its subring $\cO_K$ are naturally $\cO_F\{\tau\}[G]$--modules as well, $E(K_\infty)$ and $E(\cO_K)$ have natural $A[G]$--module structures. The first is $G$--c.t., the second is not $G$--c.t. unless $K/F$ is tame.

\begin{definition}\label{D:lattices} With notations as above, we define the following.
\begin{enumerate}
\item An $A$--lattice in $K_\infty$ is a free $A$--submodule of $K_\infty$ of rank equal to
${\rm dim}_{\F_q((t^{-1}))}K_\infty$, which spans $K_\infty$ as an
$\F_q((t^{-1}))$--vector space.
\item An $A[G]$--lattice in $K_\infty$ is an $A[G]$--submodule of $K_\infty$ which is an $A$--lattice in $K_\infty$.
\item A projective (respectively, free) $A[G]$--lattice in $K_\infty$ is an $A[G]$--lattice in $K_\infty$ which is projective (respectively, free) as an $A[G]$--module.
\end{enumerate}
\end{definition}

\begin{remark} Note that $A$--lattices in $K_\infty$ are {\it discrete and cocompact in $K_\infty$} (because $A$ is discrete and cocompact in $\F_q((t^{-1}))$).
Also, note that an $A[G]$--lattice in $K_\infty$ is projective if and only if it is $G$--c.t. (See Lemma \ref{L:gct-Dedekind}.) Further, any projective $A[G]$--lattice in $K_\infty$
is of constant local rank $n$ (as a projective $A[G]$--module), as a consequence of $\eqref{iso}$ above.
\end{remark}
\medskip
\noindent{\bf Examples.} $\cO_K$ is an $A[G]$--lattice in $K_\infty$ which is projective if and only if $K/F$ is tame. However, any taming module $\cM$ is a projective $A[G]$--lattice in $K_\infty$.
\begin{definition} We let ${\rm exp}_E$ denote the exponential of the Drinfeld module $E$. Recall that this is the unique power
series in $F_\infty[[z]]$, of the form ${\rm exp}_E(z)=z+a_1z^q+a_2z^{q^2}+\dots$,  and satisfying the functional equations
$${\rm exp}_E(aX)=\varphi_E(a)({\rm exp}_E(X)),$$
for all $a\in A$. (see \cite[Prop. 2]{T12} for existence and uniqueness.)
\end{definition}
Recall that ${\rm exp}_E$ converges everywhere on $\C_\infty$ and gives a continuous, open morphism of $A$--modules
$${\rm exp}_E: K_\infty\to E(K_\infty).$$
The uniqueness of ${\rm exp}_E$ implies that the above is in fact a morphism of $A[G]$--modules. Also, since the preimage
${\rm exp}_E^{-1}(\cO_K)$ is an $A[G]$--lattice in $K_\infty$ (see \cite[Prop. 3]{T12}), it is easy to see that the preimage ${\rm exp}_E^{-1}(\cM)$,
is also an $A[G]$--lattices in $K_\infty$, for all taming modules $\cM$. Consequently, if $\cM$ is either $\cO_K$ or a taming module and ${\widetilde{{\rm exp}_E}}: K_\infty/{\rm exp}_E^{-1}(\cM)\to E(K_\infty)/E(\cM)$ is the map induced by ${\rm exp}_E$, we have an exact sequence of compact topological $A[G]$---modules
\begin{equation}\label{E:exponential-sequence}
0\to K_\infty/{\rm exp}_E^{-1}(\cM)\overset{\widetilde{{\rm exp}_E}}\longrightarrow E(K_\infty)/E(\cM)\overset{\pi}\longrightarrow H(E/\cM)\to 0.
\end{equation}
Here $H(E/\cM)$ is defined to be the $A[G]$--module cokernel of the exponential map, i.e.
\begin{equation}\label{D:Hdef}
H(E/\cM):=\frac{E(K_\infty)}{E(\cM)+{\rm exp}_E(K_\infty)}.
\end{equation}
Note that since $E(K_\infty)/E(\cM)$ is compact and ${\rm exp}_E$ is an open map, the $A[G]$--module $H(E/\cM)$ is finite. These finite $A[G]$--modules are generalizations
of Taelman's ``class group'' $H(E/K)$ (see \cite{T12}) which is precisely $H(E/\cO_K)$, in our notation. For all $\cM$ as above, since $\cM\subseteq \cO_K$,
the exact sequences \eqref{E:exponential-sequence}
induce natural surjective $A[G]$--linear maps
\begin{equation}\label{E:class-group-surjections} H(E/\cM)\twoheadrightarrow H(E/\cO_K).\end{equation}
\begin{definition} For any module $\cM$ as above (i.e. either equal to $\cO_K$ or a taming module for $K/F$), the finite
$A[G]$-module $H(E/\cM)$ will be called the $\cM$-class group of $E$.
\end{definition}
The compact $A[G]$--modules which play an important role in what follows are $E(K_\infty)/E(\cM)$ and $K_\infty/\cM$, where $\cM$ is a taming module for $K/F$. According to \eqref{E:exponential-sequence}, these belong to the larger class $\cC$ of compact $A[G]$--modules defined below.
\begin{definition}\label{D:classC}
We let $\cC$ denote the class of compact $A[G]$-modules $M$ which are $G$--c.t. and fit in a short
exact sequence of topological $A[G]$--modules
$$0\to K_\infty/\Lambda\to M\to H\to 0,$$
where $\Lambda$ is an $A[G]$--lattice in $K_\infty$, $K_\infty/\Lambda$ is endowed with the usual (quotient) topology  and $H$ is a finite $A[G]$--module.
\end{definition}

\begin{remark}Note that $E(K_\infty)/E(\cO_K)$ belongs to the class $\cC$ if and only if $K/F$ is tame. Also, note that if $\Lambda$ is a projective $A[G]$--lattice in $K_\infty$, then $K_\infty/\Lambda$ belongs to $\cC$. \end{remark}

In \S\ref{S:lattice-index} below, we define a lattice index
$$[\Lambda_1:\Lambda_2]_G\in  \F_q((t^{-1}))[G]^+,$$
for any two projective $A[G]$--lattices $\Lambda_1$ and $\Lambda_2$  in $K_\infty$. If $G$ is trivial, this recovers Taelman's lattice index defined in \cite{T12}.
In \S\ref{S:volume-function} below, we fix an arbitrary free $A[G]$--lattice $\Lambda_0$ in $K_\infty$ and use the lattice index to define a volume function
$${\rm Vol}: \cC\to \F_q((t^{-1}))[G]^+,$$
normalized so that ${\rm Vol}(K_\infty/\Lambda_0)=1$. Here,  $\F_q((t^{-1}))[G]^+$ denotes the subgroup of monic elements in  $\F_q((t^{-1}))[G]^\times$, to be defined
in \S\ref{S:monic} below.

\subsection{The equivariant Tamagawa number formula and applications}\label{S:intro-ETNF} Our main result is the following $G$--Equivariant Tamagawa Number Formula, which generalizes Taelman's class number formula \cite{T12} to the current $G$--equivariant context. (See \S\ref{S:Main-ETNF} for the proof.)
\begin{theorem}[the ETNF for Drinfeld modules]\label{T:ETNF}  If $\cM$ is a taming module for $K/F$ and $E$ is a Drinfeld module of structural morphism
$\varphi_E:\Fq[t]\to\cO_F\{\tau\}$, then we have the following equality in $(1+t^{-1}\F_q[[t^{-1}]][G])$.
$$\Theta_{K/F}^{E, \cM}(0)=\frac{{\rm Vol}(E(K_\infty)/E(\cM))}{{\rm Vol}(K_\infty/\cM)}.$$
\end{theorem}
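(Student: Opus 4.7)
The plan is to combine the exponential exact sequence with multiplicativity of the $G$-equivariant volume to reduce Theorem \ref{T:ETNF} to a ``regulator + class number'' identity, then to prove that identity by a Taelman-style trace/nuclear-determinant computation adapted to the projective $A[G]$-setting that the taming module $\cM$ secures.

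\emph{Step 1 (reduction via the exponential sequence).} Since $\cM$ is a taming module, both $\cM$ and $E(\cM)$ are projective $\cO_F[G]$-modules, so the quotients $K_\infty/{\rm exp}_E^{-1}(\cM)$ and $E(K_\infty)/E(\cM)$ lie in the class $\cC$ of Definition \ref{D:classC} and are $G$-c.t. The exponential sequence \eqref{E:exponential-sequence} is then a short exact sequence in $\cC$ with finite cokernel $H(E/\cM)$; multiplicativity of $\Vol$ in such sequences (to be established in \S\ref{S:volume-function}) yields
$$\Vol(E(K_\infty)/E(\cM))=\Vol(K_\infty/{\rm exp}_E^{-1}(\cM))\cdot|H(E/\cM)|_G.$$
Dividing by $\Vol(K_\infty/\cM)$ and recognizing the resulting ratio of volumes as the equivariant lattice index, Theorem \ref{T:ETNF} becomes equivalent to an identity of the shape
$$\Theta^{E,\cM}_{K/F}(0)=[\cM:{\rm exp}_E^{-1}(\cM)]_G\cdot|H(E/\cM)|_G,$$
the direct Drinfeld-module analogue of the classical ``$L$-value $=$ regulator $\cdot$ class number''.

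\emph{Step 2 (the equivariant nuclear-determinant argument).} To prove the reduced identity, I would adapt Taelman's trace-formula strategy by viewing $\varphi_E(t)t^{-1}=1+t^{-1}(a_1\tau+\cdots+a_r\tau^r)$ as a compact $A[G]$-linear perturbation of the identity on $K_\infty$ and computing its ``$G$-equivariant Fredholm determinant'' in two ways. Locally at each $v$, its action on $\cM/v$ (free over $\F_q[G]$ by Proposition \ref{P:EF-values}(1)) has equivariant characteristic polynomial at $1$ equal to $|\cM/v|_G/|E(\cM/v)|_G$, by Proposition \ref{P:EF-values}(2); the product over all $v$ is $\Theta^{E,\cM}_{K/F}(0)$. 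To relate this to the arithmetic side, I would truncate to partial products over primes of $\cO_F$ of bounded degree and compare, via the equivariant Fitting-ideal formalism of the Appendix, the two $A[G]$-module structures that $t$ and $\varphi_E(t)$ induce on the projective truncations of $\cM$. A snake-lemma argument applied to the pair $(\cM,\,{\rm exp}_E^{-1}(\cM))$, together with the openness of ${\rm exp}_E$ and the finiteness of $H(E/\cM)$, shows that these truncated Fitting-ideal ratios stabilize modulo arbitrarily large powers of $t^{-1}$ to $[\cM:{\rm exp}_E^{-1}(\cM)]_G\cdot|H(E/\cM)|_G$. Matching the two limits inside $\F_q((t^{-1}))[G]^+$ gives the reduced identity, hence the theorem.

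\emph{Main obstacle.} The essential technical difficulty is the case $p\mid|G|$ combined with wildly ramified primes, which blocks any character-by-character reduction to Taelman's non-equivariant theorem and forces one to work genuinely with $A[G]$. The taming module $\cM$ is the key workaround: it replaces $\cO_K$ by a projective $\cO_F[G]$-sublattice that agrees with it at all tamely ramified primes, so the Euler completion $\Theta^{E,\cM}_{K/F}(0)$ is well-defined and every lattice and quotient entering the argument above remains $G$-c.t.\ with a principal, monic Fitting ideal. The technical crux is therefore (i) multiplicativity of the $A[G]$-size $|\cdot|_G$ in short exact sequences of ``projective-by-finite'' $A[G]$-modules, (ii) principality and monicity of the equivariant Fitting ideals of the truncations $\cM/\fa\cM$ and $E(\cM/\fa\cM)$ for cofinal ideals $\fa\subseteq A$, and (iii) verifying that the nuclear-determinant limit lands in the monic subgroup $\F_q((t^{-1}))[G]^+$ — precisely the homological preliminaries that the Appendix is designed to provide.
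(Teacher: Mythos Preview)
Your Step 1 reduction has a genuine gap precisely in the case that motivates the whole paper, namely $p\mid |G|$. You assert that $K_\infty/{\rm exp}_E^{-1}(\cM)$ lies in $\cC$ and then invoke multiplicativity of $\Vol$ in the exponential sequence, arriving at the ``reduced identity''
\[
\Theta^{E,\cM}_{K/F}(0)=[\cM:{\rm exp}_E^{-1}(\cM)]_G\cdot|H(E/\cM)|_G.
\]
But membership of $K_\infty/{\rm exp}_E^{-1}(\cM)$ in $\cC$ requires it to be $G$--c.t., which (since $E(K_\infty)/E(\cM)$ is $G$--c.t.) is equivalent to $H(E/\cM)$ being $G$--c.t.; nothing guarantees this when $p\mid |G|$. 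Correspondingly, neither side factor of your reduced identity is defined in general: the index $[\cM:{\rm exp}_E^{-1}(\cM)]_G$ is only constructed for \emph{projective} $A[G]$--lattices (Definition \ref{D:projective-lattice-index}), and $|H(E/\cM)|_G$ only for $G$--c.t.\ modules (Definition \ref{D:G-size}). That the projectivity of $\cM$ transfers to ${\rm exp}_E^{-1}(\cM)$ is simply false in general; this is exactly why Corollary \ref{C:G-not-p} is stated only under the hypothesis $p\nmid |G|$.

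The paper avoids this obstruction by never attempting to split off $H(E/\cM)$. Instead it compares $M_1:=E(K_\infty)/E(\cM)$ and $M_2:=K_\infty/\cM$ directly: these share the same $R=\Fq[G]$--module structure, and the identity map $\gamma:M_1\to M_2$ is shown (via ${\rm exp}_E$ and Proposition \ref{P:Ntangpowerseries}) to be infinitely tangent to the identity. Theorem \ref{T:Volume} then gives
\[
\det\nolimits_{R[[T^{-1}]]}(1+\Delta_\gamma\mid M_1)\big|_{T=t}=\frac{\Vol(M_2)}{\Vol(M_1)},
\]
and, because $\gamma={\rm id}$, the operator $1+\Delta_\gamma$ is $\frac{1-tT^{-1}}{1-\varphi_E(t)T^{-1}}$ on $K_\infty/\cM$; Corollary \ref{C:TraceFormula} identifies its determinant at $T=t$ with $\Theta_{K/F}^{E,\cM}(0)^{-1}$. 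The point is that the volume function of \S\ref{S:volume-function} is built (via admissible lattices $\Lambda'$) to handle structural sequences $0\to K_\infty/\Lambda\to M\to H\to 0$ with $\Lambda$ and $H$ \emph{not} individually $G$--c.t., packaging them into the single $G$--c.t.\ object $\Lambda'/\Lambda\times s(H)$. Your Step 1 undoes that packaging and thereby loses the argument in the wild case. Your Step 2 is closer in spirit to the paper, but the paper's trace formula is applied to $K_\infty/\cM$ itself (not to truncations $\cM/\fa\cM$), and the passage from the determinant to the volume ratio goes through the ``infinitely tangent to the identity'' machinery of \S\ref{S:VolumeFormula}, not through a snake-lemma comparison of $(\cM,{\rm exp}_E^{-1}(\cM))$.
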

\begin{remark}Note that although for $M, M'\in\cC$ (e.g. $M=E(K_\infty)/E(\cM)$ and $M'=K_\infty/\cM$) the individual volumes ${\rm Vol}(M)$ and ${\rm Vol}(M')$ depend on the choice of the normalizing lattice $\Lambda_0$, the quotient ${\rm Vol}(M)/{\rm Vol}(M')$ is independent of that choice. (See \S\ref{S:volume-function} for details.)
\end{remark}
Noting that if $p\nmid |G|$ then every $A[G]$--lattice in $K_\infty$ is a projective $A[G]$--lattice (as it is $G$--c.t.), so in particular $\cO_K$ and ${\rm exp}_E^{-1}(\cO_K)$ are projective $A[G]$--lattices, we obtain the following Corollary
from the above theorem. (See \S\ref{S:Main-ETNF} for the proof.)
\begin{corollary}\label{C:G-not-p} If $p\nmid |G|$, then we have the following equality in $(1+t^{-1}\F_q[[t^{-1}]][G])$:
$$\Theta_{K/F}^E(0)=[\cO_K: {\rm exp}^{-1}_E(\cO_K)]_G\cdot |H(E/\cO_K)|_G.$$
\end{corollary}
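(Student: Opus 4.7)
The plan is to reduce everything to Theorem \ref{T:ETNF} applied with $\cM = \cO_K$, and then unpack the right-hand side using the exponential exact sequence together with multiplicativity of the volume function.

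First, I would verify that the hypothesis $p \nmid |G|$ lets us take $\cM = \cO_K$ as a taming module. Since $|I_v|$ divides $|G|$ for every $v$, the assumption forces every inertia group $I_v$ to have order prime to $p$, so no prime of $\cO_F$ is wildly ramified in $K/F$; hence $K/F$ is tame. By the remark following Definition \ref{D:taming-modules} (invoking Noether's theorem), $\cO_K$ is then a projective $\cO_F[G]$-module, so it satisfies condition (1), and condition (2) is vacuous. Thus $\cO_K$ is the (unique) taming module. Moreover, under the same hypothesis the incomplete product $\Theta_{K/F}^E(0)$ and the Euler-completed product $\Theta_{K/F}^{E,\cO_K}(0)$ agree, since the extra wild Euler factors are empty.

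Next, I would apply Theorem \ref{T:ETNF} with $\cM = \cO_K$ to obtain
$$\Theta^E_{K/F}(0) \;=\; \Theta^{E,\cO_K}_{K/F}(0) \;=\; \frac{\mathrm{Vol}(E(K_\infty)/E(\cO_K))}{\mathrm{Vol}(K_\infty/\cO_K)}.$$
Using that $p\nmid |G|$ makes every $A[G]$-lattice in $K_\infty$ automatically $G$-c.t., hence projective (Lemma \ref{L:gct-Dedekind}); in particular $\exp_E^{-1}(\cO_K)$ is projective, so $K_\infty/\exp_E^{-1}(\cO_K)$ lies in the class $\cC$, and the exact sequence \eqref{E:exponential-sequence} (with $\cM=\cO_K$) exhibits $E(K_\infty)/E(\cO_K)$ as an extension in $\cC$ of the finite module $H(E/\cO_K)$ by $K_\infty/\exp_E^{-1}(\cO_K)$.

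Then I would invoke the multiplicativity of the volume function on short exact sequences in $\cC$ (which sends finite $A[G]$-modules to their $|\cdot|_G$-size) to get
$$\mathrm{Vol}(E(K_\infty)/E(\cO_K)) \;=\; \mathrm{Vol}(K_\infty/\exp_E^{-1}(\cO_K))\cdot |H(E/\cO_K)|_G.$$
Substituting into the previous identity and recognizing the remaining ratio of volumes as the lattice index of two projective $A[G]$-lattices in $K_\infty$ (Section \ref{S:lattice-index}),
$$\frac{\mathrm{Vol}(K_\infty/\exp_E^{-1}(\cO_K))}{\mathrm{Vol}(K_\infty/\cO_K)} \;=\; [\cO_K : \exp_E^{-1}(\cO_K)]_G,$$
yields the claimed equality. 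Note that the dependence of the individual volumes on the auxiliary normalizing lattice $\Lambda_0$ cancels in the quotient, as flagged in the remark after Theorem \ref{T:ETNF}.

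There is no real obstacle here, since everything is assembled from prior results; the only nontrivial input is the automatic tameness of $K/F$ under $p\nmid |G|$, which makes the wild Euler factors disappear and makes $\cO_K$ itself a legitimate taming module. The rest is a matter of tracking the multiplicativity of $\mathrm{Vol}$ along the exponential sequence and unwinding the definition of the $G$-equivariant lattice index.
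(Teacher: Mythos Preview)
Your argument is correct and follows essentially the same route as the paper. The only presentational difference is that you invoke a ``multiplicativity of $\mathrm{Vol}$ along short exact sequences in $\cC$,'' which is not stated as a standalone property in the paper; it is, however, an immediate consequence of Definition~\ref{D:volume} together with Lemma~\ref{L:FitSequence} applied to the sequence $0\to \Lambda'/\Lambda\to (\Lambda'/\Lambda\times s(H))\to H\to 0$, once one notes that $p\nmid|G|$ makes $H(E/\cO_K)$ and $\Lambda'/\Lambda$ automatically $G$--c.t. The paper's proof makes this explicit in a slightly different way: it observes that the exponential sequence \eqref{E:exponential-sequence} splits $A[G]$--linearly (by averaging the $A$--linear section over $G$), so one may take the admissible lattice $\Lambda'=\exp_E^{-1}(\cO_K)$ itself, whence $\Lambda'/\Lambda=0$ and the volume formula collapses directly to $|H(E/\cO_K)|_G/[\exp_E^{-1}(\cO_K):\Lambda_0]_G$.
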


\begin{remark} If $G$ is the trivial group (i.e. $K=F$), the above Corollary is precisely Taelman's class number formula \cite[Thm. 1]{T12}.  For a general $G$ of order coprime to $p$, the above Corollary implies the main result of Angles--Taelman \cite{AT15}. See Remark \ref{R:Taelman-Angles-Taelman} for more details.\end{remark}
\medskip

The main application of Theorem \ref{T:ETNF} above included in this paper is the Drinfeld module analogue of the classical refined Brumer--Stark Conjecture for number fields. We remind the reader that this conjecture roughly states that the special value $\Theta_{K/F,T}(0)$ of a $G$--equivariant, Euler--modified, Artin $L$--function $\Theta_{K/F,T}:\Bbb C\to\Bbb C$, associated to an abelian extension $K/F$ of number fields of Galois group $G$, belongs to the Fitting ideal $\fitt^0_{\Z[G]}({\rm Cl}_{K,T}^{\vee})$ of the Pontrjagin dual of a certain ray--class group ${\rm Cl}_{K,T}$ of the top field $K$. (See \cite[\S 6.1]{GP15} for a precise statement and conditional proof.) This conjecture has tremendously far reaching applications to the arithmetic of number fields. (See \cite{BG19} for details.) The Drinfeld module analogue of this conjecture is the following. (See \S\ref{S:Main-Brumer-Stark} for the proof.)

\begin{theorem} [refined Brumer--Stark for Drinfeld modules]\label{T:BrSt}  If $\cM$ is a taming module for $K/F$, $E$ is a Drinfeld module of structural morphism
$\varphi_E:\Fq[t]\to\cO_F\{\tau\}$,  and $\Lambda'$ is a $E(K_\infty)/E(\cM)$--admissible $A[G]$--lattice in $K_\infty$, then we have
$$\frac{1}{[\cM:\Lambda']_G}\cdot \Theta_{K/F}^{E, \cM}(0)\in{\rm Fitt}^0_{A[G]}H(E/\cM).$$
\end{theorem}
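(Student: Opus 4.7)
The strategy is to combine Theorem \ref{T:ETNF} with the exponential short exact sequence \eqref{E:exponential-sequence}, and then use the admissibility of $\Lambda'$ to translate an identity of $G$-equivariant volumes into a containment of Fitting ideals. At a conceptual level, the ETNF writes the $L$-value as a ratio of volumes, the exponential sequence splits this ratio into a lattice-index piece (coming from $K_\infty/\mathrm{exp}_E^{-1}(\cM)$) times a contribution from the finite module $H(E/\cM)$, and the admissibility of $\Lambda'$ is precisely the device that packages this contribution as an element of $\fitt^0_{A[G]} H(E/\cM)$.

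Concretely, I would proceed as follows. First, apply Theorem \ref{T:ETNF} to write
$$\Theta_{K/F}^{E,\cM}(0) = \frac{\Vol(E(K_\infty)/E(\cM))}{\Vol(K_\infty/\cM)}.$$
Second, verify that $\mathrm{exp}_E^{-1}(\cM)$ is itself a projective $A[G]$-lattice in $K_\infty$ (since $\mathrm{exp}_E$ is a local $G$-equivariant isomorphism of topological $A[G]$-modules and $\cM$ is projective by Definition \ref{D:taming-modules}), so that the short exact sequence \eqref{E:exponential-sequence} has all its terms in the class $\cC$ of Definition \ref{D:classC}. Third, invoke multiplicativity of $\Vol$ on short exact sequences in $\cC$ together with the defining property that ratios of volumes of the form $\Vol(K_\infty/\Lambda_1)/\Vol(K_\infty/\Lambda_2)$ are computed by the lattice index $[\Lambda_2:\Lambda_1]_G$, yielding
$$\Theta_{K/F}^{E,\cM}(0) = [\cM : \mathrm{exp}_E^{-1}(\cM)]_G \cdot |H(E/\cM)|_G,$$
where $|H(E/\cM)|_G$ is the $A[G]$-size of Definition \ref{D:Fitt}. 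Fourth, using multiplicativity of the lattice index along the chain $\Lambda' \subseteq \mathrm{exp}_E^{-1}(\cM) \subseteq \cM$, divide both sides by $[\cM : \Lambda']_G$ to reduce the claim to
$$\frac{|H(E/\cM)|_G}{[\mathrm{exp}_E^{-1}(\cM) : \Lambda']_G} \in \fitt^0_{A[G]} H(E/\cM).$$

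The main obstacle is this last containment: extracting a Fitting-ideal statement from what is, up to that point, only a lattice-index identity. This is exactly the role of the $E(K_\infty)/E(\cM)$-admissibility hypothesis on $\Lambda'$: by construction it should supply a two-term projective $A[G]$-presentation of $H(E/\cM)$ (or of a module naturally surjecting onto it) whose presentation matrix has generalized determinant equal to the ratio $|H(E/\cM)|_G/[\mathrm{exp}_E^{-1}(\cM):\Lambda']_G$, after which the containment follows from the definition of $\fitt^0_{A[G]}$ in terms of projective resolutions (formalized in the Appendix). The technical subtlety, specific to the truly equivariant setting where $p$ may divide $|G|$, is that $H(E/\cM)$ cannot be handled character-by-character, so the Fitting ideal must be computed via determinants of endomorphisms of projective-but-not-free $A[G]$-modules; the purpose of admissibility is precisely to produce a resolution on which this generalized determinant is meaningful and matches the lattice index that naturally emerges from the volume computation above.
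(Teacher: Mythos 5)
Your proposal heads toward the right conclusion but contains several genuine gaps that would make it break down in the truly equivariant case (i.e.\ when $p$ divides $|G|$), which is precisely the case the paper is designed to handle.

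The central problem is your third step, the claimed factorization $\Theta_{K/F}^{E,\cM}(0) = [\cM : \mathrm{exp}_E^{-1}(\cM)]_G \cdot |H(E/\cM)|_G$. The quantity $|H(E/\cM)|_G$ is \emph{not defined} in general: Definitions \ref{D:Fitt} and \ref{D:G-size} require the finite module to be $\F_q[G]$--free (equivalently, $G$--c.t.), and $H(E/\cM)$ is not $G$--c.t.\ when $p$ divides $|G|$. Relatedly, your second step is also unsupported: the paper never asserts that $\mathrm{exp}_E^{-1}(\cM)$ is a projective $A[G]$--lattice (only that it is an $A[G]$--lattice), and the "local isomorphism" heuristic does not give projectivity. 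Even setting both points aside, the third term $H(E/\cM)$ of the exponential sequence is not in $\cC$ (objects of $\cC$ must be $G$--c.t.), so "multiplicativity of $\Vol$ on short exact sequences in $\cC$" does not apply to this sequence; the paper proves no such multiplicativity and, crucially, does not need it. Finally, in your fourth step the chain of inclusions is reversed: Definition \ref{D:Admissible lattice}(1) requires $\mathrm{exp}_E^{-1}(\cM) \subseteq \Lambda'$, not $\Lambda' \subseteq \mathrm{exp}_E^{-1}(\cM)$, and there is no reason for $\Lambda' \subseteq \cM$ to hold.

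The actual role of admissibility is different from what you describe. The volume $\Vol(E(K_\infty)/E(\cM))$ is \emph{defined} (Definition \ref{D:volume}) via a choice of $R$--linear section $s$ and an $(E(K_\infty)/E(\cM),s)$--admissible lattice $\Lambda'$, and unwinding that definition together with $\Vol(K_\infty/\cM)=[\cM:\Lambda_0]_G^{-1}$ and Lemma \ref{L:projective-lattice-index}(2) yields directly, with no appeal to multiplicativity or to $|H(E/\cM)|_G$,
$$\frac{1}{[\cM:\Lambda']_G}\,\Theta_{K/F}^{E,\cM}(0) \;=\; \bigl|\,\Lambda'/\mathrm{exp}_E^{-1}(\cM)\times s(H(E/\cM))\,\bigr|_G.$$
The module $\Lambda'/\mathrm{exp}_E^{-1}(\cM)\times s(H(E/\cM))$ is finite and $G$--c.t.\ (Remark \ref{R:Admissible}), so the right-hand side is a monic generator of its $0$-th Fitting ideal; since $\pi$ gives a surjective $A[G]$--linear map from this module onto $H(E/\cM)$, the standard monotonicity property of Fitting ideals under quotients gives ${\rm Fitt}^0_{A[G]}(\Lambda'/\mathrm{exp}_E^{-1}(\cM)\times s(H(E/\cM))) \subseteq {\rm Fitt}^0_{A[G]}H(E/\cM)$, finishing the proof. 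So admissibility does not produce a projective resolution of $H(E/\cM)$ whose determinant you must compute; it produces a $G$--c.t.\ surrogate that surjects onto $H(E/\cM)$, and the conclusion follows from one elementary property of Fitting ideals.
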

\begin{remark}
For every taming module $\cM$, we define in \S\ref{S:volume-function} a class of projective $A[G]$--lattices $\Lambda'$ which we call {\it $E(K_\infty)/E(\cM)$--admissible} and which are instrumental in defining the volume ${\rm Vol}(E(K_\infty)/E(\cM)).$
\end{remark}

The above Theorem has the following two consequences regarding the $A[G]$--module structure of Taelman's ideal--class group $H(E/\cO_K).$ (See \S\ref{S:Main-Brumer-Stark} for proofs and additional remarks.)
\begin{corollary}
With notations as in Theorem \ref{T:BrSt}, we have
$$\frac{1}{[\cM:\Lambda']_G}\cdot \Theta_{K/F}^{E, \cM}(0)\in{\rm Fitt}^0_{A[G]}H(E/\cO_K).$$
\end{corollary}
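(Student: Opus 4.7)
The plan is to derive this corollary as an immediate formal consequence of Theorem \ref{T:BrSt} combined with the basic functoriality of the zeroth Fitting ideal under surjections. Recall that for any commutative ring $R$ and any surjection $M \twoheadrightarrow N$ of finitely presented $R$-modules, one has the containment $\fitt^0_R(M) \subseteq \fitt^0_R(N)$: this is standard, and follows from the observation that any presentation matrix for $M$ becomes a presentation matrix for $N$ after adjoining columns corresponding to the extra relations, so the ideal generated by the maximal minors can only grow.

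First, I would invoke \eqref{E:class-group-surjections}, which records that the inclusion of $\cO_F[G]\{\tau\}$-modules $\cM \hookrightarrow \cO_K$ induces a canonical surjective $A[G]$-linear map
$$H(E/\cM) \twoheadrightarrow H(E/\cO_K).$$
This surjection arises by comparing the two instances of the exponential exact sequence \eqref{E:exponential-sequence} for $\cM$ and $\cO_K$ and passing to cokernels.

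Next, applying the monotonicity property above to this surjection yields the inclusion of Fitting ideals
$$\fitt^0_{A[G]} H(E/\cM) \subseteq \fitt^0_{A[G]} H(E/\cO_K).$$
Theorem \ref{T:BrSt} places the element $\frac{1}{[\cM:\Lambda']_G}\cdot \Theta_{K/F}^{E, \cM}(0)$ in the smaller ideal on the left-hand side, and this containment then transports it into the larger ideal on the right-hand side, which is precisely the claim.

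No genuine obstacle arises in this step: all the substantive work is absorbed into Theorem \ref{T:BrSt} itself (which in turn relies on the equivariant Tamagawa number formula of Theorem \ref{T:ETNF}) and into the construction of the surjection \eqref{E:class-group-surjections} from the compatibility of the exponential sequences. The only mild care required is to ensure that the Fitting-ideal monotonicity statement is applied in the correct category — here both class groups are finite (hence finitely presented) $A[G]$-modules, so the general commutative algebra fact applies without modification.
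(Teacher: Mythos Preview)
Your proposal is correct and follows essentially the same approach as the paper: invoke Theorem~\ref{T:BrSt}, then use the surjection \eqref{E:class-group-surjections} together with the standard inclusion $\fitt^0_{A[G]}H(E/\cM)\subseteq\fitt^0_{A[G]}H(E/\cO_K)$ for Fitting ideals under surjections.
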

\noindent In the case $p\nmid |G|$, the lattice ${\rm exp}^{-1}_E(\cO_K)$ is $K_\infty/\cO_K$--admissible. Consequently, we obtain a description of the full Fitting ideal of $H(E/\cO_K)$ in this case.
\begin{corollary}\label{C:fullFitt}
If $p\nmid |G|$, then we have an equality of principal $A[G]$--ideals
$$\frac{1}{[\cO_K: {\rm exp}^{-1}_E(\cO_K)]_G}\Theta_{K/F}^E(0)\cdot A[G]={\rm Fitt}^0_{A[G]} H(E/\cO_K).$$
\end{corollary}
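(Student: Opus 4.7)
The plan is to deduce the corollary by combining two previously established ingredients: Corollary \ref{C:G-not-p}, which identifies the left-hand element of the claim in closed form, and Definition \ref{D:Fitt} together with Proposition \ref{P:fittidealgen}, which identifies the right-hand ideal as principal with a canonical monic generator. Once one has both descriptions in hand, they match on the nose.

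First I would record the setup implications of $p\nmid|G|$. Every inertia subgroup $I_v$ for $K/F$ divides $|G|$, hence has order coprime to $p$, so $K/F$ is tamely ramified at every place. By Noether's theorem $\cO_K$ is then a projective $\cO_F[G]$--module, so it is itself a taming module for $K/F$; in particular $\Theta_{K/F}^{E,\cO_K}(0)=\Theta_{K/F}^E(0)$. Furthermore, by Lemma \ref{L:gct-Dedekind} every $A[G]$--lattice in $K_\infty$ is $G$--c.t.\ and hence projective, so $\cO_K$ and $\exp_E^{-1}(\cO_K)$ are projective $A[G]$--lattices in $K_\infty$ and the index $[\cO_K:\exp_E^{-1}(\cO_K)]_G$ is a well-defined unit in $\F_q((t^{-1}))[G]^+$.

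Next, I would apply Corollary \ref{C:G-not-p}, which in the present situation gives the identity
$$
\Theta_{K/F}^E(0) \;=\; [\cO_K:\exp_E^{-1}(\cO_K)]_G \cdot |H(E/\cO_K)|_G
$$
in $1+t^{-1}\F_q[G][[t^{-1}]]$. Dividing through by the invertible factor $[\cO_K:\exp_E^{-1}(\cO_K)]_G$ rewrites the left-hand side of the claim as the single element $|H(E/\cO_K)|_G\in A[G]$, so the principal ideal on the left equals $|H(E/\cO_K)|_G\cdot A[G]$.

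Finally, I would invoke Definition \ref{D:Fitt} and the underlying Proposition \ref{P:fittidealgen}: since $H(E/\cO_K)$ is a finite $A[G]$--module that is free as an $\F_q[G]$--module (a property already required in order for the size $|H(E/\cO_K)|_G$ in Corollary \ref{C:G-not-p} to make sense), the Fitting ideal $\fitt^0_{A[G]}H(E/\cO_K)$ is principal and $|H(E/\cO_K)|_G$ is its unique monic generator. Combining this with the preceding paragraph yields the asserted equality of principal ideals. The only genuinely nontrivial input is the $\F_q[G]$--freeness of $H(E/\cO_K)$ when $p\nmid|G|$, and that is already secured at the stage of Corollary \ref{C:G-not-p}; no additional argument is needed at this step.
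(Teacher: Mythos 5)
Your proof is correct and follows exactly the route the paper intends: its own proof of this corollary is the one line ``This is a direct consequence of Corollary \ref{C:Main-G-not-p}.'' You have simply unpacked that consequence, namely dividing the identity of Corollary \ref{C:G-not-p} by the unit $[\cO_K:\exp_E^{-1}(\cO_K)]_G$ and recalling that $|H(E/\cO_K)|_G$ is, by construction (Definition \ref{D:Fitt} / Proposition \ref{P:fittidealgen}), the monic generator of $\Fitt^0_{A[G]}H(E/\cO_K)$.
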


\begin{remark}
In the number field vs. Drinfeld module analogy, the $T$--modified $L$--value $\Theta_{K/F, T}(0)$ corresponds to the $\cM$--modified $L$--value $\left(\frac{1}{[\cM:\Lambda']_G}\cdot \Theta_{K/F}^{E, \cM}(0)\right)$. At the same time, the natural class--group surjection ${\rm Cl}_{K,T}\twoheadrightarrow{\rm Cl}_K$ corresponds to
the equally natural surjection $H(E/\cM)\twoheadrightarrow H(E/\cO_K)$. See more on this analogy in \S\ref{S:Main-Brumer-Stark}.
\end{remark}

\subsection{A brief word on proof strategy and techniques} Once we construct and study the various invariants associated to the data $(K/F, E)$ and briefly described in Sections 1.2--1.4 of this introduction, the proofs of the main results stated above rely on $G$--equivariant versions of Taelman's techniques (\cite{T12}),
which we develop in this paper. In particular, we prove a $G$--equivariant version of Taelman's trace formula (see \S\ref{S:Trace})), which plays a crucial role in obtaining Theorem \ref{T:ETNF}. The main obstacle for passing from a non-equivariant to a $G$--equivariant setting is, as expected, lack of cohomological triviality (or lack of finite projective dimension) of the various $A[G]$--modules at play. Of course, this obstacle would not be present had we assumed that $p\nmid |G|$, as in \cite{AT15} and \cite{Fang15}, for example.

\section{Nuclear operators, the $G$-equivariant theory}\label{S:Nuclear}

\subsection{Generalities}

Let $R:=\Fq[G]$ and let $V$ be a topological $R$--module, which is $R$--projective or, equivalently, $G$--c.t. (See Corollary \ref{C:gct-arbitraryG}(1) for the equivalence.) In this section, we develop the theory of nuclear operators and determinants a la Taelman (see \cite[\S 2]{T12}) for $V$ as an $R$-module as opposed to $\F_q$-vector space.  The main difference between the $R$-linear and $\Fq$--linear settings is that in the $R$--linear setting one can only take determinants of endomorphisms of finitely generated, projective $R$--modules (as opposed to any finite dimensional $\Fq$--vector spaces), in the sense of \eqref{E:det-projective} in the Appendix. In what follows, ``endomorphism of $V$'' means a continuous $R$--module endomorphism of $V$.

\begin{definition}\label{D:Umdef}
Let $\mathcal{U} = \{U_i\}_{i\geq M}$ be a sequence of open $R$-submodules of $V$ with the following properties:
\begin{enumerate}
\item[(1)] Each $U_i$ is $G$--c.t.;
\item[(2)] $U_{i+1}\subseteq U_i$, for all $i\geq M$;
\item[(3)] $\mathcal U$ forms a basis of open neighborhoods of $0$ in $V$.
\end{enumerate}
\end{definition}
Assuming that $\mathcal{U}$ exists, we fix it and define everything that follows for the pair $(V,\mathcal{U})$. Independence on $\cU$ in the definitions and results below will be addressed in \S\ref{S:independence-on-U}.

\begin{definition} Let $\varphi$ be an endomorphism of $V$. We say that $\varphi$ is locally contracting if there exists an $I\in\Z_{\geq M}$, such that $\varphi(U_i)\subseteq U_{i+1}$, for all $i\geq I$. A neighborhood $U := U_I$ of $0$ with this property is called a nucleus for $\varphi$. \end{definition}

\begin{remark}If $V$ is a finitely generated $R$-module, then we always take $U_i = \{0\}$, for all $i\geq 1$. Obviously, every endomorphism of $V$ is locally contracting in this case.
\end{remark}
The following are clear.

\begin{proposition} Any finite collection of locally contracting endomorphisms of $V$ has a common nucleus.\end{proposition}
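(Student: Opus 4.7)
The plan is very short: this should follow immediately from the definitions by taking a maximum of indices. Given finitely many locally contracting endomorphisms $\varphi_1,\ldots,\varphi_n$ of $V$, each $\varphi_j$ by hypothesis comes with an index $I_j \in \Z_{\geq M}$ such that $\varphi_j(U_i) \subseteq U_{i+1}$ for all $i \geq I_j$. Set $I := \max(I_1,\ldots,I_n)$. Then for every $i \geq I$ and every $j \in \{1,\ldots,n\}$, we have $i \geq I_j$, hence $\varphi_j(U_i) \subseteq U_{i+1}$. Thus $U := U_I$ is a nucleus for each $\varphi_j$ simultaneously, i.e.\ a common nucleus.

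There is no real obstacle here; the statement is essentially a bookkeeping observation. The only thing to be mildly careful about is to note that the cofinality of the tail $\{U_i\}_{i\geq I}$ in the neighborhood basis $\mathcal U$ is preserved (which is automatic, since the $U_i$ are nested and still form a basis of open neighborhoods of $0$ when restricted to indices $i \geq I$), so $U_I$ legitimately qualifies as a nucleus in the sense of the definition. I would present the proof as a single sentence after recalling the definition of a nucleus.
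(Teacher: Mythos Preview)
Your proof is correct and matches the paper's approach exactly: the paper simply asserts ``The following are clear'' without further argument, and your one-line maximum-of-indices argument is precisely the intended justification. The remark about cofinality is unnecessary---the definition of a nucleus $U=U_I$ only requires $\varphi(U_i)\subseteq U_{i+1}$ for all $i\geq I$, with no separate neighborhood-basis condition to verify---but it does no harm.
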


\begin{proposition} \label{SumAndComposition}If $\varphi$ and $\psi$ are locally contracting endomorphisms of $V$, then so are the sum $\varphi + \psi$ and the composition $\varphi\psi$.\end{proposition}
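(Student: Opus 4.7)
The plan is to reduce immediately to a common nucleus using the preceding proposition, and then exploit the fact that each $U_i$ is an $R$--submodule (so closed under addition) together with the nested chain condition $U_{i+1}\subseteq U_i$. Both assertions will follow by a direct set-theoretic check, with no cohomological or measure-theoretic input required.

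First I would invoke the previous proposition to find a common index $I\geq M$ such that $\varphi(U_i)\subseteq U_{i+1}$ and $\psi(U_i)\subseteq U_{i+1}$ for all $i\geq I$. For the sum, since each $U_{i+1}$ is an $R$--submodule of $V$, we have $(\varphi+\psi)(U_i)\subseteq \varphi(U_i)+\psi(U_i)\subseteq U_{i+1}+U_{i+1}=U_{i+1}$, so $U=U_I$ is also a nucleus for $\varphi+\psi$. Continuity of $\varphi+\psi$ is clear since addition on $V$ is continuous.

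For the composition $\varphi\psi$, the same common nucleus $U=U_I$ works, and in fact one gets a doubly fast contraction: $\varphi\psi(U_i)\subseteq \varphi(U_{i+1})\subseteq U_{i+2}\subseteq U_{i+1}$ for all $i\geq I$, using the nesting condition (2) of Definition \ref{D:Umdef} at the last step. Continuity of $\varphi\psi$ is immediate from continuity of $\varphi$ and $\psi$.

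I do not anticipate any obstacle here: the conditions (1)--(3) in Definition \ref{D:Umdef} are stronger than what is strictly needed for this proposition (condition (1) on $G$--cohomological triviality of the $U_i$ plays no role at this stage, but will presumably be needed later when one sets up determinants and traces on the quotients $V/U_i$). The only point worth noting is that one could record the stronger statement that $\varphi\psi$ admits $U_I$ as a nucleus with the sharper property $\varphi\psi(U_i)\subseteq U_{i+2}$, which may be useful in later estimates, although it is not required by the definition of locally contracting as stated.
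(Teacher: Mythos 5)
Your proof is correct and fills in exactly the routine verification that the paper leaves implicit (the paper simply introduces this proposition with ``The following are clear'' and gives no written proof). Your use of the common-nucleus proposition, the $R$-submodule structure of the $U_i$ for the sum, and the nesting condition for the composition is the standard argument intended here.
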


Following Taelman \cite{T12}, we let $R[[Z]]$ be the ring of power series in variable $Z$ with coefficients in $R$ and consider the $R[[Z]]$--modules
$$V[[Z]]/Z^N := V\otimes_R R[[Z]]/Z^N,  \text{ and } V[[Z]] := \varprojlim_{N\geq 1} V[[Z]]/Z^N.$$
We endow $V[[Z]]/Z^N$ with the product topology of $N$ copies of $V$ and $V[[Z]]$ with the inverse limit topology. These are topological $R[[Z]]$--modules, where $R[[Z]]$ is endowed with its $Z$--adic topology. It is easily seen that any continuous $R[[Z]]$-linear endomorphism $\Phi$ of $V[[Z]]$ (respectively $R[[Z]]/Z^N$--linear endomorphism of $V[[Z]]/Z^N$) is of the form
\begin{equation}\label{E:Phi-sum}\Phi = \sum_{n = 0}^\infty \varphi_nZ^n \text{ (respectively $\Phi = \sum_{n = 0}^{N-1}\varphi_nZ^n$)},\end{equation}
where the $\varphi_n$'s are uniquely determined endomorphisms of $V$.

\begin{remark}If $V$ is a finitely generated, projective $R$-module, then $V[[Z]]/Z^N$ and $V[[Z]]$ are finitely generated, projective $R[[Z]]/Z^N$-- and $R[[Z]]$--modules, respectively. (Note that for such $V$'s we have an isomorphism $V[[Z]]\simeq V\otimes_RR[[Z]]$ of $R[[Z]]$--modules.) Therefore, we may take determinants of endomorphisms $\Phi$ of $V[[Z]]/Z^N$ and $V[[Z]]$ in the classical sense, as defined in \eqref{E:det-projective} of the Appendix. For notational convenience, in this case we let
$$\det\nolimits_{R[[Z]]}(\Phi\rvert V):= \det\nolimits_{R[[Z]]}(\Phi\rvert V[[Z]]), \quad \det\nolimits_{R[[Z]]/Z^N}(\Phi\rvert V):=\det\nolimits_{R[[Z]]/Z^N}(\Phi\rvert V[[Z]]/Z^N).$$
\end{remark}
\medskip

For the rest of this section, we assume that $V$ is compact, but not necessarily finitely generated over $R$. Now, we describe how to take determinants of certain types of endomorphisms of $V[[Z]]/Z^N$ and $V[[Z]]$ in this more general setting. Note that for all $j\geq i\geq M$, the $R$--modules $V/U_i$ and $U_i/U_j$ are finite, therefore finitely generated and projective. (Since $V$ and the $U_i$'s are all $G$--c.t., by assumption, and therefore
$V/U_i$ and $U_i/U_j$ are all $G$--c.t.)

\begin{definition}\label{D:Nuclear} We say that a continuous $R[[Z]]$-linear endomorphism $\Phi$ of $V[[Z]]$ (respectively $V[[Z]]/Z^N$) is nuclear, if for all $n\geq 0$ (respectively all $n$, with $N>n\geq 0$), the endomorphisms $\varphi_n$ of $V$ defined in \eqref{E:Phi-sum} are locally contracting.\end{definition}

\begin{proposition} \label{NuclearNucleus}Let $\Phi:V[[Z]]/Z^N\rightarrow V[[Z]]/Z^N$ be a nuclear endomorphism. Let $U = U_J$ and $W = U_I$ be common nuclei for all the corresponding $\varphi_n$'s. Then
$$\det\nolimits_{R[[Z]]/Z^N}(1 + \Phi\rvert V/U) = \det\nolimits_{R[[Z]]/Z^N}(1 + \Phi\rvert V/W).$$
\end{proposition}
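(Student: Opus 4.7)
The plan is to mimic Taelman's non-equivariant argument (\cite[\S2]{T12}) but carefully track $G$--cohomological triviality at every stage so that determinants in the sense of \eqref{E:det-projective} remain defined. First, I would reduce to the nested case $W\subseteq U$. Given common nuclei $U=U_J$ and $W=U_I$, set $K:=\max(I,J)$ and $X:=U_K$. By property (2) of Definition \ref{D:Umdef}, $X\subseteq U\cap W$, and $X$ is itself a common nucleus for all the $\varphi_n$ (with $0\leq n<N$), because $\varphi_n(U_i)\subseteq U_{i+1}$ automatically holds for $i\geq K$. If the nested case is established, then both determinants $\det_{R[[Z]]/Z^N}(1+\Phi\mid V/U)$ and $\det_{R[[Z]]/Z^N}(1+\Phi\mid V/W)$ coincide with $\det_{R[[Z]]/Z^N}(1+\Phi\mid V/X)$, and we are done. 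So from here I assume $W\subseteq U$, equivalently $I\geq J$.

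Next, I would observe that each $\varphi_n$ maps $U$ into $U_{J+1}\subseteq U$ and $W$ into $U_{I+1}\subseteq W$, so $\Phi$ preserves both $U[[Z]]/Z^N$ and $W[[Z]]/Z^N$. This produces a short exact sequence of topological $R$--modules
$$0\longrightarrow U/W\longrightarrow V/W\longrightarrow V/U\longrightarrow 0,$$
on which $\Phi$ acts compatibly. All three terms are finite and $G$--c.t.\ (by property (1) of Definition \ref{D:Umdef} and the usual long exact cohomology argument applied to nested $G$--c.t.\ modules), hence finitely generated projective $R$--modules; tensoring with $R[[Z]]/Z^N$ preserves exactness, and multiplicativity of the determinant over short exact sequences of projective modules then yields
$$\det\nolimits_{R[[Z]]/Z^N}(1+\Phi\mid V/W)=\det\nolimits_{R[[Z]]/Z^N}(1+\Phi\mid U/W)\cdot\det\nolimits_{R[[Z]]/Z^N}(1+\Phi\mid V/U).$$
So it suffices to prove that the first factor on the right is $1$.

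The final step is a filtration argument on $U/W$. Consider the chain $W=U_I\subseteq U_{I-1}\subseteq\cdots\subseteq U_J=U$. Each $U_j$ (for $J\leq j\leq I$) is $\Phi$--stable by the same nucleus argument as above, and each successive quotient $U_j/U_{j+1}$ is finite and $G$--c.t., hence a finitely generated projective $R$--module. Crucially, since every $\varphi_n$ satisfies $\varphi_n(U_j)\subseteq U_{j+1}$ for $j\geq J$, the endomorphism $\Phi$ induces the \emph{zero} map on $(U_j/U_{j+1})[[Z]]/Z^N$. Therefore $1+\Phi$ acts as the identity on each successive quotient, contributing $\det=1$, and multiplicativity of the determinant along the filtration gives $\det_{R[[Z]]/Z^N}(1+\Phi\mid U/W)=1$, completing the proof.

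The main obstacle I anticipate is bookkeeping of $G$--cohomological triviality: every time a subquotient enters the argument one must verify it is $G$--c.t.\ so that the determinant makes sense as an element of $R[[Z]]/Z^N$ via \eqref{E:det-projective}. This is automatic from Definition \ref{D:Umdef}(1) and the standard fact that $G$--c.t.\ modules are closed under extensions and quotients by $G$--c.t.\ submodules, but it is the one place where the $R[G]$--equivariant setting requires genuinely more care than Taelman's original $\F_q$--linear framework.
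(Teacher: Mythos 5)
Your proposal is correct and follows essentially the same route as the paper: reduce to the nested case, then filter $U/W$ by the chain of $U_j$'s and use that each $\varphi_n$ carries $U_j$ into $U_{j+1}$, so $1+\Phi$ is the identity on every successive quotient, whence the determinant contribution is trivial. The only cosmetic difference is that you handle the non-nested case by passing to a common third nucleus $U_{\max(I,J)}$, whereas the paper simply takes $I\leq J$ without loss of generality; the substance is identical.
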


\begin{proof} Say $I\leq J$, so $U\subseteq W$. Then, we have the descending sequence
$$W = U_I\supseteq U_{I+1}\supseteq U_{I+2}\supseteq\cdots\supseteq U_{J-1}\supseteq U_J = U,$$
such that $\varphi_n(U_i)\subseteq U_{i + 1}$ for all $n$ and $i$, with $0\leq n<N$ and $I\leq i\leq J-1$. Then $(1 + \Phi)$ induces the identity map on the quotients $U_i/U_{i + 1}$, so we have
$$\begin{array}{ll} \det\nolimits_{R[[Z]]/Z^N}(1 + \Phi\rvert V/U) &\ds = \det\nolimits_{R[[Z]]/Z^N}(1 + \Phi\rvert V/W)\prod_{i = I}^{J - 1}\det\nolimits_{R[[Z]]/Z^N}(1 + \Phi\rvert U_i/U_{i + 1})\\
													    & = \det\nolimits_{R[[Z]]/Z^N}(1 + \Phi\rvert V/W).\end{array}$$\end{proof}

\begin{definition} Let $\Phi$ be a nuclear endomorphism of $V[[Z]]/Z^N$. Then we define
$$\det\nolimits_{R[[Z]]/Z^N}(1 + \Phi\rvert V) := \det\nolimits_{R[[Z]]/Z^N}(1 + \Phi\rvert V/U)$$
where $U$ is any common nucleus for the corresponding $\varphi_n$'s. If $\Phi$ is a nuclear endomorphism of $V[[Z]]$, then we define the determinant
of $(1+\Phi)$ in $\ds R[[Z]] = \varprojlim_N R[[Z]]/Z^N$ by
$$\det\nolimits_{R[[Z]]}(1 + \Phi\rvert V):=\varprojlim_N\det\nolimits_{R[[Z]]/Z^N}(1 + \Phi\rvert V).$$
(The reader has to check that the projective limit above makes sense.) \end{definition}

\begin{remark}\label{R:Nuclear-Fitting} Assume that $M$ is a finite $R[t]$--module (i.e. an $A[G]$--module, where $A=\Fq[t]$) which is $R$--free of rank $n$. Then we can view $\Phi:=-t\cdot T^{-1}$ as a nuclear endomorphism of $M[[T^{-1}]]$. Then $\det_{R[[T^{-1}]]}(1-t\cdot T^{-1}\mid M)$ as defined above is the usual determinant of $(1+\Phi)$ viewed as an endomorphism of the free $R[[T^{-1}]]$--module $M\otimes_RR[[T^{-1}]]$ of rank $n$.
Proposition \ref{P:fittidealgen}(1) then gives the following equality in $R[t]$:
$$|M|_G= t^n\cdot\det\nolimits_{R[[T^{-1}]]}(1-t\cdot T^{-1}\mid M)\rvert_{T=t}.$$
\end{remark}

\begin{proposition}\label{MultDet} Let $\Phi$ and $\Psi$ be nuclear endomorphisms of $V[[Z]]$. Then  the endomorphism $(1 + \Phi)(1 + \Psi) - 1$ is nuclear, and
$$\det\nolimits_{R[[Z]]}((1 + \Phi)(1 + \Psi)\rvert V) = \det\nolimits_{R[[Z]]}(1 + \Phi\rvert V)\det\nolimits_{R[[Z]]}(1 + \Psi\rvert V).$$\end{proposition}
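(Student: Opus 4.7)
The plan is to reduce the statement to a finite-rank situation, where classical multiplicativity of the determinant applies, and then pass to the inverse limit over $N$.

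First I would verify the nuclearity claim. Write $\Phi = \sum_{n\geq 0}\varphi_n Z^n$ and $\Psi = \sum_{n\geq 0}\psi_n Z^n$. Then
$$(1+\Phi)(1+\Psi) - 1 = \Phi + \Psi + \Phi\Psi,$$
whose coefficient of $Z^n$ is $\varphi_n + \psi_n + \sum_{i+j=n}\varphi_i\psi_j$, a finite $\F_q$-linear combination of the $\varphi_k$'s, $\psi_k$'s and their compositions $\varphi_i\psi_j$. Two applications of Proposition \ref{SumAndComposition} show that each such coefficient is locally contracting on $V$, so $(1+\Phi)(1+\Psi)-1$ is nuclear in the sense of Definition \ref{D:Nuclear}.

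Next I would tackle the multiplicativity. By definition, $\det_{R[[Z]]}$ is the inverse limit of $\det_{R[[Z]]/Z^N}$, so it suffices to establish the equality modulo $Z^N$ for each $N\geq 1$. Fix such an $N$. Only finitely many coefficients $\varphi_0,\ldots,\varphi_{N-1},\psi_0,\ldots,\psi_{N-1}$ (together with all their compositions $\varphi_i\psi_j$ with $i+j<N$) contribute modulo $Z^N$; by the existence of a common nucleus for finitely many locally contracting endomorphisms, I can choose a single $U=U_J\in\cU$ that is a nucleus for each. In particular, each of these coefficients sends $U$ into $U$, so $\Phi$, $\Psi$ and $(1+\Phi)(1+\Psi)-1$ all preserve $U[[Z]]/Z^N$ and descend to continuous $R[[Z]]/Z^N$-linear endomorphisms of the quotient
$$(V/U)[[Z]]/Z^N \;\cong\; (V/U)\otimes_R R[[Z]]/Z^N.$$
Since $V$ and $U$ are both $G$-c.t., the finite quotient $V/U$ is $G$-c.t., hence $R$-projective; therefore $(V/U)[[Z]]/Z^N$ is a finitely generated projective $R[[Z]]/Z^N$-module. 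On this module the classical determinant construction \eqref{E:det-projective} is available and is multiplicative on endomorphisms. This yields, modulo $Z^N$,
$$\det\nolimits_{R[[Z]]/Z^N}\!\bigl((1+\Phi)(1+\Psi)\,\big|\,V/U\bigr) = \det\nolimits_{R[[Z]]/Z^N}(1+\Phi\,|\,V/U)\cdot \det\nolimits_{R[[Z]]/Z^N}(1+\Psi\,|\,V/U),$$
and by Proposition \ref{NuclearNucleus} each factor equals the corresponding $\det_{R[[Z]]/Z^N}(-\,|\,V)$, independently of the common nucleus used.

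Taking the inverse limit over $N$ (which is well-defined for each of the three determinants, by the compatibility already used in the definition) finishes the proof. The main obstacle, and the only real subtlety, is the coordinated choice of a common nucleus for all three nuclear operators at once modulo $Z^N$; once this is in place, projectivity of $V/U$ over $R$ transfers the finite-rank theory to $R[[Z]]/Z^N$ and the classical multiplicativity does the rest.
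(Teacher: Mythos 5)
Your proposal is correct and takes essentially the same route as the paper, which simply cites Proposition \ref{SumAndComposition} together with multiplicativity of finite determinants; you have spelled out the reduction to a common nucleus, the passage to the finite projective $R$-module $V/U$, and the inverse limit over $N$, all of which is exactly what the paper's terse proof is gesturing at.
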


\begin{proof} This follows from Proposition \ref{SumAndComposition} and the multiplicativity of finite determinants.\end{proof}

\begin{proposition} \label{MultSES} Let $V'\subseteq V$ be a closed $R$-submodule of $V$ which is $G$--c.t. and let $V'' := V/V'$. Let $\mathcal{U}' = \{U_i'\}_i$ where $U_i' = U_i\cap V'$, and $\mathcal{U}'' = \{U_i''\}_i$ where $U_i''$ is the image of $U_i$ in $V''$. Assume that all the $U_i'$' and $U_i''$ are $G$-c.t. Let $\Phi = \sum \varphi_nZ^n:V[[Z]]\rightarrow V[[Z]]$ be a nuclear endomorphism, such that $\varphi_n(V')\subseteq V'$, for all $n$. Then the endomorphisms induced by $\Phi$ on $(V', \mathcal{U}')$ and $(V'',\mathcal{U}'')$ are nuclear and
$$\det\nolimits_{R[[Z]]}(1 + \Phi\rvert V) = \det\nolimits_{R[[Z]]}(1 + \Phi\rvert V')\det\nolimits_{R[[Z]]}(1 + \Phi\rvert V'').$$
\end{proposition}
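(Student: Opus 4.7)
The plan is to reduce the claim to the standard multiplicativity of determinants along a split short exact sequence of finitely generated projective modules over $R[[Z]]/Z^N$, and then to pass to the inverse limit over $N$.

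First I would verify that the induced endomorphisms are nuclear. Since $\Phi$ preserves $V'$, it restricts to an $R[[Z]]$--linear endomorphism of $V'[[Z]]$ and descends to one of $V''[[Z]]$, whose coefficients are the restrictions $\varphi_n|_{V'}$ and the quotient maps $\overline{\varphi_n}$. The families $\mathcal{U}'$ and $\mathcal{U}''$ satisfy Definition \ref{D:Umdef}: openness is clear (intersection with a closed submodule, continuous image of an open set), the descending condition is inherited from $\mathcal{U}$, the $G$--c.t. property is a hypothesis, and they form bases of neighborhoods of $0$ by the subspace and quotient topologies. If $U_I$ is a common nucleus for the $\varphi_n$ appearing up to the relevant truncation, then for $i \geq I$ we have $\varphi_n(U_i \cap V') \subseteq U_{i+1} \cap V' = U'_{i+1}$, so $U'_I$ is a common nucleus for the restricted operators; analogously, $U''_I$ is a common nucleus on $V''$.

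Next, fix a common nucleus $U := U_J$ for all the coefficient operators at level $N$, set $U' := U \cap V'$, and let $U''$ be the image of $U$ in $V''$. Consider the natural sequence of finite $R$--modules
$$0 \to V'/U' \to V/U \to V''/U'' \to 0.$$
This is exact: surjectivity and left exactness are clear, while the kernel of the right map is $(V' + U)/U \cong V'/(V' \cap U) = V'/U'$. Each term is a finite $R$--module that is $G$--c.t. by hypothesis, hence projective over $R = \F_q[G]$ by the equivalence recalled at the start of Section \ref{S:Nuclear} (Corollary \ref{C:gct-arbitraryG}(1)). Projectivity of $V''/U''$ forces the sequence to split as $R$--modules, and the splitting persists after tensoring with $R[[Z]]/Z^N$. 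Moreover, $(1+\Phi)$ preserves the submodule $(V'/U')[[Z]]/Z^N$ and induces the corresponding endomorphism on the quotient $(V''/U'')[[Z]]/Z^N$.

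Finally, the classical multiplicativity of determinants across a split short exact sequence of endomorphisms of finitely generated projective modules (the definition \eqref{E:det-projective} of determinants via a splitting makes this manifest) yields
$$\det\nolimits_{R[[Z]]/Z^N}(1 + \Phi \mid V/U) = \det\nolimits_{R[[Z]]/Z^N}(1 + \Phi \mid V'/U') \cdot \det\nolimits_{R[[Z]]/Z^N}(1 + \Phi \mid V''/U'').$$
By the definition of nuclear determinants, together with the independence statement of Proposition \ref{NuclearNucleus}, the three finite determinants compute the corresponding nuclear determinants on $V$, $V'$, $V''$, yielding the identity modulo $Z^N$. Passing to the inverse limit in $N$ gives the claimed equality in $R[[Z]]$. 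The main obstacle is the split exactness of the sequence of finite $R$--modules displayed above; this rests squarely on the hypotheses that $V$, $V'$, and all the $U_i'$, $U_i''$ are $G$--c.t., so that the $\F_q[G]$--projectivity of each term (and hence the splitting) follows from the cited equivalence.
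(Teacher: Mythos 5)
Your proof is correct and follows the same route the paper intends: the paper's proof is the one-liner ``Clear from the behaviour of finite determinants in short exact sequences,'' and you have simply filled in the details — verifying that $\mathcal{U}'$, $\mathcal{U}''$ give nuclear structures, identifying the short exact sequence $0 \to V'/U' \to V/U \to V''/U'' \to 0$ of finite projective $R$--modules, splitting it via projectivity of $V''/U''$, and invoking block-triangular multiplicativity of determinants before passing to the limit in $N$.
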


\begin{proof}Clear from the behaviour of finite determinants in short exact sequences.\end{proof}

\subsection{Independence of $\cU$}\label{S:independence-on-U} Assume that $V$ is a compact, $G$--c.t. $R$--module and that $\cU=\{U_i\}_i$ and $\cU'=\{U'_i\}_i$ are two bases of open neighborhoods of $0$ in $V$, satisfying the properties in Definition \ref{D:Umdef}. Let $\varphi\in{\rm End}_R(V)$ and $\Phi=\sum_n\varphi_nZ^n\in{\rm End}_{R[[Z]]}(V[[Z]])$ be such that $\varphi$ is locally contracting and $\Phi$ is nuclear with respect to both $\cU$ and $\cU'$.
\begin{definition}\label{D: phi-domination} We say that $\cU$ $\varphi$--dominates $\cU'$, and write $\cU\succeq_{\varphi}\cU'$,  if there exists an $M\in\Z_{\geq 0}$ such that
for all $i\geq M$ there exists $j\geq M$ satisfying
$$U_{i}\supseteq U'_{j} \text{ and } \varphi(U_{i})\subseteq U'_{j}.$$
We say that $\cU$ $\Phi$--dominates $\cU'$, and write $\cU\succeq_{\Phi}\cU'$, if $\cU\succeq_{\varphi_n}\cU'$, for all $n\geq 0$.
\end{definition}
\begin{lemma}\label{L:independence-on-U} Assume that $V$, $\Phi$, $\cU$ and $\cU'$ are as above, and $\cU\succeq_{\Phi}\cU'$. Then
$${\rm det}_{R[[Z]]}(1+\Phi|V)={\rm det}{'}_{R[[Z]]}(1+\Phi|V),$$
where the nuclear determinants ${\rm det}$ and ${\rm det}'$ are computed with respect to $\cU$ and $\cU'$, respectively.
\end{lemma}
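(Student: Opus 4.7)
The plan is to reduce to showing equality in $R[[Z]]/Z^N$ for each $N\geq 1$ and then pass to the inverse limit defining $\det_{R[[Z]]}(1+\Phi \mid V)$. The core claim I will prove is: if $A\in\mathcal{U}$ and $A'\in\mathcal{U}'$ are common nuclei for $\varphi_0,\ldots,\varphi_{N-1}$ in their respective bases, and satisfy both $A'\subseteq A$ and $\varphi_n(A)\subseteq A'$ for every $n<N$, then
$$\det\nolimits_{R[[Z]]/Z^N}(1+\Phi\mid V/A)=\det\nolimits_{R[[Z]]/Z^N}(1+\Phi\mid V/A').$$
Granting this, Proposition \ref{NuclearNucleus} identifies the left--hand side with $\det_{R[[Z]]/Z^N}(1+\Phi\mid V)$ computed with respect to $\mathcal{U}$ and the right--hand side with ${\det}'_{R[[Z]]/Z^N}(1+\Phi\mid V)$ computed with respect to $\mathcal{U}'$, and inverse--limiting over $N$ yields the lemma.

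The claim itself follows by applying multiplicativity of $R[[Z]]/Z^N$-linear determinants to the short exact sequence of finite $R$-modules
$$0\to A/A'\to V/A'\to V/A\to 0.$$
All three terms are $G$-c.t. (the outer ones because $V, A, A'$ are, by hypothesis and by Definition \ref{D:Umdef}; the middle one by the associated long exact sequences in Tate cohomology), hence projective over $R=\F_q[G]$ by Corollary \ref{C:gct-arbitraryG}, so classical determinants make sense. The hypothesis $\varphi_n(A)\subseteq A'$ for every $n<N$ means $\Phi$ acts as $0$ on $A/A'$ modulo $Z^N$, so $(1+\Phi)$ acts as the identity on $(A/A')\otimes_R R[[Z]]/Z^N$ and contributes a factor of $1$ to the product.

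The substantive step is constructing the pair $(A,A')$. Enlarging the universal constant $M$ in the domination hypothesis (which only strengthens it), I may assume $M\geq J_0$, where $J_0$ is a threshold past which every $U'_J$ is a common nucleus for $\varphi_0,\ldots,\varphi_{N-1}$ with respect to $\mathcal{U}'$. Pick any $A:=U_I$ with $I\geq M$ that is a common nucleus with respect to $\mathcal{U}$. Applying $\mathcal{U}\succeq_{\varphi_n}\mathcal{U}'$ to $U_I$ for each $n<N$ produces indices $j_n\geq M$ with $A\supseteq U'_{j_n}$ and $\varphi_n(A)\subseteq U'_{j_n}$. Set $j^*:=\min_{n<N}j_n$ and $A':=U'_{j^*}$. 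Since the $U'_j$ are nested decreasing, $A'\supseteq U'_{j_n}$ for every $n$, so $\varphi_n(A)\subseteq U'_{j_n}\subseteq A'$; also $A'=U'_{j_{n_0}}\subseteq A$ for the $n_0$ achieving the minimum; and $j^*\geq M\geq J_0$ makes $A'$ a common nucleus with respect to $\mathcal{U}'$.

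The main obstacle is the asymmetry of the one--sided domination hypothesis: it produces an index $j_n$ \emph{separately} for each $\varphi_n$ and only controls $\varphi_n(A)\subseteq U'_{j_n}$, rather than placing all $\varphi_n(A)$ inside a single small $U'_j$. The (perhaps counterintuitive) resolution is to choose $j^*=\min_n j_n$ rather than $\max_n j_n$: this makes $A'=U'_{j^*}$ \emph{large enough} to contain every $\varphi_n(A)$, while the containment $A'\subseteq A$ is preserved because $A$ already contains $U'_{j_{n_0}}=A'$ by the $n=n_0$ instance of the hypothesis. Once this compatibility is arranged at each level $N$, the projective limit packages the finite--level equalities into the desired identity in $R[[Z]]$.
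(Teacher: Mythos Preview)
Your proof follows the paper's approach: for each $N$, find $A=U_i\in\mathcal{U}$ and $A'=U'_j\in\mathcal{U}'$, both common nuclei, with $A'\subseteq A$ and $\varphi_n(A)\subseteq A'$ for all $n<N$, then apply multiplicativity of determinants to $0\to A/A'\to V/A'\to V/A\to 0$, on whose first term $(1+\Phi)$ acts as the identity modulo $Z^N$. Your device $j^*=\min_n j_n$ for reconciling the $n$-dependence is a nice explicit detail that the paper glosses over with ``it is easy to see.''

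There is, however, one slip. Your claim that ``enlarging the universal constant $M$ \ldots\ only strengthens'' the domination hypothesis is not correct as stated: the definition demands the \emph{existence} of $j\geq M$, and replacing $M$ by a larger $M'$ makes this constraint harder, not easier. It does not follow formally that domination with constant $M$ implies domination with constant $M'>M$. The fix is to additionally choose $I$ large enough that $U_I\subseteq U'_{J_0}$ (possible because $\mathcal{U}$ is a basis of open neighborhoods of $0$ and $U'_{J_0}$ is open). Then each $U'_{j_n}\subseteq U_I\subseteq U'_{J_0}$; combined with the nesting $U'_{J_0}\subseteq U'_{j_n}$ when $j_n<J_0$, this forces $U'_{j_n}=U'_{J_0}$ in that case, so one may replace each $j_n$ by $\max(j_n,J_0)$ without loss. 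Hence $j^*\geq J_0$ and $A'$ is indeed a common nucleus with respect to $\mathcal{U}'$. With this correction, your argument is complete and matches the paper's.
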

\begin{proof} Let $N\in\Bbb Z_{\geq 1}$. It is easy to see that we can take $i$ and $j$ sufficiently large, such that
$$U_i\supseteq U_j', \qquad \varphi_n(U_i)\subseteq U'_j, \text{ for all }n<N$$
and such that $U_i$ and $U_j'$ are common nuclei for $\varphi_0, \dots, \varphi_{N-1}$. Consider the exact sequence of finite, $G$--c.t. $R$--modules
$$0\to U_i/U'_j\to V/U'_j\to V/U_i\to 0,$$
and note that $\varphi_n$ gives the $0$--map when restricted to $U_i/U'_j$, for all $n<N$. Consequently, the exact sequence above gives an equality of (regular) determinants
$${\rm det}_{R[[Z]]/Z^N}(1+\Phi|V/U_i)={\rm det}_{R[[Z]]/Z^N}(1+\Phi|V/U'_j).$$
This yields the desired equality of nuclear determinants by taking a limit when $N\to\infty$.
\end{proof}

\subsection{The relevant compact $R$--modules $V$.} Now, we construct two examples of compact, projective $R$--modules $V$ and corresponding bases $\mathcal U$ of open, $G$--c.t. submodules as above. For that purpose we fix what we call a taming pair $(\mathcal W, \mathcal W^\infty)$ for $K/F$, consisting of a taming module $\mathcal W$  and an $\infty$--taming module $\mathcal W^\infty$ for $K/F$. (See Definition \ref{D:infty-taming} and Proposition \ref{P:taming-module} for the properties and existence of $\mathcal W$ and $\mathcal W^\infty$.)

For a prime $v$ in $F$, we let $K_v:=\prod_{w|v}K_w$ be the product of the $w$--adic completions of $K$, for all primes $w$ in $K$ sitting above $v$, endowed with the product of the $w$--adic topologies. As usual, $F_v$, $\cO_v$, and $\frak m_v$ denote the $v$--adic completion of $F$, its ring of integers, and the maximal ideal of that ring, respectively. We denote by $S_\infty$ the set of infinite primes in $F$ and let $K_\infty=\prod_{v\in S_\infty}K_v$. For a prime $v$  in $F$, we let $\mathcal W_v^\infty$ and $\mathcal W_v$ denote the $v$--adic completion of $\mathcal W^\infty$, if $v\in S_\infty$, and the $v$--adic completion of $\mathcal W$, if $v\not\in S_\infty$, respectively. These are $\cO_v[G]$--submodules of $K_v$, for all $v$. Recall that Corollary \ref{C:taming-basis} shows that if $v\in S_\infty$ and $v\not\in S_\infty$, respectively, then
\begin{equation}\label{E:nuclei}
U_{i, v}:=\{t^{-i}\mathcal W_v^\infty\}_{i\geq 0}, \qquad\qquad U_{i,v}:=\{\frak m_v^i\mathcal W_v\}_{i\geq 0}\end{equation}
give bases of open neighborhoods of $0$ in $K_v$, consisting of free $\cO_v[G]$--submodules of rank 1, therefore projective $R$--submodules (as they are
$G$--c.t.) of $K_v$.

\subsubsection{The class $\mathcal C$.}\label{S:nuclear-classC} Let $V$ be an element in the class $\mathcal{C}$ of compact $A[G]$--modules given in Definition \ref{D:classC}, and let
\[0\longrightarrow K_\infty/\Lambda \overset{\iota}\longrightarrow V\longrightarrow H\longrightarrow 0\]
be a structural exact sequence for $V$ as in loc.cit. In particular, $V$ is a compact $R$--module. To construct a sequence $\mathcal U$ of open $R$--submodules for $V$ as in Definition \ref{D:Umdef}, we give a basis of open $R$-submodules of $K_\infty$ which are $G$-c.t. This will induce an appropriate basis of open submodules of $V$ as described below.

 For all $i\geq 0$, we  let
\begin{equation}\label{E:ui-infty}U_{i,\infty} := \prod_{v\in S_\infty} U_{i,v}\subseteq K_\infty=\prod_{v\in S_\infty}K_v.\end{equation}
According to Corollary \ref{C:taming-basis} the $\{U_{i, \infty}\}_{i\geq 0}$ are compact, open, $G$-c.t. $R$--submodules of $K_\infty$ which form a basis of open neighborhoods of $0$ in $K_\infty$.

Recalling that $\Lambda$ is discrete in $K_\infty$, let $\ell\geq 1$ be such that $U_{\ell,\infty}\cap \Lambda = \{0\}$. For $i\geq \ell$, we identify $U_{i,\infty}$ with its image in $V$ via $\iota$, and define $\mathcal{U} = \{U_{i,\infty}\}_{i\geq \ell}$ as the appropriate basis of open neighborhoods of $0$ in $V$. Now, we can define nuclear endomorphisms and take nuclear determinants for the pair $(V, \mathcal{U})$.

\subsubsection{$V$'s arising from general taming modules for $K/F$.}\label{S:nuclear-taming} Let $\mathcal{M}$ be a taming module for $K/F$ as in Definition \ref{D:taming-modules}, and let $S$ be a finite set of primes of $F$ containing $S_\infty$. Let $K_S := \prod_{v\in S}K_v$, endowed with the sup norm. Let
$$\OX_{F,S} = \{\alpha\in F: v(\alpha) \geq 0, \text{ for } v\not\in S\}$$
 be the ring of $S$-integers in $F$, and let $\mathcal{M}_S = \mathcal{M}\otimes_{\OX_F}\OX_{F,S}$. The module $\mathcal{M}_S$ is discrete and cocompact in $K_S$, because $\cO_{K,S}:=\cO_K\otimes_{\cO_F}\cO_{F,S}$ is. In this case, we let
$$V:=K_S/\cM_S.$$
Now, since $K_S\simeq \prod_{v}(K_w\otimes_{\Fq[G_v]}\F_q[G])$ as $R$--modules (where for each $v$, $w$ is a place of $K$ above $v$), $K_S$ is $G$--c.t., as a consequence of the normal basis theorem and Shapiro's Lemma. Since $\cM$ is $\cO_F[G]$--projective (by definition), then $\cM_S$ is $\cO_{F, S}[G]$--projective. Therefore $\cM_S$ is $G$--c.t. Consequently $V$ is $G$--c.t. Therefore, $V$ is a topological, compact, projective $R$--module.
We give a basis of $G$-c.t., open $R$-submodules of $K_S$, which induces a basis of open $R$-submodules of $V$, as described below.

For all $i\geq 0$, we let
$$U_{i, S}:=\prod\limits_{v\in S}U_{i,v} \subseteq \prod\limits_{v\in S}K_v=K_S.$$
As above, Corollary \ref{C:taming-basis} shows that $(U_{i, S})_{i\geq 0}$ forms a basis of $R$--projective, open submodules of $K_S$. Now, since $\cM_S$ sits discretely in $K_S$, we can pick an $\ell\in\Bbb Z_{\geq 1}$, such that $U_{\ell, S}\cap \cM_S=\{0\}$. Identify $U_{i,S}$ with their images in $V=K_S/\cM_S$, for all $i\geq \ell$ and define $\cU:=(U_{i, S})_{i\geq \ell}$  as the appropriate basis of open neighborhoods of $0$ in $V$. Now, we can define nuclear endomorphisms and determinants for the pair
$(V, \cU)$.

\begin{lemma} \label{SimpleLocallyContracting} Let $\mathcal{M}$ be a taming module for $K/F$, and let $S$ be a finite set of primes of $F$ containing $S_\infty$. Let $\varphi = \alpha \tau^n$, for some $\alpha \in\OX_{F,S}$ and $n\geq 1$. Then $\varphi$ is a locally contracting endomorphism of $K_S/\mathcal{M}_S$.
\end{lemma}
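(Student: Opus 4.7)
The plan is to reduce the statement to a local contraction at each $v \in S$ and then to bound separately the effects of $\tau^n$ and of multiplication by $\alpha$ on the filtration at each place.

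First I would verify that $\varphi = \alpha\tau^n$ defines a continuous $R$-endomorphism of $V = K_S/\cM_S$: it is $R = \F_q[G]$-linear because both multiplication by $\alpha \in F$ and the Frobenius $\tau$ commute with the $G$-action on $K_S$, and it preserves $\cM_S$ because $\cM$ is an $\cO_F\{\tau\}[G]$-submodule of $\cO_K$ (by definition of a taming module) while $\tau(\cO_{F,S}) \subseteq \cO_{F,S}$. Using the product decompositions $K_S = \prod_{v\in S} K_v$ and $U_{i,S} = \prod_{v\in S} U_{i,v}$ over the finite set $S$, it therefore suffices to exhibit, for each $v \in S$, an integer $I_v \geq \ell$ with $\varphi(U_{i,v}) \subseteq U_{i+1,v}$ for all $i \geq I_v$, after which $I := \max_{v\in S} I_v$ will be a common nucleus.

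The main local estimate is that $\tau^n$ scales the filtration by a factor of $q^n$. Since $\tau(x) = x^q$ is additive in characteristic $p$ and the local pieces $\mathcal{W}_v^\infty$, $\mathcal{W}_v$ are $\tau$-stable (inherited from the $\tau$-stability built into the definitions of (infinity-)taming modules in \S \ref{S:intro-volumes}), one obtains
$$\tau^n(U_{i,v}) \subseteq U_{iq^n,\,v}$$
in both the infinite-place filtration $U_{i,v} = t^{-i}\mathcal{W}_v^\infty$ (for $v \in S_\infty$) and the finite-place filtration $U_{i,v} = \mathfrak{m}_v^i \mathcal{W}_v$ (for $v \in S \setminus S_\infty$). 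Multiplication by $\alpha \in F$ then shifts the index downward by at most $c_v := \max(0,-v(\alpha))$, which is a finite constant, giving
$$\varphi(U_{i,v}) \subseteq U_{iq^n - c_v,\,v}.$$
Because $n \geq 1$ forces $q^n - 1 \geq 1$, the inequality $iq^n - c_v \geq i + 1$ holds for all $i \geq (c_v+1)/(q^n-1)$, and any integer $I_v \geq \max\bigl(\ell,\,(c_v+1)/(q^n-1)\bigr)$ supplies the required local bound.

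The only genuinely nontrivial input is the $\tau$-stability of the local taming pieces $\mathcal{W}_v$ and $\mathcal{W}_v^\infty$, but this follows at once from the $\cO_F\{\tau\}[G]$-module structure of $\mathcal{W}$ and $\mathcal{W}^\infty$ together with continuity of $\tau$ with respect to the $v$-adic topology, so the entire proof reduces to the explicit index-shift computation above.
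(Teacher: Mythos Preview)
Your proof is correct and follows essentially the same approach as the paper's own proof: both arguments reduce to the local filtrations $U_{i,v}$, invoke the $\tau$-stability of $\mathcal{W}_v$ and $\mathcal{W}_v^\infty$, and then observe that $\tau^n$ pushes $U_{i,v}$ into $U_{iq^n,v}$ while multiplication by $\alpha$ shifts the index by a bounded amount depending only on $v(\alpha)$, so that for $i$ large enough one lands in $U_{i+1,v}$. The paper just states this more tersely (choosing $m \geq 1 - v(\alpha)$ for all $v\in S$ and declaring the inclusion ``obvious''), whereas you spell out the index arithmetic explicitly.
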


\begin{proof} Clearly, $\varphi$ is an endomorphism of $K_S/\mathcal{M}_S$ since $\mathcal{M}_S$ is an $\cO_{F,S}[G]\{\tau\}$--module. Let $m\in\Z_{\geq 1}$, such that $m\geq (1 - v(\alpha))$, for all $v\in S$. Recalling that, by definition, $\tau(\mathcal W_v)\subseteq \mathcal W_v$ and $\tau(\mathcal W^\infty_v)\subseteq \mathcal W^\infty_v$, we obviously have $\varphi(U_{i, S})\subseteq U_{i + 1, S}$, for all $i\geq m$.\end{proof}

\begin{corollary}\label{C:tauPolyLocallyContracting} For any $\cM$ and $S$ as in the lemma, any $\varphi\in\OX_{F,S}\{\tau\}\tau$ is a locally contracting endomorphism of $K_S/\mathcal{M}_S$. Consequently, any
$\Phi\in\OX_{F,S}\{\tau\}\tau[[Z]]$ is a nuclear endomorphism of $K_S/\mathcal{M}_S[[Z]]$.  \end{corollary}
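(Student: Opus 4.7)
The plan is to reduce both assertions directly to Lemma \ref{SimpleLocallyContracting} combined with the basic closure properties of locally contracting endomorphisms that were already recorded.

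For the first assertion, I would write an arbitrary $\varphi\in\cO_{F,S}\{\tau\}\tau$ as a finite sum
\[
\varphi \;=\; \sum_{n=1}^{N}\alpha_n\tau^n,\qquad \alpha_n\in\cO_{F,S},
\]
so that $\varphi$ has zero constant term in $\tau$. Each summand $\alpha_n\tau^n$ is locally contracting on $K_S/\cM_S$ by Lemma \ref{SimpleLocallyContracting}. Since Proposition \ref{SumAndComposition} says the sum of finitely many locally contracting endomorphisms is again locally contracting (alternatively, one can invoke the fact that any finite collection admits a common nucleus and then work on that nucleus), the endomorphism $\varphi$ is locally contracting on $K_S/\cM_S$.

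For the second assertion, I would write an arbitrary $\Phi\in\cO_{F,S}\{\tau\}\tau[[Z]]$ as
\[
\Phi \;=\; \sum_{n=0}^{\infty}\varphi_n Z^n,\qquad \varphi_n\in\cO_{F,S}\{\tau\}\tau,
\]
and observe that by the first assertion each coefficient $\varphi_n$ is a locally contracting endomorphism of $K_S/\cM_S$ (with respect to the basis $\cU=(U_{i,S})_{i\geq\ell}$ constructed in Section \ref{S:nuclear-taming}). This is exactly the hypothesis required by Definition \ref{D:Nuclear}, so $\Phi$ is a nuclear endomorphism of $(K_S/\cM_S)[[Z]]$, which completes the argument.

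I do not foresee any real obstacle here: the corollary is a formal packaging of Lemma \ref{SimpleLocallyContracting} via the additive closure in Proposition \ref{SumAndComposition} and the definition of nuclearity. The only mild subtlety worth flagging is the verification that one is genuinely allowed to pass infinite sums inside the nuclearity check; but this is built into Definition \ref{D:Nuclear}, which demands local contractivity of each individual coefficient $\varphi_n$ rather than any uniform condition across $n$, so no further argument is needed.
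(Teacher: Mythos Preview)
Your proof is correct and is essentially identical to the paper's own proof, which simply says to combine Lemma \ref{SimpleLocallyContracting} with Proposition \ref{SumAndComposition}. You have just written out the details of that combination explicitly.
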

\begin{proof} Combine the Lemma above with Proposition \ref{SumAndComposition}.
\end{proof}

\begin{proposition} \label{CommDet} Let $\mathcal{M}$ be a taming module for $K/F$, and let $S$ be a finite set of primes of $F$ containing $S_\infty$. Let $\alpha, \beta \in\OX_{F,S}$ and let $\varphi = \beta\tau^n$ for $n\geq 1$. Then for any $m\in\Z_{\geq 1}$,
$$\det\nolimits_{R[[Z]]}(1 + \alpha\varphi Z^m\rvert K_S/\mathcal{M}_S) = \det\nolimits_{R[[Z]]}(1 + \varphi\alpha Z^m\rvert K_S/\mathcal{M}_S).$$\end{proposition}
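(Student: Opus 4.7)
The plan is to reduce the statement to a finite-dimensional identity of classical determinants and then invoke Sylvester's identity $\det(1+BA)=\det(1+AB)$ for $R$--linear maps between two (possibly distinct) finitely generated projective modules over the commutative ring $R[[Z]]/Z^N$, for each $N$, and then pass to the inverse limit.

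Since $n\geq 1$, both $\alpha\varphi=\alpha\beta\tau^n$ and $\varphi\alpha=\beta\alpha^{q^n}\tau^n$ lie in $\cO_{F,S}\{\tau\}\tau$, so by Corollary \ref{C:tauPolyLocallyContracting} they are locally contracting. However, neither $\alpha$ nor $\varphi$ is individually locally contracting, because multiplication by $\alpha$ may \emph{expand} a nucleus at places $v\in S$ where $v(\alpha)<0$. The first and main step is therefore to construct a \emph{nested pair} of common nuclei. I choose $k\geq 0$ large enough to absorb all negative valuations $v(\alpha)$ for $v\in S$, and then pick $I$ large enough so that, writing $U:=U_{I,S}$ and $W:=U_{I-k,S}$ (so $W\supseteq U$), the following three compatibilities hold simultaneously: (i) $\alpha(U)\subseteq W$; (ii) $\varphi(W)\subseteq U$; (iii) both $\alpha\varphi$ and $\varphi\alpha$ send $U$ into $U$ and $W$ into $W$. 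Condition (iii) comes from the local contraction property of the compositions. Condition (i) holds by the choice of $k$. Condition (ii) holds for $I$ sufficiently large, because $\tau^n$ contracts each local nucleus $U_{i,v}$ into $U_{iq^n,v}$, which beats the bounded shift produced by $\beta$ once $(q^n-1)(I-k)$ exceeds $k+v(\beta)$ at every $v\in S$.

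Given such $U\subseteq W$, Proposition \ref{NuclearNucleus} yields
\[
\det\nolimits_{R[[Z]]}(1+\varphi\alpha Z^m\mid V)=\det\nolimits_{R[[Z]]}(1+\varphi\alpha Z^m\mid V/U),
\]
and analogously $\det_{R[[Z]]}(1+\alpha\varphi Z^m\mid V)=\det_{R[[Z]]}(1+\alpha\varphi Z^m\mid V/W)$. By (i) and (ii), $\alpha$ descends to an $R$--linear map $a\colon V/U\to V/W$ and $\varphi$ descends to $b\colon V/W\to V/U$; their compositions recover the descents of $\varphi\alpha$ to $V/U$ and $\alpha\varphi$ to $V/W$. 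Both $V/U$ and $V/W$ are finite and $G$--c.t.\ (by the standard two-out-of-three property for short exact sequences, applied to $0\to U\to V\to V/U\to 0$ and the analogous sequence for $W$), hence finitely generated projective $R$--modules by Corollary \ref{C:gct-arbitraryG}. Consequently, for every $N\geq 1$ the modules $P_N:=(V/U)\otimes_R R[[Z]]/Z^N$ and $Q_N:=(V/W)\otimes_R R[[Z]]/Z^N$ are finitely generated projective over the commutative ring $R[[Z]]/Z^N$, and Sylvester's identity applied with $A:=a$ and $B:=bZ^m$ gives
\[
\det\nolimits_{R[[Z]]/Z^N}(1+BA\mid P_N)=\det\nolimits_{R[[Z]]/Z^N}(1+AB\mid Q_N),
\]
which is precisely the claimed equality modulo $Z^N$. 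Passing to the inverse limit in $N$ produces the identity in $R[[Z]]$.

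The main obstacle is arranging the three compatibilities (i)--(iii) simultaneously; this requires a careful quantitative bookkeeping of how multiplication by $\alpha$ and the operator $\tau^n$ act on each local nucleus $U_{i,v}$ for $v\in S$, but is otherwise elementary. The remaining ingredient, Sylvester's determinant identity for maps between two finitely generated projective modules over a commutative ring, reduces to the classical matrix version by Zariski-localizing to the free case, or equivalently by the standard block matrix trick involving $\det\bigl(\begin{smallmatrix}1&-A\\ B&1\end{smallmatrix}\bigr)$.
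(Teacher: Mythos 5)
Your proof is correct, and it takes a genuinely different route from the paper's. The paper's argument (following Taelman's original, \cite{T12} Prop.~6) sets $\varphi_\alpha(x)=\alpha x$, assumes $\alpha,\beta\neq 0$, and conjugates by $\varphi_\alpha$: it builds a commutative square exhibiting an isomorphism $\varphi_\alpha\colon V/\varphi_\alpha^{-1}(U_{a,S})\overset{\sim}{\to}V/U_{a,S}$ intertwining $\varphi\varphi_\alpha$ with $\varphi_\alpha\varphi$, so the two finite determinants agree by conjugation invariance; the remaining work is that $\varphi_\alpha^{-1}(U_{a,S})$ is \emph{not} one of the standard nuclei $U_{i,S}$, so the paper sandwiches it between standard nuclei via \eqref{Inclusions} and runs a short-exact-sequence comparison to reconnect with the nuclear determinant. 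You instead stay entirely within the standard filtration, replacing conjugation by Sylvester's identity $\det(1_P+BA)=\det(1_Q+AB)$ for morphisms $A\colon P\to Q$, $B\colon Q\to P$ between finitely generated projective modules. The nested pair $U=U_{I,S}\subseteq W=U_{I-k,S}$ (with $k$ absorbing $-v(\alpha)$ over $v\in S$, and $I$ large enough that $q^n$--contraction beats the $\beta$--shift and that both compositions satisfy the nucleus condition $(\,\cdot\,)(U_{i,S})\subseteq U_{i+1,S}$ for $i\geq I-k$) gives exactly the two quotients $V/U$, $V/W$ between which $\alpha$ and $\varphi$ descend as the $A$ and $B$ of Sylvester. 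This removes the ad hoc quotient $V/\varphi_\alpha^{-1}(U_{a,S})$ and the accompanying SES bookkeeping, and it does not rely on $\alpha$ being invertible in $K_S$ (the $\alpha=0$ case is simply $k=0$). The only imprecision in your write-up is that your condition (iii) is stated as ``both compositions send $U$ into $U$ and $W$ into $W$,'' which is weaker than what Proposition~\ref{NuclearNucleus} requires; what you actually need (and what local contraction delivers once $I$ is large) is the full nucleus condition $(\varphi\alpha)(U_{i,S}),(\alpha\varphi)(U_{i,S})\subseteq U_{i+1,S}$ for all $i\geq I-k$. With that rephrasing the argument is airtight, and arguably cleaner than the one in the paper.
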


\begin{proof} We may assume that $\alpha, \beta\ne 0$. Define $\varphi_\alpha: {K_S}/{\mathcal{M}_S}\rightarrow {K_S}/{\mathcal{M}_S}$ by $\varphi_\alpha(x) = \alpha x$. Let $a\in\Z_{\geq 1}$ be such that $U_{a,S}\cap \mathcal{M}_S = \{0\}$, $U_{a,S}$ is a nucleus for $\varphi, \varphi_\alpha\varphi$, and $\varphi\varphi_\alpha$, and
$$b:=\min\{a + v(\beta) : v\in S\} > \max\left(\{0\}\cup \{-v(\alpha) : v\in S\}\right).$$
We have a commutative diagram of finite $R$--module morphisms
$$\begin{tikzpicture}[description/.style={fill=white,inner sep=2pt}]
\matrix (m) [matrix of math nodes, row sep=3em,
column sep=2.5em, text height=1.5ex, text depth=0.25ex]
{\frac{K_S/\mathcal{M}_S}{\varphi_\alpha^{-1}(U_{a,S})} & \frac{K_S/\mathcal{M}_S}{U_{a,S}}\\ \frac{K_S/\mathcal{M}_S}{\varphi_\alpha^{-1}(U_{a,S})} & \frac{K_S/\mathcal{M}_S}{U_{a,S}}\\};
\path[->, font=\scriptsize]
(m-1-1) edge node[auto] {$\varphi_\alpha$} (m-1-2)
(m-1-1) edge node[auto] {$\varphi\varphi_\alpha$} (m-2-1)
(m-2-1) edge node[auto] {$\varphi_\alpha$} (m-2-2)
(m-1-2) edge node[auto] {$\varphi_\alpha\varphi$} (m-2-2);
\end{tikzpicture}$$
whose horizontal arrows are isomorphisms (as $\alpha$ is invertible in $K_S$.)
For $a$ as above, $\varphi_\alpha^{-1}(U_{a,S})\simeq U_{a, S}$ as $R$--modules, so $\varphi_\alpha^{-1}(U_{a,S})$ is $G$-c.t. Therefore $\det\nolimits_{R[[Z]]}(1 + \varphi\varphi_\alpha Z^m\rvert \frac{K_S/\mathcal{M}_S}{\varphi_\alpha^{-1}(U_{a,S})})$ is defined. Consequently, from the above diagram, we obtain
\begin{equation}\label{E:equality1}\det\nolimits_{R[[Z]]}\left (1 + \varphi\varphi_\alpha Z^m\bigg\vert \frac{K_S/\mathcal{M}_S}{\varphi_\alpha^{-1}(U_{a,S})}\right ) = \det\nolimits_{R[[Z]]}\left (1 + \varphi_\alpha\varphi Z^m\bigg\vert\frac{K_S/\mathcal{M}_S}{U_{a,S}}\right ).\end{equation}
However, since $U_{a, S}$ is a nucleus for $\varphi_\alpha\varphi$, by definition we have
\begin{equation}\label{E:equality2}\det\nolimits_{R[[Z]]}(1 + \varphi_\alpha\varphi Z^m\rvert \frac{K_S}{\mathcal{M}_S}) = \det\nolimits_{R[[Z]]}(1 + \varphi_\alpha\varphi Z^m\rvert\frac{K_S/\mathcal{M}_S}{U_{a,S}}).\end{equation}
Now, from the definition of $b$ it is easy to see that
\begin{equation}\label{Inclusions}\varphi\varphi_\alpha(\varphi_\alpha^{-1}(U_{a, S}))\subseteq U_{a + b, S}\subseteq \varphi_\alpha^{-1}(U_{a,S}).\end{equation}
Consider the following short exact sequence of finite, projective $R$--modules.
\begin{equation}0\longrightarrow \frac{\varphi_\alpha^{-1}(U_{a,S})}{U_{a + b, S}} \longrightarrow \frac{K_S/\mathcal{M}_S}{U_{a + b, S}}\longrightarrow \frac{K_S/\mathcal{M}_S}{\varphi_\alpha^{-1}(U_{a,S})}\longrightarrow 0.\end{equation}
By (\ref{Inclusions}), we have $\det\nolimits_{R[[Z]]}(1 + \varphi\varphi_\alpha\rvert\frac{\varphi_\alpha^{-1}(U_{a,S})}{U_{a + b, S}}) = 1$. Consequently, if we combine the fact that $U_{a+b, S}$ is a nucleus for $\varphi\varphi_\alpha$ (because $U_{a, S}$ is and $b>0$) with the short exact sequence above and with \eqref{E:equality1} and \eqref{E:equality2}, we to obtain
\begin{align*}
\det\nolimits_{R[[Z]]}(1 + \varphi\varphi_\alpha Z^m\rvert K_S/\mathcal{M}_S) & = \det\nolimits_{R[[Z]]}\left (1 + \varphi\varphi_\alpha Z^m\bigg\vert\frac{K_S/\mathcal{M}_S}{U_{a + b, S}}\right )\\
 						& = \det\nolimits_{R[[Z]]}\left (1 + \varphi\varphi_\alpha Z^m\bigg\vert \frac{K_S/\mathcal{M}_S}{\varphi_\alpha^{-1}(U_{a,S})}\right )\\
&=\det\nolimits_{R[[Z]]}(1 + \varphi_\alpha\varphi Z^m\rvert K_S/\mathcal{M}_S).
\end{align*}
\end{proof}
The following Lemma addresses independence on the chosen taming pair $(\mathcal W, \mathcal W^\infty)$.
\begin{lemma}\label{L:independence-on-W} Assume that $\cM$ and $S$ are as in the last proposition and let $\Phi\in\cO_{F,S}\{\tau\}[[Z]]\tau$, viewed as
an $R[[Z]]$--endomorphism of $K_S/\cM_S[[Z]]$. Then the nuclear determinant
$${\rm det}_{R[[Z]]}(1+\Phi|K_S/\cM_S)$$
is independent of the taming pair $(\mathcal W, \mathcal W^\infty)$ for $K/F$.
\end{lemma}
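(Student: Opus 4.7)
The plan is to invoke Lemma \ref{L:independence-on-U}: it suffices to exhibit, given two taming pairs $(\mathcal{W}_1, \mathcal{W}_1^\infty)$ and $(\mathcal{W}_2, \mathcal{W}_2^\infty)$ and the corresponding bases $\cU = \{U_{i,S}\}$ and $\cU' = \{U'_{i,S}\}$ of open neighborhoods of $0$ in $V = K_S/\cM_S$ (constructed as in \S\ref{S:nuclear-taming}), the $\Phi$--domination relation $\cU \succeq_\Phi \cU'$. My approach combines two ingredients: local commensurability of the two taming pairs, and the strong contraction rate forced on the coefficients of $\Phi$ by its factor of $\tau$.

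For commensurability, I would argue one prime at a time. At each $v \in S$, the local components $\mathcal{W}_{1,v}$ and $\mathcal{W}_{2,v}$ (respectively $\mathcal{W}_{1,v}^\infty$ and $\mathcal{W}_{2,v}^\infty$ at $v \in S_\infty$) are, by Proposition \ref{P:taming-module}, projective $\cO_v[G]$--lattices spanning the same $F_v[G]$--module $K_v$, hence commensurable: there is an integer $c_v \geq 0$, equal to zero for almost all $v$, with $\fm_v^{c_v} \mathcal{W}_{1,v} \subseteq \mathcal{W}_{2,v}$ and $\fm_v^{c_v} \mathcal{W}_{2,v} \subseteq \mathcal{W}_{1,v}$ (and analogous inclusions using $t^{-c_v}$ at infinite places). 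Setting $c := \max_{v\in S} c_v$ and unwinding \eqref{E:nuclei}--\eqref{E:ui-infty}, I would obtain the global relations $U'_{i+c, S} \subseteq U_{i, S}$ and $U_{i+c, S} \subseteq U'_{i, S}$ for all $i \geq 0$, which descend to $V$.

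To extract the contraction rate, I would write each coefficient $\varphi_n$ of $\Phi$ as a finite sum $\sum_{k \geq 1} a_{n,k}\tau^k$ with $a_{n,k} \in \cO_{F,S}$, and refine the computation in the proof of Lemma \ref{SimpleLocallyContracting} by keeping track of the $q^k$--power scaling of $\tau^k$ on $\fm_v^i$ (and on $t^{-i}$ at infinite places). This produces a constant $C_n \geq 0$ with $\varphi_n(U_{i,S}) \subseteq U_{qi - C_n, S}$ for $i$ sufficiently large. Setting $j := qi - C_n - c$, commensurability then gives
\[ \varphi_n(U_{i,S}) \subseteq U_{qi-C_n,S} \subseteq U'_j \quad\text{and}\quad U'_j \subseteq U_{j-c,S} \subseteq U_{i,S},\]
the last inclusion being valid as soon as $(q-1)i \geq C_n + 2c$, i.e., for $i$ large. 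This verifies $\cU \succeq_{\varphi_n} \cU'$ for every $n \geq 0$, hence $\cU \succeq_\Phi \cU'$, and Lemma \ref{L:independence-on-U} closes the argument.

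The main subtlety will be precisely the interplay of the two ingredients: the commensurability shift $c$ has to be absorbed by the contraction rate. This is exactly why the hypothesis $\Phi \in \cO_{F,S}\{\tau\}[[Z]]\tau$ is essential -- the mandatory factor of $\tau$ (so $k \geq 1$ in every term of every $\varphi_n$) upgrades the bare nuclear contraction $i \mapsto i+1$ to the much stronger $i \mapsto qi - C_n$, which comfortably dominates any fixed $c$. Without this extra $\tau$, a $\tau^0$--term in some $\varphi_n$ would contract only by one step and the argument would collapse.
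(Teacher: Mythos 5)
Your proposal is correct and follows essentially the same approach as the paper's proof: establish commensurability of the two bases of open neighborhoods (the paper deduces this directly from openness of the taming completions in $K_v$ and finiteness of $S$; you do it prime-by-prime using local lattice commensurability, which is the same content), use the forced factor of $\tau$ to obtain a $q$-power contraction rate $\varphi_n(U_{i,S}) \subseteq U_{qi - C_n, S}$, combine to verify the $\Phi$-domination $\cU \succeq_\Phi \cU'$, and close with Lemma~\ref{L:independence-on-U}. The only cosmetic difference is the choice of $j$ in Definition~\ref{D: phi-domination}: you take $j = qi - C_n - c$, while the paper takes the simpler $j = i + a$, but both work once $(q-1)i$ dominates the relevant constants.
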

\begin{proof}
Let $(\mathcal W, \mathcal W^\infty)$ and $(\mathcal W', \mathcal W'^\infty)$ be two such taming pairs for $K/F$. Let $\cU$ and $\cU'$ be the bases of open neighborhoods of $0$ in $K_S/\cM_S$ constructed
as above out of these pairs, respectively. Let $\varphi\in \cO_{F,S}\{\tau\}\tau$. Then, we claim that
\begin{equation}\label{E:dominance}\cU\succeq_{\varphi}\cU'.\end{equation}
Indeed, since the completions $\mathcal W_v, \mathcal W'_v$ (for $v\in S\setminus S_\infty$) and $\mathcal W^\infty_v, \mathcal W'^\infty_v$ (for $v\in S_\infty$) are open in the field
completions $K_v$ and the set $S$ is finite, there exist $a, b\in\Z_{\geq 0}$ such that
$$U'_{i+a, S}\subseteq U_{i,S}, \quad U_{i+b, S}\subseteq U'_{i, S}, \quad \text{ for all }i\gg 0.$$
Since $\varphi\in \cO_{F,S}\{\tau\}\tau$, it is easy to see that there exists $\alpha\in\Z_{\geq 0}$ (depending on the coefficients in the $\tau$--expansion of $\varphi$) such that $\varphi(U_{i,S})\subseteq U_{iq-\alpha, S}$, for all $i\gg 0$. This shows that
$$U'_{i+a, S}\subseteq U_{i,S} \text { and } \varphi(U_{i,S})\subseteq U'_{i+a, S}, \text{ for all }i\gg a+b+\alpha.$$
Therefore \ref{E:dominance} holds. Now, the desired result follows by applying Lemma \ref{L:independence-on-U}.
\end{proof}

\section{The $G$-equivariant trace formula and consequences}\label{S:Trace}

In this section we prove a trace formula for $\Fq[G]$--linear nuclear operators on $K_S/\cM_S$ by using the line of reasoning in \cite[\S 3]{T12}, adapted to our $G$-equivariant setting. As a consequence, we interpret the special values $\Theta_{K/F}^{E, \cM}(0)$ of the $G$--equivariant $L$--functions defined in the introduction as determinants of such a nuclear operators. The notations are as above.

\begin{lemma}\label{LocalizationLemma} Let $\mathcal{M}$ be a taming module for $K/F$. Let $S$ be a finite set of primes of $F$ containing $S_\infty$, let $v\in {\rm MSpec}(\cO_F)\setminus S$, and let $S' := S\cup\{v\}$. Then, for any operator $\Phi\in\OX_{F,S}\{\tau\}[[Z]]\tau Z$, we have
$$\det\nolimits_{R[[Z]]}(1 + \Phi\rvert \mathcal{M}/v\mathcal{M}) = \frac{\det\nolimits_{R[[Z]]}(1 + \Phi\rvert K_{S'}/\mathcal{M}_{S'})}{\det\nolimits_{R[[Z]]}(1 + \Phi\rvert K_S/\mathcal{M}_S)}.
$$ \end{lemma}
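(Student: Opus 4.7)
The plan is to build a short exact sequence of compact, $G$-cohomologically trivial $R$-modules
\[
0 \longrightarrow \cM_v \longrightarrow K_{S'}/\cM_{S'} \longrightarrow K_S/\cM_S \longrightarrow 0,
\]
that is $\Phi$-stable, apply Proposition \ref{MultSES} to obtain
\[
\det\nolimits_{R[[Z]]}(1+\Phi\mid K_{S'}/\cM_{S'}) = \det\nolimits_{R[[Z]]}(1+\Phi\mid \cM_v)\cdot\det\nolimits_{R[[Z]]}(1+\Phi\mid K_S/\cM_S),
\]
and then identify the middle factor with $\det\nolimits_{R[[Z]]}(1+\Phi\mid \cM/v\cM)$; rearrangement yields the lemma.

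To construct the exact sequence, I would decompose $K_{S'}=K_S\oplus K_v$ and embed $\cM_v$ into $K_{S'}$ via $y\mapsto(0,y)$. Since $\cM_{S'}$ sits inside $K_{S'}$ diagonally, $\cM_v\cap\cM_{S'}=\{0\}$, so $\cM_v$ injects into $K_{S'}/\cM_{S'}$ as a closed subgroup (a continuous injection of a compact group into a Hausdorff one). For the quotient, weak approximation gives that the image $\cM_{S'}|_{K_v}$ is dense in $K_v$, while $\cM_v$ is an open subgroup, so $\cM_{S'}|_{K_v}+\cM_v=K_v$. Given $(x,y)\in K_{S'}$, one picks $\mu\in\cM_{S'}$ with $y-\mu\in\cM_v$; then $(x,y)\equiv(x-\mu,0)\pmod{\cM_{S'}+\cM_v}$, and $(x-\mu)+\cM_S$ is well-defined in $K_S/\cM_S$ because $\cM_{S'}\cap\cM_v=\cM_S$ (an element of $\cM_{S'}$ is $v$-integral iff it is integral outside $S$). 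This yields the desired topological isomorphism $K_{S'}/(\cM_{S'}+\cM_v)\cong K_S/\cM_S$. The $G$-c.t. property of $\cM_v=\cM\otimes_{\cO_F}\cO_{F_v}$ follows from its projectivity over $\cO_{F_v}[G]$ (inherited from $\cM$ by flat base change), and $\Phi$-stability of all three terms is clear since $\Phi$ has coefficients in $\cO_{F,S}\{\tau\}\subseteq\cO_{F_v}\{\tau\}$.

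For the final identification, I would equip $\cM_v$ with the descending chain of open $G$-c.t. submodules $\{v^k\cM_v\}_{k\geq 0}$. Writing $\Phi=\sum_{m\geq 1}c_m Z^m$ with each $c_m\in\cO_{F,S}\{\tau\}\tau$, the twisting relation $\tau\cdot a=a^q\tau$ forces $c_m(v^i\cM_v)\subseteq v^{iq}\cM_v$; for $i\geq 1$ and $q\geq 2$ this lies in $v^{i+1}\cM_v$, so $\Phi$ acts as zero on every successive quotient $v^i\cM_v/v^{i+1}\cM_v$ with $i\geq 1$. Iterated application of Proposition \ref{MultSES} to the filtration $\cM_v\supset v\cM_v\supset\cdots\supset v^k\cM_v$ (with $k$ large enough to give a nucleus for $\Phi$) then collapses the nuclear determinant to its first layer:
\[
\det\nolimits_{R[[Z]]}(1+\Phi\mid\cM_v)=\det\nolimits_{R[[Z]]}(1+\Phi\mid\cM_v/v\cM_v)=\det\nolimits_{R[[Z]]}(1+\Phi\mid\cM/v\cM).
\]
The main obstacle is the topological part of the first step: verifying that $\cM_{S'}+\cM_v$ is closed in $K_{S'}$ (so that the sequence is genuinely an exact sequence of compact Hausdorff groups). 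This can be checked directly from the discreteness of $\cM_{S'}$ in $K_{S'}$ combined with compactness of $\cM_v$ via a sequential convergence argument; once this is in hand, the induced identification of quotients is automatically a homeomorphism since a bijective continuous map between compact Hausdorff spaces is one.
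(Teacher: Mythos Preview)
Your approach is essentially the paper's: the same short exact sequence $0\to\cM_v\to K_{S'}/\cM_{S'}\to K_S/\cM_S\to 0$, the same application of Proposition~\ref{MultSES}, and the same identification $\det(1+\Phi\mid\cM_v)=\det(1+\Phi\mid\cM/v\cM)$ via the observation that $\mathfrak m_v\cM_v$ is already a nucleus (your filtration argument is a slightly longer way of saying this).

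One technical point you gloss over, which the paper handles explicitly: in Proposition~\ref{MultSES} the neighborhood bases on the sub- and quotient modules are \emph{induced} from the basis on $K_{S'}/\cM_{S'}$, not chosen freely. You simply declare that you ``equip $\cM_v$ with $\{v^k\cM_v\}$,'' but you need to know this is what $\psi^{-1}(\cU')$ actually gives. The paper secures this by choosing the taming pair $(\cM,\mathcal W^\infty)$ itself to build the basis $\cU'$ on $K_{S'}/\cM_{S'}$ (so that $U_{i,v}=\mathfrak m_v^i\cM_v$ by construction), invoking Lemma~\ref{L:independence-on-W} to justify that this choice does not affect the determinants. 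With a different taming module $\mathcal W$, the intersections $U_{i,S'}\cap\cM_v=\mathfrak m_v^i\mathcal W_v\cap\cM_v$ need not even be $G$-c.t., so the hypotheses of Proposition~\ref{MultSES} could fail. This is easy to fix once noticed, but it is the one place where your sketch is looser than the paper's argument. (Also, the surjectivity step uses strong approximation rather than weak: you need the approximating element to lie in $\cM_{S'}$, i.e.\ to be integral outside $S'$, not merely in $K$.)
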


\begin{proof}
As in the proof of Lemma 1 in \cite{T12}, we have a sequence of compact, $G$--c.t. $R$--modules
$$0\longrightarrow \cM_v\overset\psi\longrightarrow\frac{K_{S'}}{\cM_{S'}}\overset\eta\longrightarrow\frac{K_S}{\cM_S}\longrightarrow 0.$$
Above, we view $K_{S'} = K_S\times K_v$. In this representation, $\psi(\alpha) = \widehat{(0,\alpha)}$, for all $\alpha\in\cM_v$. Also, for $\alpha\in K_S$ and $\beta\in K_v$, we define $\eta(\widehat{(\alpha,\beta)}) = \widehat{\alpha - \alpha'}$, where $\alpha'\in \mathcal{M}_{S'}$ is chosen such that $\beta = \alpha' + \beta'$, with $\beta'\in \mathcal{M}_v$. Such an $\alpha'$ exists since $K_v = \cO_{K,S'}+\cO_{K_v}=\cM_{S'} + \cM_v$, as one can check by applying a strong approximation theorem. It is easily seen that $\psi$ and $\eta$ are well defined and that the sequence above is exact.

Now, since we can compute the nuclear determinants in question with respect to bases of open neighborhoods of $0$ constructed out of any taming pair (see Lemma \ref{L:independence-on-W}), we choose to work
with the taming pair $(\cM, \mathcal W^\infty)$, where $\mathcal W^\infty$ is an arbitrary $\infty$--taming module for $K/F$. This taming pair induces bases $\cU$ and $\cU'$ of open neighborhoods of $0$
on $K_S/\cM_S$ and $K_S'/\cM_{S'}$, respectively, as descibed in \S\ref{S:nuclear-taming}. It is easily checked that
$$\eta(\cU')=\cU, \qquad \psi^{-1}(\cU')=\cU_v:=\{\mathfrak{m}_v^i\cM_v\}_{i\geq 1}.$$
Obviously, $\cU_v$ defined above is a basis of open neighborhoods of $0$ for the compact, $G$--c.t. $R$--module $\cM_v$, satisfying the properties in Definition \ref{D:Umdef}.

Since $\Phi\in \OX_{F,S}\{\tau\}[[Z]]\tau Z$, the coefficients $\varphi_n$ of $\Phi$ are in $\cO_{F,S}\{\tau\}\tau$. Therefore, they all commute with $\psi$ and $\eta$ and are local contractions
with respect to $\cU_v$, $\cU'$ and $\cU$. (See Corollary \ref{C:tauPolyLocallyContracting}.) Consequently, we may apply Proposition \ref{MultSES} to obtain the following.
$$\det\nolimits_{R[[Z]]}(1 + \Phi\rvert \frac{K_{S'}}{\mathcal{M}_{S'}}) = \det\nolimits_{R[[Z]]}(1 + \Phi\rvert \mathcal{M}_v)\det\nolimits_{R[[Z]]}(1 + \Phi\rvert \frac{K_S}{\mathcal{M}_S}),$$
where the nuclear determinants above are computed with respect to $\cU'$, $\cU_v$ and $\cU$, respectively.
Since $\Phi\in \OX_{F,S}\{\tau\}[[Z]]\tau Z$ and $v\not\in S$, we may take $\mathfrak{m}_v\mathcal{M}_v$ as a common nucleus for all the coefficients $\varphi_n$ of $\Phi$, viewed as a nuclear operator on $\cM_v$. Then, since $\mathcal{M}_v/\mathfrak{m}_v\mathcal{M}_v\simeq \mathcal{M}/v\mathcal{M}$ as $R$--modules, we have
$$\det\nolimits_{R[[Z]]}(1 + \Phi\rvert \mathcal{M}_v) = \det\nolimits_{R[[Z]]}(1 + \Phi\rvert \mathcal{M}/v\mathcal{M}).$$
The last two displayed equalities give the desired result.
\end{proof}

\begin{theorem}\label{T:TraceFormula} (The Trace Formula) Let $\mathcal{M}$ be a taming module for $K/F$, and let $S$ be a finite set of primes of $F$ containing $S_\infty$. Let $\Phi\in \OX_{F,S}\{\tau\}[[Z]]\tau Z$. Then, we have
$$\prod_{v\in{\rm MSpec}(\cO_{F,S})}\det\nolimits_{R[[Z]]}(1 + \Phi\rvert \mathcal{M}/v\mathcal{M}) = \det\nolimits_{R[[Z]]}(1 + \Phi\rvert K_S/\mathcal{M}_S)^{-1}.$$
\end{theorem}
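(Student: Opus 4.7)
The plan is to iterate the Localization Lemma \ref{LocalizationLemma} over finite subsets of primes of $\cO_{F,S}$, then pass to the limit. Given any finite $T = \{v_1, \ldots, v_n\} \subseteq \mspec(\cO_{F,S})$, set $S_i := S \cup \{v_1, \ldots, v_i\}$ for $i = 0, 1, \ldots, n$. Applying Lemma \ref{LocalizationLemma} at each step and multiplying the resulting identities over $i = 1, \ldots, n$, the intermediate factors telescope to yield
\[
\prod_{v \in T} \det\nolimits_{R[[Z]]}(1+\Phi|\cM/v\cM) \;=\; \frac{\det\nolimits_{R[[Z]]}(1+\Phi|K_{S \cup T}/\cM_{S \cup T})}{\det\nolimits_{R[[Z]]}(1+\Phi|K_S/\cM_S)}.
\]

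Next, pass to the limit as $T$ exhausts $\mspec(\cO_{F,S})$. It suffices to prove the desired identity modulo $Z^N$ in $R[[Z]]/Z^N$ for every $N \geq 1$ and then take the inverse limit in $R[[Z]] = \varprojlim_N R[[Z]]/Z^N$. Fixing $N$, the truncation $\Phi_N := \sum_{n=1}^{N-1} \varphi_n Z^n$ involves only finitely many coefficients $a_{n,j} \in \cO_{F,S}$ in its $\tau$-expansion, whose $v$-adic poles lie in some finite set $S_N \supseteq S$. Two convergence statements are then needed: (a) for every $v \notin S_N$, $\det\nolimits_{R[[Z]]}(1+\Phi|\cM/v\cM) \equiv 1 \pmod{Z^N}$, which ensures that the infinite product converges modulo $Z^N$; and (b) for any finite $T \supseteq S_N \setminus S$ that is sufficiently large, $\det\nolimits_{R[[Z]]}(1+\Phi|K_{S \cup T}/\cM_{S \cup T}) \equiv 1 \pmod{Z^N}$. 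Combining (a), (b), and the telescoping identity above yields the Trace Formula modulo $Z^N$, and the inverse limit produces the full statement.

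The main obstacle is establishing (a) and (b), which are the $G$-equivariant refinements of the convergence arguments in \cite[\S 3]{T12}. For (a), one exploits that $\cM/v\cM$ is a free $\F_q[G]$-module of rank $\deg(v)$ (by Proposition \ref{P:EF-values}), that $\tau$ acts as the geometric Frobenius, and that every coefficient $\varphi_n$ of $\Phi_N$ carries a factor of $\tau$; as $\deg(v) \to \infty$, the growing $\tau$-action forces the low-order $Z$-coefficients of $\det\nolimits_{R[[Z]]}(1+\Phi_N|\cM/v\cM)$ to vanish. For (b), the strategy is to produce a common open nucleus $U_{i, S \cup T} \subseteq K_{S \cup T}/\cM_{S \cup T}$ for $\varphi_1, \ldots, \varphi_{N-1}$ that is uniform in $T$ (possible because these coefficients are $v$-integral for $v \notin S_N$, cf.\ Corollary \ref{C:tauPolyLocallyContracting}), and then to show that the induced finite-rank determinant on the quotient $(K_{S \cup T}/\cM_{S \cup T})/U_{i, S \cup T}$ becomes trivial modulo $Z^N$ once $T$ is sufficiently large. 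Throughout, the $G$-equivariant nuclear determinant machinery of Section \ref{S:Nuclear} must be carefully invoked to ensure all intermediate sub-quotients remain $G$-cohomologically trivial, so that the various nuclear determinants being compared are well-defined at each stage and independent of the chosen nucleus by Proposition \ref{NuclearNucleus}.
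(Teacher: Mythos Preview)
Your overall architecture matches the paper's: telescope via Lemma~\ref{LocalizationLemma}, work modulo $Z^N$, and show that for a suitably enlarged set of primes $T$ both the residual product $\prod_{v\in\mspec(\cO_{F,T})}\det\nolimits_{R[[Z]]/Z^N}(1+\Phi\mid\cM/v\cM)$ and $\det\nolimits_{R[[Z]]/Z^N}(1+\Phi\mid K_T/\cM_T)$ equal $1$. The gap is in how you justify (a) and (b); the arguments you sketch do not work, and the actual mechanism is entirely different.

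For (a), the claim that ``the growing $\tau$-action forces the low-order $Z$-coefficients to vanish'' is not a proof. The coefficient of $Z$ in $\det(1+\Phi\mid\cM/v\cM)$ is the $R$-trace of $\varphi_1$, and there is no reason for the trace of a map of the form $x\mapsto a\tau(x)$ on a large residue ring to vanish; degree alone does not force this. For (b), enlarging $T$ makes the quotient $(K_{S\cup T}/\cM_{S\cup T})/U_{i,S\cup T}$ \emph{larger}, and you give no reason why the finite determinant on it should become $1$. A uniform nucleus exists, but that only says the determinant is well-defined, not that it is trivial.

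What the paper actually does is choose $D$ so that $\deg_\tau\varphi_n<nD/N$ for all $n<N$, and take $T$ to contain all primes of residue degree $<D$. Then $\cO_{F,T}$ has no residue fields of degree $<D$, so for each $d<D$ one can write $1=\sum_j f_{dj}(a_{dj}^{q^d}-a_{dj})$ with $f_{dj},a_{dj}\in\cO_{F,T}$ (Anderson's trick). This identity lets one factor $(1+\Phi)\bmod Z^N$ inside the group $\mathcal S_{D,N}$ as a product of commutator-type elements $\dfrac{1-(s\tau^d)aZ^n}{1-a(s\tau^d)Z^n}$. On any finite $\cO_{F,T}$-module such a factor has determinant $1$ by the elementary identity $\det(1-AB)=\det(1-BA)$, which handles (a). On $K_T/\cM_T$ the same conclusion requires the nontrivial Proposition~\ref{CommDet}, the $G$-equivariant commutation lemma for nuclear determinants, which you do not invoke at all; this is what handles (b). Both of these ingredients are essential and are absent from your sketch.
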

\begin{proof} Let $\ds\Phi = \sum_{n = 1}^\infty\varphi_nZ^n\in \OX_{F,S}\{\tau\}[[Z]] \tau Z$. We show that we have an equality
$$\prod_{v\in{\rm MSpec}(\cO_{F,S})}\det\nolimits_{R[[Z]]/Z^N}(1 + \Phi \rvert \mathcal{M}/v\mathcal{M})=\det\nolimits_{R[[Z]]/Z^N}(1 + \Phi\rvert K_S/\mathcal{M}_S)^{-1}$$
in $R[[Z]]/Z^N$. Then, the desired result follows by taking a projective limit, as $N\to\infty$.

Let $D = D_N$ be such that ${\rm deg}_\tau\varphi_n<\frac{nD}{N}$, for all $n<N$. Let
$$T := T_D := S\cup\{v\in\text{MSpec}(\cO_{F,S}) \mid  [\OX_{F,S}/v:\F_q]< D\}.$$
By Lemma \ref{LocalizationLemma}, it suffices to show that
$$\prod_{v\in\text{MSpec}(\cO_{F,T})}\det\nolimits_{R[[Z]]/Z^N}(1 + \Phi\rvert \mathcal{M}/v\mathcal{M})=\det\nolimits_{R[[Z]]/Z^N}(1 + \Phi\rvert K_T/\mathcal{M}_T)^{-1}.$$

Let $\mathcal{S}_{D,N}\subseteq \OX_{F,T}\{\tau\}[[Z]]/Z^N$ be the set
$$\mathcal{S}_{D,N} = \left\{1 + \sum_{n = 1}^{N-1}\psi_nZ^n \bigg\vert  \deg_{\tau}(\psi_n) < \frac{nD}{N}, \text{ for all } n<N\right\}.$$
The set $\mathcal{S}_{D,N}$ is a group under multiplication, and $(1 + \Phi)\bmod Z^N\in\mathcal{S}_{D,N}$. Now, following Taelman, we use a trick of Anderson (\cite[Prop 9]{Anderson00}). Since $\OX_{F,T}$ has no residue fields of degree $d<D$ over $\Fq$, for every $d<D$ there exists $f_{dj}, a_{dj}\in\OX_{F,T}$, with $1\leq j\leq M_d$, such that
$$1 = \sum_{j = 1}^{M_d} f_{dj}(a_{dj}^{q^d} - a_{dj}).$$
Then for every $r\in \OX_{F,T}$, and every $n<N$ and $d<D$, we have
$$1 - r\tau^dZ^n\equiv \prod_{j = 1}^{M_d}\frac{1 - (rf_{dj}\tau^d)a_{dj}Z^n}{1 - a_{dj}(rf_{dj}\tau^d)Z^n}\bmod Z^{n+1}.$$
Using this congruence it follows that the group $\mathcal{S}_{N,D}$ is generated by the set
$$\left\{\frac{1 - (s\tau^d)aZ^n}{1 - a(s\tau^d)Z^n}\mid a, s\in \cO_{F, T}, d, n\geq 1\right\}.$$
By properties of finite determinants, we have
$$\det\nolimits_{R[[Z]]/Z^N}\left(\frac{1 - (s\tau^d)aZ^n}{1 - a(s\tau^d)Z^n}\bigg\vert \mathcal{M}/v\mathcal{M}\right) = 1, \text{ for all }v\in\text{MSpec}(\cO_{F,T}).$$
Also, by Lemma \ref{CommDet}, we have
$$\det\nolimits_{R[[Z]]/Z^N}\left(\frac{1 - (s\tau^d)aZ^n}{1 - a(s\tau^d)Z^n}\bigg\rvert \frac{K_T}{\mathcal{M}_T}\right) = 1.$$
Consequently, Proposition \ref{MultDet} leads to the equalities
$$\det\nolimits_{R[[Z]]/Z^N}(1 + \Phi\rvert \frac{K_T}{\mathcal{M}_T}) = 1=\prod_{v\in{\rm MSpec}(\OX_{F,T})}\det\nolimits_{R[[Z]]/Z^N}(1 + \Phi\rvert \mathcal{M}/v\mathcal{M}),$$
which conclude the proof of the Theorem.
\end{proof}

\begin{corollary} \label{C:TraceFormula} Let $\mathcal{M}$ be a taming module for $K/F$. Let $E$ be a Drinfeld module with structural morphism $\varphi_E:\Fq[t]\to \cO_F\{\tau\}$. Then $\Phi = \frac{1 - \varphi_E(t)T^{-1}}{1 - tT^{-1}} - 1$ is a nuclear operator on $K_\infty/\mathcal{M}[[T^{-1}]]$ and
\[\Theta_{K/F}^{E,\mathcal{M}}(0) = \det\nolimits_{R[[T^{-1}]]}(1 + \Phi\mid K_\infty/\mathcal{M})\rvert_{T = t}.\]
\end{corollary}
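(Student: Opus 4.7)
The plan is to verify nuclearity of $\Phi$ via a geometric expansion, apply the Trace Formula (Theorem \ref{T:TraceFormula}) to reduce the identity to a product over finite primes of local factors, and compute each local factor by twice invoking Remark \ref{R:Nuclear-Fitting}. For nuclearity, I rewrite
\[\Phi = \frac{1-\varphi_E(t)T^{-1}}{1-tT^{-1}} - 1 = \frac{(t-\varphi_E(t))T^{-1}}{1-tT^{-1}} = -(a_1\tau + \cdots + a_r\tau^r)T^{-1}\sum_{n\geq 0}t^n T^{-n},\]
so that each coefficient of $T^{-n}$ with $n\geq 1$ lies in $\cO_F\{\tau\}\tau$ (every $\tau^i$ with $i\geq 1$ is a left multiple of $\tau$). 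Taking $S=S_\infty$, so that $\cO_{F,S}=\cO_F$, $K_S = K_\infty$, and $\mathcal{M}_S=\mathcal{M}$, Corollary \ref{C:tauPolyLocallyContracting} immediately shows $\Phi$ is nuclear on $K_\infty/\mathcal{M}[[T^{-1}]]$ and meets the hypothesis $\Phi\in\cO_F\{\tau\}[[T^{-1}]]\tau T^{-1}$ required by Theorem \ref{T:TraceFormula}.

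Applying the Trace Formula then gives
\[\det\nolimits_{R[[T^{-1}]]}(1+\Phi\mid K_\infty/\mathcal{M})^{-1} = \prod_{v\in\mspec(\cO_F)}\det\nolimits_{R[[T^{-1}]]}(1+\Phi\mid \mathcal{M}/v\mathcal{M}).\]
For each finite prime $v$, property $(1')$ of taming modules makes $\mathcal{M}/v\mathcal{M}$ a finite free $R=\Fq[G]$-module of rank $m_v := [\cO_F/v:\Fq]$, so the nuclear determinant on $\mathcal{M}/v\mathcal{M}$ collapses to an ordinary $R[[T^{-1}]]$-determinant of an endomorphism of a finitely generated free module. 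The factorization $1+\Phi=(1-\varphi_E(t)T^{-1})(1-tT^{-1})^{-1}$, which holds in $\End_R(\mathcal{M}/v\mathcal{M})[[T^{-1}]]$ since $(1-tT^{-1})$ is invertible there, combined with multiplicativity of the determinant yields
\[\det\nolimits_{R[[T^{-1}]]}(1+\Phi\mid \mathcal{M}/v\mathcal{M}) = \frac{\det\nolimits_{R[[T^{-1}]]}(1-\varphi_E(t)T^{-1}\mid \mathcal{M}/v\mathcal{M})}{\det\nolimits_{R[[T^{-1}]]}(1-tT^{-1}\mid \mathcal{M}/v\mathcal{M})}.\]
Remark \ref{R:Nuclear-Fitting}, applied to $\mathcal{M}/v\mathcal{M}$ with its native $A[G]$-structure, identifies the denominator at $T=t$ with $|\mathcal{M}/v\mathcal{M}|_G/t^{m_v}$; applied to $E(\mathcal{M}/v\mathcal{M})$, which has the same underlying $R$-module structure but with $t$ acting through $\varphi_E(t)$, it identifies the numerator at $T=t$ with $|E(\mathcal{M}/v\mathcal{M})|_G/t^{m_v}$. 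The $t^{m_v}$ factors cancel, so the local factor at $T=t$ equals $|E(\mathcal{M}/v\mathcal{M})|_G/|\mathcal{M}/v\mathcal{M}|_G$.

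Inverting the Trace Formula identity and specializing at $T=t$ then gives
\[\det\nolimits_{R[[T^{-1}]]}(1+\Phi\mid K_\infty/\mathcal{M})\rvert_{T=t} = \prod_{v\in\mspec(\cO_F)}\frac{|\mathcal{M}/v\mathcal{M}|_G}{|E(\mathcal{M}/v\mathcal{M})|_G} = \Theta_{K/F}^{E,\mathcal{M}}(0),\]
which is precisely the claimed identity. The main technical subtlety is the uniform $R$-freeness of $\mathcal{M}/v\mathcal{M}$ over all $v$, including wildly ramified primes: this is exactly what property $(1')$ of the taming-module construction in \S\ref{S:projective-modules} was engineered to guarantee, and without it the Fitting-ideal identification via Remark \ref{R:Nuclear-Fitting} would fail at wild primes and the whole local-factor step would collapse. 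Everything else is a formal transcription of Taelman's trace-formula proof into the present $G$-equivariant setting.
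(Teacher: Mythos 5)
Your proof is correct and follows the same route as the paper's: verify nuclearity by expanding $\Phi=(t-\varphi_E(t))T^{-1}(1-tT^{-1})^{-1}$ into $\cO_F\{\tau\}[[T^{-1}]]\tau T^{-1}$ (so Corollary \ref{C:tauPolyLocallyContracting} applies with $S=S_\infty$), apply Theorem \ref{T:TraceFormula}, and then identify each local factor with $|\cM/v|_G/|E(\cM/v)|_G$ via Remark \ref{R:Nuclear-Fitting}. The only difference is cosmetic: you make explicit the intermediate step $\det\nolimits_{R[[T^{-1}]]}(1+\Phi\mid\cM/v)=\det(1-\varphi_E(t)T^{-1}\mid\cM/v)/\det(1-tT^{-1}\mid\cM/v)$ (via multiplicativity over $R[[T^{-1}]]$ on a finite free module and the reinterpretation $\det(1-\varphi_E(t)T^{-1}\mid\cM/v)=\det(1-tT^{-1}\mid E(\cM/v))$), whereas the paper starts directly from the product formula defining $\Theta_{K/F}^{E,\cM}(0)$ and rewrites the Euler factors as determinants before invoking the trace formula.
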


\begin{proof} By Remark \ref{R:Nuclear-Fitting} applied to the free $R$--modules $\cM/v$ and $E(\cM/v)$ of the same rank $n_v:=[\cO_F/v:\Fq]$ and the definition of $\Theta_{K/F}^{E,\mathcal{M}}(0)$, we have
\[ \Theta_{K/F}^{E,\mathcal{M}}(0) = \prod_{v} \frac{|\cM/v|_G}{|E(\cM/v)|_G}=\prod_{v} \frac{\det\nolimits_{R[[T^{-1}]]}(1 - tT^{-1}\mid \cM/v)\rvert_{T = t}}{\det\nolimits_{R[[T^{-1}]]}(1 - \varphi_E(t)T^{-1}\mid \cM/v)\rvert_{T = t}},\]
where the products are taken over all $v\in{\rm MSpec}(\cO_F)$. Note that we have used the fact that $t$ acts as $\varphi_E(t)$ on $E(\cM/v)$. Since
\[\Phi = \sum_{n = 1}^\infty (t - \varphi_E(t))t^{n-1}T^{-n}\in \OX_{F}\{\tau\}[[T^{-1}]]\tau T^{-1},\]
by Corollary \ref{C:tauPolyLocallyContracting}, $\Phi$ is a nuclear operator on $K_\infty/\cM$ and $\cM/v$, for all $v$. Now, Theorem \ref{T:TraceFormula} applied in the case $S:=S_\infty$, combined with the previously displayed equalities gives
\[\Theta_{K/F}^{E,\mathcal{M}}(0) =\prod_{v}\det\nolimits_{R[[T^{-1}]]}(1 + \Phi\mid\cM/v)^{-1}\rvert_{T = t} = \det\nolimits_{R[[T^{-1}]]}(1 + \Phi\mid K_\infty/\cM)\rvert_{T = t},\]
which concludes the proof.
\end{proof}

\section{The volume function}\label{S:volume}
In this section we define the volume function ${\rm Vol}:\cC\to\Fq((t^{-1}))[G]^+$ on the class $\cC$ of compact $A[G]$--modules described
in Definition \ref{D:classC} with values in the subgroup $\Fq((t^{-1}))[G]^+$ of monic elements inside $\Fq((t^{-1}))[G]^\times$, defined in \S\ref{S:monic}.

\subsection{Indices of projective $A[G]$--lattices}\label{S:lattice-index} The first ingredient needed for defining the desired volume function is a notion of an index
$[\Lambda:\Lambda']_G\in \Fq((t^{-1}))[G]^+$, for any two projective $A[G]$--lattices $\Lambda, \Lambda'\subseteq K_\infty$. (See Definition \ref{D:lattices} for lattices.)

For the moment let us assume that $\Lambda$ and $\Lambda'$ are both free $A[G]$--lattices, of bases ${\bf e}:=(e_1, ..., e_n)$ and ${\bf e'}:=(e'_1, ..., e'_n)$,
where $n:=[F:\Fq(t)]$. Then, ${\bf e}$ and ${\bf e'}$ remain $\Fq((t^{-1}))[G]$--bases for $K_\infty$ (an immediate consequence of Definition \ref{D:lattices}). Therefore there exists a unique matrix  $X\in{\rm GL}_n(\Fq((t^{-1}))[G])$, such that
$({\bf e'})^t=X\cdot{\bf e}^t$. While the determinant ${\rm det}(X)$ depends on the choice of ${\bf e}$ and ${\bf e'}$, its image ${\rm det}(X)^+$ via the canonical  group morphism
$$\Fq((t^{-1}))[G]^\times\twoheadrightarrow\Fq((t^{-1}))[G]^\times/\Fq[t][G]^\times\simeq \Fq((t^{-1}))[G]^+$$
(see Corollary \ref{C:monic-rep}) obviously does not depend on any choices.
\begin{definition}\label{D:free-lattice-index}
For free $A[G]$--lattices $\Lambda, \Lambda'\subseteq K_\infty$, we define $[\Lambda:\Lambda']_G:={\rm det}(X)^+$.
\end{definition}
\begin{remark}\label{R:properties-free-index} If $\Lambda\subseteq\Lambda'$ are free $A[G]$--lattices in $K_\infty$,
then $\Lambda/\Lambda'$ is a finite, $G$--c.t. $A[G]$--module. From the definition of Fitting ideals, one can easily see that
$$[\Lambda:\Lambda']_G=|\Lambda/\Lambda'|_G.$$
Moreover, if $\Lambda''\subseteq\Lambda'\subseteq\Lambda$ are free $A[G]$--lattices in $K_\infty$, then
$$[\Lambda:\Lambda'']_G=[\Lambda:\Lambda']_G\cdot[\Lambda':\Lambda'']_G.$$
\end{remark}

The following Lemma permits us to transition from free to projective $A[G]$--lattices.
Recall that Definition \ref{D:G-size} associates to any finite, $G$-c.t. $A[G]$--module $M$ the unique
monic generator $|M|_G$ of ${\rm Fitt}^0_{A[G]}M$. This belongs to $\Fq[G][t]^+=\Fq[G][t]\cap\Fq((t^{-1}))[G]^+$.

\begin{lemma}\label{L:embed-projective-free}Let $\Lambda$ be a projective $A[G]$--lattice in $K_\infty$. Then
\begin{enumerate}\item  There exists a free $A[G]$--lattice $\cF$ of $K_\infty$, such that
$\Lambda\subseteq \cF$;
\item For any $\cF$ as above, the quotient $\cF/\Lambda$ is a finite, $G$--c.t. $A[G]$--module.
\item For any $\cF$ as above $|\cF/\Lambda|_G\in\Fq((t^{-1}))[G]^+$ is well defined.
\end{enumerate}
\end{lemma}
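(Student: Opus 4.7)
The plan is to treat the three claims in order, with parts (1) and (2) following from relatively standard scaling and rank arguments, and part (3) requiring a careful reduction to the hypotheses of Definition \ref{D:Fitt} via a filtration.

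For part (1), I would invoke the additive Hilbert 90 isomorphism $K_\infty\simeq \F_q((t^{-1}))[G]^n$ of \eqref{iso} to fix a basis $e_1,\dots,e_n$ of $K_\infty$ over $\F_q((t^{-1}))[G]$, which produces a free $A[G]$-lattice $\cF_0:=\bigoplus_i A[G]\cdot e_i$ in $K_\infty$. Since $\Lambda$ is finitely generated over $A[G]$, say by $\lambda_1,\dots,\lambda_m$, expanding each $\lambda_j$ in the basis $\{e_i\}$ yields coefficients in $\F_q((t^{-1}))[G]$, and a single $a\in A\setminus\{0\}$ clears all $t$-denominators, giving $a\Lambda\subseteq\cF_0$. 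Then $\cF:=a^{-1}\cF_0$ is a free $A[G]$-lattice in $K_\infty$ containing $\Lambda$.

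For part (2), I would count $A$-ranks: both $\Lambda$ and $\cF$ are $A$-lattices in $K_\infty$ by Definition \ref{D:lattices}, hence both $A$-free of rank $\dim_{\F_q((t^{-1}))}K_\infty=n|G|$, so the quotient $\cF/\Lambda$ is a finitely generated torsion $A$-module, hence finite. For $G$-cohomological triviality, $\cF$ is $G$-c.t. because it is $A[G]$-free and $\Lambda$ is $G$-c.t. because it is $A[G]$-projective (Lemma \ref{L:gct-Dedekind}), so applying the long exact sequence of Tate cohomology to $0\to\Lambda\to\cF\to\cF/\Lambda\to 0$ forces $\cF/\Lambda$ to be $G$-c.t.

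For part (3), the content is to verify the prerequisite of Definition \ref{D:Fitt}, namely that $\cF/\Lambda$ is free (not merely projective) over $\F_q[G]$, so that Proposition \ref{P:fittidealgen} applies and produces the monic generator $|\cF/\Lambda|_G\in\F_q[G][t]^+\subset\F_q((t^{-1}))[G]^+$. The approach is to pick $d\geq 1$ with $t^d\cF\subseteq\Lambda$ (possible by finiteness of $\cF/\Lambda$), yielding the filtration $t^d\cF\subseteq\Lambda\subseteq\cF$ and the short exact sequence
$$0\to\Lambda/t^d\cF\to\cF/t^d\cF\to\cF/\Lambda\to 0$$
of finite $G$-c.t. $\F_q[G]$-modules, in which the middle term $\cF/t^d\cF\simeq(A[G]/t^dA[G])^n\simeq\F_q[G]^{nd}$ is $\F_q[G]$-free. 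Decomposing $\F_q[G]=\prod_i\F_q[G]_i$ into local Artinian blocks, each $\cF_i/\Lambda_i$ is $\F_q[G]_i$-free since projective equals free over a local ring, so the obstruction to $\cF/\Lambda$ being $\F_q[G]$-free is that the block-wise $\F_q[G]_i$-ranks $r_i$ could differ. The main obstacle is precisely to prove $r_i$ is independent of $i$; I expect this to follow from the identity $\Lambda\otimes_A\F_q((t^{-1}))=K_\infty=\cF\otimes_A\F_q((t^{-1}))$ (which at each block gives that $\Lambda_i$ and $\cF_i$ are $A[G]_i$-projective of equal rank $n$), combined with a matrix/determinant comparison on a chosen pair of $A[G]_i$-bases --- or, equivalently, by directly invoking the appendix's Proposition \ref{P:fittidealgen}, which is formulated precisely to handle such balanced-rank situations for finite $G$-c.t. $A[G]$-modules arising as quotients of projective $A[G]$-lattices of equal generic rank.
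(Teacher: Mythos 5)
Your argument for part (1) has a genuine gap. You fix a basis $e_1,\dots,e_n$ of $K_\infty$ over $\F_q((t^{-1}))[G]$ coming from \eqref{iso}, and then expand the generators $\lambda_j$ of $\Lambda$ in this basis. The resulting coefficients $c_{ij}$ lie in $\F_q((t^{-1}))[G]$, which is a genuine Laurent-series ring, not a localization of $A[G]$. If some $c_{ij}$ is a bona fide infinite power series in $t^{-1}$ there is no $a\in A\setminus\{0\}$ for which $a\cdot c_{ij}\in A[G]$, so the claim ``a single $a\in A\setminus\{0\}$ clears all $t$-denominators'' is false. Clearing denominators by a polynomial only works when the coefficients are rational in $t$. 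The paper avoids this by not choosing the basis arbitrarily in $K_\infty$: by Proposition \ref{P:lattices-projective}(2), the $\F_q(t)$-span $V:=\F_q(t)\Lambda$ is a \emph{free} $\F_q(t)[G]$-module of rank $n$, and one picks the basis $\{e_i\}$ inside $V$. Then the coefficients of the $\lambda_j$ lie in $\F_q(t)[G]$, and a single $f\in A\setminus\{0\}$ does clear denominators, yielding $\Lambda\subseteq\bigoplus_i A[G]\cdot\frac{e_i}{f}$. This is precisely what Proposition \ref{P:lattices-projective}(3) does, and is what the paper cites.

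Part (2) is correct and matches the paper's (terse) reasoning. For part (3) you were led astray by Definition \ref{D:Fitt}, which requires $\F_q[G]$-freeness; the relevant definition here is Definition \ref{D:G-size}, which applies to any finite $G$-c.t.\ $A[G]$-module and rests on Proposition \ref{P:fittidealgen}, whose hypotheses only require the module to be finitely generated projective over the semilocal ring $\F_q[G]$. In particular, the block-wise ranks $r_i$ of $\cF/\Lambda$ over the local components $\F_q[G]_i$ genuinely can differ (they measure the degree of the quotient in each character block, which is not forced to be constant), so any attempt to prove $r_i$ independent of $i$ was doomed; your closing remark that one can ``directly invoke Proposition \ref{P:fittidealgen}'' is exactly right, and is in fact the entire content of part (3) once part (2) is in hand.
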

\begin{proof} (1)  follows from Proposition \ref{P:lattices-projective} in the Appendix. (2) follows from $\Fq(t)\Lambda=\Fq(t)\cF$ and the fact that both $\Lambda$ and $\cF$ are $G$--c.t.
(3) is a direct consequence of (2).
\end{proof}

\begin{definition}\label{D:projective-lattice-index}
Let $\Lambda$ and $\Lambda'$ be two projective $A[G]$--lattices in $K_\infty$. Let $\cF$ and $\cF'$ be free $A[G]$--lattices in $K_\infty$,
such that $\Lambda\subseteq\cF$ and $\Lambda'\subseteq \cF'$. Define
$$[\Lambda:\Lambda']_G:=[\cF:\cF']_G\cdot\frac{|\cF'/\Lambda'|_G}{|\cF/\Lambda|_G},$$
where $[\cF:\cF']_G$ is defined in \ref{D:free-lattice-index} above.
\end{definition}

\begin{lemma}\label{L:projective-lattice-index} With notations as in Definition \ref{D:projective-lattice-index}, we have the following:
\begin{enumerate}
\item $[\Lambda:\Lambda']_G$ is independent of the chosen $\cF$ and $\cF'$.
\item If $\Lambda, \Lambda', \Lambda''\subseteq K_\infty$ are projective $A[G]$--lattices, then
$$[\Lambda:\Lambda'']_G=[\Lambda:\Lambda']_G\cdot[\Lambda':\Lambda'']_G.$$
\item If $\Lambda'\subseteq\Lambda\subseteq K_\infty$ are projective $A[G]$--lattices, then
$$[\Lambda:\Lambda']_G=|\Lambda/\Lambda'|_G.$$
\end{enumerate}
\end{lemma}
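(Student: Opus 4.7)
The plan is to prove (1) first and then reduce (2) and (3) to the free-lattice setting via Definition \ref{D:projective-lattice-index}, using the multiplicativity already established for free lattices (Remark \ref{R:properties-free-index}) together with multiplicativity of $|\cdot|_G$ across short exact sequences of finite $G$-c.t. $A[G]$-modules (from the Appendix).

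For (1), I would reduce independence to the case of monotone enlargements. Given two pairs $(\cF_i, \cF_i')$, $i = 1, 2$, of free $A[G]$-lattices with $\cF_i \supseteq \Lambda$ and $\cF_i' \supseteq \Lambda'$, apply Lemma \ref{L:embed-projective-free}(1) to $\cF_1 + \cF_2$ and to $\cF_1' + \cF_2'$ to produce common free super-lattices $\cF_0$ and $\cF_0'$. It therefore suffices to show that if $\cF \subseteq \cF_0$ are two free lattices both containing $\Lambda$ (with $\cF'$ unchanged), the quantity in Definition \ref{D:projective-lattice-index} is unchanged; a symmetric argument treats $\cF'$. This rests on two facts: the multiplicativity $[\cF_0:\cF']_G = [\cF_0:\cF]_G \cdot [\cF:\cF']_G$ for free lattices (Remark \ref{R:properties-free-index}); and, applied to the short exact sequence $0 \to \cF/\Lambda \to \cF_0/\Lambda \to \cF_0/\cF \to 0$ of finite $G$-c.t. $A[G]$-modules, the identity $|\cF_0/\Lambda|_G = |\cF_0/\cF|_G \cdot |\cF/\Lambda|_G = [\cF_0:\cF]_G \cdot |\cF/\Lambda|_G$, where the last equality uses Remark \ref{R:properties-free-index} in the free case. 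The common factor $[\cF_0:\cF]_G$ then cancels.

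For (2), with (1) in hand we may compute all three indices using a single triple of free overlattices $\cF_0 \supseteq \Lambda$, $\cF_1 \supseteq \Lambda'$, $\cF_2 \supseteq \Lambda''$ produced by Lemma \ref{L:embed-projective-free}(1). Expanding each of $[\Lambda:\Lambda']_G$, $[\Lambda':\Lambda'']_G$, $[\Lambda:\Lambda'']_G$ from Definition \ref{D:projective-lattice-index}, the two occurrences of $|\cF_1/\Lambda'|_G$ cancel and the claim reduces to the free-lattice transitivity $[\cF_0:\cF_2]_G = [\cF_0:\cF_1]_G \cdot [\cF_1:\cF_2]_G$.

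For (3), assuming $\Lambda' \subseteq \Lambda$, I would choose a single free lattice $\cF \supseteq \Lambda$ (hence $\cF \supseteq \Lambda'$) and use $\cF' = \cF$ in Definition \ref{D:projective-lattice-index}; this is legitimate by (1) and gives $[\Lambda:\Lambda']_G = |\cF/\Lambda'|_G / |\cF/\Lambda|_G$. The short exact sequence $0 \to \Lambda/\Lambda' \to \cF/\Lambda' \to \cF/\Lambda \to 0$ has finite $G$-c.t. terms — the middle and right by Lemma \ref{L:embed-projective-free}(2), the left because it is an extension-kernel in an exact sequence of $G$-c.t. modules — so multiplicativity of $|\cdot|_G$ delivers $|\cF/\Lambda'|_G = |\Lambda/\Lambda'|_G \cdot |\cF/\Lambda|_G$, and part (3) follows. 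The main technical point threading through all three parts is ensuring that the quotients in play are finite and $G$-c.t., so that $|\cdot|_G$ is defined and multiplicative on their short exact sequences; this is the role played by Lemma \ref{L:embed-projective-free} and the homological machinery of the Appendix.
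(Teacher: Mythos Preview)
Your proof is correct and matches the paper's approach; the paper proves (1) by the same reduction to nested free over-lattices via Lemma~\ref{L:FitSequence} and Remark~\ref{R:properties-free-index}, and leaves (2) and (3) to the reader, so your arguments for those parts are exactly what is intended. One minor citation fix: to produce a common free super-lattice of $\cF_1$ and $\cF_2$ you should invoke Proposition~\ref{P:lattices-projective}(3) rather than Lemma~\ref{L:embed-projective-free}(1), since the latter as stated requires its input to be a \emph{projective} $A[G]$-lattice and you have not verified that $\cF_1+\cF_2$ is $G$-c.t.
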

\begin{proof} We prove (1) and leave the proofs of (2) and (3) to the interested reader. Since any two free $A[G]$--lattices $\cF_1$ and $\cF_2$ which
contain $\Lambda$ (respectively $\Lambda'$) can be embedded into a third free $A[G]$--lattice $\cF_3$ which contains $\Lambda$ (respectively $\Lambda'$),
it suffices to prove that
\begin{equation}\label{E:fractions}[\cF_1:\cF_1']_G\cdot \frac{|\cF_1'/\Lambda'|_G}{|\cF_1/\Lambda|_G}=  [\cF_2:\cF_2']_G\cdot \frac{|\cF_2'/\Lambda'|_G}{|\cF_2/\Lambda|_G},
\end{equation}
for any free $A[G]$--lattices $\cF_1, \cF_2, \cF_1', \cF_2'$,  such that $\Lambda\subseteq\cF_1\subseteq\cF_2$ and $\Lambda'\subseteq\cF_1'\subseteq\cF_2'$.
We have an obvious exact sequence of finite, $G$--c.t. $A[G]$--modules
$$0\to \cF_1/\Lambda\to\cF_2/\Lambda\to\cF_2/\cF_1\to 0.$$
Combined with Lemma \ref{L:FitSequence}, this yields the equality $|\cF_2/\Lambda|_G=|\cF_1/\Lambda|_G\cdot|\cF_1/\cF_2|_G$. Similarly, we obtain an equality
 $|\cF_2'/\Lambda'|_G=|\cF_1'/\Lambda'|_G\cdot|\cF_1'/\cF_2'|_G$. Now, the desired equality \eqref{E:fractions} follows from Remark \ref{R:properties-free-index} above.
\end{proof}

The index defined in \ref{D:projective-lattice-index} for projective $A[G]$--lattices restricts to Taelman's definition \cite{T12} of a (projective) $A$--lattice index when $G$ is trivial.

\subsection{The volume function and its properties}\label{S:volume-function} Let $M$ be an $A[G]$--module in the class $\cC$ described in Definition \ref{D:classC} of the Introduction. We refer to
\begin{equation}\label{E:classC-sequence}
0\to K_\infty/\Lambda\overset{\iota}{\longrightarrow}M\overset{\pi}{\longrightarrow} H\to 0
\end{equation}
as the structural exact sequence of topological $A[G]$--modules for $M$, where $\Lambda$ is an $A[G]$--lattice in $K_\infty$ and $H$ is a finite $A[G]$--module.
Recall that, by definition, $M$ is $G$--c.t. We let $[M]\in{\rm Ext}^1_{A[G]}(H, K_\infty/\Lambda)$ denote the extension class corresponding to \eqref{E:classC-sequence}.

Now, since $K_\infty/\Lambda$ is $A$--divisible, therefore $A$--injective (because $\Fq((t^{-1}))/A$ is), $\pi$ admits a section $s$ in the category of $A$--modules (not $A[G]$--modules, in general.)
Pick such a section $s$ and note that we have an $A$--module isomorphism
$$K_\infty/\Lambda\times s(H)\simeq M,$$
given by $(\iota, {\rm id})$. To simplify notation, we will drop $\iota$ from the notation and will think of it as an inclusion and of the isomorphism above as an equality in what follows.
\begin{definition}\label{D:Admissible lattice} An  $A[G]$--lattice $\Lambda'$ in $K_\infty$ is called $(M, s)$--admissible if
\begin{enumerate}
\item $\Lambda\subseteq\Lambda'$;
\item $\Lambda'$ is $A[G]$--projective;
\item $\Lambda'/\Lambda\times s(H)$ is an $A[G]$--submodule of $M$.
\end{enumerate}
An $A[G]$--lattice $\Lambda'$ is called $M$--admissible if it is $(M,s)$--admissible for some $s$.
\end{definition}

\begin{proposition}\label{P:Admissible}
For $(M, s)$ as above, there exist $A[G]$--free, $(M,s)$--admissible lattices.
\end{proposition}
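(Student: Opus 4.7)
\medskip\noindent
\textbf{Proof plan.} My plan is to produce $\Lambda'$ by a two-stage enlargement of $\Lambda$: first enlarge to an $A[G]$-lattice $\Lambda_1$ forcing $(\Lambda_1/\Lambda)\oplus s(H)$ to be $G$-stable in $M$, then enlarge $\Lambda_1$ further into a free $A[G]$-lattice.

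For the first enlargement, the guiding observation is that the defect of $s$ to being $A[G]$-equivariant,
$$c(\sigma, h) := \sigma s(h) - s(\sigma h) \in K_\infty/\Lambda \qquad (\sigma \in G,\ h \in H),$$
generates only a \emph{finite} $A[G]$-submodule $C$ of $K_\infty/\Lambda$. Indeed, the $A$-linearity of $s$ (and of each $\sigma$-action on $M$) yields $a\cdot c(\sigma, h) = c(\sigma, ah)$ for $a \in A$, while the cocycle identity $\sigma\cdot c(\tau, h) = c(\sigma\tau, h) - c(\sigma, \tau h)$ governs the $G$-action, so $C$ is contained in the $\Fq$-linear span of the finite set $\{c(\sigma, h) : \sigma \in G,\, h \in H\}$. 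Setting $\Lambda_1 := \pi^{-1}(C) \subseteq K_\infty$ for the quotient $\pi : K_\infty \to K_\infty/\Lambda$ produces an $A[G]$-submodule with $\Lambda\subseteq\Lambda_1$ and $\Lambda_1/\Lambda = C$ finite; so $\Lambda_1$ has the same $A$-rank as $\Lambda$ and is itself an $A[G]$-lattice in $K_\infty$. By construction, $\sigma s(h) = c(\sigma,h)+s(\sigma h) \in (\Lambda_1/\Lambda)\oplus s(H)$ for every $\sigma, h$, so the direct sum $(\Lambda_1/\Lambda)\oplus s(H)$ is $G$-stable---hence an $A[G]$-submodule of $M$.

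For the second enlargement, I would embed $\Lambda_1$ into a free $A[G]$-lattice. By the additive Hilbert 90 isomorphism $K_\infty \simeq \laurent{\Fq}{t\inv}[G]^n$, the module $K_\infty$ is free of rank $n$ over the semilocal Artinian ring $R := \laurent{\Fq}{t\inv}[G]$; moreover $R\cdot\Lambda_1 = K_\infty$, since $\Lambda \subseteq \Lambda_1$ already spans $K_\infty$ over $\laurent{\Fq}{t\inv}$. Nakayama's lemma applied modulo $\mathrm{rad}(R)$ then yields $n$ elements $\mu_1, \ldots, \mu_n \in \Lambda_1$ forming an $R$-basis of $K_\infty$. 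The $A[G]$-module $\Lambda_1^\flat := \bigoplus_j A[G]\mu_j \subseteq \Lambda_1$ is then a free $A[G]$-sublattice of the same $A$-rank as $\Lambda_1$, so $\Lambda_1/\Lambda_1^\flat$ is a finite $A$-module killed by some nonzero $f \in A$; taking $\Lambda' := f^{-1}\Lambda_1^\flat$ produces a free $A[G]$-lattice containing $\Lambda_1 \supseteq \Lambda$. Conditions (1) and (2) of Definition \ref{D:Admissible lattice} are then immediate, and (3) follows from $(\Lambda_1/\Lambda)\oplus s(H) \subseteq (\Lambda'/\Lambda)\oplus s(H)$ together with $A[G]$-stability of $\Lambda'/\Lambda$ in $K_\infty/\Lambda$; directness of the sum inside $M$ is inherited from the standing decomposition $M = (K_\infty/\Lambda)\oplus s(H)$. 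The principal technical point is this second enlargement, since $\Lambda_1$ may fail to be $A[G]$-projective and Lemma \ref{L:embed-projective-free} does not apply directly; the Nakayama-based basis selection inside $\Lambda_1$ supplies the needed substitute.
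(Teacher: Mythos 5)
Your two-stage plan is structurally the same idea as the paper's proof: identify the defect of $s$ from being $G$-equivariant (your cocycle $c(\sigma,h)$ is exactly the paper's $a_{g,x}$, the $K_\infty/\Lambda$-component of $g\cdot(0,s(x))$), observe that it lands in a finite $A[G]$-submodule of $K_\infty/\Lambda$, and enlarge $\Lambda$ to a free $A[G]$-lattice that absorbs it. You simply perform the steps in the opposite order: you enlarge $\Lambda$ to $\Lambda_1 := \pi^{-1}(C)$ first and then embed $\Lambda_1$ in a free over-lattice, whereas the paper first produces a free over-lattice $\widetilde{\Lambda}'\supseteq\Lambda$ via Proposition \ref{P:lattices-projective}(3) and then scales it to $\Lambda' := \tfrac{1}{f'}\widetilde{\Lambda}'$, where $f'\in A\setminus\{0\}$ annihilates $s(H)$ (and hence, by $A$-equivariance of the $G$-action, annihilates every $a_{g,x}$). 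Your finiteness argument for $C$ --- via the relations $a\,c(\sigma,h)=c(\sigma,ah)$ and the cocycle identity, pinning $C$ inside the $\F_q$-span of the finite set $\{c(\sigma,h)\}$ --- is correct and pleasant, and Stage 1 is sound.

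The soft spot is Stage 2, and it is worth flagging because you have reinvented, in a weaker form, a result the paper already proves. You explicitly note that Lemma \ref{L:embed-projective-free} does not apply because $\Lambda_1$ need not be projective, but Proposition \ref{P:lattices-projective}(3) requires no projectivity: given any $A[G]$-lattice $\Lambda_1$, it produces a free $A[G]$-lattice containing it (its proof picks an $\F_q(t)[G]$-basis of the free $\F_q(t)[G]$-module $\F_q(t)\Lambda_1$ and scales by $\tfrac{1}{f}$). You should cite that rather than the Nakayama route. As written, the Nakayama step is not airtight: you need $n$ elements of the image $\bar\Lambda_1$ of $\Lambda_1$ in $K_\infty/\mathrm{rad}(R)K_\infty$ forming an $\bar R$-basis, where $\bar R = R/\mathrm{rad}(R)$ is a product of fields $k_{\widehat\chi}$. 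A bare $\F_q$-subspace of $\bigoplus_{\widehat\chi} k_{\widehat\chi}^n$ whose $\bar R$-span is everything need \emph{not} contain an $\bar R$-basis (already for $n=1$ one can write down $\F_2$-subspaces of $\F_2^3$, thought of as an $\F_2^3$-module, that span but contain no unit). What rescues the argument here is the extra structure: $\bar\Lambda_1$ is an $\F_q[G]$-submodule, hence decomposes over the idempotents $e_{\widehat\chi}\in\F_q[G]$, and each $\widehat\chi$-component is an $\F_q(\chi)[t]$-submodule of $k_{\widehat\chi}^n$ that spans, from which a basis can be extracted componentwise. This decomposition step is essential and should be stated; simply invoking ``Nakayama modulo $\mathrm{rad}(R)$'' does not supply it. The cleanest fix is to replace this entire paragraph with an appeal to Proposition \ref{P:lattices-projective}(3) applied to $\Lambda_1$, which is precisely what the paper's proof relies on.
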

\begin{proof}
Let $\widetilde\Lambda'$ be a free $A[G]$--lattice satisfying property (1) in the above definition. (See Proposition \ref{P:lattices-projective} in the Appendix for its existence.)
We will modify $\widetilde\Lambda'$ so that it will satisfy property (3) as well. For that, let $x\in H$ and $g\in G$, and let (under the $G$--action on $M$)
$$g\cdot (0,s(x)) := (a_{g,x},b_{g,x})\in(K_\infty/\Lambda\times s(H))=M.$$
Since the $A$--module $s(H)\simeq H$ is finite, there exists some $f'\in A\setminus\{0\}$ such that $f'\cdot s(x) = 0$, for all $x \in H$.  Then, since the $G$-action on $M$ commutes with multiplication by elements in $A$, we find that $f'a_{g,x} = 0$, for all $x$ and $g$ as above.  Now, it is easily seen that the free $A[G]$--lattice  $\Lambda' := \tfrac{1}{f'}\widetilde\Lambda'$ is $(M,s)$--admissible.
\end{proof}
\begin{remark}\label{R:Admissible} Note that if $M_i\in\cC$, $[M_i]\in{\rm Ext}^1_{A[G]}(H_i, K_\infty/\Lambda_i)$, for $i=1, \dots, m$, such that $\Fq(t)\Lambda_i$ is independent of $i$ (i.e. the $\Lambda_i$'s are contained in a common $A[G]$--lattice $\Lambda$), and $s_i$ is a fixed section for $M_i$, then the proof of the Proposition above can be easily adapted to show that there is a lattice $\Lambda'$ which is $(M_i, s_i)$--admissible, for all $i$.

Also, note that given data $(M,s)$ as above and an admissible $(M,s)$--lattice $\Lambda'$ we have a short exact sequence of $A[G]$-modules
\begin{equation}\label{E:admissible-sequence} 0 \rightarrow \Lambda'/\Lambda\times s(H)\rightarrow M \rightarrow K_\infty/\Lambda'\rightarrow 0.\end{equation}
Consequently, since $M$ is $G$--c.t. (by definition) and $K_\infty/\Lambda'$ is $G$--c.t. (because $K_\infty$ and $\Lambda'$ are),  $\Lambda'/\Lambda\times s(H)$ is a finite $A[G]$--module which is $G$--c.t.  Consequently, the monic element
$$|\Lambda'/\Lambda\times s(H)|_G\in \Fq((t^{-1}))[G]^+$$
is well defined, for any admissible $(M,s)$--lattice $\Lambda'$.
\end{remark}

Now, we are ready to define the desired volume function.  To make the definition, we first fix a projective $A[G]$--lattice $\Lambda_0\subseteq K_\infty$, which will be used for normalization. The volume function will
depend on $\Lambda_0$, but not in an essential way.
\begin{definition}\label{D:volume}
Let $M\in\cC$, with $[M]\in {\rm Ext}^1_{A[G]}(H, K_\infty/\Lambda)$. Let $s$ be a section for $M$ and $\Lambda'$ an $(M,s)$--admissible lattice. We define
\begin{equation}\label{D:Vol}
\Vol(M) = \frac{|\Lambda'/\Lambda \times s(H)|_G}{\left [\Lambda':\Lambda_0\right ]_{G}},
\end{equation}
where $[\Lambda':\Lambda_0]_G$ is as in Definition \ref{D:projective-lattice-index} and $|\Lambda'/\Lambda \times s(H)|_G$ is as
in Remark \ref{R:Admissible}.
\end{definition}

The next result shows that ${\rm Vol}$ is well defined, i.e. is independent of all choices except for $\Lambda_0$, and its dependence
of $\Lambda_0$ disappears in quotients.
\begin{proposition}
The function $\Vol:\cC\to \laurent{\F_q}{T\inv}[G]^+$ satisfies the following properties.
\begin{enumerate}
\item  For each $M \in \cC$ given by an exact sequence \eqref{E:classC-sequence}, the value $\Vol(M)$ is independent of  choice of section $s$ and of choice of
$(M,s)$--admissible lattice $\Lambda'$.
\item  $\Vol(K_\infty/\Lambda_0) = 1$.
\item  If $M_1,M_2 \in \cC$, the quantity $\frac{\Vol(M_1)}{\Vol(M_2)}$ is independent of choice of $\Lambda_0$.
\item If $M\in\cC$, with  $[M]\in {\rm Ext}^1_{A[G]}(H, K_\infty/\Lambda)$, then ${\rm Vol}(M)$ depends only on the extension class $[M]$ (if $H$ and $\Lambda$ are fixed.)
\end{enumerate}
\end{proposition}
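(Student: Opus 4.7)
The plan is to establish the four properties in the order (2), (1), (3), (4), since (2) is a sanity check and (3), (4) follow formally from (1).

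Property (2) is immediate: $K_\infty/\Lambda_0$ has structural sequence with $\Lambda=\Lambda_0$ and $H=0$; the trivial section together with $\Lambda'=\Lambda_0$ is admissible, and both numerator and denominator in \eqref{D:Vol} equal $1$.

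The heart of the work is (1), which I split in two steps. First, fix the section $s$ and prove independence from the admissible lattice $\Lambda'$. Given two $(M,s)$-admissible lattices $\Lambda'_1\subseteq\Lambda'_2$, the short exact sequence of finite $G$-c.t.\ $A[G]$-modules
\[0\to \Lambda'_1/\Lambda\times s(H)\to \Lambda'_2/\Lambda\times s(H)\to \Lambda'_2/\Lambda'_1\to 0\]
together with multiplicativity of $|\cdot|_G$ in short exact sequences (Lemma \ref{L:FitSequence}) yields $|\Lambda'_2/\Lambda\times s(H)|_G=|\Lambda'_1/\Lambda\times s(H)|_G\cdot |\Lambda'_2/\Lambda'_1|_G$, while Lemma \ref{L:projective-lattice-index} gives $[\Lambda'_2:\Lambda_0]_G=|\Lambda'_2/\Lambda'_1|_G\cdot[\Lambda'_1:\Lambda_0]_G$; dividing yields agreement of the two volumes. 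Any two $(M,s)$-admissible lattices can be embedded into a common third by taking any projective lattice containing both and then applying the scaling construction in the proof of Proposition \ref{P:Admissible}.

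Next I prove independence from $s$ via a refinement trick. Given two sections $s_1,s_2$, the difference $\delta:=s_2-s_1$ is an $A$-module map from the finite module $H$ into $K_\infty/\Lambda$, so $\delta(H)$ is a finite set of cosets. Choose a projective $A[G]$-lattice $\widetilde\Lambda\supseteq\Lambda$ with $\delta(H)\subseteq\widetilde\Lambda/\Lambda$, then apply the scaling trick in Proposition \ref{P:Admissible} with a single $f'\in A\setminus\{0\}$ annihilating both $s_1(H)$ and $s_2(H)$ to produce $\Lambda':=\tfrac{1}{f'}\widetilde\Lambda$; this $\Lambda'$ is simultaneously $(M,s_1)$- and $(M,s_2)$-admissible and still contains the image of $\delta(H)$. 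Since $s_2(h)\equiv s_1(h)\pmod{\Lambda'/\Lambda}$ for every $h\in H$, the two sets $\Lambda'/\Lambda+s_1(H)$ and $\Lambda'/\Lambda+s_2(H)$ are equal as subsets of $M$, so the defining formulas coincide term by term. Combined with lattice-independence this proves (1). The main obstacle here is precisely the coordination of sections and admissible lattices: the trick of enlarging $\Lambda'$ so that the two sections become congruent modulo $\Lambda'/\Lambda$ sidesteps the need to directly compare Fitting ideals of the possibly non-isomorphic $A[G]$-submodules $\Lambda'/\Lambda+s_i(H)$, whose extension classes in $\Ext^1_{A[G]}(H,\Lambda'/\Lambda)$ need not agree.

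Finally, (3) is immediate from the cocycle identity $[\Lambda':\widetilde\Lambda_0]_G=[\Lambda':\Lambda_0]_G\cdot[\Lambda_0:\widetilde\Lambda_0]_G$ in Lemma \ref{L:projective-lattice-index}(2): replacing $\Lambda_0$ by $\widetilde\Lambda_0$ multiplies every $\Vol(M_i)$ by the common factor $[\Lambda_0:\widetilde\Lambda_0]_G$, which cancels in the ratio $\Vol(M_1)/\Vol(M_2)$. Property (4) then follows directly from (1): an isomorphism of extensions $\phi:M\xrightarrow{\sim}M'$ carries any admissible pair $(s,\Lambda')$ for $M$ to the admissible pair $(\phi\circ s,\Lambda')$ for $M'$, and the restriction of $\phi$ gives an $A[G]$-module isomorphism $\Lambda'/\Lambda+s(H)\cong \Lambda'/\Lambda+(\phi\circ s)(H)$ preserving Fitting ideals, so the defining formulas evaluate identically.
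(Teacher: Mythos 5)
Your proof is correct and follows essentially the same route as the paper's: independence from $\Lambda'$ via the short exact sequence of finite $G$--c.t.\ modules plus Lemma \ref{L:FitSequence} and the index identities in Lemma \ref{L:projective-lattice-index}; independence from $s$ by enlarging $\Lambda'$ until $(s_1-s_2)(H)\subseteq\Lambda'/\Lambda$ so that $\Lambda'/\Lambda\times s_1(H)$ and $\Lambda'/\Lambda\times s_2(H)$ coincide inside $M$; parts (2) and (3) by the same direct computations. The only substantive difference is that you actually prove (4) via pushing the admissible pair $(s,\Lambda')$ forward along an isomorphism of extensions, whereas the paper declines to prove (4) on the grounds that it is not used; your argument there is correct, so your write-up is in fact slightly more complete than the published proof.
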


\begin{proof} (1) First, let $s$ be a section for $M$ and let $\Lambda'$ and $\Lambda''$ be $(M,s)$--admissible lattices. Since $\Lambda\subset\Lambda', \Lambda''$, Remark \ref{R:Admissible} shows that we may assume without loss of generality that $\Lambda'\subseteq\Lambda''$. Then, we have a short exact sequence of finite $A[G]$--modules, which are $G$-c.t.
\[0\rightarrow \Lambda'/\Lambda \times s(H) \rightarrow \Lambda''/\Lambda \times s(H) \rightarrow \Lambda''/\Lambda' \rightarrow 0.\]
Applying Lemma \ref{L:FitSequence} in the Appendix to the above sequence gives an equality
$$|\Lambda''/\Lambda\times s(H)|_{G}=|\Lambda'/\Lambda\times s(H)|_{G}\cdot |\Lambda''/\Lambda'|_{G} =|\Lambda'/\Lambda\times s(H)|_{G}\cdot [\Lambda'':\Lambda']_{G}.$$
Independence on $\Lambda'$ follows from the equality above combined with Lemma \ref{L:projective-lattice-index}(2).

Now, for two distinct sections $s_1$ and $s_2$, it is easy to see that one can pick a sufficiently large lattice $\Lambda'$ which is both $(M, s_1)$-- and $(M, s_2)$--admissible, and with the additional property that for all $x\in H$,
$(s_1(x)-s_2(x))\in \Lambda'/\Lambda$. It is easily seen that for such $\Lambda'$, the identity map on $M$ induces an isomorphism of $A[G]$--modules
$$\Lambda'/\Lambda\times s_1(H)\simeq \Lambda'/\Lambda\times s_2(H).$$
Therefore $|\Lambda'/\Lambda\times s_1(H)|_G=|\Lambda'/\Lambda\times s_2(H)|_G$, which proves independence on $s$.

 Part (2) is immediate as $\Lambda_0$ is $K_\infty/\Lambda_0$--admissible.

Part (3) follows by noting that for $M_1, M_2\in\cC$, we have
$$\frac{\Vol(M_1)}{\Vol(M_2)} = \frac{|\Lambda_1'/\Lambda_1 \times H_1|_{G}}{|\Lambda_2'/\Lambda_2 \times H_2|_{G}}\cdot [\Lambda_2':\Lambda_1']_{G},$$
where the notations are the obvious ones.

Part (4) is left to the interested reader, as it will not be used in this paper.
\end{proof}

\section{A $G$-equivariant volume formula}\label{S:VolumeFormula}
The purpose of this section is to express determinants of certain nuclear operators in the sense of \S \ref{S:Nuclear} in terms of a quotient of volumes in the sense of \S \ref{S:volume}.  Eventually, this will allow us to express our special $L$-values $\Theta_{K/F}^{E, \cM}(0)$ in terms of volumes, in preparation for proving the ETNF and the Drinfeld module analogue of the refined Brumer-Stark conjecture.

\subsection{Maps tangent to the identity}\label{S:Ntang} Below, $K_\infty$ is endowed with the sup of the local norms, denoted $||\cdot||$, normalized so that $\lVert t\rVert=q.$ The closed unit ball in $K_\infty$ is denoted $\cO_{K_\infty}$, as usual.

Let $M_1,M_2\in \cC$ of structural short exact sequences
$$0\to K_\infty/\Lambda_s\overset{\iota_s}\longrightarrow M_s\overset{\pi_s}\longrightarrow H_s\to 0, \qquad s=1,2.$$
Fix $\ell>0$ sufficiently large so that $t^{-i}\cO_{K_\infty}\cap\Lambda_s=\{0\}$, for all $i\geq\ell$ and $s=1, 2$ and identify $t^{-i}\cO_{K_\infty}$ with its image in $K_\infty/\Lambda_s$, for all $i\geq \ell$. Fix an $\infty$--taming module $\mathcal W^\infty$ for $K/F$. With notations as in \S\ref{S:nuclear-classC}, the resulting $\{\iota_s(U_{i, \infty})\}_{i\geq \ell}$ are appropriate bases of $R$--projective, open neighborhoods of $0$
in $M_s$, for all $s=1,2$. By \eqref{E:taming-open} there exists $a\in\Z_{>0}$, which we fix once and for all, such that
\begin{equation}\label{E:U-inclusions} t^{-a-i}\cO_{K_\infty}\subseteq U_{i, \infty}\subseteq t^{-i}\cO_{K_\infty}, \qquad\text{ for all }i\geq\ell.
\end{equation}
We endow $\iota_s(t^{-i}\cO_{K_\infty})$ with the norm which makes
$\iota_s: t^{-i}\cO_{K_\infty}\simeq \iota_s(t^{-i}\cO_{K_\infty})$ bijective isometries, for all $s=1, 2$, and all $i\geq \ell$.

\begin{definition}\label{D:N-tangent} Let $N\in\Z_{\geq 0}$. A continuous $R$--module morphism $\gamma:M_1\to M_2$ is called $N$-tangent to the identity if there exists $i\geq \ell$ such that
\begin{enumerate}
\item $\gamma$ induces a bijective isometry $(\iota_2^{-1}\circ\gamma\circ\iota_1): t^{-i}\cO_{K_\infty}\simeq t^{-i}\cO_{K_\infty}.$
\item If we let $\gamma_i$ denote the bijective isometry $(\iota_2^{-1}\circ\gamma\circ\iota_1): t^{-i}\cO_{K_\infty}\simeq t^{-i}\cO_{K_\infty}$, then
$$||\gamma_i(x)-x||\leq ||t||^{-N-a}\cdot||x||, \qquad\text{ for all }x\in t^{-i}\cO_{K_\infty}.$$
\end{enumerate}
If $\gamma$ is $N$--tangent to the identity for all $N\geq 0$, $\gamma$ is called infinitely tangent to the identity.
\end{definition}

\begin{proposition}\label{P:Ntangpowerseries}
 Let $\Gamma:K_\infty\to K_\infty$ be an $R$-linear map given by an everywhere convergent power series
\[\Gamma(z) = z + \alpha_1 z^q + \alpha_2 z^{q^2} + \dots,\qquad\text{with } \alpha_i \in K_\infty.\]
Assume that $\Gamma(\Lambda_1) \subseteq \Lambda_2$ and denote by $\widetilde{\Gamma}:K_\infty/\Lambda_1\to K_\infty/\Lambda_2$ the induced map. Assume that $\gamma:M_1\to M_2$ is a continuous $R$--linear morphism such that $\iota_2^{-1}\circ\gamma\circ\iota_1=\widetilde\Gamma$ on $t^{-\ell}\cO_{K_\infty}$.  Then $\gamma$ is infinitely tangent to the identity.
\end{proposition}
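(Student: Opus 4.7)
The plan is to reduce everything to a norm estimate on the underlying convergent power series $\Gamma(z) - z = \sum_{i\geq 1}\alpha_i z^{q^i}$ near $0$, then transport this estimate across the identifications $\iota_1,\iota_2$.

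First, I would exploit the everywhere convergence of $\Gamma$ on $K_\infty$. Fix $N \geq 0$. For any $\rho > 0$ one has
\[
\sup_{\|x\|\le \rho}\|\Gamma(x) - x\| \;\le\; \sup_{i\ge 1} \|\alpha_i\|\,\rho^{q^i},
\]
and I would show that the quantity $M_\rho := \sup_{i\ge 1}\|\alpha_i\|\,\rho^{q^i - 1}$ tends to $0$ as $\rho\to 0$. The standard way is to split indices: everywhere convergence at any fixed $R\ge 1$ gives $\|\alpha_i\|\le R^{-q^i}$ for $i\ge i_0$, which forces $\sup_{i\ge i_0}\|\alpha_i\|\rho^{q^i-1}\le \rho^{-1}(\rho/R)^{q^{i_0}}\to 0$; for $1\le i<i_0$ the bound $\|\alpha_i\|\rho^{q^i-1}\le (\max_j \|\alpha_j\|)\rho^{q-1}$ handles the finitely many remaining terms. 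Hence for $\rho$ small enough one gets $\|\Gamma(x)-x\|\le q^{-N-a}\|x\|$ for all $x$ with $\|x\|\le \rho$.

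Second, I would choose $i\ge \ell$ large enough that $q^{-i}\le \rho$, so that $t^{-i}\cO_{K_\infty}$ lies inside this ball. Because the estimate above is strictly sharper than $\|\Gamma(x)-x\|<\|x\|$, the non-Archimedean triangle inequality gives $\|\Gamma(x)\|=\|x\|$, so $\Gamma$ restricted to $t^{-i}\cO_{K_\infty}$ is norm-preserving and sends this ball into itself. For bijectivity I would invoke a standard contraction/fixed-point argument: given $w$ in the ball, iterating $z_{n+1}=w-(\Gamma(z_n)-z_n)$ starting from $z_0=w$ produces a Cauchy sequence whose limit $z$ satisfies $\Gamma(z)=w$; injectivity follows from the isometry property. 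Thus $\Gamma$ is a bijective isometry of $t^{-i}\cO_{K_\infty}$ onto itself satisfying $\|\Gamma(x)-x\|\le q^{-N-a}\|x\|$.

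Third, I would transfer this to $\widetilde\Gamma$ and then to $\gamma$. Since $i\ge\ell$, the projection $K_\infty\twoheadrightarrow K_\infty/\Lambda_s$ is injective on $t^{-i}\cO_{K_\infty}$ (with image identified with $t^{-i}\cO_{K_\infty}\subseteq K_\infty/\Lambda_s$, as in the setup), so $\widetilde\Gamma$ restricted to $t^{-i}\cO_{K_\infty}\subseteq K_\infty/\Lambda_1$ agrees with $\Gamma$ restricted to $t^{-i}\cO_{K_\infty}$, with the same norm behavior and the same bijectivity onto $t^{-i}\cO_{K_\infty}\subseteq K_\infty/\Lambda_2$. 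By hypothesis $\iota_2^{-1}\circ\gamma\circ\iota_1=\widetilde\Gamma$ on $t^{-\ell}\cO_{K_\infty}$, hence on $t^{-i}\cO_{K_\infty}$, so $\gamma_i$ inherits the same bijective isometry property and the estimate in Definition~\ref{D:N-tangent}(2). Since $N$ was arbitrary and $i$ depends on $N$, $\gamma$ is $N$-tangent to the identity for every $N$, i.e.\ infinitely tangent to the identity.

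The main obstacle is the second step: turning the analytic estimate $\|\Gamma(x)-x\|\le q^{-N-a}\|x\|$ into \emph{bijectivity} of $\Gamma$ on $t^{-i}\cO_{K_\infty}$, rather than just a self-map. Isometry is essentially automatic once the inequality is strict, but surjectivity requires the fixed-point iteration (or equivalently the formal power series inverse of $\Gamma$, together with a radius-of-convergence check showing it also converges on a ball of this form). Everything else is bookkeeping around the identifications $\iota_1,\iota_2$ and the inclusions $t^{-a-i}\cO_{K_\infty}\subseteq U_{i,\infty}\subseteq t^{-i}\cO_{K_\infty}$ from \eqref{E:U-inclusions}.
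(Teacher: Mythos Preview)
Your proposal is correct and follows essentially the same approach as the paper. The paper streamlines your first step by simply noting that everywhere convergence forces the $\alpha_i$ to be bounded (say by $\alpha$), giving the cleaner estimate $\|\Gamma(z)-z\|\le \alpha\|z\|^q$, and it replaces your explicit contraction iteration by a citation of the non-archimedean inverse function theorem \cite[2.2]{Igusa}; but these are presentational differences, not a different strategy.
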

\begin{proof}
Let $N\geq 1$. We will show that $\gamma$ is $N$--tangent to the identity.
Since the power series for $\Gamma$ is everywhere convergent, the coefficients $\alpha_i$ must be bounded in norm.  Let $\alpha:=\sup_i||\alpha_i||.$
Thus, if $i\geq \ell$ is sufficiently large and $z\in t^{-i}\cO_{K_\infty}$, then we have
$$\lVert(\iota_2^{-1}\circ\gamma\circ\iota_1)(z)\rVert=\lVert z\rVert, \quad \lVert(\iota_2^{-1}\circ\gamma\circ\iota_1)(z) - z\rVert = \lVert(\alpha_1z^q + \alpha_2 z^{q^2} + \cdots)\rVert\leq \alpha\cdot||z||^q.$$
In particular, if $i$ is sufficiently large, then $(\iota_2^{-1}\circ\gamma\circ\iota_1): t^{-i}\cO_{K_\infty}\to t^{-i}\cO_{K_\infty}$ is an isometry, which is strictly
differentiable at $0$ and $(\iota_2^{-1}\circ\gamma\circ\iota_1)'(0)=1$. By the non-archimedean inverse function theorem (see \cite[2.2]{Igusa}), for all $i\gg\ell$ the map $(\iota_2^{-1}\circ\gamma\circ\iota_1): t^{-i}\cO_{K_\infty}\simeq t^{-i}\cO_{K_\infty}$ is a bijective isometry.
Further, for all $i\gg \ell$ and all $z\in t^{-i}\cO_{K_\infty}\setminus \{0\}$, we have
$$\frac{\lVert(\iota_2^{-1}\circ\gamma\circ\iota_1)(z) - z\rVert}{\lVert z\rVert}\leq \alpha\lVert z\rVert^{q-1}\leq \alpha\lVert t\rVert^{-i(q-1)}\leq\lVert t\rVert^{-N-a},$$
which shows that, indeed, $\gamma$ is $N$--tangent to the identity.
\end{proof}

\begin{definition} Let $M_1, M_2\in\mathcal C$ and let $\gamma:M_1\simeq  M_2$ be an $R$--linear topological isomorphism. We define the endomorphism $\Delta_\gamma$ of $\power{M_1}{T\inv}$ by
$$\Delta_\gamma := \frac{1 - \gamma\inv t \gamma T\inv}{1-tT\inv} - 1=\sum_{n=1}^\infty \delta_n T^{-n},$$
where $\delta_n = (t-\gamma\inv t \gamma )t^{n-1}$, for all $n\geq 1.$
\end{definition}

\begin{lemma}\label{L:Ntangnuclear}
If the topological $R$--linear isomorphism $\gamma:M_1\simeq M_2$ is $N$-tangent to the identity, then the map $(\Delta_\gamma$ mod $T^{-N})$ is a nuclear endomorphism of $\power{M_1}{T\inv}/T^{-N}$. If $\gamma$ is infinitely tangent to the identity, then $(1+\Delta_\gamma)$ is a nuclear endomorphism of $M_1[[T^{-1}]]$.
\end{lemma}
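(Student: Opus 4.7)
The strategy is to expand $\Delta_\gamma = \sum_{n\geq 1}\delta_n T^{-n}$ with $\delta_n = (t-\gamma^{-1}t\gamma)t^{n-1}$ and verify directly that each $\delta_n$ (for $n<N$) is locally contracting on $M_1$ with respect to the basis $\{U_{i,\infty}\}_{i\geq \ell}$ defined in \S\ref{S:nuclear-classC}. The key structural fact I will use is that, because $U_{i,\infty}=\prod_{v\in S_\infty}t^{-i}\mathcal W^\infty_v$, multiplication by $t^k$ induces a bijection $U_{i,\infty}\to U_{i-k,\infty}$, so reducing $\delta_n(U_{i,\infty})$ to $(t-\gamma^{-1}t\gamma)(U_{i-n+1,\infty})$ is immediate.

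The heart of the argument is a norm estimate for the commutator $t-\gamma^{-1}t\gamma$ on a ball $t^{-j}\cO_{K_\infty}$ identified via $\iota_1$ with its image in $M_1$. For $j$ large enough, the $N$-tangency hypothesis gives an isometric bijection $\gamma_j:t^{-j}\cO_{K_\infty}\simeq t^{-j}\cO_{K_\infty}$ satisfying $\lVert\gamma_j(z)-z\rVert\leq\lVert t\rVert^{-N-a}\lVert z\rVert$, and similarly for $\gamma_{j-1}$ on the slightly larger ball. Writing $x=\iota_1(y)$ with $y\in t^{-j}\cO_{K_\infty}$ and setting $w=\iota_1^{-1}((t-\gamma^{-1}t\gamma)(x))$, the fact that $\gamma_{j-1}$ is an isometry gives
\[
\lVert w\rVert=\lVert\gamma_{j-1}(w)\rVert=\lVert\gamma(ty)-t\gamma(y)\rVert\leq\max\bigl(\lVert t\rVert^{-N-a}\lVert ty\rVert,\ \lVert t\rVert\cdot\lVert t\rVert^{-N-a}\lVert y\rVert\bigr),
\]
so $\lVert w\rVert\leq\lVert t\rVert^{1-N-a}\lVert y\rVert\leq\lVert t\rVert^{-(N+a+j-1)}$.

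Combining these two steps, for $x\in U_{i,\infty}$ I get $\delta_n(x)=(t-\gamma^{-1}t\gamma)(t^{n-1}x)\in\iota_1\bigl(t^{N+a+i-n}\cO_{K_\infty}\bigr)$. Since $\lVert t\rVert^{-(a+i+1)}\cO_{K_\infty}\subseteq U_{i+1,\infty}$ by \eqref{E:U-inclusions}, the inclusion $\delta_n(U_{i,\infty})\subseteq U_{i+1,\infty}$ follows as soon as $N+a+i-n\geq -(a+i+1)$, which holds for all sufficiently large $i$. Thus each $\delta_n$ with $n<N$ is locally contracting, proving that $\Delta_\gamma\bmod T^{-N}$ is a nuclear endomorphism of $M_1[[T^{-1}]]/T^{-N}$. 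If $\gamma$ is infinitely tangent to the identity, then it is $N$-tangent for every $N$, so for each fixed $n\geq 1$ we apply the above with any $N>n$ to conclude that $\delta_n$ is locally contracting; hence $1+\Delta_\gamma$ is nuclear on $M_1[[T^{-1}]]$.

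The main obstacle I anticipate is bookkeeping: tracking how the three filtrations (the $U_{i,\infty}$ in $M_s$, the balls $t^{-i}\cO_{K_\infty}$ in $K_\infty$, and their images under $\iota_s$) interact with the shift by $t$, and making sure that every intermediate element of the computation (namely $\gamma(x)$, $t\gamma(x)$, and $\gamma^{-1}(t\gamma(x))$) lies in a ball on which the $N$-tangency isometry is defined. This amounts to choosing the threshold $i$ uniformly large enough, which is possible because both cutoffs depend only on the fixed constants $N$, $a$, $n$, and $\ell$.
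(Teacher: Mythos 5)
Your proof is correct in substance and follows essentially the same route as the paper's: both expand $\Delta_\gamma=\sum_{n\geq1}\delta_n T^{-n}$ with $\delta_n=(t-\gamma^{-1}t\gamma)t^{n-1}$, split the commutator $t-\gamma^{-1}t\gamma$ into two pieces each controlled by the $N$-tangency estimate, and then observe that the resulting contraction factor $\lVert t\rVert^{n-N-a}\leq\lVert t\rVert^{-1-a}$ (using $n<N$) combined with \eqref{E:U-inclusions} yields $\delta_n(U_{j,\infty})\subseteq U_{j+1,\infty}$ for $j$ large. Your rearrangement is to conjugate by $\gamma$ and estimate $\gamma(w)=(\gamma(ty)-ty)-t(\gamma(y)-y)$, whereas the paper writes $\delta_n=\gamma^{-1}(\gamma-1)t^n+\gamma^{-1}t(1-\gamma)t^{n-1}$; these are the same computation up to relabeling, so the approaches are interchangeable.

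One bookkeeping caveat: in your final step the exponent should read $\delta_n(x)\in\iota_1\bigl(t^{-(N+a+i-n)}\cO_{K_\infty}\bigr)$, and the inclusion into $U_{i+1,\infty}$ requires $-(N+a+i-n)\leq-(a+i+1)$, i.e.\ $N\geq n+1$. As written you have the signs flipped on both sides (``$t^{N+a+i-n}$'' and ``$\geq-(a+i+1)$''), which would make the condition vacuous; the correct condition is $N>n$, which is what actually makes the argument work, and which is also why the constraint on $i$ is only that $i-n$ exceed the tangency threshold (not that the inequality be forced by $i$ large). Since you do take $n<N$ and $i$ sufficiently large, the conclusion stands, but the inequalities as stated should be corrected.
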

\begin{proof} For simplicity, below we suppress $\iota_1$ and $\iota_2$ from the notations (and think of them as inclusions.)
We need to show that each $\delta_n$ is locally contracting in the sense of \ref{D:Nuclear}, for all $n<N$.  Fix $n<N$, and fix $i\geq \ell$ as in Definition \ref{D:N-tangent} applied to $\gamma$. We will show that
$$\delta_n(U_{j, \infty})\subseteq U_{j+1,\infty}, \qquad\text{ for all }j\geq i+n.$$
Since $-j+n\leq -i$, we obviously have
$$\delta_n(t^{-j}\cO_{K_\infty})\subseteq t^{-j+n}\cO_{K_\infty}.$$
Also, if $\gamma_i$ is as in Definition \ref{D:N-tangent}, then we have equalities of functions defined on  $t^{-j}\cO_{K_\infty}$
$$\delta_{n}=(t-\gamma_i^{-1}t\gamma_i)t^{n-1}=\gamma_i^{-1}(\gamma_i-1)t^n+\gamma_i^{-1}t(1-\gamma_i)t^{n-1}.$$
Consequently, the conditions imposed upon $\gamma_i$ in Definition \ref{D:N-tangent} imply that
$$||\delta_{n}(z)||\leq ||t||^{-N+n-a}\cdot||z||\leq ||t||^{-1-a}\cdot||z||, \qquad\text{ for all }z\in t^{-j}\cO_{K_\infty}.$$
In particular, if $z\in U_{j,\infty}$ then $\delta_{n}(z)\in t^{-j-1-a}\cO_{K_\infty}$, and the inclusions \eqref{E:U-inclusions} show that $\delta_{n}(z)\in U_{j+1, \infty}.$
\end{proof}

\subsection{Endomorphisms of $K_\infty/\Lambda$.}\label{S:endomorphisms} Now, we treat the particular case $M_1=M_2=K_\infty/\Lambda$, for an $A[G]$--projective lattice $\Lambda\subseteq K_\infty$. As above, we fix
$\ell>0$ such that $t^{-\ell}\cO_{K_\infty}\cap\Lambda=\{0\}$ and fix $a\in\Z_{>0}$ satisfying \eqref{E:U-inclusions}. For simplicity, we let $V:=K_\infty/\Lambda$.
\begin{definition}\label{D:contraction}
An $R$--linear, continuous endomorphism $\phi: V\to V$ is called a local $M$--contraction, for some
$M\in\Bbb Z_{>0}$, if there exists $i\geq\ell$ such that
$$||\phi(x)||\leq ||t||^{-M}\cdot||x||, \text{ for all }x\in t^{-i}\cO_{K_\infty}.$$
\end{definition}
\begin{remark}\label{R:contraction-nuclear}
If $\phi$ as above is a local $M$--contraction for some $M>a$, then $\phi$ is locally contracting on $V$ and therefore
the nuclear determinant ${\rm det}_{R[[Z]]}(1-\phi\cdot Z|V)$ makes sense.  Indeed, pick an $i>\ell$ as in the definition above.
Then, inclusions \ref{E:U-inclusions} show that
$$\phi(U_{j,\infty})\subseteq\phi(t^{-j}\cO_{K_\infty})\subseteq t^{-j-M}\cO_{K_\infty}\subseteq t^{-j-a-1}\cO_{K_\infty}\subseteq U_{j+1, \infty},$$
for all $j\geq i$. This shows that $U_{i, \infty}$ is a nucleus for $\phi$.
\end{remark}
\begin{proposition}\label{P:alpha-psi}
Assume that $\gamma:V\simeq V$ is an $R$--linear, continuous isomorphism, which
is $N$--tangent to the identity, for some $N>0$. Let $\psi:V\to V$ be an $R$--linear, continuous, local $M$--contraction, for some
$M>2a$. Let $\alpha:=t\gamma$. Then
\begin{enumerate}
\item $\alpha\psi$ and $\psi\alpha$ are local $(M-1)$--contractions on $V$.
\item ${\rm det}_{R[[Z]]}(1-\alpha\psi\cdot Z|V)={\rm det}_{R[[Z]]}(1-\psi\alpha\cdot Z|V).$
\end{enumerate}
\end{proposition}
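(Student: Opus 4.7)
The plan is to handle part (1) by direct norm estimates and part (2) by an adaptation of the commutative-diagram argument of Proposition \ref{CommDet}, the key new subtlety being that the ``conjugating'' operator $\varphi_\alpha=t\gamma$ is no longer injective on $V$ (because of $t$-torsion in $V=K_\infty/\Lambda$). For part (1), since $\gamma$ is $N$-tangent to the identity, there is an $i_\gamma\geq\ell$ on which $\gamma$ acts as a bijective isometry, so $\|\alpha(x)\|=\|t\gamma(x)\|=\|t\|\cdot\|x\|$ for $x\in t^{-i_\gamma}\cO_{K_\infty}$. Combining this with the local $M$-contraction estimate for $\psi$, a straightforward norm computation shows that, for $j$ sufficiently large, both $\alpha\psi$ and $\psi\alpha$ satisfy $\|(\alpha\psi)(x)\|,\,\|(\psi\alpha)(x)\|\leq\|t\|^{-(M-1)}\|x\|$ on $t^{-j}\cO_{K_\infty}$; hence both are local $(M-1)$-contractions, and since $M>2a$ forces $M-1>a$, Remark \ref{R:contraction-nuclear} ensures that the nuclear determinants $\det_{R[[Z]]}(1-\alpha\psi Z\,|\,V)$ and $\det_{R[[Z]]}(1-\psi\alpha Z\,|\,V)$ are well defined.

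For part (2), let $\varphi_\alpha:V\to V$ denote multiplication by $\alpha=t\gamma$. Since $\gamma$ is bijective on $V$ and the $t$-multiplication map on $V$ is surjective (as $K_\infty=tK_\infty$), $\varphi_\alpha$ is surjective. Its kernel $\ker\varphi_\alpha=\gamma^{-1}(t^{-1}\Lambda/\Lambda)$ is $R$-isomorphic, via multiplication by $t$, to $\Lambda/t\Lambda$; because $\Lambda$ is a projective $A[G]$-module, $\Lambda/t\Lambda$ is a projective $\F_q[G]$-module, so $\ker\varphi_\alpha$ is $G$-c.t. I would then fix $i$ large enough that $U_{i,\infty}$ is a common nucleus for $\alpha\psi$, $\psi\alpha$, and $\psi$; the short exact sequence $0\to\ker\varphi_\alpha\to\varphi_\alpha^{-1}(U_{i,\infty})\to U_{i,\infty}\to 0$ then shows that $\varphi_\alpha^{-1}(U_{i,\infty})$ is $G$-c.t. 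The induced map $\varphi_\alpha:V/\varphi_\alpha^{-1}(U_{i,\infty})\to V/U_{i,\infty}$ is an $R$-linear isomorphism (surjective because $\varphi_\alpha$ is surjective on $V$, injective by the definition of the preimage) intertwining $\psi\varphi_\alpha$ with $\varphi_\alpha\psi$, which yields the equality of finite determinants
$$\det\nolimits_{R[[Z]]}\bigl(1-\psi\varphi_\alpha Z\bigm|V/\varphi_\alpha^{-1}(U_{i,\infty})\bigr)=\det\nolimits_{R[[Z]]}\bigl(1-\varphi_\alpha\psi Z\bigm|V/U_{i,\infty}\bigr).$$

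The right-hand side equals $\det_{R[[Z]]}(1-\alpha\psi Z\,|\,V)$ since $U_{i,\infty}$ is a nucleus for $\alpha\psi$. For the left-hand side I would pick $j$ with $i+a+1\leq j\leq i+M-a$ (possible precisely because $M>2a$) such that $U_{j,\infty}$ is also a nucleus for $\psi\alpha$. Norm bookkeeping using \eqref{E:U-inclusions} gives both $U_{j,\infty}\subseteq\varphi_\alpha^{-1}(U_{i,\infty})$ and $\psi\alpha\bigl(\varphi_\alpha^{-1}(U_{i,\infty})\bigr)=\psi(U_{i,\infty})\subseteq U_{i+M-a,\infty}\subseteq U_{j,\infty}$, so $\psi\alpha$ vanishes on the quotient $\varphi_\alpha^{-1}(U_{i,\infty})/U_{j,\infty}$. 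Applying Proposition \ref{MultSES} to the short exact sequence
$$0\longrightarrow\varphi_\alpha^{-1}(U_{i,\infty})/U_{j,\infty}\longrightarrow V/U_{j,\infty}\longrightarrow V/\varphi_\alpha^{-1}(U_{i,\infty})\longrightarrow 0$$
then gives $\det_{R[[Z]]}(1-\psi\alpha Z\,|\,V/\varphi_\alpha^{-1}(U_{i,\infty}))=\det_{R[[Z]]}(1-\psi\alpha Z\,|\,V)$, and combining the three displayed equalities yields the claim. The principal technical obstacle is controlling the non-injectivity of $\varphi_\alpha$ caused by the $t$-torsion in $V$: this is what forces the verification that $\varphi_\alpha^{-1}(U_{i,\infty})$ is $G$-cohomologically trivial, and it dictates the sandwiching range $i+a+1\leq j\leq i+M-a$ for the bridging index, which is feasible exactly under the hypothesis $M>2a$.
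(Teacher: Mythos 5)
Your argument is correct and follows the same overall strategy as the paper's proof of Proposition \ref{P:alpha-psi} (the conjugation diagram via $\varphi_\alpha$, followed by a bridging step between $\varphi_\alpha^{-1}(U_{i,\infty})$ and a standard nucleus), but your handling of the bridging step is genuinely tidier. The paper establishes $G$-cohomological triviality of $\alpha^{-1}(U_{i,\infty})$ via the direct sum decomposition $\alpha^{-1}(U_{i,\infty})=\gamma^{-1}(\tfrac{1}{t}\Lambda/\Lambda)\oplus\gamma^{-1}(t^{-1}U_{i,\infty})$, introduces the auxiliary submodule $\alpha^{-1}(U_{i,\infty})^{\ast}:=\gamma^{-1}(t^{-1}U_{i,\infty})$, and then runs through three successive short-exact-sequence reductions (passing through $t^{-a}\alpha^{-1}(U_{i,\infty})^{\ast}$) to land on $U_{i+1,\infty}$. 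You instead establish projectivity via the kernel sequence $0\to\ker\varphi_\alpha\to\varphi_\alpha^{-1}(U_{i,\infty})\to U_{i,\infty}\to 0$, with $\ker\varphi_\alpha\simeq\Lambda/t\Lambda$ projective over $\F_q[G]$, and then bridge in a single short exact sequence to a standard nucleus $U_{j,\infty}$, where $j$ is chosen in the window $i+a+1\leq j\leq i+M-a$. Both routes localize the role of the hypothesis $M>2a$: in the paper it is what allows the chain $\psi(U_{i,\infty})\subseteq t^{-(i+1)-a}\cO_{K_\infty}\subseteq\alpha^{-1}(U_{i,\infty})^{\ast}$ and the subsequent $t^{-a}$-rescaling to fit inside $U_{i+1,\infty}$; in your version it is precisely what makes the index window $[i+a+1,\,i+M-a]$ nonempty. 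Your approach buys economy (one SES instead of three, no auxiliary module $\alpha^{-1}(U_{i,\infty})^{\ast}$); the paper's approach more explicitly displays where the $t$-torsion $\gamma^{-1}(\tfrac{1}{t}\Lambda/\Lambda)$ sits inside the preimage, which some readers may find more transparent.
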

\begin{proof}  Fix $i>\ell$ such that $\gamma:t^{-(i-1)}\cO_{K_\infty}\to t^{-(i-1)}\cO_{K_\infty}$ is a bijective isometry and such that
$$||\psi(x)||\leq ||t||^{-M}\cdot ||x||, \text{ for all }x\in t^{-(i-1)}\cO_{K_\infty}.$$

(1) For $i$ chosen as above it is easy to check that
$$||\alpha\psi(x)||\leq ||t||^{-(M-1)}||x||, \quad ||\psi\alpha(x)||\leq ||t||^{-(M-1)}||x||, \text{ for all }x\in t^{-i}\cO_{K_\infty}.$$
So, $\alpha\psi$ and $\psi\alpha$ are $(M-1)$--contractions on $t^{-i}\cO_{K_\infty}$. Therefore, they are locally contracting endomorphisms of $V$
by Remark \ref{R:contraction-nuclear}, and so the nuclear determinants in (2) make sense.
\medskip

(2) The last displayed inequalities, combined with $(M-1)>a$ and Remark \ref{R:contraction-nuclear} show that $\psi$, $\alpha\psi$, and  $\psi\alpha$
are all locally contracting on $V$ of common nuclei $U_{j, \infty}$, for all $j\geq i$.

Now, since $\gamma$ is an isomorphism and $V$ is $t$--divisible (because $K_\infty$ is), $\alpha$ is surjective. Therefore $\alpha$ induces and $R$--module isomorphism
$$V/\alpha^{-1}(U_{i,\infty})\overset{\alpha}\simeq V/U_{i, \infty}.$$
Since $\Lambda\cap U_{i, \infty}=\{0\}$, we have $\alpha^{-1}(U_{i, \infty})=\gamma^{-1}(\frac{1}{t}\Lambda/\Lambda)\oplus\gamma^{-1}(t^{-1}U_{i,\infty}).$ Below, we let
$$\alpha^{-1}(U_{i, \infty})^\ast:=\gamma^{-1}(t^{-1}U_{i,\infty}).$$
Since $\gamma$ is an isomorphism, the $R$--modules $\gamma^{-1}(t^{-1}U_{i,\infty})$, $\gamma^{-1}(\frac{1}{t}\Lambda/\Lambda)$ and $\alpha^{-1}(U_{i, \infty})$ are all projective
(equivalently, $G$--c.t.) because $U_{i,\infty}$ and $\Lambda$ are $G$--c.t. Also, note that
\begin{equation}\label{E:alpha-plus}t^{-(i+1)-a}\cO_{K_\infty}\subseteq \alpha^{-1}(U_{i, \infty})^\ast,\, U_{i+1, \infty}\subseteq t^{-(i+1)}\cO_{K_\infty},\end{equation}
as $\gamma^{-1}$ is an isometry on $t^{-(i+1)}\cO_{K_\infty}$ and $t^{-(i+1)-a}\cO_{K_\infty}\subseteq t^{-1}U_{i, \infty}\subseteq t^{-(i+1)}\cO_{K_\infty}$.
Now, use \eqref{E:alpha-plus} to note that since $\psi$ is an $M$--contraction on $t^{-i}\cO_{K_\infty}$ and $M>2a$, we have
\begin{equation}\label{E:psi-alpha}(\psi\alpha)(\alpha^{-1}(U_{i, \infty}))=\psi(U_{i, \infty})\subseteq t^{-i-M}\cO_{K_\infty}\subseteq t^{-(i+1)-a}\cO_{K_\infty}\subseteq \alpha^{-1}(U_{i, \infty})^\ast\subseteq \alpha^{-1}(U_{i, \infty}).
\end{equation}
Consequently, we have a commutative diagram of morphisms of finite, projective $R$--modules
$$\xymatrix{
V/\alpha^{-1}(U_{i, \infty})\ar@{>}[r]^{\quad \alpha}_{\quad \sim}\ar@{>}[d]^{\,\psi\alpha} & V/U_{i, \infty}\ar@{>}[d]^{\,\alpha\psi} \\
V/\alpha^{-1}(U_{i, \infty})\ar@{>}[r]^{\quad \alpha}_{\quad \sim} &V/U_{i, \infty} ,}$$
whose horizontal maps are isomorphisms. This gives an equality of (regular) determinants
\begin{equation}\label{E:det2}{\rm det}_{R[[Z]]}(1-\alpha\psi\cdot Z|V/U_{i, \infty})={\rm det}_{R[[Z]]}(1-\psi\alpha\cdot Z|V/\alpha^{-1}(U_{i, \infty})).\end{equation}
Now, consider the short exact sequence of projective $R$--modules
$$0\to {\alpha^{-1}(U_{i, \infty})}/{\alpha^{-1}(U_{i, \infty})^\ast}\to V/{\alpha^{-1}(U_{i, \infty})^\ast}\to V/{\alpha^{-1}(U_{i, \infty})}\to 0.$$
Noting that \eqref{E:psi-alpha} implies that $\psi\alpha$ induces an $R$--linear endomorphism of the exact sequence above and that $\psi\alpha\equiv 0$ on ${\alpha^{-1}(U_{i, \infty})}/{\alpha^{-1}(U_{i, \infty})^\ast}$,
the exact sequence above gives
\begin{equation}\label{E:det3}{\rm det}_{R[[Z]]}(1-\psi\alpha\cdot Z|V/\alpha^{-1}(U_{i, \infty}))={\rm det}_{R[[Z]]}(1-\psi\alpha\cdot Z|V/\alpha^{-1}(U_{i, \infty})^\ast).\end{equation}
Now, since $\psi\alpha$ is an $(M-1)$--contraction on $t^{-i}\cO_{K_\infty}$ (see proof of part (1)), \eqref{E:alpha-plus} leads to the following inclusions
$$\psi\alpha(\alpha^{-1}(U_{i, \infty})^\ast),\, \psi\alpha(U_{i+1,\infty})\subseteq t^{-(i+1)-(M-1)}\cO_{K_\infty}\subseteq t^{-2a-(i+1)}\cO_{K_\infty}\subseteq t^{-a}\left(\alpha^{-1}(U_{i, \infty})^\ast\right).$$
Now, since \eqref{E:alpha-plus} also implies that
$$t^{-a}\left(\alpha^{-1}(U_{i, \infty})^\ast\right)\subseteq U_{i+1,\infty},\, \alpha^{-1}(U_{i, \infty})^\ast,$$
the last displayed inclusions show that $\psi\alpha\equiv 0$ on the quotients $U_{i+1,\infty}/t^{-a}\left(\alpha^{-1}(U_{i, \infty})^\ast\right)$ and on $\alpha^{-1}(U_{i, \infty})^\ast/t^{-a}\left(\alpha^{-1}(U_{i, \infty})^\ast\right)$. Consequently, a short exact sequence argument similar to the one used to prove \eqref{E:det3} above gives the following equalities of (regular) determinants
\begin{eqnarray*}
 {\rm det}_{R[[Z]]}(1-\psi\alpha\cdot Z|V/\alpha^{-1}(U_{i, \infty})^\ast)  &=& {\rm det}_{R[[Z]]}(1-\psi\alpha\cdot Z|V/t^{-a}\alpha^{-1}(U_{i, \infty})^\ast) \\
  &=& {\rm det}_{R[[Z]]}(1-\psi\alpha\cdot Z|V/U_{i+1,\infty}).
\end{eqnarray*}
Now, we combine these equalities with \eqref{E:det2} and \eqref{E:det3} to obtain
$${\rm det}_{R[[Z]]}(1-\alpha\psi\cdot Z|V/U_{i, \infty})={\rm det}_{R[[Z]]}(1-\psi\alpha\cdot Z|V/U_{i+1,\infty}).$$
Recalling that $U_{i, \infty}$ and $U_{i+1, \infty}$ are common nuclei for $\psi\alpha$ and $\alpha\psi$, this leads to the desired equality
of nuclear determinants, which concludes the proof of part (2).
\end{proof}

\begin{remark}\label{R:monomials} Assume that $\psi$, $\gamma$, $\alpha$ and $M$ are as in Proposition \ref{P:alpha-psi}, however here $M$ can be any positive integer. An argument similar to that used in the proof of part (1) of Prop. \ref{P:alpha-psi} shows that any element in the $R$--subalgebra $R\{\alpha, \psi\}$ of ${\rm End}_R(V)$ generated by $\alpha$ and $\psi$ with the property that it is a sum of monomials of degree at most $n$, for some $n\leq M$, each containing at least one factor of $\psi$, is a local $(M-n+1)$--contraction on $V$. Examples of such monomials are $\alpha\psi$ and $\psi\alpha$, dealt with in Proposition \ref{P:alpha-psi}(1). We leave the details of the general case to the reader.
\end{remark}

\begin{corollary}\label{C:det=1}
Let $\gamma:V\simeq V$ be an $R$--linear, continuous isomorphism which is $(2N)$--tangent to the identity, for some $N\geq a$. Then, we have
$${\rm det}_{R[[T^{-1}]]/T^{-N}}(1+\Delta_\gamma\,|\, V[[T^{-1}]]/T^{-N})=1.$$
\end{corollary}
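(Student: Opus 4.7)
My plan: the starting observation is that by the very definition of $\Delta_\gamma$ one has
$$(1+\Delta_\gamma)(1-tT^{-1}) = 1-\gamma^{-1}t\gamma T^{-1} = \gamma^{-1}(1-tT^{-1})\gamma,$$
so
$$1+\Delta_\gamma = \gamma^{-1}(1-tT^{-1})\gamma\,(1-tT^{-1})^{-1}$$
is literally the group-theoretic commutator $[\gamma^{-1},\,1-tT^{-1}]$ in the automorphism group of $V[[T^{-1}]]/T^{-N}$. Morally, any commutator has trivial determinant, so the proof is about making this precise inside the nuclear formalism — the obstacle is that neither $\gamma^{\pm 1}$ nor $(1-tT^{-1})^{\pm 1}$ individually admits a nuclear determinant (the coefficient $t$ of $(1-tT^{-1})$ is not locally contracting on $V$). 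The key technical input to circumvent this will be Proposition \ref{P:alpha-psi}(2), whose threshold $M>2a$ is exactly what forces the hypothesis $N\geq a$.

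Concretely, use the $(2N)$-tangent hypothesis to write $\gamma=1+\epsilon$ and $\gamma^{-1}=1+\epsilon'$, where $\epsilon,\epsilon'$ are local $(2N+a)$-contractions on $V$ related by $\epsilon'=-\gamma^{-1}\epsilon$ (the computations preceding Proposition \ref{P:Ntangpowerseries} show this). Substituting into the numerator and denominator and factoring out the non-nuclear piece $A:=1-t\gamma T^{-1}$ on the right of each, one gets
$$1+\Delta_\gamma \;=\; A\,(1-\widetilde\Phi_1)(1+\widetilde\Phi_2)^{-1}A^{-1}\pmod{T^{-N}},$$
with
$$\widetilde\Phi_1 = A^{-1}\epsilon' t\gamma\,T^{-1} = \sum_{n\geq 1}(t\gamma)^{n-1}\epsilon'(t\gamma)\,T^{-n},\qquad \widetilde\Phi_2 = A^{-1}t\epsilon\,T^{-1} = -\sum_{n\geq 1}(t\gamma)^n\epsilon'\,T^{-n},$$
the second equality using $\gamma^{-1}\epsilon=-\epsilon'$. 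A norm check based on \eqref{E:U-inclusions} shows each coefficient of $\widetilde\Phi_i$ (for $n<N$) is a local $M$-contraction on $V$ with $M>2a$, precisely when $N\geq a$; consequently $\widetilde\Phi_1,\widetilde\Phi_2$ are nuclear mod $T^{-N}$ by Proposition \ref{SumAndComposition}, and both determinants $\det(1-\widetilde\Phi_1|V)$ and $\det(1+\widetilde\Phi_2|V)$ are defined.

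Next, apply Proposition \ref{P:alpha-psi}(2) with $\alpha=t\gamma$ iteratively, together with Proposition \ref{MultDet}, to match the coefficients of $\widetilde\Phi_1$ and $\widetilde\Phi_2$: the elementary cyclicity $\det(1-(t\gamma)^n\epsilon' Z|V) = \det(1-(t\gamma)^{n-1}\epsilon'(t\gamma)Z|V)$ for each $n<N$ corresponds at the level of traces to the identity $\mathrm{tr}((t\gamma)^n\epsilon') = \mathrm{tr}((t\gamma)^{n-1}\epsilon'(t\gamma))$; combining this termwise cyclicity with the multiplicativity of nuclear determinants yields $\det(1-\widetilde\Phi_1|V) = \det(1+\widetilde\Phi_2|V)$, and hence $\det\bigl((1-\widetilde\Phi_1)(1+\widetilde\Phi_2)^{-1}|V\bigr)=1$. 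The final step is conjugation invariance: the same application of Proposition \ref{P:alpha-psi}(2) with $\alpha=t\gamma$ transports the determinant through conjugation by $A=1-t\gamma T^{-1}$, giving $\det(1+\Delta_\gamma|V)=1\pmod{T^{-N}}$.

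The principal obstacle is the matching step: Proposition \ref{P:alpha-psi}(2) only gives cyclicity for single-factor operators $1-\alpha\psi Z$, while $\widetilde\Phi_1,\widetilde\Phi_2$ are infinite series whose squares and higher powers couple coefficients across different values of $n$. Making the coefficient matching into an equality of full determinants requires a careful bookkeeping of cross-terms via Proposition \ref{MultDet}, staying inside the $M>2a$ regime throughout; this is exactly what fails if $N<a$ (the contraction $(t\gamma)^{n-1}\epsilon'(t\gamma)$ becomes too weak for large $n$), which explains the sharpness of the hypothesis $N\geq a$.
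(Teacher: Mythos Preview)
Your decomposition $1+\Delta_\gamma = A(1-\widetilde\Phi_1)(1+\widetilde\Phi_2)^{-1}A^{-1}$ is algebraically correct, and you correctly observe that the $n$-th coefficients of $\widetilde\Phi_1$ and $-\widetilde\Phi_2$ are $\psi_n\alpha$ and $\alpha\psi_n$ respectively, with $\psi_n=(t\gamma)^{n-1}\epsilon'$ and $\alpha=t\gamma$. But the step you label ``combining this termwise cyclicity with the multiplicativity of nuclear determinants'' is not carried out, and as written it does not follow: Proposition~\ref{MultDet} gives multiplicativity for \emph{products} of operators of the form $(1+\text{nuclear})$, whereas $1-\widetilde\Phi_1$ is $1$ minus a \emph{sum} $\sum_n\psi_n\alpha Z^n$. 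Factoring it as a product $\prod_n(1-\psi'_n\alpha Z^n)$ mod $Z^N$ is possible, but the $\psi'_n$ are determined recursively with cross-terms that mix $\alpha$'s and $\psi$'s in different orders, and there is no reason a priori for the numerator and denominator to yield the same $\psi'_n$. You acknowledge this as ``the principal obstacle''; it is in fact the entire content of the proof. The conjugation-by-$A$ step has the same defect: Proposition~\ref{P:alpha-psi}(2) is stated only for a single $\psi$, and ``transporting the determinant through $A=1-\alpha Z$'' would again need either a series extension of that proposition or a separate nucleus argument you do not supply.

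The paper closes this gap differently. Writing $\psi:=\gamma^{-1}-1$ and $\alpha:=t\gamma$, one has $1+\Delta_\gamma=\dfrac{1-(\psi+1)\alpha Z}{1-\alpha(\psi+1)Z}$, and the key input (taken from Taelman) is the \emph{single} product decomposition
\[
\frac{1-(\psi+1)\alpha Z}{1-\alpha(\psi+1)Z}\;\equiv\;\prod_{n=1}^{N-1}\frac{1-\psi_n\alpha\,Z^n}{1-\alpha\psi_n\,Z^n}\pmod{Z^N},
\]
where each $\psi_n\in R\{\alpha,\psi\}$ has degree at most $n$ and contains at least one factor of $\psi$. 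This identity produces matched pairs $(\psi_n\alpha,\alpha\psi_n)$ from the outset, so Proposition~\ref{P:alpha-psi}(2) applies directly to each factor and Proposition~\ref{MultDet} assembles the result; no conjugation step is needed. Remark~\ref{R:monomials} supplies the contraction bound: since $\psi$ is a local $(2N+a)$-contraction, each $\psi_n$ with $n<N$ is a local $(N+a+1)$-contraction, and $N+a+1>2a$ is exactly the hypothesis $N\geq a$. Your route could be completed by proving a genuine series version of Proposition~\ref{P:alpha-psi}(2), namely $\det(1-\Psi\alpha\mid V)=\det(1-\alpha\Psi\mid V)$ for nuclear $\Psi=\sum_n\psi_nZ^n$; the nucleus manipulations in that proof do extend, but this must be argued, not asserted.
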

\begin{proof} We use the main ideas in the proof of Corollary 1 in \cite{T12}. Let $Z:=T^{-1}$.
Let $\alpha:=t\gamma$ and $\psi:=(\gamma^{-1}-1)$, viewed as a continuous, $R$--linear endomorphism of $V$. Then, we have
$$1+\Delta_\gamma=\frac{1-(\psi+1)\alpha\cdot Z}{1-\alpha(\psi+1)\cdot Z}.$$
Now, since $\gamma^{-1}$ is $(2N)$--tangent to the identity, $\psi$ is a local $(2N+a)$--contraction. (See Definition \ref{D:N-tangent}(2).)
As in the proof of Cor. 1 \cite{T12}, one writes
$$\frac{1-(\psi+1)\alpha\cdot Z}{1-\alpha(\psi+1)\cdot Z}\mod Z^N=\prod_{n=1}^{N-1}\left(\frac{1-\psi_n\alpha\cdot Z^n}{1-\alpha\psi_n\cdot Z^n}\right)\mod Z^N,$$
where the $\psi_n$'s are uniquely determined polynomials in $R\{\alpha, \psi\}$ of degree at most $n$, containing at least one factor of $\psi$.
According to Remark \ref{R:monomials}, $\psi_n$ is a local $(N+a+1)$--contraction on $V$, for all $n<N$.
Since $M:=(N+a+1)>2a$, we may apply Proposition \ref{P:alpha-psi}(2) to $\alpha$, $\psi:=\psi_n$ and $M$ to conclude that
$${\rm det}_{R[[Z]]/Z^N}(1+\Delta_\gamma\,|\,V[[Z]]/Z^N)=\prod_{n=1}^{N-1}{\rm det}_{R[[Z]]/Z^N}\left(\frac{1-\psi_n\alpha\cdot Z^n}{1-\alpha\psi_n\cdot Z^n}\,\bigg|\,V[[Z]]/Z^N\right)=1.$$
\end{proof}
\subsection{Volume interpretation of determinants.} The next theorem is motivated by the fact that if $\gamma:H_1\simeq H_2$ is an $R$--linear isomorphism of {\it finite}, projective $R[t]$--modules, then
\begin{equation}\label{E:finite-determinants-volumes}\det\nolimits_{\power{R}{T\inv}}\left (1 + \Delta_\gamma \big|H_1\right )\bigg\vert_{T=t} = \frac{|H_2|_G}{|H_1|_G}.\end{equation}
This follows immediately from Remark \ref{R:Nuclear-Fitting} and the observation that $H_1$ endowed with the modified $t$--action
$t\ast x=\gamma^{-1} t\gamma(x)$ is $R[t]$--isomorphic to $H_2$. ($\gamma$ gives an isomorphism.)

\begin{theorem}\label{T:Volume}
Let $M_1$ and $M_2$ be modules from the class $\cC$, and let $\gamma:M_1\simeq M_2$ be an $R$-linear, continuous isomorphism which is infinitely tangent to the identity. Further, assume that $M_2=K_\infty/\Lambda_2$, for a projective $R[t]$--lattice $\Lambda_2$ in $K_\infty$. Then
\[\det\nolimits_{\power{R}{T\inv}}\left (1 + \Delta_\gamma \big|M_1\right )\bigg\vert_{T=t} = \frac{\Vol(M_2)}{\Vol(M_1)}.\]
\end{theorem}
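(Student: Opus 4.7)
The plan is to adapt Taelman's strategy (the proof of Theorem~3 in \cite{T12}) to the $G$--equivariant setting by splitting the nuclear determinant into a ``finite part'' and a ``continuous part'' via a carefully chosen admissible lattice, and then matching these pieces to the two constituents of the volume formula in Definition \ref{D:volume}. Throughout, let $0\to K_\infty/\Lambda_1\xrightarrow{\iota_1}M_1\xrightarrow{\pi_1}H_1\to 0$ be the structural sequence for $M_1$, and fix a section $s_1:H_1\to M_1$.

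\medskip

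First I would choose an $(M_1,s_1)$-admissible projective $A[G]$-lattice $\Lambda_1'\supseteq\Lambda_1$ (using Proposition \ref{P:Admissible}), giving a finite $A[G]$-submodule $N:=\iota_1(\Lambda_1'/\Lambda_1)+s_1(H_1)\subseteq M_1$. Since $\gamma$ is only $R$-linear (not $A[G]$-linear), the image $\gamma(N)\subseteq M_2$ need not be $A[G]$-stable; I would enlarge $\Lambda_1'$ by iterating $N\mapsto\gamma^{-1}(A[G]\cdot\gamma(N))$ until $\gamma(N)$ becomes $A[G]$-stable, necessarily of the form $\Lambda_2'/\Lambda_2$ for some projective $A[G]$-lattice $\Lambda_2'\supseteq\Lambda_2$. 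Termination of this iteration in finitely many steps is the first nontrivial point, and should follow from the fact that being infinitely tangent to the identity forces the commutator $[t,\gamma]$ to be small enough on the relevant small neighborhoods to bound the growth of the $A[G]$-span.

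\medskip

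Next I would apply the multiplicativity Proposition \ref{MultSES} to the exact sequence $0\to N\to M_1\to K_\infty/\Lambda_1'\to 0$. This is legitimate precisely because $\gamma(N)$ is $A[G]$-stable: the coefficients $\delta_n=(t-\gamma^{-1}t\gamma)t^{n-1}$ of $\Delta_\gamma$ then preserve $N$. The finite-part factor is computed by equation \eqref{E:finite-determinants-volumes} applied to the $R$-linear $R[t]$-module isomorphism $\gamma|_N:N\xrightarrow{\sim}\Lambda_2'/\Lambda_2$, yielding
\[
\det\nolimits_{R[[T^{-1}]]}(1+\Delta_\gamma\mid N)\big|_{T=t}=\frac{|\Lambda_2'/\Lambda_2|_G}{|N|_G}.
\]

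\medskip

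The continuous-part factor is the main obstacle. The induced map $\overline\gamma:K_\infty/\Lambda_1'\to K_\infty/\Lambda_2'$ remains infinitely tangent to the identity (being a local property at $0$ that survives passage to a quotient by compatible finite submodules), and I need
\[
\det\nolimits_{R[[T^{-1}]]}(1+\Delta_{\overline\gamma}\mid K_\infty/\Lambda_1')\big|_{T=t}=[\Lambda_1':\Lambda_2']_G.
\]
To establish this, I would construct a second $R$-linear isomorphism $\eta:K_\infty/\Lambda_1'\to K_\infty/\Lambda_2'$ infinitely tangent to the identity, coming from a power series $\Gamma(z)=z+\alpha_1 z^q+\cdots$ as in Proposition \ref{P:Ntangpowerseries} combined with a suitable $\F_q((t^{-1}))[G]$-linear change-of-basis matrix, whose nuclear determinant at $T=t$ can be read off directly as $[\Lambda_1':\Lambda_2']_G$. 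Then $\eta^{-1}\overline\gamma$ is an automorphism of $K_\infty/\Lambda_1'$ infinitely tangent to the identity, so Corollary \ref{C:det=1} gives $\det(1+\Delta_{\eta^{-1}\overline\gamma})|_{T=t}=1$. A formal multiplicativity of $\det(1+\Delta_{(\cdot)})$ under composition of isomorphisms (derived from conjugation-invariance of finite determinants together with the short exact sequence machinery already in use) then forces $\det(1+\Delta_{\overline\gamma})|_{T=t}=\det(1+\Delta_\eta)|_{T=t}=[\Lambda_1':\Lambda_2']_G$. Assembling this with the finite-part factor and using $|\Lambda_2'/\Lambda_2|_G\cdot[\Lambda_1':\Lambda_2']_G=[\Lambda_1':\Lambda_2]_G$ (by Lemma \ref{L:projective-lattice-index}), together with the explicit formulas $\Vol(M_1)=|N|_G/[\Lambda_1':\Lambda_0]_G$ and $\Vol(M_2)=1/[\Lambda_2:\Lambda_0]_G$, one obtains the asserted equality.
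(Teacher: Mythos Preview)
Your proposal contains two genuine gaps. The more serious one is step 4: any $\F_q((t^{-1}))[G]$--linear change--of--basis map $\eta$ commutes with $t$, so $\Delta_\eta=0$ and $\det(1+\Delta_\eta)=1$, not $[\Lambda_1':\Lambda_2']_G$; and if your $\eta$ has a genuinely non--$A$--linear power--series component, there is no ``direct'' way to read off its determinant. In fact, your steps 1--3 (even if they succeed) only reduce Theorem \ref{T:Volume} to its own special case with $H_1=0$, since $\Vol(K_\infty/\Lambda_2')/\Vol(K_\infty/\Lambda_1')=[\Lambda_1':\Lambda_2']_G$ is precisely what step 4 asserts about $\overline\gamma$---so step 4 is circular. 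The termination claim in step 1 is also unsubstantiated: local contractivity of $(t-\gamma^{-1}t\gamma)$ controls behavior on small balls but says nothing about the finite piece $N$, and each pass of the iteration may push mass into the continuous part that must then be reabsorbed.

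The paper's proof avoids both issues. Rather than enlarging $N$ until $\gamma(N)$ becomes $A[G]$--stable, it proves an \emph{independence of $\gamma$} lemma (Lemma \ref{L:independencegamma}, via Corollary \ref{C:det=1}): any two isomorphisms $M_1\simeq M_2$ that are $(2N)$--tangent to the identity give the same determinant $\bmod\ T^{-N}$. This lets one \emph{replace} $\gamma$ by an explicit $\rho=(\mathrm{id}_{\mathcal U},\xi)$, once one has a common over--lattice $\Lambda\supseteq\Lambda_1,\Lambda_2$ and an $R$--isomorphism $\xi:A_1\simeq A_2$ of the finite parts (obtained by a rank count over the local summands of the semilocal ring $R$). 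The reduction to the common over--lattice situation is achieved by approximating the transition matrix as $X=BX_0$ with $B\in\GL_n(\F_q(t)[G])$ and $X_0\equiv 1\bmod t^{-2N}$, then pushing out along the $A[G]$--linear $\phi_{X_0}$: this changes neither the determinant (since $\Delta_{\phi}=0$) nor the volume ratio $\bmod\ t^{-N}$. It is this approximate--and--replace mechanism, not an explicit $\eta$ with a prescribed determinant, that actually produces the lattice index.
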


\begin{remark} Although we believe that the above Theorem holds for general $M_1$ and $M_2$, for the purposes of this paper
it is sufficient to prove this result for $M_2$ of the special type described above. We plan on addressing the general case in an
upcoming paper.
\end{remark}

\begin{proof}[Proof of Theorem \ref{T:Volume}] The proof follows the strategy in \S4 of \cite{T12}. Below, we use the notations in \S\S\ref{S:Ntang}--\ref{S:endomorphisms}. For simplicity, we suppress $\iota_1$ from the notations, and think of it as an inclusion. Recall that $R:=\Fq[G]$. We need two intermediate Lemmas.

\begin{lemma}[Independence of $\gamma$] \label{L:independencegamma}
For $M_1$ and $M_2$ as in Theorem \ref{T:Volume}, assume that $\gamma_1,\gamma_2:M_1\simeq M_2$ are two $R$-linear, continuous isomorphisms which are $2N$-tangent to the identity, for some $N\geq a$.  Then
\[\det\nolimits_{R[[T\inv]]/T^{-N}}(1+\Delta_{\gamma_1}|M_1) = \det\nolimits_{R[[T\inv]]/T^{-N}}(1+\Delta_{\gamma_2}|M_1).\]
\end{lemma}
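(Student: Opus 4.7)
The plan is to reduce the lemma to a single application of Corollary \ref{C:det=1} performed on the target module $M_2$, combined with the multiplicativity of nuclear determinants from Proposition \ref{MultDet}.

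The key construction is the self-map $\beta := \gamma_1 \gamma_2^{-1} \colon M_2 \to M_2$. First I would verify that $\beta$ is a continuous $R$-linear automorphism of $M_2$ which is $2N$-tangent to the identity. This is a short ultrametric calculation: writing $\gamma_s^{\mathrm{loc}} := \iota_2^{-1}\gamma_s\iota_1$, each $\gamma_s^{\mathrm{loc}}$ is a bijective isometry on $t^{-i}\cO_{K_\infty}$ for $i \gg 0$ with $\lVert \gamma_s^{\mathrm{loc}}(x) - x\rVert \leq \lVert t\rVert^{-2N-a}\lVert x\rVert$. Since $\iota_2^{-1}\beta\iota_2 = \gamma_1^{\mathrm{loc}} \circ (\gamma_2^{\mathrm{loc}})^{-1}$, the strong triangle inequality yields the analogous bound for $\beta$. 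The elementary relation $\beta^{-1} t \beta = \gamma_2 \gamma_1^{-1} t \gamma_1 \gamma_2^{-1}$, true by direct substitution, then gives, after cancellation of the common $(1 - tT^{-1})$ factor, the identity
\begin{equation*}
1 + \Delta_\beta \;=\; \gamma_2 \cdot \bigl((1+\Delta_{\gamma_1})(1+\Delta_{\gamma_2})^{-1}\bigr) \cdot \gamma_2^{-1}
\end{equation*}
inside the ring of continuous $R[[T^{-1}]]$-endomorphisms of $M_2[[T^{-1}]]$. (Here $1+\Delta_{\gamma_2}$ is automatically invertible because $\Delta_{\gamma_2}$ has no constant term as a power series in $T^{-1}$.)

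Applying Corollary \ref{C:det=1} to the pair $(M_2,\beta)$, which is legitimate because $M_2 = K_\infty/\Lambda_2$ is of the required form and $\beta$ is $2N$-tangent to the identity with $N\geq a$, produces $\det_{R[[T^{-1}]]/T^{-N}}(1+\Delta_\beta \mid M_2)=1$. To transport this identity back to $M_1$, I would compute the nuclear determinant of $1+\Delta_\beta$ on $M_2$ using the pushforward basis $\{\gamma_2(U_{i,\infty})\}_{i\geq \ell}$ instead of the canonical basis $\{\iota_2(U_{i,\infty})\}$. This pushforward is a legitimate basis of open, $G$-cohomologically trivial $R$-submodules of $M_2$, since $\gamma_2$ is a continuous $R$-linear homeomorphism and each $\gamma_2(U_{i,\infty})$ is $R$-isomorphic to $U_{i,\infty}$. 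Lemma \ref{L:independence-on-U} then identifies the two nuclear determinants on $M_2$, provided mutual $\Phi$-dominance of the two bases is verified; this follows from $\gamma_2$ and $\gamma_2^{-1}$ being near-isometries in a neighborhood of $0$, a quantitative consequence of the $2N$-tangency hypothesis. On the other hand, $\gamma_2$ induces $R$-module isomorphisms $M_1/U_{i,\infty} \simeq M_2/\gamma_2(U_{i,\infty})$ which intertwine $(1+\Delta_{\gamma_1})(1+\Delta_{\gamma_2})^{-1}$ with $1+\Delta_\beta$, so the corresponding finite determinants, and hence their projective limits, coincide. Combining these observations gives $\det_{R[[T^{-1}]]/T^{-N}}\bigl((1+\Delta_{\gamma_1})(1+\Delta_{\gamma_2})^{-1} \bigm| M_1\bigr) = 1$, and Proposition \ref{MultDet} rearranges this into the desired equality of nuclear determinants of $1+\Delta_{\gamma_1}$ and $1+\Delta_{\gamma_2}$ on $M_1$.

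The main technical obstacle lies in this transfer step. Because $\gamma_2$ is only $R$-linear and not $A[G]$-linear, one cannot simply identify $M_1$ with $M_2$ as $A[G]$-modules; the argument must instead match up nuclear determinants across the two distinct bases $\{\gamma_2(U_{i,\infty})\}$ and $\{\iota_2(U_{i,\infty})\}$ on $M_2$. The quantitative $2N$-tangency assumption, with $N\geq a$, is used precisely to secure the mutual $\Phi$-dominance needed to invoke Lemma \ref{L:independence-on-U}.
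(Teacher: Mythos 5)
Your proof is correct and follows essentially the same route as the paper: both arguments reduce, via the factorization of $1+\Delta_{\gamma_1}$ in terms of $\gamma_2$ and $\beta := \gamma_1\gamma_2^{-1}$ and the multiplicativity of nuclear determinants (Proposition \ref{MultDet}), to showing $\det_{R[[T^{-1}]]/T^{-N}}(1+\Delta_{\beta}\mid M_2)=1$, which is precisely Corollary \ref{C:det=1}. The one step you spell out that the paper leaves implicit is the transfer of the determinant of the conjugated operator between $M_1$ and $M_2$; your pushforward-basis argument combined with Lemma \ref{L:independence-on-U} is an appropriate way to justify it.
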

\begin{proof} It is straightforward to see that if an $R$--linear morphism $\delta:M_2\to M_2$ is locally contracting and if an $R$--linear isomorphism $\gamma:M_1\simeq M_2$ is $2N$--tangent to the identity for $N\geq a$, then $\gamma^{-1}\delta\gamma:M_1\to M_1$ is locally contracting. This is a direct consequence of the definitions and the identity
$$\delta-\gamma^{-1}\delta\gamma=(1-\gamma^{-1})\delta+\gamma^{-1}\delta(1-\gamma).$$
In our context, this observation allows us to write
\begin{equation}\label{E:composition}(1+\Delta_{\gamma_1})=[\gamma_2^{-1}(1+\Delta_{\gamma_1\gamma_2^{-1}})\gamma_2]\cdot(1+\Delta_{\gamma_2}),\end{equation}
where all operators inside parentheses are nuclear mod $T^{-N}$. Hence, it suffices to show that
$$\det\nolimits_{R[[T\inv]]/T^{-N}}(1+\Delta_{\gamma_1\gamma_2^{-1}}\,|\,M_2)=1.$$
This follows directly from Corollary \ref{C:det=1} applied to $V:=M_2$ and $\gamma:=\gamma_1\gamma_2^{-1}$.
 \end{proof}

\begin{lemma}[Common over-lattice]\label{L:common-over-lattice}
For $M_1$ and $M_2$ as in Theorem \ref{T:Volume}, assume that the $A[G]$--lattices $\Lambda_1$ and $\Lambda_2$ are contained in a common
$A[G]$--lattice $\Lambda$ of $K_\infty$. Let $\gamma:M_1\simeq M_2$ be an $R$--linear isomorphism, which is $2N$--tangent to the identity,
for some $N>a$. Then
$${\rm det}_{R[[T^{-1}]]/T^{-N}}(1+\Delta_\gamma\,|\, M_1)|_{T=t}\equiv \frac{{\rm Vol}(M_2)}{{\rm Vol}(M_1)}\mod t^{-N}.$$
\end{lemma}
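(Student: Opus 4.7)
The plan is to reduce the nuclear determinant on $M_1$ to an ordinary determinant on a suitably chosen finite quotient $M_1/U$, and then to read the ratio of volumes off from short exact sequences attached to a common admissible over-lattice. By Remark \ref{R:Admissible}, fix sections $s_s\colon H_s\to M_s$ and a single projective $A[G]$-lattice $\Lambda'\supseteq\Lambda_1,\Lambda_2$ which is $(M_s,s_s)$-admissible for both $s=1,2$. This yields short exact sequences of $G$-c.t.\ $A[G]$-modules
\[0\to F_s\to M_s\xrightarrow{\pi_s} K_\infty/\Lambda'\to 0,\qquad F_s:=\Lambda'/\Lambda_s\times s_s(H_s),\]
with each $F_s$ finite, and by Definition \ref{D:volume} we have $\Vol(M_s)=|F_s|_G/[\Lambda':\Lambda_0]_G$, so the normalizing index cancels in the ratio and the desired congruence becomes
\[\det\nolimits_{R[[T^{-1}]]/T^{-N}}(1+\Delta_\gamma \mid M_1)\big|_{T=t}\equiv \frac{|F_2|_G}{|F_1|_G}\pmod{t^{-N}}.\]

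I will then choose $i\geq\ell$ sufficiently large that $U:=\iota_s(U_{i,\infty})$, viewed inside both $M_1$ and $M_2$, simultaneously satisfies: $(a)$ $U_{i,\infty}\cap \Lambda'=\{0\}$; $(b)$ $U\cap F_s=\{0\}$ for $s=1,2$ (possible as each $F_s$ is finite and $\{U_{j,\infty}\}_j$ forms a basis of neighborhoods of $0$); $(c)$ $U$ is a common nucleus for $\delta_0,\dots,\delta_{N-1}$ by Lemma \ref{L:Ntangnuclear}; and, crucially, $(d)$ $\gamma(\iota_1(U))=\iota_2(U)$. Property $(d)$ is the main technical point: $2N$-tangency gives, for every $x\in U_{i,\infty}\subseteq t^{-i}\cO_{K_\infty}$, the estimate $\|\iota_2^{-1}\gamma\iota_1(x)-x\|\leq \|t\|^{-2N-a}\|x\|\leq \|t\|^{-i-a}$, so the error lies in $t^{-i-a}\cO_{K_\infty}\subseteq U_{i,\infty}$ by \eqref{E:U-inclusions}, forcing $\iota_2^{-1}\gamma\iota_1(U_{i,\infty})\subseteq U_{i,\infty}$; the analogous bound for $\gamma^{-1}$ gives equality. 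The hypothesis $N>a$ enters essentially at this step.

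With such $U$ fixed, $\gamma$ descends to an $R$-linear isomorphism $\bar\gamma\colon M_1/U\simeq M_2/U$ of finite, $G$-c.t.\ $A[G]$-modules. The definition of the nuclear determinant with $U$ a common nucleus then equates the left-hand side of the target congruence with the ordinary $R[[T^{-1}]]/T^{-N}$-determinant of $1+\Delta_{\bar\gamma}$ on $M_1/U$, which by the finite-module formula \eqref{E:finite-determinants-volumes} evaluates at $T=t$ to $|M_2/U|_G/|M_1/U|_G$. Finally, $(b)$ together with $(a)$ yields short exact sequences of finite, $G$-c.t.\ $A[G]$-modules
\[0\to F_s\to M_s/U\to (K_\infty/\Lambda')/\pi_s(U)\to 0,\]
in which the cokernel $(K_\infty/\Lambda')/\pi_s(U)$ depends only on $U_{i,\infty}$ and $\Lambda'$ and is therefore independent of $s$. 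Multiplicativity of $|\cdot|_G$ in such sequences (Lemma \ref{L:FitSequence}) gives $|M_s/U|_G=|F_s|_G\cdot|(K_\infty/\Lambda')/\pi_s(U)|_G$, the $s$-independent factor cancels, and we recover $|F_2|_G/|F_1|_G$, as required. The bulk of the real work lies in the tangency computation of the middle paragraph; everything that follows is a formal consequence of the nuclear-determinant formalism and multiplicativity of $|\cdot|_G$.
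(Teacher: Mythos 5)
Your tangency computation in the middle paragraph is sound (though it only uses $2N\ge 0$, not $N>a$): for $i$ large, $\iota_2^{-1}\gamma\iota_1$ does carry $U_{i,\infty}$ bijectively onto itself, so $\gamma$ descends to an $R$--linear isomorphism $\bar\gamma\colon M_1/U\simeq M_2/U$. The gap lies in everything you do after that. The nucleus $U=\iota_s(U_{i,\infty})$ is only an $R=\F_q[G]$--submodule of $M_s$; it is \emph{not} stable under the $t$--action, since $t\cdot t^{-i}\cO_{K_\infty}=t^{-(i-1)}\cO_{K_\infty}\not\subseteq t^{-i}\cO_{K_\infty}$. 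Consequently $M_1/U$, $M_2/U$, and $(K_\infty/\Lambda')/\pi_s(U)$ carry no induced $A[G]$--module structure, so the symbols $|M_s/U|_G$ and $|(K_\infty/\Lambda')/\pi_s(U)|_G$ are undefined, there is no operator $\Delta_{\bar\gamma}$ on $M_1/U$ (that would require $t$--actions on both quotients), formula \eqref{E:finite-determinants-volumes} cannot be applied (it requires finite \emph{projective $R[t]$--modules}), and Lemma~\ref{L:FitSequence} cannot be applied to your short exact sequences either. What the nuclear determinant actually computes on $M_1/U$ is $\det_{R[[T^{-1}]]/T^{-N}}(1+\sum_{n<N}\bar\delta_n T^{-n}\mid M_1/U)$, where each $\bar\delta_n$ is the map induced by $\delta_n=(t-\gamma^{-1}t\gamma)t^{n-1}$; this is well defined precisely because $\delta_n$ is a \emph{contraction} and hence preserves $U$, but the individual factors $t$ and $\gamma^{-1}t\gamma$ do \emph{not} descend, so the determinant does not factor as a ratio in the way your appeal to \eqref{E:finite-determinants-volumes} presupposes.

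The paper sidesteps exactly this obstruction. Instead of quotienting by a nucleus, it fixes a decomposition $K_\infty=\Lambda\oplus\cU$ with $\cU$ an open $R$--projective complement, producing genuine $R[t]$--exact sequences $0\to A_s\to M_s\to K_\infty/\Lambda\to 0$ with $A_s$ finite $R[t]$--submodules. It then constructs an auxiliary $R$--isomorphism $\rho\colon M_1\simeq M_2$ with $\rho|_{\cU}=\mathrm{id}$ and $\rho|_{A_1}=\xi$ for an $R$--isomorphism $\xi\colon A_1\simeq A_2$; the key point is that $\rho$ (unlike your $\gamma$) \emph{does} respect the $R[t]$--exact sequence, so Proposition~\ref{MultSES} applies, reducing to \eqref{E:finite-determinants-volumes} on the finite $R[t]$--modules $A_1\simeq A_2$. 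The price is an invocation of Lemma~\ref{L:independencegamma}, via Corollary~\ref{C:det=1}, to replace $\gamma$ by $\rho$ at the level of determinants mod $T^{-N}$. If you want to rescue your approach you would need an analogous replacement of $\gamma$ by a map compatible with an $R[t]$--structure; quotienting by the ($t$--unstable) nucleus alone cannot deliver the Fitting--ideal interpretation you are after.
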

\begin{proof} Fix an $A$--linear section $s_1$ for $\pi_1$. Per Remark \ref{R:Admissible}, we may assume that $\Lambda$ is $(M_1, s_1)$--admissible. Hence,
$\Lambda$ is $A[G]$--free and $(\Lambda/\Lambda_1\times s_1(H_1))$ is a finite, projective $A[G]$--submodule of $M_1$. Now, we can pick an $R$--projective, open
submodule $\cU$ of $K_\infty$, such that
$$K_\infty=\Lambda\oplus\cU,$$
as $R$--modules. Indeed, if $\mathbf{e}=\{e_1, \dots, e_n\}$ is a $\Bbb F_q[t][G]$--basis for $\Lambda$, then $\mathbf{e}$ is an $\Fq((t^{-1}))[G]$--basis for $K_\infty$, so we let $\cU:=\bigoplus_{i=1}^n t^{-1}\Fq[[t^{-1}]][G]e_i$, which satisfies all the desired properties. Now, $\gamma$ gives an $R$--linear isomorphism
$$\gamma: M_1=\cU\oplus(\Lambda/\Lambda_1\times s_1(H_1))\simeq  \cU\oplus \Lambda/\Lambda_2=M_2,$$
where the two direct sums are viewed in the category of topological $R$--modules (not $R[t]$--modules, as $\cU$ is not an $R[t]$--submodule of $K_\infty$.) We claim that this implies
that there exists an $R$--module isomorphism (not necessarily induced by $\gamma$)
\begin{equation}\label{E:iso-xi} \xi: (\Lambda/\Lambda_1\times s_1(H_1))\simeq \Lambda/\Lambda_2.\end{equation}
To prove this, let us pick an $i\in\Z_{>\ell}$ sufficiently large, so that   $\gamma: t^{-i}\cO_{K_\infty}\to t^{-i}\cO_{K_\infty}$ is a bijective isometry and $t^{-i}\cO_{K_\infty}\subseteq \cU$ . Then $\gamma$
 induces an isomorphism of finite $R$--modules
 $$\gamma: S\oplus A_1  \simeq  S\oplus A_2,$$
 where $S:=\cU/t^{-i}\cO_{K_\infty}$, $A_1:=(\Lambda/\Lambda_1\times s_1(H_1))$ and $A_2=\Lambda/\Lambda_2$. Now, $R$ is a finite, semilocal ring. Let us split it into the direct sum
 $R:=\oplus_j R_j$ of its local components, as in \eqref{E:character-decomposition} and do the same for any $R$--module $M$, i.e. write $M=\oplus_j M_j$, where $M_j:=M\otimes_R R_j$. Obviously, $\gamma$ induces $R_j$--module isomorphisms
 $$\gamma_j: S_j\oplus (A_1)_j\simeq S_j\oplus (A_2)_j,$$
 for all $j$. Now, since all modules involved are finite, we must have an equality of cardinalities $|(A_1)_j|=|(A_2)_j|$, for all $j$. However, the modules $(A_1)_j$ and $(A_2)_j$ are $R_j$--projective, therefore $R_j$--free. Hence, since the rings $R_j$ are finite, the equality of cardinalities implies an equality of $R_j$--ranks, which in turn gives isomorphisms $(A_1)_j\simeq (A_2)_j$ as $R_j$--modules, for all $j$. Consequently, we have an isomorphism of $R$--modules $A_1\simeq A_2$, as desired.

 Fix an isomorphism $\xi$ as above and define the $R$--module isomorphism
 $$\rho: M_1=(\cU\oplus A_1)\simeq (\cU\oplus A_2)=M_2, \qquad \rho|_{\cU}={\rm id}_{\cU}\text{ and }\rho|_{A_1}=\xi.$$
Obviously, $\rho$ is infinitely tangent to the identity. Therefore, Lemma \ref{L:independencegamma} implies that
$${\rm det}_{R[[T^{-1}]]/T^{-N}}(1+\Delta_\gamma\,|\, M_1)={\rm det}_{R[[T^{-1}]]/T^{-N}}(1+\Delta_\rho\,|\, M_1).$$
Now, we have a commutative diagram of topological morphisms of modules in class $\mathcal C$
$$\xymatrix{
0\ar[r] & A_1\ar[r]\ar[d]^{\xi}_{\wr} & M_1\ar[r]\ar[d]^{\rho}_{\wr} & K_\infty/\Lambda\ar[r]\ar[d]^{{\rm id}}_{=} & 0 \\
0\ar[r] & A_2\ar[r] & M_2\ar[r] & K_\infty/\Lambda\ar[r] & 0,}$$
whose rows are exact and $R[t]$--linear (see \eqref{E:admissible-sequence}) and whose vertical maps are $R$--linear isomorphisms, $(2N)$--tangent to the identity. This leads to an equality of nuclear determinants
\begin{eqnarray*}
{\rm det}_{R[[T^{-1}]]/T^{-N}}(1+\Delta_\rho\,|\, M_1) &=& {\rm det}_{R[[T^{-1}]]/T^{-N}}(1+\Delta_\xi\,|\, A_1)\cdot {\rm det}_{R[[T^{-1}]]/T^{-N}}(1+\Delta_{\rm id}\,|\, K_\infty/\Lambda) \\
 &=&{\rm det }_{R[[T^{-1}]]/T^{-N}}(1+\Delta_\xi\,|\, A_1)
\end{eqnarray*}
However, \eqref{E:finite-determinants-volumes} combined with the definition of the volume function gives
$${\rm det }_{R[[T^{-1}]]}(1+\Delta_\xi\,|\, A_1)|_{T=t}=\frac{|A_2|_G}{|A_1|_G}=\frac{{\rm Vol}(M_2)}{{\rm Vol}(M_1)},$$
which concludes the proof of the Lemma.
\end{proof}

Now, we are ready to prove Theorem \ref{T:Volume}. Fix an $R$--linear splitting $s_1$ for $\pi_1$ and let Let $\widetilde{\Lambda_1}$ be an $A[G]$--free, $(M_1, s_1)$--admissible lattice. Let $\widetilde{\Lambda_2}$ be a free $A[G]$--lattice containing $\Lambda_2$. For $i=1,2$, let ${\bf e_i}$ be an ordered $A[G]$--basis for $\widetilde{\Lambda_i}$. Let $X\in{\rm GL}_n(\Fq((t^{-1}))[G])$ be the transition matrix between ${\bf e_1}$
and ${\bf e_2}$, i.e. ${\bf  e_2}=X\cdot {\bf  e_1}$.

In what follows, we view the matrix ring ${\rm M}_n(\Fq((t^{-1}))[G])$ endowed with its $t^{-1}$--adic topology. In this topology, ${\rm GL}_n(\Fq((t^{-1}))[G])$ is open in ${\rm M}_n(\Fq((t^{-1}))[G])$ and it has a basis of open neighborhoods of $1$ consisting of $(1+t^{-i}{\rm M}_n(\Fq[[t^{-1}]][G])_{i\geq 0}$. Also, ${\rm GL}_n(\Fq(t)[G])$ is dense in ${\rm GL}_n(\Fq((t^{-1}))[G])$. These facts imply that, if we fix an $N>a$, we can write
$$X=B\cdot X_0,\qquad   X_0\in (1+t^{-2N}{\rm M}_n(\Fq[[t^{-1}]][G]),\quad B\in{\rm GL}_n(\Fq(t)[G]).$$
Let $\phi_X, \phi_{X_0}, \phi_B :K_\infty\simeq K_\infty$ be the $\Fq((t^{-1}))[G]$--linear isomorphisms whose matrices in the basis ${\bf e}_1$ are $X$, $X_0$ and $B$, respectively. We have a commutative diagram
of morphisms in the category of compact $R[t]$--modules, with exact rows and vertical isomorphisms
\begin{equation}\label{E:push-out}\xymatrix{
0\ar[r] & K_\infty/\Lambda_1\ar[r]\ar[d]^{\phi_{X_0}}_{\wr} & M_1\ar[r]^{\pi_1}\ar[d]^{\phi}_{\wr} & H_1\ar[r]\ar[d]^{{\rm id}}_{=} & 0 \\
0\ar[r] & K_\infty/\Lambda_1'\ar[r] & M_1'\ar[r]^{\pi_1'} & H_1\ar[r] & 0,}\end{equation}
where $\Lambda_1':=\phi_{X_0}(\Lambda_1)$, $M_1':=M_1\times_{K_\infty/\Lambda_1}K_\infty/\Lambda_1'$ and $\phi$ is induced by $\phi_{X_0}$. In other words, the lower exact sequence
is the push--out along $\phi_{X_0}$ of the upper one.

Now, note that $M_1'$ is an object in class $\mathcal C$ (the lower exact sequence is its structural exact sequence). Most importantly, note
that, since $X_0\in (1+t^{-2N}{\rm M}_n(\Fq[[t^{-1}]][G])$, the $R[t]$--linear isomorphism $\phi:M_1\simeq M_1'$ is $(2N)$--tangent to the identity. Therefore,
the $R$--linear isomorphism $\gamma\circ\phi^{-1}:M_1'\simeq M_2$ is $(2N)$--tangent to the identity.  Further, since $B\in{\rm GL}_n(\Fq(t)[G])$, it is easy to see from the definitions
that $\Lambda_1'$ and $\Lambda_2$ are contained in a common $A[G]$--lattice of $K_\infty$. Consequently, Lemma \ref{L:common-over-lattice} applied to $\gamma\circ\phi^{-1}$ gives
$${\rm det}_{R[[T^{-1}]]/T^{-N}}(1+\Delta_{\gamma\circ\phi^{-1}}\,|\, M'_1)|_{T=t} = \frac{{\rm Vol}(M_2)}{{\rm Vol}(M'_1)}\mod t^{-N}.$$
However, since $\phi$ is $R[t]$--linear, we have $\phi t\phi^{-1}=t$, so $(1+\Delta_{\phi^{-1}})=1$ on $M'_1$. Therefore, the above congruence combined with \eqref{E:composition} gives
\begin{equation}\label{E:M1-prime}
{\rm det}_{R[[T^{-1}]]/T^{-N}}(1+\Delta_{\gamma}\,|\, M_1)|_{T=t} = \frac{{\rm Vol}(M_2)}{{\rm Vol}(M'_1)}\mod t^{-N}.\end{equation}

Now, let $s_1':=\phi\circ s_1$. Diagram \eqref{E:push-out} shows that $s_1'$ is a section of $\pi_1'$ and that $\widetilde{\Lambda_1'}:=\phi_{X_0}(\widetilde{\Lambda_1})$ is
an $(M_1', s_1')$--admissible lattice. Since $\phi$ gives an $R[t]$--linear isomorphism
$$\phi: (\widetilde{\Lambda_1}/\Lambda_1\times s_1(H_1))\simeq (\widetilde{\Lambda'_1}/\Lambda'_1\times s'_1(H'_1)),$$
we have an equality $|\widetilde{\Lambda_1}/\Lambda_1\times s_1(H_1)|_G=|\widetilde{\Lambda'_1}/\Lambda'_1\times s'_1(H'_1)|_G.$ Therefore, we have
$$\frac{{\rm Vol}(M_1)}{{\rm Vol}(M'_1)}=[\widetilde{\Lambda_1'}:\widetilde{\Lambda_1}]_G=[\phi_{X_0}(\widetilde{\Lambda_1}):\widetilde{\Lambda_1}]_G={\rm det}(X_0)\equiv 1\mod t^{-2N}.$$
Combined with \eqref{E:M1-prime}, this leads to
$${\rm det}_{R[[T^{-1}]]/T^{-N}}(1+\Delta_{\gamma}\,|\,M_1)|_{T=t}\equiv\frac{{\rm Vol}(M_2)}{{\rm Vol}(M_1)}\mod t^{-N}.$$
After taking a limit for $N\to\infty$, this concludes the proof of Theorem \ref{T:Volume}.
\end{proof}

\section{The main theorems}\label{S:main-theorems}
In this section, we prove the main results of this paper, announced in \S\ref{S:intro-ETNF}. We work with the notations, and under the assumptions in \S\S1.1--1.5.

\subsection{The equivariant Tamagawa number formula for Drinfeld modules}\label{S:Main-ETNF} Below, we state and prove the $G$--equivariant generalization of Taelman's class--number formula \cite{T12}.
\begin{theorem}[the ETNF for Drinfeld modules]\label{T:Main-ETNF}  If $\cM$ is a taming module for $K/F$ and $E$ is a Drinfeld module of structural morphism
$\varphi_E:\Fq[t]\to\cO_F\{\tau\}$, then we have the following equality in $(1+t^{-1}\F_q[[t^{-1}]][G])$.
$$\Theta_{K/F}^{E, \cM}(0)=\frac{{\rm Vol}(E(K_\infty)/E(\cM))}{{\rm Vol}(K_\infty/\cM)}.$$
\end{theorem}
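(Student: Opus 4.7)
My plan is to realize both sides of the claimed identity as determinants of nuclear operators on a single compact $\Fq[G]$-module, and then apply Theorem \ref{T:Volume} once. The crucial observation is that $E(K_\infty)=K_\infty$ and $E(\cM)=\cM$ as additive groups, so the compact topological $\Fq[G]$-modules $M_1 := E(K_\infty)/E(\cM)$ and $M_2 := K_\infty/\cM$ share a common underlying set $V$ and differ only in their $A[G]$-structures: on $M_1$, $t$ acts as $\varphi_E(t)$; on $M_2$, $t$ acts by ordinary multiplication. Both lie in class $\cC$---the structural exact sequence for $M_1$ is precisely \eqref{E:exponential-sequence}, with $\Lambda_1=\exp_E^{-1}(\cM)$, $\iota_1=\widetilde{\exp_E}$, and cokernel $H(E/\cM)$, while $M_2$ has trivial structural sequence with $\Lambda_2=\cM$ and $\iota_2=\mathrm{id}$. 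Since $\cM$ is projective by Definition \ref{D:taming-modules}, $M_2$ satisfies the additional hypothesis of Theorem \ref{T:Volume}.

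The first step will be to let $\gamma : M_1 \to M_2$ be the identity on the common set $V$---manifestly an $\Fq[G]$-linear topological isomorphism---and to verify via Proposition \ref{P:Ntangpowerseries} that $\gamma$ is infinitely tangent to the identity. Indeed, in the notation of Definition \ref{D:N-tangent}, the composition $\iota_2^{-1}\circ\gamma\circ\iota_1$ equals $\widetilde{\exp_E}$, and the everywhere-convergent power series expansion $\exp_E(z)=z+a_1 z^q+a_2 z^{q^2}+\cdots$ is precisely the input required by that proposition (taking $\Gamma=\exp_E$, $\Lambda_1=\exp_E^{-1}(\cM)$, $\Lambda_2=\cM$).

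The second step will be a direct computation of $1+\Delta_\gamma$. With $\gamma$ the identity on sets, $\gamma^{-1} t_{M_2} \gamma$ is ordinary multiplication by $t$ viewed as an $\Fq[G]$-endomorphism of $V$, while the ``$t$'' in the denominator of $\Delta_\gamma$ is the natural $A$-action on $M_1$, namely $\varphi_E(t)$. Consequently
\begin{equation*}
1+\Delta_\gamma \;=\; \frac{1 - t\,T^{-1}}{1-\varphi_E(t)\,T^{-1}} \;=\; (1+\Phi)^{-1},
\end{equation*}
where $\Phi$ is the nuclear operator appearing in Corollary \ref{C:TraceFormula}. Since $\widetilde{\exp_E}$ restricts to a bijective isometry on $t^{-i}\cO_{K_\infty}$ for all $i$ sufficiently large, the natural bases of open neighborhoods of $0$ used to define nuclear determinants on $M_1$ and $M_2$ coincide inside $V$ in large indices, so Lemma \ref{L:independence-on-U} and the multiplicativity of Proposition \ref{MultDet} imply that $\det_{R[[T^{-1}]]}(1+\Phi\,\vert\,V)$ and $\det_{R[[T^{-1}]]}(1+\Delta_\gamma\,\vert\,M_1)$ are mutually inverse.

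Combining Corollary \ref{C:TraceFormula} with Theorem \ref{T:Volume} will then finish the argument:
\begin{equation*}
\Theta_{K/F}^{E,\cM}(0) \;=\; \det\nolimits_{R[[T^{-1}]]}(1+\Phi\,\vert\,V)\bigr|_{T=t} \;=\; \det\nolimits_{R[[T^{-1}]]}(1+\Delta_\gamma\,\vert\,M_1)^{-1}\bigr|_{T=t} \;=\; \frac{\Vol(M_1)}{\Vol(M_2)}.
\end{equation*}
The principal obstacle---and really the only substantive one, since the hard work has been concentrated in Theorem \ref{T:Volume} and the trace formula of Theorem \ref{T:TraceFormula}---is the bookkeeping needed to reconcile the two different nuclear structures on the common underlying module $V$; this is dispatched by the independence results of \S\ref{S:independence-on-U} together with Lemma \ref{L:independence-on-W}.
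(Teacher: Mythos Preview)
Your proposal is correct and follows essentially the same route as the paper: take $\gamma=\mathrm{id}$ between $M_1=E(K_\infty)/E(\cM)$ and $M_2=K_\infty/\cM$, invoke Proposition~\ref{P:Ntangpowerseries} with $\Gamma=\exp_E$ to get infinite tangency, identify $1+\Delta_\gamma$ with $(1+\Phi)^{-1}$, and combine Theorem~\ref{T:Volume} with Corollary~\ref{C:TraceFormula}. One small clean-up: your appeal to Lemma~\ref{L:independence-on-W} is off target, since that lemma concerns independence from the choice of taming pair $(\cW,\cW^\infty)$, not the comparison of the two bases $\{\iota_1(U_{i,\infty})\}$ and $\{\iota_2(U_{i,\infty})\}$; the observation you actually need (and already state) is that $\exp_E$ is a bijective isometry on sufficiently small $t^{-i}\cO_{K_\infty}$ with $\exp_E(z)-z\in t^{-i-a}\cO_{K_\infty}\subseteq U_{i,\infty}$, so in fact $\iota_1(U_{i,\infty})=\iota_2(U_{i,\infty})$ for large $i$ and no reconciliation is required. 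The paper passes over this point silently.
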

\begin{proof} Note that $M_1:=E(K_\infty)/E(\cM)$ is an object in class $\mathcal C$ of structural exact sequence \eqref{E:exponential-sequence}, and so is $M_2:=K_\infty/\cM$. By definition,
$M_1$ and $M_2$ have identical $R$--module structures. However, while $t$ acts on $M_2$ naturally, $t$ acts on $M_1$ via the $R$--linear operator $\varphi_E(t)\in \cO_F\{\tau\}$. Consider $\gamma:={\rm id}$ as a continuous $R$--linear operator
$$\gamma: M_1\simeq M_2, \qquad \gamma(x)=x,\,\,\forall x\in M_1.$$
Since $\gamma\circ\iota_1=\widetilde{{\rm exp}_E}$ and ${\rm exp}_E:K_\infty\to K_\infty$ is given by an everywhere convergent, $R$--linear power series in $F_\infty[[z]]$ of the form ${\rm exp_E}=z+a_1z^q+a_2z^{q^2}+...$, Proposition \ref{P:Ntangpowerseries} shows that $\gamma$ is infinitely tangent to the identity. Consequently, Theorem \ref{T:Volume} shows that we have
$${\rm det}_{R[[T^{-1}]]}(1+\Delta_\gamma\,|M_1\,)|_{T=t}=\frac{{\rm Vol}(M_2)}{{\rm Vol}(M_1)}.$$
Since $\gamma={\rm id}$, if we identify $M_1$ with $K_\infty/\cM$ as $R$--modules, the $R[[T^{-1}]]$--linear operators
$$(1-\gamma^{-1}t\gamma\cdot T^{-1}), \qquad (1-t\cdot T^{-1})$$ on $M_1[[T^{-1}]]$
become $(1-t\cdot T^{-1})$ and $(1-\varphi_E(t)\cdot T^{-1})$, respectively, on $K_\infty/\cM[[T^{-1}]]$. Therefore, the last displayed equality can be rewritten
$${\rm det}_{R[[T^{-1}]]}\left(\frac{1-t\cdot T^{-1}}{1-\varphi_E(t)\cdot T^{-1}}\,\bigg|\,K_\infty/\cM\,\right )\Bigg|_{T=t}=\frac{{\rm Vol}(K_\infty/\cM)}{{\rm Vol}(E(K_\infty)/E(\cM))}.$$
Now, Corollary \ref{C:TraceFormula} identifies the left--hand side of the equality above with $\Theta_{K/F}^{E, \cM}(0)^{-1}$, which gives the desired result.
\end{proof}

\begin{corollary}\label{C:Main-G-not-p} If $p\nmid |G|$, then we have the following equality in $(1+t^{-1}\F_q[[t^{-1}]][G])$:
$$\Theta_{K/F}^E(0)=[\cO_K: {\rm exp}^{-1}_E(\cO_K)]_G\cdot |H(E/\cO_K)|_G.$$
\end{corollary}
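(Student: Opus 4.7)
The plan is to derive the corollary by specializing Theorem \ref{T:Main-ETNF} to $\cM = \cO_K$ and unwinding the volume definitions. First I would observe that when $p \nmid |G|$, the group ring $\Fq[G]$ is semisimple, so every $\Fq[G]$-module is $G$-c.t.; consequently every $A[G]$-lattice in $K_\infty$ is projective. In particular, $\cO_K$ and $\exp_E^{-1}(\cO_K)$ are projective $A[G]$-lattices in $K_\infty$. Moreover, $K/F$ cannot have wildly ramified primes (wild ramification would force $p\mid |G|$), so by Definition \ref{D:taming-modules} the module $\cO_K$ itself is the (unique) taming module for $K/F$, and the Euler completion is trivial, giving $\Theta_{K/F}^{E,\cO_K}(0)=\Theta_{K/F}^E(0)$. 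Theorem \ref{T:Main-ETNF} applied to $\cM=\cO_K$ then yields
$$\Theta_{K/F}^E(0)=\frac{\Vol(E(K_\infty)/E(\cO_K))}{\Vol(K_\infty/\cO_K)}.$$

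Next, I would compute these two volumes using a single admissible lattice. Fix a section $s$ for $\pi_1:E(K_\infty)/E(\cO_K)\twoheadrightarrow H(E/\cO_K)$, and, by the argument of Proposition \ref{P:Admissible} together with Remark \ref{R:Admissible} (noting that $\exp_E^{-1}(\cO_K)$ and $\cO_K$ span the same $\Fq(t)[G]$-subspace of $K_\infty$), choose a single $A[G]$-lattice $\Lambda'\subseteq K_\infty$ that contains both $\cO_K$ and $\exp_E^{-1}(\cO_K)$ and is simultaneously $(M_1,s)$-admissible for $M_1:=E(K_\infty)/E(\cO_K)$ and admissible for $M_2:=K_\infty/\cO_K$ (the latter condition is automatic since $H_2=0$). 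For any choice of normalizing lattice $\Lambda_0$ the two volumes are then
$$\Vol(M_2)=\frac{|\Lambda'/\cO_K|_G}{[\Lambda':\Lambda_0]_G}, \qquad \Vol(M_1)=\frac{|\Lambda'/\exp_E^{-1}(\cO_K)\times s(H(E/\cO_K))|_G}{[\Lambda':\Lambda_0]_G},$$
so when we take the ratio the factor $[\Lambda':\Lambda_0]_G$ (and thus dependence on $\Lambda_0$) cancels.

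Finally, I would simplify the numerator and denominator. The short exact sequence of finite, $G$--c.t.\ $A[G]$-modules
$$0\to \Lambda'/\exp_E^{-1}(\cO_K)\to \Lambda'/\exp_E^{-1}(\cO_K)\times s(H(E/\cO_K))\to H(E/\cO_K)\to 0$$
combined with multiplicativity of $|\cdot|_G$ (Lemma \ref{L:FitSequence}) gives
$$|\Lambda'/\exp_E^{-1}(\cO_K)\times s(H(E/\cO_K))|_G = |\Lambda'/\exp_E^{-1}(\cO_K)|_G\cdot |H(E/\cO_K)|_G.$$
On the other hand, since $\cO_K,\exp_E^{-1}(\cO_K)\subseteq \Lambda'$ are all projective, parts (2) and (3) of Lemma \ref{L:projective-lattice-index} yield
$$[\cO_K:\exp_E^{-1}(\cO_K)]_G = [\cO_K:\Lambda']_G\cdot[\Lambda':\exp_E^{-1}(\cO_K)]_G = \frac{|\Lambda'/\exp_E^{-1}(\cO_K)|_G}{|\Lambda'/\cO_K|_G}.$$
Putting these together, $\Vol(M_1)/\Vol(M_2)=[\cO_K:\exp_E^{-1}(\cO_K)]_G\cdot |H(E/\cO_K)|_G$, which is the claimed identity.

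There is no real obstacle here beyond careful bookkeeping: the coprimality hypothesis $p\nmid |G|$ trivializes every cohomological issue (projectivity, tameness, existence of admissible lattices), and the main work has already been absorbed into Theorem \ref{T:Main-ETNF}. The one point worth double-checking is that a common $\Lambda'$ containing both $\cO_K$ and $\exp_E^{-1}(\cO_K)$ admissible for $M_1$ can be found, but this follows verbatim from the construction in Proposition \ref{P:Admissible} applied to any projective $A[G]$-lattice containing $\cO_K\cup \exp_E^{-1}(\cO_K)$.
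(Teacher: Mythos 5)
Your argument has a genuine gap: the claim that $\cO_K$ and $\exp_E^{-1}(\cO_K)$ ``span the same $\Fq(t)[G]$-subspace of $K_\infty$'', and hence can be embedded in a common projective $A[G]$-lattice $\Lambda'$, is false in general. Two $A[G]$-lattices admit a common over-lattice exactly when they have the same $\Fq(t)$-span (Proposition \ref{P:lattices-projective}(3), Remark \ref{R:Admissible}), and in that case the index $[\Lambda_1:\Lambda_2]_G$ lies in $\Fq(t)[G]^\times$. But already for $G$ trivial, $K=F=\Fq(t)$, $E=C$ the Carlitz module, this Corollary reduces to Taelman's formula $\zeta_A(1)=[\cO_F:\exp_C^{-1}(\cO_F)]\cdot|H(C/\cO_F)|$; here $\zeta_A(1)$ is transcendental over $\Fq(t)$ while $|H(C/\cO_F)|\in A$, so $[\cO_F:\exp_C^{-1}(\cO_F)]$ is transcendental and the two lattices are not commensurable. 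Consequently the object $\Lambda'$ on which your entire computation rests does not exist, and the intermediate identity $[\cO_K:\exp_E^{-1}(\cO_K)]_G=|\Lambda'/\exp_E^{-1}(\cO_K)|_G/|\Lambda'/\cO_K|_G$ is never applicable. The index $[\cO_K:\exp_E^{-1}(\cO_K)]_G$ makes sense precisely because Definition \ref{D:projective-lattice-index} does \emph{not} require commensurability: one embeds each lattice separately into a free one.

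The paper avoids all of this by using the sharpest consequence of $p\nmid|G|$: the structural sequence \eqref{E:exponential-sequence} for $E(K_\infty)/E(\cO_K)$ splits in the category of $A[G]$-modules (since $H(E/\cO_K)$ is $A[G]$-projective). With an $A[G]$-linear section $s$, the \emph{minimal} lattice $\Lambda'_1=\exp_E^{-1}(\cO_K)$ is already $(M_1,s)$-admissible, and $\Lambda'_2=\cO_K$ is admissible for $M_2=K_\infty/\cO_K$. Then \eqref{D:Vol} gives
$$\Vol(M_1)=\frac{|s(H(E/\cO_K))|_G}{[\exp_E^{-1}(\cO_K):\Lambda_0]_G},\qquad \Vol(M_2)=\frac{1}{[\cO_K:\Lambda_0]_G},$$
and since $s(H(E/\cO_K))\simeq H(E/\cO_K)$ the quotient collapses, via Lemma \ref{L:projective-lattice-index}(2), to $[\cO_K:\exp_E^{-1}(\cO_K)]_G\cdot|H(E/\cO_K)|_G$. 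If you insist on working with an arbitrary ($\Fq[G]$-linear) section, the correct repair is to use two \emph{separate} admissible lattices $\Lambda'_1\supseteq\exp_E^{-1}(\cO_K)$ and $\Lambda'_2\supseteq\cO_K$ and evaluate $[\Lambda'_2:\Lambda'_1]_G$ via Definition \ref{D:projective-lattice-index}, not via a non-existent common over-lattice.
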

\begin{proof}
In this case, the extension $K/F$ is tame, so all taming modules are equal to $\cO_K$. Also, all $A[G]$--lattices are $G$--c.t., therefore
$A[G]$--projective, and the same holds for the $A[G]$--module $H(E/\cO_K)$. Therefore, the exact sequence \eqref{E:exponential-sequence} (with $\cM=\cO_K$)
is split in the category of $A[G]$--modules.  So, if $s$ is an $A[G]$--linear section of $\pi$, we have equalities
$${\rm Vol}(E(K_\infty)/E(\cO_K))=\frac{|s(H(E/\cO_K)|_G}{[{\rm exp}^{-1}_E(\cO_K):\Lambda_0]_G}, \qquad {\rm Vol}(K_\infty/\cO_K)=\frac{1}{[\cO_K:\Lambda_0]_G},$$
where $\Lambda_0$ is the auxiliary $A[G]$--lattice fixed in Definition \eqref{D:volume}
Now, since we have an isomorphism $s(H(E/\cO_K))\simeq H(E/\cO_K)$ of $A[G]$--modules, the desired result follows directly from Theorem \ref{T:Main-ETNF} and the equalities above.
\end{proof}

\begin{remark}\label{R:Taelman-Angles-Taelman} As pointed out in the introduction, if $G$ is the trivial group (i.e. $K=F$), the above Corollary is precisely Taelman's class number formula \cite{T12}. If $K:=F(C[v_0])$ is the extension of $F$ obtained by adjoining the $v_0$--torsion points of the Carlitz module $C$, for some $v_0\in{\rm MSpec}(A)$, then the above Corollary applies because
$G$ is a subgroup of $(A/v_0)^\times$ and therefore of order coprime to $p$, and it implies the main result of Angles--Taelman in \cite{AT15}.
\end{remark}

\subsection{The refined Brumer--Stark conjecture for Drinfeld modules}\label{S:Main-Brumer-Stark} As an application of Theorem \ref{T:Main-ETNF}, we prove the Drinfeld module analogue of the classical refined Brumer--Stark
Conjecture for number fields.

We remind the reader that the classical refined Brumer--Stark Conjecture roughly states that the special value $\Theta_{K/F, T}(0)$ of a $G$--equivariant Artin $L$--function $\Theta_{K/F, T}:\Bbb C\to\Bbb C$, associated to an abelian extension $K/F$ of number fields of Galois group $G$, belongs to the Fitting ideal $\fitt^0_{\Z[G]}({\rm Cl}_{K,T}^{\vee})$ of the Pontrjagin dual of a certain ray--class group ${\rm Cl}_{K,T}$ of the field $K$. Here, $T$ is a certain finite set of primes in ${\rm MSpec}(\cO_F)$ and $\Theta_{K/F, T}(0)$ is a classical Artin $L$--function with some extra Euler factors at the primes in $T$. See \cite[\S 6.1]{GP15} for a precise statement and conditional proof.

This classical conjecture has tremendously far reaching applications to the arithmetic of number fields, ranging from explicit constructions of Euler Systems and of very general algebraic Hecke characters, to understanding the  $\Bbb Z[G]$--module structure of the Quillen $K$--groups $K_i(\cO_K)$. (See \cite{BG19} for more details). Its Drinfeld module analogue is the following.

\begin{theorem} [refined Brumer--Stark for Drinfeld modules]\label{T:Main-BrSt}  If $\cM$ is a taming module for $K/F$, $E$ is a Drinfeld module of structural morphism $\varphi_E:\Fq[t]\to\cO_F\{\tau\}$, and $\Lambda'$ is a $E(K_\infty)/E(\cM)$--admissible $A[G]$--lattice  in $K_\infty$ (as in \ref{D:Admissible lattice}), then we have
$$\frac{1}{[\cM:\Lambda']_G}\cdot \Theta_{K/F}^{E, \cM}(0)\in{\rm Fitt}^0_{A[G]}H(E/\cM).$$
\end{theorem}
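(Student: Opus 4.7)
The plan is to combine the equivariant Tamagawa number formula just proved (Theorem \ref{T:Main-ETNF}) with the definition of the volume function from \S\ref{S:volume-function} and the standard behavior of Fitting ideals under surjections. By Theorem \ref{T:Main-ETNF},
$$\Theta_{K/F}^{E, \cM}(0) = \frac{\Vol(E(K_\infty)/E(\cM))}{\Vol(K_\infty/\cM)},$$
and both volumes will be computed using Definition \ref{D:volume}. For the denominator, since $K_\infty/\cM$ is an object of $\cC$ whose finite quotient is trivial, the tautological choice $\cM$ itself is admissible and yields $\Vol(K_\infty/\cM) = 1/[\cM:\Lambda_0]_G$, where $\Lambda_0$ is the free normalizing lattice. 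For the numerator, I would fix an $A$--linear section $s$ of $\pi: E(K_\infty)/E(\cM) \twoheadrightarrow H(E/\cM)$ compatible with the hypothesized admissible lattice $\Lambda'$, so that $\Lambda'$ is $(E(K_\infty)/E(\cM), s)$--admissible (such a pairing $(s, \Lambda')$ exists by Definition \ref{D:Admissible lattice}). Writing $\Lambda := \mathrm{exp}_E^{-1}(\cM)$, the definition of the volume gives
$$\Vol(E(K_\infty)/E(\cM)) = \frac{|\Lambda'/\Lambda \times s(H(E/\cM))|_G}{[\Lambda':\Lambda_0]_G}.$$

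Combining the two volume computations and using the multiplicativity of the lattice index (Lemma \ref{L:projective-lattice-index}(2)), the ETNF repackages into the clean identity
$$\frac{1}{[\cM:\Lambda']_G}\cdot\Theta_{K/F}^{E, \cM}(0) = |\Lambda'/\Lambda \times s(H(E/\cM))|_G.$$
It remains to prove that the right--hand side lies in $\fitt^0_{A[G]}H(E/\cM)$. I would set $N := \Lambda'/\Lambda \times s(H(E/\cM))$ and first observe that, by condition (3) of the admissibility of $\Lambda'$, $N$ is an $A[G]$--submodule of $E(K_\infty)/E(\cM)$. The admissible short exact sequence \eqref{E:admissible-sequence} then exhibits $N$ as the kernel of the $A[G]$--linear surjection $E(K_\infty)/E(\cM)\twoheadrightarrow K_\infty/\Lambda'$ between $G$--c.t. modules, hence $N$ itself is finite and $G$--c.t. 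By Proposition \ref{P:fittidealgen}, the Fitting ideal of $N$ is therefore principal, generated by $|N|_G$. Finally, the restriction $\pi|_N : N \twoheadrightarrow H(E/\cM)$ is surjective because $s(H(E/\cM))\subseteq N$ and $\pi\circ s = \mathrm{id}$, so the functoriality of Fitting ideals under surjections yields
$$|N|_G\cdot A[G] = \fitt^0_{A[G]}(N) \subseteq \fitt^0_{A[G]} H(E/\cM),$$
which is the desired conclusion.

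The main obstacle here is conceptually not any single step but rather ensuring that the various compatibilities hold simultaneously: namely that the chosen section $s$ interacts with $\Lambda'$ so as to make $N$ an $A[G]$--submodule of $E(K_\infty)/E(\cM)$, and that $N$ is $G$--cohomologically trivial (since $H(E/\cM)$ itself generally is not, the failure of principality of $\fitt^0_{A[G]}H(E/\cM)$ would otherwise be the obstacle). These points are secured by the short exact sequence \eqref{E:admissible-sequence} and the $G$--c.t. hypothesis built into the class $\cC$. Once they are in hand, the proof is essentially a clean bookkeeping exercise, with all the analytic content already absorbed into Theorem \ref{T:Main-ETNF}.
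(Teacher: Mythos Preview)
Your proof is correct and follows essentially the same approach as the paper: apply the ETNF (Theorem \ref{T:Main-ETNF}), unwind the volume definition to obtain the identity $\frac{1}{[\cM:\Lambda']_G}\cdot\Theta_{K/F}^{E,\cM}(0)=|\Lambda'/\exp_E^{-1}(\cM)\times s(H(E/\cM))|_G$, and then use the surjection $\pi$ onto $H(E/\cM)$ together with the inclusion of Fitting ideals under quotients. Your treatment is in fact slightly more explicit than the paper's in justifying why $N$ is $G$--c.t.\ (via the exact sequence \eqref{E:admissible-sequence}) and why $\pi|_N$ is surjective.
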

\begin{proof} Let $s:H(E/\cM)\to E(K_\infty)/E(\cM)$ be an $R$--linear splitting for the exact sequence \eqref{E:exponential-sequence} (a right inverse for $\pi$) and let $\Lambda'$ be an $(E(K_\infty)/E(\cM), s)$--admissible lattice.
Theorem \ref{T:Main-ETNF} combined with the definition of the function ${\rm Vol}$ leads to the equality
$$\frac{1}{[\cM:\Lambda']_G}\cdot \Theta_{K/F}^{E, \cM}(0)=|\Lambda'/{\rm exp_E}^{-1}(\cM)\times s(H(E/\cM))|_G.$$
However, recall that $|M|_G$ is, by definition, a monic generator of ${\rm Fitt}^0_{A[G]}(M)$, for all finite, projective $A[G]$--modules $M$. Therefore, we have
$$\frac{1}{[\cM:\Lambda']_G}\cdot \Theta_{K/F}^{E, \cM}(0)\in {\rm Fitt}^0_{A[G]}\left(\Lambda'/{\rm exp_E}^{-1}(\cM)\times s(H(E/\cM))\right).$$
However, $\pi: (\Lambda'/{\rm exp_E}^{-1}(\cM)\times s(H(E/\cM))\twoheadrightarrow H(E/\cM)$ is a surjective, $A[G]$--linear morphism. Therefore, a basic property of Fitting ideals implies that
$${\rm Fitt}^0_{A[G]}\left(\Lambda'/{\rm exp_E}^{-1}(\cM)\times s(H(E/\cM))\right)\subseteq {\rm Fitt}^0_{A[G]}H(E/\cM)),$$
which, if combined with the last displayed statement, concludes the proof.
\end{proof}
\noindent Theorem \ref{T:Main-BrSt} has two consequences regarding the $A[G]$--module structure of Taelman's class--group $H(E/\cO_K).$
\begin{corollary} With notations as in Theorem \ref{T:Main-BrSt}, we have
$$\frac{1}{[\cM:\Lambda']_G}\cdot \Theta_{K/F}^{E, \cM}(0)\in{\rm Fitt}^0_{A[G]}H(E/\cO_K).$$
\end{corollary}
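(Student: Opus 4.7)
The plan is to obtain this corollary as an immediate consequence of Theorem \ref{T:Main-BrSt} combined with the functoriality of Fitting ideals under surjections. Theorem \ref{T:Main-BrSt} already gives the stronger containment
$$\frac{1}{[\cM:\Lambda']_G}\cdot \Theta_{K/F}^{E, \cM}(0)\in{\rm Fitt}^0_{A[G]}H(E/\cM),$$
so the only task is to compare the Fitting ideals of $H(E/\cM)$ and $H(E/\cO_K)$.

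First, I would invoke the canonical surjection of $A[G]$--modules
$$H(E/\cM)\twoheadrightarrow H(E/\cO_K)$$
constructed in \eqref{E:class-group-surjections} of the Introduction. Recall this surjection was obtained from the inclusion $\cM\subseteq\cO_K$, which induces a morphism of structural exact sequences \eqref{E:exponential-sequence} for $\cM$ and $\cO_K$ respectively, with the identity on the left column $K_\infty/{\rm exp}_E^{-1}(\cM)\to K_\infty/{\rm exp}_E^{-1}(\cO_K)$ (well, the obvious quotient map) and the natural quotient $E(K_\infty)/E(\cM)\twoheadrightarrow E(K_\infty)/E(\cO_K)$ in the middle; the induced map on cokernels yields the surjection above.

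Next, I would apply the standard property of Fitting ideals which says that a surjection $M\twoheadrightarrow N$ of finitely generated modules over a commutative ring yields an inclusion of zeroth Fitting ideals ${\rm Fitt}^0(M)\subseteq {\rm Fitt}^0(N)$. Applied to $R:=A[G]$ and the surjection displayed above, this gives
$${\rm Fitt}^0_{A[G]}H(E/\cM)\subseteq {\rm Fitt}^0_{A[G]}H(E/\cO_K).$$

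Finally, chaining this inclusion with Theorem \ref{T:Main-BrSt} yields the desired statement. There is no real obstacle here: the corollary is essentially a formal consequence of the refined Brumer--Stark theorem for $\cM$, together with the fact that $H(E/\cO_K)$ is a natural quotient of $H(E/\cM)$. The only point that merits attention is verifying that the surjection \eqref{E:class-group-surjections} is indeed $A[G]$--linear (not merely $A$--linear), which is clear from the construction since all maps in the commutative diagram of exact sequences \eqref{E:exponential-sequence} are $A[G]$--equivariant.
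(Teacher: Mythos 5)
Your proof is correct and matches the paper's argument exactly: both combine Theorem \ref{T:Main-BrSt} with the canonical surjection $H(E/\cM)\twoheadrightarrow H(E/\cO_K)$ of \eqref{E:class-group-surjections} and the standard fact that a surjection of finitely generated modules induces an inclusion of zeroth Fitting ideals.
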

\begin{proof} This follows directly from the surjective morphism $H(E/\cM)\twoheadrightarrow H(E/\cO_K)$ of $A[G]$--modules
(see \eqref{E:class-group-surjections}), which gives an inclusion ${\rm Fitt}^0_{A[G]}H(E/\cM)\subseteq {\rm Fitt}^0_{A[G]}H(E/\cO_K).$
\end{proof}
\noindent In the case $p\nmid |G|$ we obtain a description of the full Fitting ideal of $H(E/\cO_K)$.
\begin{corollary}\label{C:Main-fullFitt}
If $p\nmid |G|$, then we have an equality of principal $A[G]$--ideals
$$\frac{1}{[\cO_K: {\rm exp}^{-1}_E(\cO_K)]_G}\Theta_{K/F}^E(0)\cdot A[G]={\rm Fitt}^0_{A[G]} H(E/\cO_K).$$
\end{corollary}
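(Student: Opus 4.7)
The plan is to deduce this Corollary as a direct repackaging of Corollary \ref{C:Main-G-not-p}, combined with the principality of $\Fitt^0_{A[G]}H(E/\cO_K)$ in the tame case. The Corollary immediately preceding the target already furnishes the inclusion $\subseteq$; my goal is to upgrade it to an equality under the stronger hypothesis $p\nmid|G|$.

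My first step would be to record that $p\nmid|G|$ forces $K/F$ to be tame at every finite prime of $\cO_F$, so that $\cM=\cO_K$ is a valid taming module, and that $\F_q[G]$ is semisimple in this regime. In particular every $\F_q[G]$--module is projective, so the finite $A[G]$--module $H(E/\cO_K)$ is $G$--cohomologically trivial. The next step would then be to invoke Definition \ref{D:Fitt} and Proposition \ref{P:fittidealgen} of the Appendix to conclude that $\Fitt^0_{A[G]}H(E/\cO_K)$ is a principal ideal of $A[G]$ with monic generator $|H(E/\cO_K)|_G\in\F_q[G][t]$.

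The final step is essentially formal. From Corollary \ref{C:Main-G-not-p} I would read off the equality
\[
\Theta_{K/F}^E(0) \;=\; [\cO_K:{\rm exp}_E^{-1}(\cO_K)]_G\cdot|H(E/\cO_K)|_G
\]
in $(1+t^{-1}\F_q[[t^{-1}]][G])$, divide both sides by the unit $[\cO_K:{\rm exp}_E^{-1}(\cO_K)]_G\in\F_q((t^{-1}))[G]^+$, and multiply through by the ring $A[G]$ to obtain
\[
\frac{1}{[\cO_K:{\rm exp}_E^{-1}(\cO_K)]_G}\,\Theta_{K/F}^E(0)\cdot A[G] \;=\; |H(E/\cO_K)|_G\cdot A[G] \;=\; \Fitt^0_{A[G]}H(E/\cO_K),
\]
which is precisely the desired equality of principal $A[G]$--ideals.

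In this plan there is no genuinely hard step: all the substantive work has already been done either in Corollary \ref{C:Main-G-not-p} (which in turn relies on the $A[G]$--splitting of \eqref{E:exponential-sequence}, available precisely because $H(E/\cO_K)$ is $A[G]$--projective in the tame case) or in the Appendix result identifying $|M|_G$ as a generator of $\Fitt^0_{A[G]}M$ for $G$--cohomologically trivial $M$. If I had to name a potential obstacle, it would be verifying that this latter Appendix result really does apply to $H(E/\cO_K)$---but that reduces to $G$--cohomological triviality of $H(E/\cO_K)$, which is immediate from the semisimplicity of $\F_q[G]$ when $p\nmid|G|$.
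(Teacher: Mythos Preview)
Your proposal is correct and takes essentially the same approach as the paper: the paper's proof is the single line ``This is a direct consequence of Corollary \ref{C:Main-G-not-p},'' and your argument simply unpacks that deduction by making explicit the principality of $\Fitt^0_{A[G]}H(E/\cO_K)$ with monic generator $|H(E/\cO_K)|_G$ in the tame case.
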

\begin{proof} This is a direct consequence of Corollary \ref{C:Main-G-not-p}.
\end{proof}
\begin{remark}\label{R:Brumer-Stark}
 Although, in general, the $L$--value $\Theta_{K/F}^{E, \cM}(0)\in (1+t^{-1}\cdot\F_q[[t^{-1}]][G])$ is transcendental over $\F_q(t)[G]$, the quotients
$$\frac{1}{[\cM:\Lambda']_G}\cdot \Theta_{K/F}^{E, \cM}$$
turn out to be elements in the integral group ring $A[G]$, for all $E(K_\infty)/E(\cM)$--admissible $\Lambda'$. One should think of $[\cM:\Lambda']_G$ as an ``integral smoothing'' period for the $L$--value  $\Theta_{K/F}^{E, \cM}(0)$.

This is in analogy with the number field situation, where the initial Artin $L$--value $\Theta_{K/F}(0)$ is in general in $\Bbb Q[G]$, but once hit with some well chosen Euler factors at primes in the finite set $T$, the ensuing $T$--modified $L$--value $\Theta_{K/F, T}(0)$ lands in the integral group ring $\Bbb Z[G]$. Also, the natural surjection $H(E/\cM)\twoheadrightarrow H(E/\cO_K)$ is in perfect analogy
with the number field surjection ${\rm Cl}_{K, T}\twoheadrightarrow{\rm Cl}_K$ from the ray--class group associated to the finite set of primes $T$ and the actual ideal--class group of $K$. (See \cite{GP15} for details.) The  only difference between Brumer--Stark for number fields and Drinfeld modules, respectively, is the fact that the former deals with Pontrjagin duals ${\rm Cl}_{K,T}^{\vee}$ of ray--class groups while the latter does not see Pontrjagin duality. This aspect is somewhat puzzling to us and requires further investigation.
\end{remark}

\section{Appendix}\label{S:appendix}

The goal of this Appendix is to develop several tools, mostly of homological nature, needed throughout the paper.

In what follows, if $S$ is a commutative ring, ${\rm Spec}(S)$ and $\mspec(S)$ denote the spectrum (set of prime ideals) and maximal spectrum (set of maximal ideals)
of $S$, respectively. If $M$ is an $S$--module and $\wp\in{\rm Spec}(S)$, then $M_{\wp}$ denotes the localization of $M$ at $\wp$, viewed as a module over the localization $S_{\wp}$ of $S$ at $\wp$. Recall that if $M$ is a finitely
generated, projective $S$--module, then $M_{\wp}$ is $S_{\wp}$--free of finite rank, denoted ${\rm rk}_{\wp}M$. The local rank function ${\rm rk}: {\rm Spec}(S)\to \Z_{\geq 0}$, $\wp\to {\rm rk}_{\wp}M$, is locally constant in the Zarisky topology of ${\rm Spec}(S)$ and therefore constant if ${\rm Spec}(S)$ is connected (i.e. if $S$ has no non--trivial idempotents.) Also, recall that a finitely generated $S$--module $M$ is projective if and only if $M_\fm$ is $S_\fm$--projective, for all $\fm\in\mspec(S)$. If $S$ is local, a theorem of Kaplansky states that $M$ is projective if and only if $M$ is free (even if $M$ is not f.g.). See \cite{Milnor} for these facts.

If $M$ is a finitely generated, projective $S$--module, $S$ is Noetherian, and $\varphi\in{\rm End}_S(M)$, there exists a unique element ${\rm det}_S(\varphi|M)\in S$ (called the determinant of $\varphi$) which maps into $({\rm det}_{S_{\wp}}(\varphi_{\wp}|M_\wp))_{\wp\in{\rm Spec}(S)}$ via the canonical embedding $S\hookrightarrow\prod_{\wp}S_\wp$. One can see that
\begin{equation}\label{E:det-projective}
{\rm det}_S(\varphi|M)={\rm det}_S(\varphi\oplus{\rm id}_Q|M\oplus Q),\end{equation}
where $Q$ is any finitely generated $S$--module such that $(M\oplus Q)$ is $S$--free. (See \cite{GP12}.)

If $M$ is a finitely presented $S$--module, then we denote by ${\rm Fitt}^0_S(M)$ the $0$--th Fitting ideal of $M$. Given an $S$--module presentation for $M$
$$S^m\overset{\theta}\longrightarrow S^n\to M\to 0,$$
then ${\rm Fitt}^0_S(M)$
 is (the ideal in $S$ equal to)  the image ${\rm Im}({\rm det}\circ\wedge^n\theta)$ of the $S$--module morphism
 $$\wedge^nS^m\overset{\wedge^n\theta}\longrightarrow\wedge^nS^n\overset{\rm det}\simeq S.$$
 See \cite{GP12} for the main properties of Fitting ideals used in this paper.

If $G$ is a finite group and $M$ is a $\Z[G]$--module, then $\widehat{\rm H}^i(G, M)$ is the $i$--th Tate cohomology group of $M$, for all $i\in\Z$. Recall that $M$ is called $G$--cohomologically trivial (abbreviated $G$--c.t. in this paper) if $\widehat{\rm H}^i(H, M)=0$, for all subgroups $H$ of $G$ and all $i\in\Z$. For the properties of Tate cohomology needed throughout, the reader can consult Ch. VI of \cite{CF67}.

If $R$ is a commutative ring and $G$ is an abelian group, then $I_G$ denotes the augmentation ideal of the group ring $R[G]$, i.e. the kernel of the $R$--algebra augmentation morphism
$s_G:R[G]\to R$, sending $g\to 1$, for all $g\in G$.

\subsection{Characteristic $p$ group--rings and their modules}\label{S:Algprelim}

In what follows $R$ is a commutative ring of characteristic $p$, $G$ is a finite, abelian group, and $M$ is an $R[G]$--module.

\begin{lemma}\label{L:Spec} If $G$ is a $p$--group, then the following hold.
\begin{enumerate}
\item There is a one--to--one correspondence, preserving
maximal ideals
$${\rm Spec}(R)\leftrightarrow{\rm Spec}(R[G]), \qquad \fp\to \fp_G:=(\fp, I_G).$$
 \item If $R$ is local (e.g. a field), then $R[G]$ is local.
 \item For all $\fp\in{\rm Spec}(R)$, we have $R[G]_{\fp_G}=R_\fp[G]$ and $M_{\fp_G}=M_{\fp}.$
\end{enumerate}
\end{lemma}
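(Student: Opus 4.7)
The plan is to reduce everything to the observation that, under the hypotheses, the augmentation ideal $I_G \subset R[G]$ is \emph{nilpotent}. Indeed, since $|G|$ is a power of $p$ and $R$ has characteristic $p$, for every $g \in G$ we have $(g-1)^{|G|} = g^{|G|} - 1 = 0$, so each generator $g-1$ of $I_G$ is nilpotent. Combined with the standard fact that a finitely generated ideal generated by nilpotents is itself nilpotent, this gives $I_G^N = 0$ for some $N \geq 1$. Consequently $I_G$ is contained in every prime ideal of $R[G]$.

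From here, part (1) follows immediately: the augmentation surjection $\varepsilon: R[G] \twoheadrightarrow R$ has kernel $I_G$, which lies in every prime of $R[G]$, so pullback along $\varepsilon$ gives a bijection $\Spec(R) \leftrightarrow \Spec(R[G])$. This bijection clearly sends $\fp \in \Spec(R)$ to $\varepsilon^{-1}(\fp) = (\fp, I_G) = \fp_G$, and it preserves the maximality relation because $R[G]/\fp_G \cong R/\fp$. Part (2) is then an instant consequence: if $R$ is local with maximal ideal $\fm$, then $\fm_G$ is the unique maximal ideal of $R[G]$.

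For part (3), I would argue as follows. Let $T := R \setminus \fp$ and $S := R[G] \setminus \fp_G$, so by definition $M_\fp = T^{-1}M$ and $M_{\fp_G} = S^{-1}M$. The inclusion $T \subseteq S$ induces a canonical map $T^{-1}M \to S^{-1}M$, and to show it is an isomorphism it suffices to verify that every element of $S$ already acts invertibly on $T^{-1}M$, equivalently on $R_\fp[G]$. Given $s \in S$, decompose $s = \varepsilon(s) + (s - \varepsilon(s))$ with $\varepsilon(s) \in R$ and $s - \varepsilon(s) \in I_G$. Since $s \notin \fp_G$, we have $\varepsilon(s) \notin \fp$, so $\varepsilon(s) \in R_\fp^\times$; writing
\[
s = \varepsilon(s)\bigl(1 + \varepsilon(s)^{-1}(s - \varepsilon(s))\bigr)
\]
exhibits $s$ as a unit in $R_\fp[G]$ because $\varepsilon(s)^{-1}(s - \varepsilon(s)) \in I_G \cdot R_\fp[G]$ is nilpotent, making the second factor a unit. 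This proves $R[G]_{\fp_G} = R_\fp[G]$, and the corresponding identity for $M$ follows by tensoring, since $M_{\fp_G} = M \otimes_{R[G]} R[G]_{\fp_G} = M \otimes_{R[G]} R_\fp[G] = M \otimes_R R_\fp = M_\fp$.

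The only genuine point requiring care is the nilpotence of $I_G$; once this is in hand, the rest is bookkeeping with the augmentation map and standard localization formalism. No serious obstacle is anticipated.
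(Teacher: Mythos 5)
Your proposal is correct and follows essentially the same line as the paper: the engine in both is that $I_G$ lies in the nilradical of $R[G]$, which forces $\Spec(R[G]) = \Spec(R[G]/I_G) \cong \Spec(R)$ and makes parts (2) and (3) routine. The only cosmetic differences are that the paper shows every element $x\in I_G$ satisfies $x^{|G|}=0$ directly via the Frobenius (rather than first passing through nilpotence of the generators $g-1$ and then of $I_G$ as an ideal), and in part (3) the paper exhibits the inverse of $x\in R[G]\setminus\fp_G$ explicitly as $x^{|G|^\alpha-1}/s_G(x)^{|G|^\alpha}$, using the identity $x^{|G|^\alpha}=s_G(x)^{|G|^\alpha}$, whereas you factor $s=\varepsilon(s)(1+\text{nilpotent})$ and invoke that a unit times (one plus nilpotent) is a unit; these are two standard ways of saying the same thing.
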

\begin{proof} (1) First, note that every element of $I_G$ is nilpotent. Indeed, if $x\in I_G$, then $x=\sum_{\sigma\in G}a_\sigma\cdot(\sigma-1)$, for some $a_\sigma\in R$. Since ${\rm char}(R)=p$ and $G$ is a $p$--group, we have
$$
x^{|G|}=\sum_{\sigma}a_\sigma^{|G|}\cdot(\sigma^{|G|} - 1) = 0
$$
It follows that $I_G$ is contained in every prime ideal of $R[G]$.  Now, (1) follows since $R[G]/R$ is an integral extension of rings and therefore any prime (maximal) ideal in $R[G]$ contains a unique prime (maximal) ideal in $R$, plus the obvious isomorphisms of rings $R[G]/{\fp_G}\simeq R/\fp$, for all $\fp$ as above and $R[G]/I_G\simeq R$.

(2) is an immediate consequence of (1).

(3) Let $\fp\in{\rm Spec}(R)$.  Note that $R_\fp[G]$ embeds in $R[G]_{\fp_G}$.  To show that the two are equal, suppose that $x\in (R[G] \setminus \fp_G)$.  This means that $s_G(x)\in (R\setminus\fp)$.  Since $G$ is a $p$-group and ${\rm char}(R)=p$, we have
$$
x^{|G|^\alpha}=s_G(x)^{|G|^\alpha},
$$
for some $\alpha\in\Z_{\gg 0}$. It follows that $x$ is invertible in $R_\fp[G]$ with inverse $\frac{x^{|G|^\alpha-1}}{s_G(x)^{|G|^\alpha}}$.  Hence, $R[G]_{\fp_G} = R_\fp[G]$. The fact that $M_{\fp}=M_{\fp_G}$ follows similarly.
\end{proof}

\begin{lemma}\label{L:gct-local} Assume that $G$ is a $p$--group.
Assume that $R$ is a DVR and $M$ is finitely generated, or that $R$ is a field and $M$ is arbitrary.
Then, the following are equivalent.
\begin{enumerate}
\item $M$ is $R[G]$--free.
\item $M$ is $R[G]$--projective.
\item $M$ is $R$--free and $G$--c.t.
\end{enumerate}
\end{lemma}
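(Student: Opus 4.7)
The implication $(1) \Rightarrow (2)$ is immediate. For $(2) \Rightarrow (3)$, write $M$ as a direct summand of a free module $F = R[G]^{(I)}$; since $R[G]$ is $R$-free of rank $|G|$, so is $F$, and hence $M$ is $R$-projective. This forces $M$ to be $R$-free (by the structure theorem for finitely generated modules over the PID $R$ in the DVR case, and trivially in the field case). For cohomological triviality, observe that for every subgroup $H \le G$, $R[G]$ is $R[H]$-free via a coset decomposition, so $\widehat{\mathrm{H}}^i(H, R[G]) = 0$ for all $i \in \mathbb{Z}$; thus $F$, and any direct summand of it, is $G$-c.t.

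For $(3) \Rightarrow (1)$, Lemma \ref{L:Spec}(2) shows that $R[G]$ is local; let $\mathfrak{m}$ be its maximal ideal and $\kappa$ its residue field, so $\mathfrak{m} = I_G$ and $\kappa = R$ when $R$ is a field, while $\mathfrak{m} = (\pi, I_G)$ and $\kappa = R/\pi R$ when $R$ is a DVR with uniformizer $\pi$. By Kaplansky's theorem, projective modules over a local ring are free, so it suffices to prove $M$ is $R[G]$-projective. The key input in both cases is that $G$-cohomological triviality of $M$ yields
\[
H_1(G, M) \;=\; \mathrm{Tor}_1^{R[G]}(R, M) \;=\; \widehat{\mathrm{H}}^{-2}(G, M) \;=\; 0,
\]
where $R$ is viewed as an $R[G]$-module via augmentation.

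In the DVR case (with $M$ finitely generated), I would apply the local criterion for flatness and prove $\mathrm{Tor}_1^{R[G]}(\kappa, M) = 0$. Tensoring the exact sequence $0 \to R \xrightarrow{\pi} R \to \kappa \to 0$ (of $R[G]$-modules, with trivial $G$-action on $R$) with $M$ over $R[G]$ gives the long exact sequence
\[
H_1(G, M) \xrightarrow{\pi} H_1(G, M) \to \mathrm{Tor}_1^{R[G]}(\kappa, M) \to M_G \xrightarrow{\pi} M_G.
\]
The first term vanishes by the preceding paragraph, while the vanishing of $\widehat{\mathrm{H}}^{-1}(G, M)$ and $\widehat{\mathrm{H}}^0(G, M)$ forces the norm map to induce an $R$-linear isomorphism $M_G \xrightarrow{\sim} M^G \hookrightarrow M$; since $M$ is $R$-free, $M_G$ is $R$-torsion-free, so multiplication by $\pi$ is injective on $M_G$. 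This forces $\mathrm{Tor}_1^{R[G]}(\kappa, M) = 0$, and a finitely generated flat module over the Noetherian local ring $R[G]$ is free.

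In the field case, $I_G$ is nilpotent (as $R[G]$ is Artinian local, or directly because $(g-1)^{p^N} = 0$ for $N \gg 0$ and $G$ is abelian). Lifting an $R$-basis $\{\bar e_\alpha\}_{\alpha \in I}$ of $M/I_G M$ to $\{e_\alpha\} \subset M$ defines an $R[G]$-linear map $\varphi: R[G]^{(I)} \to M$. By construction $\mathrm{coker}(\varphi) = I_G \cdot \mathrm{coker}(\varphi)$, and nilpotency of $I_G$ forces $\mathrm{coker}(\varphi) = 0$ without any finiteness hypothesis. Setting $K := \ker(\varphi)$, the long exact $\mathrm{Tor}$ sequence applied to $0 \to K \to R[G]^{(I)} \to M \to 0$, tensored with $R$ over $R[G]$, reads
\[
0 = H_1(G, M) \to K/I_G K \to R^{(I)} \xrightarrow{\sim} M/I_G M \to 0,
\]
so $K/I_G K = 0$ and nilpotency yields $K = 0$. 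The main obstacle is precisely this non-finitely-generated case: the usual Nakayama-style arguments require finite generation, and must be replaced by the nilpotency of $I_G$ together with the $\mathrm{Tor}$-vanishing extracted from $G$-c.t.\ to handle arbitrary $M$.
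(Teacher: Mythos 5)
Your proof is correct. The paper's own proof is essentially just a pointer: it establishes $(1)\!\iff\!(2)$ from locality of $R[G]$ (Lemma \ref{L:Spec}(2)) plus Kaplansky, and then says that $(2)\!\iff\!(3)$ follows "similarly to Theorems 6 and 8 of Ch.\ VI \S 9 of \cite{CF67}" with $\Z, p, \F_p$ replaced by $R, \pi, R/\pi$. You have, in effect, written out those adaptations in full: your field case is precisely the CF67 Theorem 6 argument (lift an $R$-basis of $M/I_GM$, use nilpotency of $I_G$ for surjectivity, use $\widehat{\mathrm H}^{-2}(G,M)=H_1(G,M)=0$ and nilpotency again to kill the kernel), so no difference there. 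In the DVR case your packaging is slightly different from the CF67 template: you derive $\mathrm{Tor}_1^{R[G]}(\kappa,M)=0$ from the long exact sequence for $0\to R\xrightarrow{\pi}R\to\kappa\to 0$ together with the norm isomorphism $M_G\xrightarrow{\sim}M^G$ (using $\widehat{\mathrm H}^{-1}=\widehat{\mathrm H}^{0}=0$ and $R$-torsion-freeness of $M$), and then invoke the local flatness criterion; the CF67-style argument instead reduces mod $\pi$, applies the field case to $M/\pi M$, lifts a free basis, and shows the kernel is trivial by a direct purity/saturation argument. Both routes exploit exactly the same inputs ($R$-freeness of $M$ and Tate-cohomology vanishing), so the substance is the same, but your flatness-criterion formulation is a clean and arguably more systematic way to run the final step. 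One small editorial point: your closing remark that "the usual Nakayama-style arguments require finite generation" as the obstacle in the field case slightly overstates the novelty — the standard CF67 Theorem 6 argument already handles arbitrary (not necessarily finitely generated) modules via nilpotency of $I_G$, exactly as you do.
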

\begin{proof} Since in this case $R[G]$ is a local ring (see (2) of the previous Lemma), (1) and (2) are obviously equivalent. Now, if $R$ is a field,
then the equivalence of (2) and (3) is proved similarly to Theorem 6 in Ch. VI \S9 of \cite{CF67}. (In loc.cit. $R=\F_p$.) If $R$
is a DVR of maximal ideal $\fm=\pi R$, then the equivalence of (2) and (3) is proved similarly to Theorem 8 in Ch. VI \S9 of \cite{CF67},
by replacing $\Z$, $p$, and $\F_p$ with $R$, $\pi$, and $R/\pi$, respectively.
\end{proof}

\begin{lemma}\label{L:gct-Dedekind} If $R$ is a Dedekind domain, $G$ is a $p$--group, and $M$ is a finitely generated $R[G]$--module, then the following
are equivalent.
\begin{enumerate}
\item $M$ is $R[G]$--projective.
\item $M$ is $R$--projective and $G$--c.t.
\end{enumerate}
\end{lemma}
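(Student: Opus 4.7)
The plan is to prove both implications separately. For $(1) \Rightarrow (2)$, the first observation is that $R[G]$ is itself $G$--c.t.: for any subgroup $H \leq G$, $R[G]$ is free as an $R[H]$-module (on a choice of coset representatives), and $R[H] \simeq \Ind_1^H(R)$ is $H$--c.t. by Shapiro's lemma since $\widehat{\rm H}^i(1, R) = 0$ for every $i$. As $R[G]$ is also a free $R$-module, any free $R[G]$-module is simultaneously $R$-free and $G$--c.t., and both properties are inherited by any direct summand, hence by any projective $R[G]$-module.

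For the converse $(2) \Rightarrow (1)$, my strategy is to verify projectivity locally. Since $R$ is Noetherian and $G$ is finite, the ring $R[G]$ is Noetherian, so the finitely generated $M$ is $R[G]$-projective if and only if $M_\fn$ is free over $R[G]_\fn$ for every $\fn \in \mspec(R[G])$. Invoking Lemma \ref{L:Spec}, every maximal ideal of $R[G]$ has the form $\fm_G = (\fm, I_G)$ with $\fm \in \mspec(R)$, and under these identifications $R[G]_{\fm_G} = R_\fm[G]$ and $M_{\fm_G} = M_\fm$. Thus it suffices to show that $M_\fm$ is $R_\fm[G]$-free for every $\fm \in \mspec(R)$.

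Fix such an $\fm$. Since $R$ is Dedekind, $R_\fm$ is a DVR; since $M$ is finitely generated and $R$-projective, $M_\fm$ is $R_\fm$-projective and finitely generated, hence $R_\fm$-free. It remains to establish that $M_\fm$ is $G$--c.t. Computing Tate cohomology of a subgroup $H \leq G$ via a complete resolution $P_\bullet \to \Z$ by finitely generated free $\Z[H]$-modules, we have $\widehat{\rm H}^i(H, M) = H^i(\Hom_{\Z[H]}(P_\bullet, M))$; since each term of this complex is a finite direct sum of copies of $M$, tensoring with the flat $R$-module $R_\fm$ commutes with formation of cohomology, so
\[
\widehat{\rm H}^i(H, M_\fm) \;=\; \widehat{\rm H}^i(H, M) \otimes_R R_\fm \;=\; 0
\]
for all $H$ and $i$, by the $G$--c.t. hypothesis on $M$. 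Lemma \ref{L:gct-local}, applied to the DVR $R_\fm$ and the finitely generated $R_\fm[G]$-module $M_\fm$, then yields the desired freeness of $M_\fm$ over $R_\fm[G]$, completing the argument.

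The main potential friction I anticipate is the descent of $G$--cohomological triviality from $M$ to $M_\fm$; I address this through the explicit complete-resolution description of Tate cohomology together with flatness of $R \to R_\fm$. Once this point is granted, Lemma \ref{L:Spec} reduces everything to the DVR situation already handled in Lemma \ref{L:gct-local}, and no further machinery is needed.
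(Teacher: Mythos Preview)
Your proof is correct and follows essentially the same route as the paper's. The only difference is that the paper handles both implications simultaneously through a single iff chain---localizing at maximal ideals of $R[G]$ via Lemma~\ref{L:Spec}, applying Lemma~\ref{L:gct-local} at each $R_\fm$, and then using $\widehat{\rm H}^i(H, M)_\fm \simeq \widehat{\rm H}^i(H, M_\fm)$ to pass back---whereas you treat $(1)\Rightarrow(2)$ by a direct global argument (Shapiro plus passage to summands) and reserve localization for $(2)\Rightarrow(1)$. Your direct argument for $(1)\Rightarrow(2)$ is slightly more elementary; the paper's uniform treatment is a bit more economical. Either way, the key ingredients---Lemma~\ref{L:Spec}, Lemma~\ref{L:gct-local}, and the compatibility of Tate cohomology with flat base change---are identical.
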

\begin{proof}  Since $M$ is finitely generated (f.g.), Lemma \ref{L:Spec} shows that $M$ is $R[G]$--projective iff $M_{\fm}$ is $R_{\fm}[G]$--projective for all $\fm\in\mspec(R).$ However, since $R_{\fm}$ is a DVR, Lemma \ref{L:gct-local} shows that this happens iff $M_\fm$ is $R_{\fm}$--free and $G$--c.t., for all $\fm$. Now, since $M$ is f.g. as an $R$--module as well, this happens iff $M$ is $R$--projective and $G$--c.t. Here, we have used the $R_{\fm}$--module isomorphisms $\widehat{\rm H}^i(H, M)_{\fm}\simeq \widehat{\rm H}^i(H, M_{\fm}),$
for all $i\in\Z$ and all subgroups $H$ of $G$. These are consequences of the flatness of the localization functor and the construction of Tate cohomology
via projective resolutions.
\end{proof}

Now, if $G$ is not necessarily a $p$--group, we let $G=P\times\Delta$, where $P$ is the $p$--Sylow subgroup of $G$ and $\Delta$ its complement. Assume that $R$ is a Dedekind domain.
For a character $\chi:\Delta\to \overline{Q(R)}$ with values in the separable closure of the field of fractions $Q(R)$ of $R$, we denote by ${\widehat{\chi}}$ its equivalence class under
the equivalence relation $\chi\sim\sigma\circ\chi$ given by conjugation with elements $\sigma$ in the absolute Galois group $G_{Q(R)}$. It is easily seen that the irreducible idempotents of $R[G]$ are indexed by these
equivalence classes and are given by
$$e_{{\widehat{\chi}}}:=\frac{1}{|\Delta|}\sum_{\psi\in{\widehat{\chi}}, \delta\in\Delta}\psi(\delta)\cdot\delta^{-1}, \qquad\text{ for all }{\widehat{\chi}}\in\widehat\Delta(R).$$
Here, $\widehat{\Delta}(R)$ denotes the set of all equivalence classes of characters described above. Implicitly, we have picked and fixed representatives $\chi\in{\widehat{\chi}}$, for all ${\widehat{\chi}}\in\widehat{\Delta}(R)$.
Consequently, we have ring isomorphisms
\begin{equation}\label{E:character-decomposition}
R[G]=\bigoplus_{{\widehat{\chi}}}e_{{\widehat{\chi}}}R[G]\simeq \bigoplus_{{\widehat{\chi}}}R(\chi)[P],
\end{equation}
where  $R(\chi)$ is the Dedekind domain obtained from $R$ by adjoining the values of $\chi$ and the isomorphism $e_{{\widehat{\chi}}}R[G]\simeq R(\chi)[P]$ is given by the usual $\chi$--evaluation map along $\Delta$,
for all ${\widehat{\chi}}$. For any $R[G]$--module, we have similar decompositions
$$M=\operatornamewithlimits{\bigoplus}_{{\widehat{\chi}}}e_{{\widehat{\chi}}}M\simeq\operatornamewithlimits{\bigoplus}_{{\widehat{\chi}}}M^\chi,$$
where $M^\chi:=M\otimes_{R[G]}R(\chi)[P].$

We let $I_{{\widehat{\chi}}}=:\ker(s_\chi)$, where $s_\chi$ is the following composition of $R$--algebra morphisms
$$s_\chi: R[G]\overset{\chi}{\twoheadrightarrow}R(\chi)[P]\overset{s_P}{\twoheadrightarrow}R(\chi).$$
Note that these are generalizations of the augmentation ideals $I_G$ and maps $s_G$ considered earlier. For every $\fp_\chi\in{\rm Spec(R(\chi))}$, we let $\fp_{\chi, G}:=s_\chi^{-1}(\fp_\chi).$
The following are immediate consequences of Lemma \ref{L:Spec}.
\begin{equation}
{\rm Spec}(R[G])=\bigcup^{\bullet}_{{\widehat{\chi}}}\,{\rm Spec}(R(\chi)[P]),\qquad{\rm Spec}(R(\chi)[P])=\{ \fp_{\chi, G}\, | \,\fp_\chi \in {\rm Spec}(R(\chi))\}\end{equation}
\begin{equation} R[G]_{\fp_{\chi, G}} = R(\chi)_{\fp_\chi}[P], \qquad M_{\fp_{\chi, G}}=M_{\fp_\chi}=(M^\chi)_{\fp_\chi}.\end{equation}
Note that the minimal primes (equivalently, non--maximal primes) in ${\rm Spec}(R[G])$ are the ideals $I_{{\widehat{\chi}}}$. Consequently, the connected components of ${\rm Spec}(R[G])$ are ${\rm Spec}(R(\chi)[P])$, for all ${\widehat{\chi}}$.
Further, we have the following consequence of the previous Lemmas.
\begin{corollary}\label{C:gct-arbitraryG} Let $R$ be a Dedekind domain or a field of characteristic $p$. Let $G$ be a finite, abelian group and $M$ a finitely generated $R[G]$--module. The following hold.
\begin{enumerate}[(1)]
\item $M$ is $R[G]$--projective iff $M$ is $R$--projective and $P$--c.t. iff $M$ is $R$--projective and $G$--c.t.
\item If $R$ is a DVR or a field, then $R[G]$ is a semilocal ring (i.e. a finite direct sum of local rings) of local direct summands $R(\chi)[P]$, for all $\widehat\chi$.
\item If $R$ is a DVR or a field, then $M$ is $R[G]$--free iff $M$ is $R[G]$--projective of constant rank.
\end{enumerate}
\end{corollary}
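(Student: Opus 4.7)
The corollary's three parts all reduce to the $p$-group case treated in Lemmas \ref{L:gct-local} and \ref{L:gct-Dedekind} via the character-idempotent decomposition
$$R[G] \;\simeq\; \bigoplus_{\widehat\chi\in\widehat\Delta(R)}R(\chi)[P], \qquad M \;\simeq\; \bigoplus_{\widehat\chi} M^\chi,$$
established in the excerpt; each factor $R(\chi)[P]$ is the group ring of the $p$-group $P$ over a Dedekind domain (respectively a field) $R(\chi)$, so the previous lemmas apply directly to each summand.

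For part (1), I would first observe that $M$ is $R[G]$-projective iff each $M^\chi$ is $R(\chi)[P]$-projective, since $R[G]$ is a finite direct product of rings. By Lemma \ref{L:gct-Dedekind}, this is equivalent to each $M^\chi$ being $R(\chi)$-projective and $P$-cohomologically trivial. The translation between $R(\chi)$-projectivity and $R$-projectivity of $M^\chi$ is a short argument: over a Dedekind domain (or field), projectivity equals torsion-freeness, and $R(\chi)/R$ is a finite flat extension of such rings, so $R(\chi)$-projective clearly implies $R$-projective, while conversely any $R(\chi)$-torsion element of $M^\chi$ is killed by its $R$-norm, so $R$-torsion-freeness yields $R(\chi)$-torsion-freeness. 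Reassembling across $\widehat\chi$ gives: $R[G]$-projective iff ($R$-projective and $P$-c.t.). For the second equivalence, I would show that for $R$-projective $M$, $P$-c.t.\ is equivalent to $G$-c.t.: one direction is immediate since $P\leq G$, and for the converse, every subgroup $H\leq G=P\times\Delta$ splits as $H_P\times H_\Delta$ (their orders are coprime), $|H_\Delta|$ is a unit in $R$ so $R[H_\Delta]$ is semisimple and acts cohomologically trivially on any $R$-module, and a Hochschild--Serre degeneration yields $\widehat{H}^*(H,M)\cong \widehat{H}^*(H_P,M)^{H_\Delta}=0$.

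Part (2) is then essentially Lemma \ref{L:Spec}(2) applied summand by summand: over a field each $R(\chi)$ is a field, and over a DVR each $R(\chi)$ is a semilocal Dedekind domain which we first decompose into its DVR localizations; in either case the (further decomposed) factors of the form (local ring)$[P]$ are local by Lemma \ref{L:Spec}(2), exhibiting $R[G]$ as a finite direct sum of local rings. Part (3) is then formal: writing $R[G] = \prod_i S_i$ with each $S_i$ local, a module $M$ decomposes as $\prod_i M_i$ with $M_i$ an $S_i$-module, $R[G]$-projectivity is equivalent to each $M_i$ being $S_i$-projective and hence, by Kaplansky's theorem cited in the Appendix, $S_i$-free; constant rank $\kappa$ means each $M_i\cong S_i^{(\kappa)}$, whence $M \cong R[G]^{(\kappa)}$, and the converse is clear. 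I expect the main technical hurdle to be the careful handling of $\Delta$-cohomology in part (1), namely establishing that $P$-cohomological triviality propagates to $G$-cohomological triviality on every subgroup of $G$, where the coprimality of $|P|$ and $|\Delta|$ together with the invertibility of $|\Delta|$ in $R$ is exactly what allows the Hochschild--Serre machinery to collapse.
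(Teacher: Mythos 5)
Your argument for part (1) is correct and follows the paper's route: decompose $R[G]\simeq\bigoplus_{\widehat\chi}R(\chi)[P]$, reduce to the $p$-group lemmas, and collapse $\Delta$-cohomology. The paper treats the translation between $R(\chi)$-projectivity and $R$-projectivity of the summands $M^\chi$ as immediate, and proves the $P$-c.t.\ $\Leftrightarrow$ $G$-c.t.\ equivalence by observing that $\widehat{H}^i(\Delta,M)=0$ forces the restriction $\widehat{H}^i(G,M)\to\widehat{H}^i(P,M)$ to be an isomorphism; you supply both points explicitly, the latter via a Hochschild--Serre degeneration for each subgroup $H=H_P\times H_\Delta$. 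These are equivalent in content, and your version is more complete since the paper's restriction remark is stated only for the full group $G$ whereas cohomological triviality is a statement about all subgroups.

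Part (2) contains a genuine error. You write that ``over a DVR each $R(\chi)$ is a semilocal Dedekind domain which we first decompose into its DVR localizations.'' A Dedekind domain is an integral domain, hence has no nontrivial idempotents; if it has more than one maximal ideal it is \emph{not} isomorphic to a nontrivial direct product of rings, and in particular does not decompose as a product of its localizations (the localization map $R(\chi)\to\prod_{\fm'}R(\chi)_{\fm'}$ is injective but never surjective when there are $\geq 2$ primes). When the maximal ideal of the DVR $R$ splits in $R(\chi)$ — which can happen for a non-complete DVR whose residue field already contains the relevant roots of unity while $Q(R)$ does not — Lemma \ref{L:Spec}(1) shows that ${\rm Spec}(R(\chi)[P])\simeq{\rm Spec}(R(\chi))$ is a connected space with several closed points, so $R(\chi)[P]$ (hence $R[G]$) is \emph{not} a finite direct sum of local rings, and your chain of ``(local ring)$[P]$ is local'' factors never materializes. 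To be fair, the paper's parenthetical ``(i.e.\ a finite direct sum of local rings)'' is open to the same objection in full generality; but notice that the paper's own proof of part (3) silently restricts to ``if $R$ is a field,'' where $R(\chi)$ is automatically a field, and every application of this corollary in the paper is with $R$ a field. For the DVR case of part (3), you could instead avoid the local-ring decomposition entirely and argue that $R[G]$ is semilocal in the weaker sense of having finitely many maximal ideals (immediate from Lemma \ref{L:Spec}), that $R[G]$ modulo its Jacobson radical is a finite product of fields, and that a finitely generated projective module of constant rank over a commutative semilocal ring is free by lifting a basis of the semisimple quotient via Nakayama — but as written, your reduction fails.
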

\begin{proof} This is immediate from the previous Lemmas. Please note that, in this context (where multiplication by $p$ on $M$ is the $0$--map), $P$--coh. triviality is equivalent to $G$--coh. triviality,
as $\widehat{\rm H}^i(\Delta, M)=0$ (since both $|\Delta|$ and $p$ annihilate these groups), which forces the usual restriction maps
 ${\rm res}_i:\widehat{\rm H}^i(G, M)\to \widehat{\rm H}^i(P, M)$ to be isomorphisms, for all $i$. Also, if $R$ is a field and $M$ is $R[G]$--projective of constant local rank $n$, then $M^\chi\simeq R(\chi)[P]^n$ as $R(\chi)[P]$--modules, for all $\widehat\chi$. Consequently, $M\simeq\oplus_{\widehat\chi}R(\chi)[P]^n\simeq R[G]^n$ as $R[G]$--modules.
\end{proof}

\subsection{The relevant projective modules}\label{S:projective-modules} In what follows, we work with the data $K/F$, $G$, $\F_q$, $A:=\Fq[t]$, $\infty$, $\cO_F$, $\cO_K$, $F_\infty$, $K_\infty$ and hypotheses in the Introduction. Let $n:=[F:\F_q(t)]$.

\begin{proposition}\label{P:lattices-projective} The following hold.
\begin{enumerate}
\item $K_\infty$ is a free $\F_q((t^{-1}))[G]$--module of rank $n$.
\item If $\Lambda$ is an $A[G]$--lattice in $K_\infty$, its $\Fq(t)$--span $\F_q(t)\Lambda$ is a
free $\F_q(t)[G]$--module of rank $n$.
\item For any two $A[G]$--lattices $\Lambda_1, \Lambda_2\subseteq K_\infty$ such that
$F_q(t)\Lambda_1=\F_q(t)\Lambda_2$, there exists a free $A[G]$--lattice $\Lambda\subseteq K_\infty$,
such that $\Lambda_1, \Lambda_2\subseteq\Lambda.$
\end{enumerate}
\end{proposition}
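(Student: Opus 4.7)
For (1), the plan is to apply the normal basis theorem to the finite abelian Galois extension $K/F$, obtaining an isomorphism $K\simeq F[G]$ of $F[G]$-modules. Tensoring over $F$ with $F_\infty=F\otimes_{\F_q(t)}\F_q((t^{-1}))$ will then yield $K_\infty\simeq F_\infty[G]$ as $F_\infty[G]$-modules, and since $F_\infty$ is $\F_q((t^{-1}))$-free of rank $n$, this immediately gives freeness of $K_\infty$ over $\F_q((t^{-1}))[G]$ of rank $n$.

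For (2), I will first establish $\F_q(t)[G]$-projectivity of $\F_q(t)\Lambda$ by verifying $G$-cohomological triviality and invoking Corollary \ref{C:gct-arbitraryG}(1), since $\F_q(t)$-projectivity is automatic. The key input is the isomorphism $\F_q(t)\Lambda\otimes_{\F_q(t)}\F_q((t^{-1}))\simeq K_\infty$, which follows from the fact that $\Lambda$ spans $K_\infty$ over $\F_q((t^{-1}))$ and both sides have $\F_q((t^{-1}))$-dimension $n\cdot|G|$. Combined with the $G$-cohomological triviality of $K_\infty$ granted by part (1), faithful flatness of $\F_q((t^{-1}))/\F_q(t)$ descends the vanishing of Tate cohomology to $\F_q(t)\Lambda$. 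To upgrade projectivity to freeness, I will exploit the idempotent decomposition $\F_q(t)[G]=\bigoplus_{\widehat\chi}\F_q(t)(\chi)[P]$ from \S\ref{S:Algprelim}. The crucial observation is that every character $\chi\colon\Delta\to\overline{\F_q(t)}^\times$ has finite order and therefore takes values in $\overline{\F_q}^\times$; its Galois conjugacy class is consequently determined by the $\widehat\Z$-action of $\mathrm{Gal}(\overline{\F_q}/\F_q)$, which is independent of whether one works over $\F_q(t)$ or over $\F_q((t^{-1}))$. Thus the $\widehat\chi$-decomposition is compatible with the base change $\F_q(t)\to\F_q((t^{-1}))$, and a dimension count on each component---comparing $(\F_q(t)\Lambda)^{\widehat\chi}\otimes_{\F_q(t)}\F_q((t^{-1}))$ against the free module $K_\infty^{\widehat\chi}\simeq\F_{q^{d_\chi}}((t^{-1}))[P]^n$ inherited from (1)---forces $(\F_q(t)\Lambda)^{\widehat\chi}$, which is free over the local ring $\F_q(t)(\chi)[P]=\F_{q^{d_\chi}}(t)[P]$, to have rank exactly $n$ for every $\widehat\chi$. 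Applying Corollary \ref{C:gct-arbitraryG}(3) will then yield freeness of $\F_q(t)\Lambda$ of rank $n$ over $\F_q(t)[G]$.

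For (3), I will use part (2) to fix an $\F_q(t)[G]$-basis $v_1,\dots,v_n$ of the common $\F_q(t)$-span $V:=\F_q(t)\Lambda_1=\F_q(t)\Lambda_2$. Since $\Lambda_1+\Lambda_2$ is a finitely generated $A[G]$-submodule of $V$ and the $v_j$'s form an $\F_q(t)[G]$-basis of $V$, clearing denominators across a finite generating set will produce $f\in A\setminus\{0\}$ with $\Lambda_1+\Lambda_2\subseteq\bigoplus_{j=1}^n A[G]\cdot(f^{-1}v_j)$; the right-hand side is then a free $A[G]$-lattice of rank $n$ containing both $\Lambda_1$ and $\Lambda_2$.

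The main obstacle will be the constant-rank verification in (2); the rest reduces to standard facts about semilocal rings and normal bases once the compatibility of the $\widehat\chi$-decomposition with the base change $\F_q(t)\to\F_q((t^{-1}))$ is established.
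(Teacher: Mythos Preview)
Your proposal is correct and follows essentially the same route as the paper: normal basis theorem for (1), faithful-flat descent of $G$-cohomological triviality plus Corollary~\ref{C:gct-arbitraryG} for (2), and basis-plus-denominator-clearing for (3). The only difference is cosmetic: where the paper asserts in one line that the local rank function of $V$ agrees with that of $V\otimes_{\F_q(t)}\F_q((t^{-1}))$, you unpack this by noting that the characters of $\Delta$ take values in $\overline{\F_q}^\times$, so the $\widehat\chi$-decomposition is the same over $\F_q(t)$ and over $\F_q((t^{-1}))$---which is precisely the content of that assertion.
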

\begin{proof} (1) Hilbert's normal basis theorem asserts that $K\simeq F[G]$, as $F[G]$--modules. Consequently,
$K_\infty=K\otimes_F F_\infty\simeq F_\infty[G]$, as $F_\infty[G]$--modules. Now, since
$$F_\infty=F\otimes_{\Fq(t)}\Fq((t^{-1}))\simeq F_q((t^{-1}))^n,$$
as $\F_q((t^{-1}))$--modules, part (1) follows.

(2) Let $V=\Fq(t)\Lambda$. By the definition of $A[G]$--lattices in $K_\infty$ and part (1), we have an isomorphism and equality of $F_\infty[G]$--modules
$$V\otimes_{\Fq(t)}\F_q((t^{-1}))\simeq F_q((t^{-1})) V=K_\infty\simeq \F_q((t^{-1}))[G]^n.$$
Consequently, $V\otimes_{\Fq(t)}\F_q((t^{-1}))$ is $G$--c.t. However, since $\F_q((t^{-1}))$ is a faithfully flat $\Fq(t)$--module, for all $i\in\Z$ and $H$ subgroup of $G$ we have
$$\widehat{\rm H}^i(H, V)\otimes_{\Fq(t)}\Fq((t^{-1}))\simeq\widehat{\rm H}^i(H, V\otimes_{\Fq(t)}\Fq((t^{-1})))=0.$$
Consequently (again, faithfull flatness), $\widehat{\rm H}^i(H, V)=0$, for all $i$ and $H$. Therefore $V$ is a $G$--c.t. $\F_q(t)[G]$--module. By Corollary \ref{C:gct-arbitraryG}(1), $V$ is a  projective $\F_q(t)[G]$--module. Now, it is easily seen that the local rank function of $V$ over ${\rm Spec}(\F_q(t)[G])$
is the same as the local rank function of $V\otimes_{\Fq(t)}\Fq((t^{-1}))$ over ${\rm Spec}(\F_q((t^{-1}))[G])$. Therefore this function is constant equal to $n$. Now, (2) follows form Corollary \ref{C:gct-arbitraryG}(3).

(3) Let $V:=\Fq(t)\Lambda_1=\Fq(t)\Lambda_2$. By (2), $V$ is a free $F_q(t)[G]$--module of rank $n$. Pick a basis $\{e_1,\dots, e_n\}$ of this free module.
It is easily seen that there exists an $f\in A\setminus\{0\}$, such that $\Lambda:=A[G]\frac{e_1}{f}\oplus\dots\oplus A[G]\frac{e_n}{f}$ is a free $A[G]$--lattice containing $\Lambda_1$ and $\Lambda_2$.
\end{proof}

In what follows, for a prime $v$ of $F$, we let $F_v$ and $\cO_v$  be the completion of $F$ at $v$ and its ring of integers, respectively.
We use similar notations for primes $w$ of $K$. If $v$ is a prime in $F$, we let $K_v:=\prod_{w|v}K_w$ and $\cO_{K_v}:=\prod_{w|v}\cO_w$, where the products are taken over all the primes $w$ in $K$ sitting above $v$. We endow these products with the product of the $w$--adic topologies.
Also, $\tau$ will denote the $q$--power Frobenius endomorphism of any $\Fq$--algebra.

\medskip
The following is a classical theorem of E. Noether (see \cite{U70} and the references therein.)
\begin{theorem}\label{T:Noether} Let $\cK/\cF$ be a finite Galois extension of Galois group G. Let $\cR$ be a Dedekind domain whose field of fractions is $\cF$ and let $\cS$ be the integral closure of $\cR$ in $\cK$. Then $\cS/\cR$ is tamely ramified if and only if $\cS$ is a projective $\cR[G]$--module of constant rank $1$.
\end{theorem}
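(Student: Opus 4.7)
The plan is to reduce the statement to the local complete case and then translate $\cR[G]$-projectivity of constant rank $1$ into a cohomological condition that matches tame ramification. Both hypotheses are local on $\Spec(\cR)$: $\cS$ is $\cR[G]$-projective if and only if $\cS_{\mathfrak{p}}$ is $\cR_{\mathfrak{p}}[G]$-projective for every prime $\mathfrak{p}$ of $\cR$, and tame ramification is a prime-by-prime condition. Since $\cS$ is finitely generated over $\cR$, I would further pass to the $\mathfrak{p}$-adic completion (by faithful flatness of completion on finitely generated modules), after which $\cR$ is a complete DVR with residue field $k$, $\cS$ is a complete DVR free of rank $|G|$ over $\cR$ with a unique prime $\mathfrak{P}$ above $\mathfrak{p}$ (uniqueness is automatic in the Henselian Galois setting), and the standard decomposition/inertia theory gives a transparent picture of the $G$-action.

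Next, I would invoke the cohomological criterion for projectivity: a finitely generated $\cR[G]$-module that is $\cR$-free is $\cR[G]$-projective if and only if it is $G$-cohomologically trivial. In equal characteristic $p$ this is Corollary \ref{C:gct-arbitraryG} of our Appendix; in the general (mixed characteristic) case it is a theorem of Rim, treated in \cite[Ch.~VI]{CF67}. Since $\cR[G]$ is semilocal when $\cR$ is local, a projective module of constant rank is free, and a rank count over $\cR$ then forces the rank over $\cR[G]$ to equal $1$ automatically once projectivity holds, because $\rank_{\cR}(\cS)=|G|$. Thus the problem reduces to showing that, in the local complete case, $\cS$ is $G$-cohomologically trivial if and only if $\cS/\cR$ is tame.

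For this last equivalence, let $I \subseteq G$ be the inertia subgroup at $\mathfrak{P}$; it is normal in $G$ in the local setting. For the direction ``tame $\Rightarrow$ c.t.'', I would filter $\cS$ by powers of $\mathfrak{P}$ and observe that each graded piece $\mathfrak{P}^n/\mathfrak{P}^{n+1}$ is isomorphic to the residue field $k_\mathfrak{P}$ with the $I$-action twisted by the $n$-th power of the tame character $\chi : I \hookrightarrow k_{\mathfrak{P}}^\times$. Tameness means $|I|$ is invertible in $k_\mathfrak{P}$, so each graded piece has vanishing Tate cohomology over $I$; a limit/d\'evissage argument then gives $\widehat{H}^i(I, \cS)=0$ for all $i$. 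Combined with the fact that $\cS^I/\cR$ is unramified, so that $\cS^I$ is $\cR[G/I]$-free by the normal basis theorem applied to the separable residue extension, and the Hochschild--Serre spectral sequence, this yields $G$-cohomological triviality of $\cS$. For the converse, if $p \mid |I|$ and $P \subseteq I$ is a nontrivial $p$-subgroup, then $P$ lies in the kernel of $\chi$, so $P$ acts trivially on every graded piece $\mathfrak{P}^n/\mathfrak{P}^{n+1}$; a composition-series argument in $k[P]$-modules forces $\widehat{H}^0(P, \cS/\mathfrak{p}\cS) \ne 0$, so $\cS$ is not $P$-c.t. and a fortiori not $G$-c.t.

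The main obstacle I expect is the cohomological computation in the tame direction: passing from vanishing on the finite graded pieces $\mathfrak{P}^n/\mathfrak{P}^{n+1}$ back to vanishing on the whole complete module $\cS$ requires care about the behavior of Tate cohomology under inverse limits and about the interplay of the topological and abstract module structures. This step is classical and worked out in detail in \cite{U70}; the sketch above is only intended to indicate the structure of the argument, not to replace that reference.
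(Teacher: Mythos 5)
The paper does not prove this statement; it attributes it to E.~Noether and refers the reader to Ullom's survey \cite{U70}, so there is no in-paper argument to compare against. Your outline follows the standard route --- reduce to the local complete case, translate $\cR[G]$-projectivity into $G$-cohomological triviality via the Rim/Nakayama criterion, and then work through the ramification filtration --- and the forward direction (tame implies c.t.) is essentially sound modulo the points of care you yourself flag.

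However, the converse direction as written has a genuine gap. You argue that for a nontrivial $p$-subgroup $P\subseteq I$, triviality of the $P$-action on each graded piece $\mathfrak{P}^n/\mathfrak{P}^{n+1}$ forces $\widehat H^0(P,\cS/\mathfrak p\cS)\ne 0$ by a ``composition-series argument in $k[P]$-modules.'' This is false as stated: the regular module $k[P]$ itself is filtered by powers of the augmentation ideal with trivial $P$-action on every graded quotient (all composition factors of any finite $k[P]$-module are trivial, since $k[P]$ is local with simple residue $k$), and yet $k[P]$ is free, hence $P$-cohomologically trivial. So ``unipotent action'' does not obstruct cohomological triviality, and your argument does not rule out that $\cS/\mathfrak p\cS$ is $k[P]$-free. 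The clean way to close this direction is via the trace form: by cohomological triviality at $i=0$ for the decomposition group $G_{\mathfrak P}$ one would need $\widehat H^0(G_{\mathfrak P},\cS_{\mathfrak P})=\cR_{\mathfrak p}/\mathrm{Tr}_{\cS_{\mathfrak P}/\cR_{\mathfrak p}}(\cS_{\mathfrak P})=0$, i.e. surjectivity of the trace; but the valuation of $\mathrm{Tr}(\cS_{\mathfrak P})$ is $\lfloor d/e\rfloor$ where $d$ is the exponent of the different and $e$ the ramification index, and $d\ge e$ exactly when the ramification is wild, so $\widehat H^0\ne 0$ in that case. A secondary, more minor point: after $\mathfrak p$-adic completion, $\cS\otimes_{\cR}\widehat\cR_{\mathfrak p}$ is a product of complete DVRs over the primes above $\mathfrak p$, not a single DVR; ``uniqueness of the prime'' is not automatic but is instead handled by observing that this product is induced from the decomposition group, and invoking Shapiro's lemma to reduce to a single $\mathfrak P$.
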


\noindent The above result justifies the following definition.

\begin{definition}\label{D:taming-module} Let $\cR$ be a Dedekind domain whose field of fractions is $F$ and let $\cS$ be its integral closure in $K$.
An $\cR\{\tau\}[G]$--submodule $\cM$ of $\cS$ is called a taming module
for $\cS/\cR$ if
\begin{enumerate}
\item $\cM$ is $\cR[G]$--projective of constant local rank $1$.
\item $\cS/\cM$ is finite and $(\cS/\cM)\otimes_{\cR}\cO_v=0$ whenever $v\in\mspec(\cR)$ is tame in $K/F$.
\end{enumerate}
\end{definition}

\begin{proposition}\label{P:taming-module} For $\cR$ and $\cS$ as in the definition above, the following hold.
\begin{enumerate}
\item Taming modules $\cM$ for $\cS/\cR$ exist.
\item If $\cS/\cR$ is tame, then any such $\cM$ equals $\cS$.
\item For any such $\cM$ and any $v\in\mspec(\cR)$, we have an $\Fq[G]$--module isomorphism
$$\cM/v\simeq \Fq[G]^{n_v}, \qquad \text{ where } n_v:=[\cR/v:\Fq].$$
\item For any such $\cM$ and $v\in\mspec(\cR)$ which is tame in $K/F$, we have $\cM/v=\cS/v$.
\end{enumerate}
\end{proposition}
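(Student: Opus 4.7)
My plan is to separate the proposition into the hard existence result (1) and the three easier consequences (2), (3), (4); the latter I would dispatch quickly using localization together with the structural results of Corollary \ref{C:gct-arbitraryG}, while part (1) requires a genuine construction and constitutes the main obstacle.

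For (2), tameness of $\cS/\cR$ means no wild primes exist, so condition (2) of Definition \ref{D:taming-module} forces the support of the finite module $\cS/\cM$ to be empty; since a finitely generated module over a Noetherian ring whose localizations all vanish must itself vanish, we conclude $\cM=\cS$. For (3), I would localize at $v$: by Definition \ref{D:taming-module}(1), $\cM_v := \cM\otimes_{\cR}\cR_v$ is $\cR_v[G]$-projective of constant rank $1$, and since $\cR_v[G]$ is semilocal (Corollary \ref{C:gct-arbitraryG}(2)), constant-rank projectives over it are free (Corollary \ref{C:gct-arbitraryG}(3)), giving $\cM_v\cong \cR_v[G]$; reducing mod $v$ yields $\cM/v\cM\cong (\cR/v)[G]\cong \F_{q^{n_v}}[G]\cong \F_q[G]^{n_v}$ as $\F_q[G]$-modules. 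For (4), Definition \ref{D:taming-module}(2) directly gives $\cM_v=\cS_v$ at every tame $v$, whence the inclusion-induced map $\cM/v\cM\hookrightarrow \cS/v\cS$ becomes an isomorphism after localization at $v$, hence is an equality.

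For (1), my strategy is local-to-global: construct $\cM_v\subseteq \cS_v$ at each wildly ramified prime $v$ (of which there are only finitely many), set $\cM_v:=\cS_v$ at every tame $v$, and patch. At a wild $v$, the \'etale normal basis theorem applied to the finite separable $F_v$-algebra $K_v=\prod_{w\mid v}K_w$ yields $\beta\in K_v^\times$ such that $\{\sigma(\beta)\}_{\sigma\in G}$ is an $F_v$-basis of $K_v$. Writing $\beta^q=\sum_\sigma c_\sigma\sigma(\beta)$ with $c_\sigma\in F_v$, and picking a uniformizer $\pi_v$ of $\cR_v$, I choose an integer $k\geq 0$ large enough that both $\pi_v^k\beta\in\cS_v$ and $\pi_v^{k(q-1)}c_\sigma\in\cR_v$ for every $\sigma\in G$ (a finite set of conditions), then set $\beta_0:=\pi_v^k\beta$ and $\cM_v:=\cR_v[G]\beta_0\subseteq\cS_v$. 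This $\cM_v$ is $\cR_v[G]$-free of rank $1$, automatically $G$-stable, and $\tau$-stable since $\tau(\beta_0)=\pi_v^{kq}\beta^q=\sum_\sigma(\pi_v^{k(q-1)}c_\sigma)\sigma(\beta_0)\in\cM_v$; moreover it spans $K_v$ over $F_v$, so $\cS_v/\cM_v$ is finite. At tame $v$, $\cS_v$ is itself $\cR_v[G]$-projective of rank $1$ by Noether's theorem (Theorem \ref{T:Noether}). I then define the global module $\cM:=\cS\cap\bigcap_{v\text{ wild}}\cM_v$ inside $K$; since only finitely many local conditions are imposed, weak approximation gives $\cM\otimes_{\cR}\cR_v=\cM_v$ for every $v$, and $\cS/\cM\cong \bigoplus_{v\text{ wild}}\cS_v/\cM_v$ is finite with the required support. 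Local freeness of each $\cM_v$ over $\cR_v[G]$ then implies that $\cM$ is $\cR[G]$-projective of constant local rank $1$, completing the proof.

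The main obstacle is achieving $\tau$-stability of the rank-$1$ free $\cR_v[G]$-submodule at wild primes. A generic normal basis generator $\beta$ does not satisfy $\beta^q\in\cR_v[G]\beta$, because the coefficients $c_\sigma$ can have negative $v$-adic valuation; the scaling trick $\beta\mapsto \pi_v^k\beta$ is essential, since applying $\tau$ multiplies the scaling factor by $q$, producing the extra integrality factor $\pi_v^{k(q-1)}$ that absorbs any finite negative valuations in the $c_\sigma$ once $k$ is chosen large enough. This is the point where the characteristic-$p$ structure plays a decisive role, and it is what lets us simultaneously secure $\cR[G]$-projectivity and $\cR\{\tau\}[G]$-stability in the wildly ramified setting.
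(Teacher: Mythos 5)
Your proof is correct, and the underlying idea for part (1) is the same as the paper's — take a normal basis generator, scale it by a power of a prime to absorb denominators, and exploit the fact that applying $\tau$ multiplies the scaling exponent by $q>1$, which makes the $\tau$-stability condition achievable. The difference is where you do the scaling. The paper works globally: it takes a single $\omega_0\in\cS$ with $K=F[G]\omega_0$, writes $\omega_0^q=\frac{1}{f}a\omega_0$ with $f\in\cR$, $a\in\cR[G]$, sets $\omega=f\omega_0$ so that $\omega^q=f^{q-2}a\,\omega\in\cR[G]\omega$ (here $q\geq 2$ is used), and then uses the \emph{same} element $\omega$ to define $T_v:=\cR_{(v)}[G]\omega$ at every wild $v$, finally taking $\cM:=\ker\bigl(\cS\to\bigoplus_{v\text{ wild}}\cS_{(v)}/T_v\bigr)$. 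You instead apply the normal basis theorem locally at each wild $v$ to get a local $\beta$, scale by $\pi_v^k$ to make $\cR_v[G]\beta_0$ stable under $\tau$, and patch via $\cM:=\cS\cap\bigcap_{v\text{ wild}}\cM_v$. Both work; the paper's global version avoids having to carry out a separate normal-basis and scaling argument at each wild prime and dispenses with the approximation/patching step, replacing it by a direct computation of $\cM_{(v)}$ as the kernel of the localized map. Your version is slightly longer but makes the local nature of the wild obstruction more transparent.

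One small point worth flagging in your write-up of (1): you should note explicitly that once $\tau(\beta_0)\in\cR_v[G]\beta_0$ is established, $\tau$-stability of the whole module $\cM_v=\cR_v[G]\beta_0$ follows because $\tau$ commutes with the $G$-action and $\tau(\cR_v)\subseteq\cR_v$, so $\tau(x\beta_0)=x^{(1)}\tau(\beta_0)\in\cM_v$ for $x\in\cR_v[G]$ (where $x^{(1)}$ denotes applying $\tau$ to coefficients). This is implicit in what you wrote but deserves a sentence. Parts (2)--(4) are handled exactly as the paper does, via localization and Corollary \ref{C:gct-arbitraryG}.
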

\begin{proof} Let $W\subseteq \mspec(\cR)$ be the wild ramification locus for $\cS/\cR$.
For primes $v\in\mspec(\cR)$, let $S_v:=\cR\setminus v$. Let $\cR_{(v)}:=S_v^{-1}\cR$ and
$\cS_{(v)}:=S_v^{-1}\cS$. Then $\cR_{(v)}$ is a DVR and $\cS_{(v)}$ is its integral closure in $K$, which happens to
be a semilocal PID. Note that, as consequence of Theorem \ref{T:Noether} and Corollary \ref{C:gct-arbitraryG}(3), we have isomorphisms of $\cR_{(v)}[G]$--modules
$$
\cS_{(v)}\simeq \cR_{(v)}[G], \qquad \text{ for all } v\notin W.
$$
Let $\omega_0\in\cS$ be an $F[G]$--basis for $K$. Then, we can write
$$\omega_0^q=\frac{1}{f}a\cdot\omega_0,\qquad\text{ for some }f\in\cR \text{ and } a\in\cR[G].$$
Consequently, since $q\geq 2$, if we let $\omega:=f\omega_0\in\cS$, then $K=F[G]\omega$ and $\omega^q\in\cR[G]\omega.$
This shows that if, for every $v\in W$, we let
$$T_v:=\cR_{(v)}[G]\omega \subseteq\cS_{(v)},$$
then $T_v$ is an $\cR_{(v)}\{\tau\}[G]$--submodule of $\cS_{(v)}$, which is free, rank $1$ (basis $\omega$) as an $\cR_{(v)}[G]$--module.
Clearly, if $v\in W$ then the $\cR$--module $\cS_{(v)}/T_v$ is finite, torsion, supported at $v$.

(1) Let $\cM$ be the $\cR\{\tau\}[G]$--submodule
of $\cS$ fitting in the exact sequence
$$0\to\cM\to \cS\overset{j}\longrightarrow\bigoplus_{v\in W}\cS_{(v)}/T_v$$
of $\cR\{\tau\}[G]$--modules, where $j(x)=(x\mod T_v)_{v\in W}$, for $x\in\cS$. Let $\cM_{(v)}:=S_v^{-1}\cM$, for all $v\in\mspec(\cR)$. The exact seqeunce above implies that we have $\cR_{(v)}[G]$--module isomorphisms
$$\cM_{(v)}=\cS_{(v)}\simeq\cR_{(v)}[G], \text{ if }v\notin W, \qquad \cM_{(v)}=T_v\simeq\cR_{(v)}[G], \text{ if }v\in W.$$
Consequently, the $\cR[G]$--module $\cM$ is locally free of rank $1$. Therefore, $\cM$ is a projective $\cR[G]$--module of rank $1$. Now, since we also have equalities
$$(\cS/\cM)\otimes_{\cR}\cR_{(v)}=(\cS_{(v)}/\cM_{(v)})=0, \text{ if }v\notin W, \qquad (\cS/\cM)\otimes_{\cR}\cR_{(v)}=(\cS_{(v)}/T_v), \text{ if }v\in W,$$
the module $\cM$ constructed this way is a taming module for $\cS/\cR$.

(2) is a consequence of Theorem \ref{T:Noether} and condition (2) in Definition \ref{D:taming-module}.

(3) is a consequence of the $\Fq[G]$--module isomorphisms, for all $v\in\mspec(\cR)$,
$$\cM/v\simeq S_v^{-1}\cM/v\simeq \cR_{(v)}[G]/v\simeq\cR_{(v)}/v[G]\simeq\Fq[G]^{n_v}.$$

(4) is a consequence of condition (2) in Definition \ref{D:taming-module}.
\end{proof}

\begin{corollary}\label{C:taming-basis} Let $\cS/\cR$ and $\cM$ be as in Proposition \ref{P:taming-module}(1). For $v\in\mspec(\cR)$, let $\cM_v$ be the $v$--adic completion of $\cM$ and let $\pi_v\in\cR$, such that $v(\pi_v)>0$. Then $\left\{\pi_v^i\cM_v\right\}_{i\geq 0}$ is a basis of open neighborhoods of $0$ in $K_v$ consisting of free $\cO_v[G]$--modules of rank $1$.
\end{corollary}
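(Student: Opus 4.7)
The plan is to split the statement into two parts: first, that $\cM_v$ is a free $\cO_v[G]$-module of rank $1$, and second, that $\{\pi_v^i\cM_v\}_{i\geq 0}$ is a fundamental system of open neighborhoods of $0$ in $K_v$.

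For the first part, I would invoke Proposition \ref{P:taming-module}(1), which gives that $\cM$ is $\cR[G]$-projective of constant local rank $1$. Since $\cO_v$ is a flat $\cR$-algebra, the base change $\cM_v = \cM\otimes_{\cR}\cO_v$ remains $\cO_v[G]$-projective of constant local rank $1$; preservation of projectivity is standard, and preservation of constant local rank $1$ follows by checking at each prime of $\cO_v[G]$, which sits over a prime of $\cR[G]$. Because $\cO_v$ is a DVR, Corollary \ref{C:gct-arbitraryG}(3) then upgrades projective-of-constant-rank-$1$ to free-of-rank-$1$, giving $\cM_v\simeq \cO_v[G]$ as $\cO_v[G]$-modules.

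For the second part, I would first note that the isomorphism from the first step exhibits $\cM_v$ as an $\cO_v$-free module of rank $|G|=[K:F]$. The inclusion $\cM\hookrightarrow\cS$ completes to $\cM_v\hookrightarrow\cO_{K_v}\subseteq K_v$; tensoring with $F_v$ produces an $F_v$-linear injection $\cM_v\otimes_{\cO_v}F_v\hookrightarrow K_v$ between $F_v$-vector spaces of the same dimension $|G|$, hence an isomorphism. Thus $\cM_v$ is a full-rank $\cO_v$-lattice in $K_v$, and in particular is compact and open. From here, each $\pi_v^i\cM_v$ is open because multiplication by $\pi_v^i$ is an $F_v$-linear homeomorphism of $K_v$; the intersection $\bigcap_i \pi_v^i\cM_v$ vanishes because $\bigcap_i \pi_v^i\cO_v=0$ and $\cM_v$ is $\cO_v$-free; and any open neighborhood $U$ of $0$ in $K_v$ contains some $\pi_v^i\cM_v$ by the standard structure of $\cO_v$-lattices in finite-dimensional $F_v$-vector spaces (or equivalently, by compactness of $\cM_v$ and the fact that the $\pi_v^i\cM_v$ form a decreasing chain with trivial intersection).

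No step presents a genuine obstacle; the only subtleties are (a) verifying that flat base change preserves ``projective of constant local rank $1$'', which amounts to tracking the local rank function across the fiber square of spectra, and (b) matching the $v$-adic topology induced from $F_v$ with the product of $w$-adic topologies on $K_v=\prod_{w\mid v}K_w$, which is immediate once one observes that both make $\cO_{K_v}$ (equivalently $\cM_v$, since $\cO_{K_v}/\cM_v$ is finite) into a compact-open neighborhood of $0$.
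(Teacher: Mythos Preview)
Your proof is correct and follows essentially the same approach as the paper: both arguments first establish that $\cM_v\simeq\cM\otimes_{\cR[G]}\cO_v[G]$ is $\cO_v[G]$--projective of constant rank $1$ and then invoke Corollary~\ref{C:gct-arbitraryG}(3) to get freeness. For the topological part, the paper takes a slightly shorter path by using directly that $\cS/\cM$ is finite (Definition~\ref{D:taming-module}(2)) to obtain the sandwich $\pi_v^a\cO_{K_v}\subseteq\cM_v\subseteq\cO_{K_v}$ for some $a>0$, which immediately yields the basis-of-neighborhoods claim; your lattice argument reaches the same conclusion but via the $F_v$--span of $\cM_v$, which is equivalent (and you note the finiteness of $\cO_{K_v}/\cM_v$ at the end anyway).
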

\begin{proof}
Note that $\cO_v$ is the $v$--adic completion of $\cR$ in this case. Since $\cM$ is a f.g. $\cR$--module and $\pi_v\in\cR$, for all $v\in\mspec(\cR)$ and all $i\geq 0$ we have isomorphisms of $\cO_v[G]$--modules
$$\pi_v^i\cM_v\simeq \cM_v\simeq \cM\otimes_{\cR[G]}\cO_v[G].$$
Consequently, since $\cM$ is $\cR[G]$--projective of rank $1$, $\pi_v^i\cM_v$ is $\cO_v[G]$--projective of rank $1$. Therefore $\pi_v^i\cM_v$ is  $\cO_v[G]$--free of rank $1$ (see Corollary \ref{C:gct-arbitraryG}(3)), for all $v$ and $i$ as above.

Since the $\cO_v$--modules $\cO_{K_v}/\cM_v\simeq \cS/\cM\otimes_{\cR}\cO_v$ are finite (because $\cS/\cM$ is finite), we have
\begin{equation}\label{E:taming-open}
\pi_v^a\cO_{K_v}\subseteq\cM_v\subseteq \cO_{K_v},
\end{equation}
for $a>0$ sufficiently large. Therefore, the topological $\cO_v[G]$--modules $\left\{\pi_v^i\cM_v\right\}_{i\geq 0}$ are open in the $v$--adic topology and form a fundamental system of open neighborhoods of $0$ in $K_v$.
\end{proof}

{\bf Examples.} For us, there are two relevant examples of rings $\cR$ as above. {\bf First,} $\cR:=\cO_F$, in which case $\cS=\cO_K$. {\bf Second,} $\cR:=\cO_{F,\infty}$ which is the intersection
of all the valuation rings in $F$ corresponding to the infinite primes of $F$. This is a semilocal PID (its maximal spectrum consists of all the infinite primes in $F$) and
its integral closure $\cO_{K, \infty}$ in $K$ is described the same way in terms of the infinite primes of $K$.

\begin{definition}\label{D:infty-taming} A taming module for $\cO_K/\cO_F$ will be simply called {\it a taming module for $K/F$.} A taming module for $\cO_{K, \infty}/\cO_{F, \infty}$ will be called {\it an $\infty$--taming module for $K/F$.}
\end{definition}

\subsection{The groups of monic elements.}\label{S:monic} Let $\F$ be any finite field of characteristic $p$, let
$t$ be a transcendental element over $\F$, and let $G$ be a finite, abelian group. Write $G=P\times\Delta$,
where $P$ is the $p$--Sylow subgroup for $G$ and $\Delta$ is its complement. Note that since $G$ is finite,
we have equalities of rings $\F[G][[t^{-1}]]=\F[[t^{-1}]][G]$ and $\F[G]((t^{-1}))=\F((t^{-1}))[G]$.

\begin{definition} Define the subgroup $\F((t^{-1}))[P]^+$  of monic elements in $\F((t^{-1}))[P]^\times$ by
$$\F((t^{-1}))[P]^+:=\bigcup_{n\in\Z}\, t^n\cdot(1+t^{-1}\F[P][[t^{-1}]]).$$
\end{definition}
Now, we use the character decomposition \eqref{E:character-decomposition} for $\F[G]$ to obtain an isomorphism with a direct sum
of rings indexed with respect to $\widehat\chi\in\widehat\Delta(\F)$
$$\psi_\Delta: \F[G]((t^{-1}))\simeq \bigoplus_{\widehat\chi}\F(\chi)[P]((t^{-1})).$$
\begin{definition}
Define the subgroup $\F((t^{-1}))[G]^+$  of monic elements in $\F((t^{-1}))[G]^\times$ by
$$\F((t^{-1}))[G]^+:=\psi_{\Delta}^{-1}\left(\bigoplus_{\widehat\chi}\F(\chi)((t^{-1}))[P]^+\right).$$
\end{definition}
\begin{remark}\label{R:monic-poly} Note that a polynomial $f\in\F[G][t]$ is a monic element in the above sense, i.e.
$$f\in\F[G][t]^+:=\F[G][t]\cap\F[G]((t^{-1}))^+,$$
if and only if $\chi(f)$ is a monic polynomial in $t$ in $\F(\chi)[P][t]$ (in the usual sense), for all
$\widehat\chi\in\widehat\Delta(\F)$. Here, $\chi(f)$ is the projection of $f$ on the $\widehat\chi$--component, under the $\F[t]$-algebra isomorphism
$\F[G][t]\simeq\oplus_{\widehat\chi}\F(\chi)[P][t].$ If $R:=\oplus_i R_i$ is a finite direct sum of indecomposable commutative rings $R_i$ (i.e. ${\rm Spec}(R_i)$ connected),
then the set of monic elements in $R[t]$ is
$$R[t]^+:=\bigoplus_i R_i[t]^+,$$
where $R_i[t]^+$ are the monic polynomials in $R_i[t]$, in the usual sense. Note that the elements in $R[t]^+$ are not zero divisors in $R[t]$.
\end{remark}
\begin{proposition}\label{P:monic-dec}
For all $\F$, $t$, and $G$ as above, we have a group decomposition
$$\F((t^{-1}))[G]^\times=\F((t^{-1}))[G]^+\times \F[t][G]^\times.$$
\end{proposition}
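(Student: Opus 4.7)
The plan is to reduce to the case where $G$ is a $p$-group via the character decomposition \eqref{E:character-decomposition}, and then to establish the factorization using a Weierstrass preparation argument over the local Artin ring $\F[P]$.

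The decomposition \eqref{E:character-decomposition} is an isomorphism of $\F$-algebras $\F[G] \simeq \bigoplus_{\widehat\chi} \F(\chi)[P]$; extending scalars yields ring isomorphisms of $\F[G][t]$ and $\F[G]((t^{-1}))$ with the corresponding direct sums over $\widehat\chi$. Passing to unit groups and invoking the very definition of $\F((t^{-1}))[G]^+$, each of the three groups in the desired identity is identified with the product over $\widehat\chi$ of its analogue for $(\F(\chi),P)$. Hence it suffices to prove the decomposition when $G$ is a finite abelian $p$-group $P$ and $\F$ is any finite field of characteristic $p$.

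Now assume $G=P$. Given $\alpha \in \F[P]((t^{-1}))^\times$, the nilpotence of $I_P$ (cf.\ Lemma \ref{L:Spec}) implies that $s_P(\alpha)\in \F((t^{-1}))$ is nonzero. Write $\alpha=t^N f$ with $f\in \F[P][[t^{-1}]]$ having nonzero reduction in $\F[[t^{-1}]]$, of order $k\ge 0$. Since $\F[P]$ is a local Artin (hence complete) ring with maximal ideal $I_P$, Weierstrass preparation applies and yields $f=g\cdot h$, where $g=t^{-k}+b_{k-1}t^{-(k-1)}+\cdots +b_0$ is a Weierstrass polynomial in $t^{-1}$ (so $b_i\in I_P$) and $h\in \F[P][[t^{-1}]]^\times$. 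Extracting $t^{-k}$ from $g$ yields $g=t^{-k}(1+\tilde b)$ with $\tilde b=b_{k-1}t+\cdots+b_0 t^k\in I_P\cdot\F[P][t]$ nilpotent, so that $1+\tilde b\in \F[P][t]^\times$. Writing also $h=c_0 h'$ with $c_0\in \F[P]^\times$ and $h'\in 1+t^{-1}\F[P][[t^{-1}]]$ produces the factorization
\[
\alpha \;=\; \underbrace{t^{N-k}\,h'}_{\in\,\F((t^{-1}))[P]^+}\cdot\underbrace{(1+\tilde b)\,c_0}_{\in\,\F[P][t]^\times}.
\]

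For uniqueness, it suffices to check $\F((t^{-1}))[P]^+\cap \F[P][t]^\times=\{1\}$: writing a common element as $\alpha=t^n(1+t^{-1}\beta)$ and imposing $\alpha\in \F[P][t]$ forces $n\ge 0$ and $\alpha$ to be a monic polynomial of degree $n$ in $t$, while $\alpha\in \F[P][t]^\times$ requires $s_P(\alpha)\in \F^\times$; comparing degrees gives $n=0$ and $\alpha=1$. The only substantive ingredient is Weierstrass preparation over $\F[P]$, which is standard in this setting because the maximal ideal $I_P$ is nilpotent (making all completeness considerations finitary); alternatively, one can argue by induction on the nilpotency index of $I_P$, reducing to the classical decomposition $\F((t^{-1}))^\times=\F((t^{-1}))^+\times \F^\times$.
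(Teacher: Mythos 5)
Your proof is correct and follows essentially the same route as the paper: reduce to the case $G=P$ a $p$-group via the character decomposition \eqref{E:character-decomposition}, then apply Weierstrass preparation over the complete local Artin ring $(\F[P],I_P)$ to factor a unit Laurent series into its monic part and a polynomial unit, with uniqueness coming from the observation that $\F((t^{-1}))[P]^+\cap \F[P][t]^\times=\{1\}$. The only differences are cosmetic — you spell out the uniqueness check and mention a possible induction on the nilpotency index of $I_P$ as an alternative, whereas the paper leaves uniqueness as an assertion — but the decomposition, the key lemma, and the assembly of the factorization all coincide.
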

\begin{proof}According to the last definition above, and since
$$\psi_\Delta(\F[G][t]^\times)=\bigoplus_{\widehat\chi}\F(\chi)[P][t]^\times,$$
it suffices to prove the Proposition in the case where $G=P$ is a $p$--group, which we assume below. The main ingredient needed in the proof is the following $\fm$--adic Weierstrass preparation theorem. (See \cite{L90} for a proof.)
\begin{theorem}[Weierstrass preparation] Let $(\cO, \fm)$ be an $\fm$--adically complete local ring. Let $f\in\cO[[X]]\setminus \fm[[X]]$ be a power series $f=\sum_{i\geq 0}a_iX^i$. Assume that $n\in\Bbb Z_{\geq 0}$ is minimal with the property that $a_n\not\in\fm$. Then $f$ has a unique Weierstrass decomposition
$$f = (X^n + b_{n-1}X^{n-1} + \dotsb + b_0)\cdot u,$$
with $b_i \in \fm$ and $u \in \cO[[X]]^\times$.
\end{theorem}
Note that the ring $(\F[P], I_P)$ is local (see Lemma \ref{L:Spec}.) Since the augmentation ideal $I_P$ is nilpotent (see the proof of Lemma \ref{L:Spec}),
the ring $\F[P]$ is $I_P$--adically complete. Now, let
$$g = \sum_{i \geq n} a_i t^{-i} \in \F((t^{-1}))[P]^\times, \qquad a_i\in\F[P],\quad a_n\ne 0.$$
 Let $s: \F((t^{-1}))[P] \to \F((t^{-1}))$ denote the usual augmentation $\F((t^{-1}))$--algebra morphism. Since $g$ is a unit, $s(g)=\sum_{i\ge n}s(a_i)t^{-i}$ is a unit. Therefore
 there exists a minimal $m \in \Z_{\geq n}$ such that $s(a_m) \neq 0$. This means that $m$ is minimal with the property that $a_m\notin I_P$. Now, we apply the Weierstrass preparation
 theorem to $\tilde g\in\F[P][[t^{-1}]]$, where
 $$\tilde g := t^{-n} g=\sum_{i\geq n} a_it^{-(i-n)}.$$
We get a unique Weierstrass decomposition
$$\tilde g = (b_0 + b_1 t^{-1} + \dotsb + b_{m-1}t^{-(m-1)} + t^{-m}) \cdot u, \qquad b_i\in I_P, \quad u\in \F[[t^{-1}]][P]^\times.$$
Consequently,  $u = \sum_{i\geq 0} u_it^{-i}$, with  $u_i\in\F[P]$ and $u_0\notin I_P$. In conclusion, we can write
$$
g = t^{-n} \tilde g = (u_0 b_0 t^m + u_0 b_1 t^{m-1} + \dotsb + u_0 b_{m-1}t + u_0)\cdot t^{-n-m} (1 + u_0^{-1} u_1 t^{-1} + \dotsb )
$$
Now, note that $x:= t^{-n-m} (1 + u_0^{-1} u_1 t^{-1} + \dotsb )\in \F((t^{-1}))[P]^+$, by definition. Also, note that
$y:=(u_0 b_0 t^m + u_0 b_1 t^{m-1} + \dotsb + u_0 b_{m-1}t + u_0)\in\F[t][P]^\times$. Indeed, since $s(b_i)=0$, for all $i$, and $s(u_0)\in\F^\times$, we have
$s(y)=s(u_0)\in\F[t]^\times=\F^\times.$ Therefore $y\in( \F^\times+I_P[t])$. However, Lemma \ref{L:Spec} shows that  $\F[t][P]^\times=(\F^\times+I_P[t])$. Consequently, we have written
$$g=x\cdot y, \qquad x\in \F((t^{-1}))[P]^+, \quad y\in \F[t][P]^\times.$$
The uniqueness of this writing follows from $\F((t^{-1}))[P]^+\cap \F[t][P]^\times=\{1\}$, which is obvious.
\end{proof}
\begin{corollary}\label{C:monic-rep} For $\F$, $G$ and $t$ as above, we have a canonical group isomorphism
$$\F((t^{-1}))[G]^\times/\F[t][G]^\times \simeq \F((t^{-1}))[G]^+, \qquad \widehat g \to g^+,$$
sending the class $\widehat g$ of $g\in\F((t^{-1}))[G]^\times$ to its unique monic representative $g^+.$
\end{corollary}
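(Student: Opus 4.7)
The plan is to derive the corollary directly from Proposition \ref{P:monic-dec}, invoking the general fact that under an internal direct product decomposition $A=B\times C$ of an abelian group, the projection $A\twoheadrightarrow B$ descends to a canonical isomorphism $A/C\simeq B$. Setting $A=\F((t^{-1}))[G]^\times$, $B=\F((t^{-1}))[G]^+$, and $C=\F[t][G]^\times$ will yield exactly the statement.

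Concretely, I would proceed as follows. For each $g\in\F((t^{-1}))[G]^\times$, Proposition \ref{P:monic-dec} guarantees a unique factorization $g=g^+\cdot g_0$ with $g^+\in\F((t^{-1}))[G]^+$ and $g_0\in\F[t][G]^\times$. Define a map
$$\Phi:\F((t^{-1}))[G]^\times\longrightarrow \F((t^{-1}))[G]^+,\qquad \Phi(g):=g^+.$$
The uniqueness of the factorization makes $\Phi$ well-defined, and the direct product structure guarantees that it is a group homomorphism. The kernel of $\Phi$ is precisely $\F[t][G]^\times$, since $g^+=1$ forces $g=g_0\in\F[t][G]^\times$, while $\Phi$ restricts to the identity on $\F((t^{-1}))[G]^+$ and is therefore surjective.

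The first isomorphism theorem then delivers the canonical isomorphism
$$\F((t^{-1}))[G]^\times/\F[t][G]^\times\simeq \F((t^{-1}))[G]^+,\qquad \widehat g\mapsto g^+,$$
as claimed. There is no genuine obstacle here: all the content already sits inside the Weierstrass--preparation argument used to establish Proposition \ref{P:monic-dec}, and this corollary is merely its repackaging in the language of quotient groups.
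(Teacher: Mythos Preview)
Your proof is correct and follows exactly the same approach as the paper, which simply states that the corollary is immediate from the direct product decomposition in Proposition~\ref{P:monic-dec}. You have merely spelled out the routine group-theoretic content behind the word ``immediate.''
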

\begin{proof} Immediate from the group equality in the previous Proposition. \end{proof}

\subsection{Fitting ideals and their monic generators.}\label{S:fitting} In what follows, $R$ is a semilocal, Noetherian ring, i.e. $R$ is a finite direct sum $R=\oplus_i R_i$, with $R_i$ local, Noetherian ring, for all $i$. Also $M$ is an $R[t]$--module
which is finitely generated and projective as an $R$--module. With notations as in \S\ref{S:projective-modules}, the typical examples are $R=\F_q[G]$ (so $R[t]=\F_q[t][G]=A[G]$) and $M=\cM/v$, where $\cM$ is a taming module for $K/F$ and
$v\in\mspec(\cO_F)$, or $M=\Lambda_1/\Lambda_2$, with $\Lambda_1\subseteq\Lambda_2$ are projective $A[G]$--lattices in $K_\infty.$

\begin{proposition}\label{P:fittidealgen}
With notations as above, the following hold.
\begin{enumerate}
\item If $R$ is local and ${\rm rank}_RM=n$, then $\Fitt_{R[t]}^0(M)$ is principal and has a unique monic generator $|M|_{R[t]}\in R[t]^+$ which has degree $n$ and is given by
$$|M|_{R[t]}=\det\nolimits_{R[t]} (t\cdot I_n - A_t),$$
where $A_t\in M_{n}(R)$ is the matrix of the $R$--endomorphism of $M$ given by multiplication with $t$, in any $R$--basis ${\bf e}$ of $M$.
\item If $R$  is semilocal, then $\Fitt_{R[t]}^0(M)$ is principal and has a unique monic generator $|M|_{R[t]}\in R[t]^+$ given by
$|M|_{R[t]}=\sum\nolimits_i |M\otimes_R R_i|_{R_i[t]}.$
\end{enumerate}
\end{proposition}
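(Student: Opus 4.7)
The plan is to reduce (2) to (1) via the direct sum decomposition of the semilocal ring $R$, and to establish (1) by producing an explicit square presentation of $M$ as an $R[t]$--module whose presenting map has matrix $t\cdot I_n-A_t$.

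For part (1), assume $R$ is local. Since $M$ is finitely generated projective over $R$, a theorem of Kaplansky recalled in the Appendix shows that $M$ is $R$--free of some rank $n$. Fix an $R$--basis $\mathbf{e}=(e_1,\dots,e_n)$, let $A_t=(a_{ij})\in M_n(R)$ be the matrix of multiplication by $t$ in this basis, and consider the $R[t]$--linear map $\varphi:R[t]^n\twoheadrightarrow M$ sending the $i$--th standard basis vector $\epsilon_i$ to $e_i$; this is visibly surjective. Let $\psi:R[t]^n\to R[t]^n$ be left multiplication by $t\cdot I_n-A_t$. The key step is to establish the short exact sequence
\[
0\longrightarrow R[t]^n\xrightarrow{\ \psi\ }R[t]^n\xrightarrow{\ \varphi\ }M\longrightarrow 0.
\]
The relation $\varphi\circ\psi=0$ holds by the very definition of $A_t$. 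Injectivity of $\psi$ follows because $\det\psi=\det_{R[t]}(t\cdot I_n-A_t)$ is a monic polynomial of degree $n$ in $t$ (expand along the diagonal), hence a non--zero--divisor in $R[t]$. Exactness in the middle is the standard ``companion matrix'' argument: given $f=\sum_i f_i(t)\epsilon_i\in\ker\varphi$, successively subtract $R[t]$--multiples of the columns of $t\cdot I_n-A_t$ to reduce every $f_i$ modulo $\mathrm{im}\,\psi$ to an element $r_i\in R$, after which $\varphi(\sum_i r_i\epsilon_i)=\sum_i r_ie_i=0$ forces each $r_i=0$ by $R$--linear independence of $\mathbf{e}$.

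From this presentation, by the very definition of the zeroth Fitting ideal recalled at the start of \S\ref{S:appendix}, we obtain
\[
\mathrm{Fitt}^0_{R[t]}(M)\;=\;\bigl(\det\nolimits_{R[t]}(t\cdot I_n-A_t)\bigr),
\]
a principal ideal generated by a monic polynomial of degree $n$ in $t$. Since $R$ is local, $\mathrm{Spec}(R)$ is connected, so by Remark \ref{R:monic-poly} this generator lies in $R[t]^+$. Independence of the choice of basis is immediate: a change of basis conjugates $A_t$ and leaves the determinant unchanged. For uniqueness of the monic generator, if $f,g\in R[t]^+$ both generate the same principal ideal then $f=ug$ for some $u\in R[t]^\times$; but in a local ring a unit in $R[t]$ has unit constant term and nilpotent higher coefficients, so matching leading coefficients in the equality $f=ug$ of monic polynomials of equal degree forces $u\in R^\times$ and then $u=1$.

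For part (2), write $R=\bigoplus_i R_i$ with each $R_i$ a local, Noetherian ring, so that $R[t]=\bigoplus_i R_i[t]$ and $M=\bigoplus_i M_i$ with $M_i:=M\otimes_R R_i$ a finitely generated projective $R_i$--module. Fitting ideals are compatible with the direct sum decomposition of the base ring (equivalently, with base change along each projection $R\twoheadrightarrow R_i$), so
\[
\mathrm{Fitt}^0_{R[t]}(M)\;=\;\bigoplus_i\mathrm{Fitt}^0_{R_i[t]}(M_i).
\]
By part (1) each summand is principal with unique monic generator $|M_i|_{R_i[t]}\in R_i[t]^+$, and Remark \ref{R:monic-poly} identifies the monic elements of $R[t]$ with tuples of monic elements across the local components. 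Hence $\mathrm{Fitt}^0_{R[t]}(M)$ is principal with unique monic generator $\sum_i|M_i|_{R_i[t]}\in R[t]^+$, as claimed.

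The main delicate point is the exactness of the square presentation in part (1); everything else is formal. With that presentation in hand, both the Fitting ideal computation and the reduction to the semilocal case are routine.
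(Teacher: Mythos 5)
Your proof is correct and takes essentially the same route as the paper: produce the square presentation $0\to R[t]^n\xrightarrow{t I_n-A_t}R[t]^n\to M\to 0$, read off $\mathrm{Fitt}^0_{R[t]}(M)=(\det(tI_n-A_t))$, observe the determinant is monic of degree $n$, and reduce (2) to (1) via the direct--sum decomposition of $R$. The paper's sketch defers the exactness of the presentation to a citation of \cite{GP12}, Prop.~4.1, and for uniqueness invokes the equality $R[t]^+\cap R[t]^\times=\{1\}$, which is exactly the content of your leading--coefficient argument; you have simply spelled out the details that the paper leaves to the reader.
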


\begin{proof} (sketch) Obviously, it suffices to prove part (1). This is a simple variation of the proof of Proposition 4.1 of \cite{GP12}. More precisely, one picks an $R$--basis ${\bf e}$ for $M$ and writes the following
sequence of $R[t]$--modules
$$0\to R[t]^n\overset{\rho_t}\longrightarrow R[t]^n\overset{\pi_t}\longrightarrow M\to 0,$$
where ${\pi_t}$ maps bijectively the standard $R[t]$--basis of $R[t]^n$ to $\bf e$ and $\rho_t$ has matrix $(t\cdot I_n-A_t)$ in the standard basis of $R[t]^n$. As in loc.cit., one proves that this sequence is exact. This yields part (1), by the definition of  $\Fitt_{R[t]}^0(M)$ and the obvious equality $R[t]^+\cap R[t]^\times=\{1\}$.
\end{proof}

\begin{definition}\label{D:G-size} If $M$ is an $A[G]$--module (i.e. an $\Fq[G][t]$--module), which is finite and $G$--c.t. (i.e. $\Fq[G]$--projective of finite rank), then
we let
$$|M|_G:=|M|_{A[G]}, $$
viewed as an element in $\in\Fq[G][t]^+=\Fq[G][t]\cap\Fq[G]((t^{-1}))^+.$
\end{definition}

\begin{lemma}\label{L:FitSequence}
Let $R$ be a semilocal, Noetherian ring. Let $A, B, C$ be $R[t]$--modules which are finitely generated and projective as $R$--modules. If we have an exact sequence of $R[t]$--modules
$$
0\rightarrow A \overset{\iota}\rightarrow B \overset{\pi}\rightarrow C \rightarrow 0,
$$
then we get the following equality of monic elements in $R[t]^+.$
$$|A|_{R[t]}\cdot |C|_{R[t]}=|B|_{R[t]}.$$
\end{lemma}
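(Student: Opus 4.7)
The plan is to reduce to the local case and then exploit that the short exact sequence splits in the category of $R$-modules, producing a block-triangular matrix for multiplication by $t$ on $B$.

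First, write $R=\bigoplus_i R_i$ with each $R_i$ local and Noetherian, and let $e_i\in R$ denote the corresponding idempotents. Tensoring the given short exact sequence with $R_i$ over $R$ is simply multiplication by $e_i$, and since direct summands are flat this yields a short exact sequence of $R_i[t]$-modules
\[
0\longrightarrow A\otimes_R R_i\longrightarrow B\otimes_R R_i\longrightarrow C\otimes_R R_i\longrightarrow 0,
\]
in which all three terms are finitely generated and projective as $R_i$-modules. By Proposition \ref{P:fittidealgen}(2) and Remark \ref{R:monic-poly}, it is enough to prove the corresponding equality over each $R_i[t]$. So without loss of generality, I would now assume that $R$ itself is local.

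Next, since $C$ is $R$-projective, the given sequence splits in the category of $R$-modules (not of $R[t]$-modules!), giving an $R$-module isomorphism $B\cong A\oplus C$. Fix $R$-bases $\mathbf{e}_A$ and $\mathbf{e}_C$ of $A$ and $C$, and let $\mathbf{e}_B$ denote the concatenated $R$-basis of $B$ obtained by lifting $\mathbf{e}_C$ through the section. Because $\iota$ and $\pi$ are $R[t]$-linear, in the ordered basis $\mathbf{e}_B$ the matrix of multiplication by $t$ on $B$ takes the block upper triangular form
\[
A_t^B=\begin{pmatrix}A_t^A & \ast\\ 0 & A_t^C\end{pmatrix},
\]
where $A_t^A$ and $A_t^C$ are the matrices of multiplication by $t$ on $A$ and $C$ in the chosen bases.

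Finally, I would invoke Proposition \ref{P:fittidealgen}(1), which expresses $|M|_{R[t]}$ as $\det_{R[t]}(t\cdot I - A_t)$ for any $R[t]$-module $M$ that is $R$-free of finite rank. Applying this to $A$, $B$, and $C$, together with the multiplicativity of the determinant for block upper triangular matrices over the commutative ring $R[t]$, gives
\[
|B|_{R[t]}=\det\nolimits_{R[t]}(t\cdot I-A_t^B)=\det\nolimits_{R[t]}(t\cdot I-A_t^A)\cdot\det\nolimits_{R[t]}(t\cdot I-A_t^C)=|A|_{R[t]}\cdot|C|_{R[t]},
\]
which is the desired equality of monic elements of $R[t]^+$. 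The only real point of substance is the existence of the $R$-linear section, which follows immediately from $R$-projectivity of $C$; everything else is bookkeeping, so I do not expect any genuine obstacle.
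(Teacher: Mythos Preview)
Your proof is correct and follows essentially the same approach as the paper's: reduce to the local case, use $R$-projectivity of $C$ to split the sequence over $R$, take the concatenated basis $\iota(\mathbf{e}_A)\cup s(\mathbf{e}_C)$ for $B$, and apply Proposition \ref{P:fittidealgen}(1). The paper's sketch leaves the block-triangular computation implicit, but your argument spells it out.
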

\begin{proof} (sketch) Obviously, it suffices to prove the statement when $R$ is local. Fix a section $s:C\to B$ for $\pi$ in the category of $R$--modules. Pick $R$--bases ${\bf a}$ and ${\bf c}$ for $A$ and $C$, respectively. Then, note that ${\bf b}:=\iota({\bf a})\cup s({\bf c})$ is an $R$--basis for $B$. Now, apply part (1) of the previous Proposition to compute $|A|_{R[t]}$, $|B|_{R[t]}$ and $|C|_{R[t]}$ in these respective bases.
\end{proof}

\subsection{Carlitz module Euler factors} \label{S:CarlitzEulerFactors}
The goal of this section is to prove Proposition \ref{P:EF-values}(1) and Proposition \ref{P:EF-values}(2) in the particular case where the Drinfeld module in question is the Carlitz module.
Below, we adopt the notations in Proposition \ref{P:EF-values}.

Any $\F_q[t]$--Drinfeld module $E$ defined over $\cO_F$, given by an $\F_q$--algebra morphism
$$\varphi_E: \F_q[t]\to \cO_F\{\tau\},$$
gives rise to a natural functor $M\to E(M)$ from the category of $\cO_F\{\tau\}$--modules to that of $\F_q[t]$--modules. Moreover, if $H$ is a group, then the natural $\F_q[H]$--algebra morphism
$$\varphi_E^H: \F_q[t][H]\to \mathcal \cO_F\{\tau\}[H]$$
obtained from $\varphi_E$ gives rise to a functor $M\to E^H(M)$ from the category of $\cO_F\{\tau\}[H]$--modules to that of $\F_q[t][H]$--modules. Note that the $\F_q[H]$--module structures of $M$ and $E^H(M)$ are identical, while their $\F_q[t]$--module structures are different, in general.
Also, it is immediate that if $H$ is a subgroup of $G$, then for any $\CO_F\{\tau\}[H]$--module $M$ we have an isomorphism
of $\F_q[t][G]$--modules
$$E^G\left(M\otimes_{\F_q[H]}\F_q[G]\right)\simeq E^H(M)\otimes_{\F_q[H]}\F_q[G].$$

We take a maximal ideal $v$ in $\CO_F$ and fix a maximal ideal $w$ of $\CO_K$ above $v$.  We let $v_0$ denote the maximal ideal of
$A$ sitting below $v$ and let $Nv$ denote the unique monic generator of $v_0^{f(v/v_0)}$, where $f(v/v_0)$ is the residual degree $[\cO_F/v:A/v_0]$.

\begin{proposition}\label{P:EF-Carlitz} Assume that $v$ is tamely ramified in $K/F$ and let $E$ be any Drinfeld module as above. Then the following hold.
\begin{enumerate}
\item The $\F_q[G]$--modules $\CO_K/v$ and $E^G(\CO_K/v)$ are free of rank $n_v:=[\cO_F/v:\F_q]$.
\item We have an equality
$$|\CO_K/v|_G=Nv.$$
\item If $C$ denotes the $\F_q[t]$--Carlitz module defined over $\CO_F$, then
$$|C^G(\CO_K/v)|_G=(Nv-{\bf e}_v\cdot\sigma_v),$$
where ${\bf e}_v=1/|I_v|\sum_{\sigma\in I_v}\sigma$ is the idempotent in $\F_q[G]$ corresponding to the trivial character of the inertia group $I_v$ and $\sigma_v$ is any Frobenius morphism for $v$ in $G$.
\end{enumerate}
\end{proposition}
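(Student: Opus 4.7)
For part (1), tameness of $v$ in $K/F$ combined with E.~Noether's theorem (Theorem~\ref{T:Noether}) applied to the $v$-adic completion gives $\cO_K/v\simeq (\cO_F/v)[G]$ as $(\cO_F/v)[G]$-modules, so $\cO_K/v$ is $\F_q[G]$-free of rank $n_v=[\cO_F/v:\F_q]$; since the functor $E^G$ preserves the underlying $\F_q[G]$-module structure, the same conclusion holds for $E^G(\cO_K/v)$. For part (2), Proposition~\ref{P:fittidealgen}(1) identifies $|\cO_K/v|_G$ with $\det_{\F_q[G][T]}(T\cdot I_{n_v}-A_t)$, where $A_t$ is the matrix of multiplication by $t$ in any $\F_q[G]$-basis. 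Because $t$ acts through the central scalar $\bar t\in\cO_F/v$, the matrix $A_t$ lies in $M_{n_v}(\F_q)$ and the determinant reduces to the classical characteristic polynomial of $\bar t$ on $\F_{q^{n_v}}$ as an $\F_q$-vector space, namely $p_{v_0}(T)^{f(v/v_0)}=Nv$.

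For part (3), fix $w_0\mid v$ and set $R:=\cO_{K,w_0}/v\cO_{K,w_0}$, so that $\cO_K/v\simeq R\otimes_{\F_q[G_v]}\F_q[G]$ as $\F_q[G]$-modules. The induction formula for determinants reduces the claim to proving
\[
\det\nolimits_{\F_q[G_v][T]}\bigl(T-\varphi_C(t)\,\big|\,R\bigr)\;=\;Nv-\sigma_v\mathbf{e}_v\quad\text{in }\F_q[G_v][T].
\]
The ring $R$ is Artin local of length $e=|I_v|$ with perfect residue field $\kappa_{w_0}$, and Cohen's structure theorem provides a unique (hence $G_v$- and $\tau$-equivariant) section $\kappa_{w_0}\hookrightarrow R$. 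Because $\mu_e\subset\kappa_{w_0}$, one may further choose a Kummer-type uniformizer $\pi\in R$ satisfying $\sigma(\pi)=\chi_{\mathrm{tame}}(\sigma)\pi$ exactly for all $\sigma\in I_v$, yielding an $\F_q[G_v]$-module grading
\[
R=\bigoplus_{j=0}^{e-1}R_j,\qquad R_j:=\kappa_{w_0}\pi^j,
\]
on which $I_v$ acts via $\chi_{\mathrm{tame}}^j$; in particular $R_0=\mathbf{e}_v R=\kappa_{w_0}$ and $\bigoplus_{j\ge 1}R_j=(1-\mathbf{e}_v)R=\pi R$. The operator $\varphi_C(t)=\bar t+\tau$ preserves $R_0$, while for $j\ge 1$ the Frobenius $\tau$ sends $R_j$ into $R_{jq}$ (which vanishes when $jq\ge e$); since $jq>j$ for $j\ge 1$ and $q\ge 2$, the operator is block upper-triangular with respect to the $j$-grading on $\pi R$, and the determinant factors along the splitting $R=R_0\oplus \pi R$.

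On $R_0$ the $G_v$-action factors through $G_v/I_v=\mathrm{Gal}(\kappa_{w_0}/\kappa_v)$ and $\kappa_{w_0}$ is $\F_q[G_v/I_v]$-free of rank $n_v$ by normal basis. Extending scalars to $\overline{\F_q}$ and diagonalizing $\bar t$ via the embeddings $\sigma_j=\tau^j$, the operator $\tau$ becomes a cyclic shift of coordinates on each character eigenspace of $G_v/I_v$, and cofactor expansion yields $\prod_{j=0}^{n_v-1}(T-\bar t^{q^j})-\chi(\sigma_v)=Nv-\chi(\sigma_v)$, which reassembles to $\mathbf{e}_v(Nv-\sigma_v)$. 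For $j\ge 1$, the upper-triangularity reduces the diagonal action of $\varphi_C(t)$ on $R_j$ to multiplication by $\bar t$, whose characteristic polynomial is $Nv$; since $R_j$ is supported on the nontrivial $I_v$-character $\chi_{\mathrm{tame}}^j$, its contribution---after extending scalars to $\F_q(\mu_e)$ and descending by Galois equivariance of character orbits---is the $\chi_{\mathrm{tame}}^j$-idempotent times $Nv$. Summing over $j=1,\dots,e-1$ gives $(1-\mathbf{e}_v)Nv$, and combining yields $\mathbf{e}_v(Nv-\sigma_v)+(1-\mathbf{e}_v)Nv=Nv-\sigma_v\mathbf{e}_v$, as required. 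The main technical obstacle is the treatment of the graded pieces $R_j$ for $j\ge 1$: as $\F_q[G_v]$-modules they are projective of non-constant local rank (supported on individual nontrivial $I_v$-characters), so the determinant identity requires decomposing along the $I_v$-idempotents over $\F_q(\mu_e)$ and then descending by Galois equivariance; the noncommutation $\tau\bar t=\bar t^q\tau$ must also be handled carefully when choosing bases on each eigenspace.
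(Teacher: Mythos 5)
Your parts (1) and (2) are fine and track the paper fairly closely in spirit (Noether locally gives $\cO_w$ free over $\cO_v[G_v]$, hence $\cO_K/v\simeq\cO_F/v[G]$ as $\F_q[t][G]$-modules, and the $t$-action then has the scalar characteristic polynomial $p_{v_0}^{f(v/v_0)}=Nv$), though the paper prefers to invoke invariance of Fitting ideals under base change $A[G_v]\to A[G]$.

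Your part (3) takes a genuinely different route from the paper's, and it has a real gap. The paper fixes a single $\cO_v[G_v]$-generator $\rho$ of $\cO_w$ (Noether), writes $\tau(\rho)=\alpha_\tau\rho$, proves the identity $\tau^{n_v}=\mathbf{e}_v\sigma_v$ on $\cO_w/v$ by splitting $\cO_w/v$ into $\mathbf{e}_v(\cO_w/v)=\cO_{w'}/w'$ and $(1-\mathbf{e}_v)(\cO_w/v)=w/w^{e_v}$, and then computes a single $n_v\times n_v$ determinant over $\overline{\F_q}[G_v]$ in the Frobenius basis, where $M_t$ is diagonal and $M_\tau$ is cyclic with product of entries $\mathbf{e}_v\sigma_v$. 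Your proposal instead fixes a Teichm\"uller section and a tame Kummer uniformizer $\pi$, giving an $\F_q[G_v]$-stable grading $R=\bigoplus_j R_j$ preserved by $\bar t$ and pushed strictly upward by $\tau$. This is a sensible and attractive decomposition, and the block-triangular reduction $\det(T-\varphi\,|\,\pi R)=\prod_{j\ge 1}\det(T-\bar t\,|\,R_j)$ is correct.

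The gap is precisely the one you flag at the end but do not resolve. The modules $R_j$ for $j\ge 1$ are $\F_q[G_v]$-projective of \emph{non-constant} local rank, and the ``$\chi_{\mathrm{tame}}^j$-idempotent'' is not an element of $\F_q[G_v]$ (only of $\F_q(\mu_e)[G_v]$). You assert that $\det_{\F_q[G_v][T]}(T-\bar t\,|\,R_j)$ equals ``that idempotent times $Nv$ (plus its complement),'' and that the product over $j\ge 1$ collapses to $\mathbf{e}_v+(1-\mathbf{e}_v)Nv$, but nowhere do you verify: (i) that for each local component $e_{\widehat\chi}\F_q[G_v]$ the $R_j$ in the mod-$e$ orbit $\{jq^k\}$ of the relevant $j_0$ together give a free $e_{\widehat\chi}\F_q[G_v]$-module of rank $n_v$; (ii) that the characteristic polynomial of the $\kappa_v$-scalar $\bar t$ on that rank-$n_v$ free module is exactly the image of $Nv$ in $e_{\widehat\chi}\F_q[G_v][T]$; and (iii) that the $\F_q(\mu_e)$-computation descends consistently along Galois orbits to $\F_q[G_v]$. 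These are the heart of part (3), and are exactly the kind of subtlety the projective/non-constant-rank formalism of the paper's \eqref{E:det-projective} is designed to handle, but they still have to be carried out. A further small but real omission: your claimed determinant on $R_0$ should be $\mathbf{e}_v(Nv-\sigma_v)+(1-\mathbf{e}_v)$, not $\mathbf{e}_v(Nv-\sigma_v)$, since on the components where $I_v$ acts nontrivially the piece $R_0$ has rank $0$ and contributes the empty determinant $1$. The paper's normal-basis $n_v\times n_v$ computation avoids all of this bookkeeping: if you keep your graded approach, you should at minimum carry out the orbit/rank computation in (i)--(ii) and the Galois descent in (iii), or else note that since $R$ is $\F_q[G_v]$-free of rank $n_v$, one has $\det(T-\bar t\,|\,R)=Nv$ exactly by the part (2) argument, so $\det(T-\bar t\,|\,\pi R)=Nv\cdot\det(T-\bar t\,|\,R_0)^{-1}=\mathbf{e}_v+(1-\mathbf{e}_v)Nv$, which replaces (i)--(iii) by the single (easier) computation on $R_0$.
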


\begin{proof}
To start, let $\rho\in\cO_w$ be a $\cO_v[G_v]$--basis for the free
$\cO_v[G_v]$--module $\cO_w$ of rank $1$. (See Theorem \ref{T:Noether}.) Then $\overline{\rho}:=(\rho\mod v)$ is a basis for the free $\cO_v/v[G_v]$--module $\cO_K/v=\cO_w/v$ of rank $1$. So, we have
$$\cO_w=\cO_v[G_v]\cdot\rho, \qquad \cO_K/v=\cO_w/v=\cO_v/v[G_v]\cdot\overline{\rho}\simeq \Fq[G_v]^{n_v}.$$
Let $\alpha_\tau\in\CO_v[G_v]$ such that $\tau(\rho)=\alpha_\tau\cdot\rho.$
For all $x=\sum_{\sigma\in G_v}x_\sigma\cdot\sigma\in \cO_v[G_v]$, we define
$$x^{(i)}:=\sum_{\sigma\in G_v}\tau^i(x_\sigma)\cdot\sigma=\sum_{\sigma\in G_v}x_\sigma^{q^i}\cdot\sigma,\qquad \text{ for all }i\in\Z_{\geq 0}.$$
Then we have the following obvious equality for all $i$ as above:
\begin{equation}\label{Frobenius}\tau^i(\rho)=(\alpha_\tau\cdot\alpha_\tau^{(1)}\cdot\dots\cdot\alpha_\tau^{(i-1)})\cdot\rho.\end{equation}

(1) This is Proposition \ref{P:taming-module}(3).

(2) We have an obvious isomorphism of $A[G]$--modules.
$$\cO_K/v\simeq \cO_w/v\otimes_{A[G_v]}A[G].$$
Since Fitting ideals commute with extension of scalars, this gives equalities
$${\rm Fitt}_{A[G]}(\CO_K/v)=\left({\rm Fitt}_{A[G_v]}(\CO_w/v)\right)\cdot A[G], \quad |\CO_K/v|_G=|\CO_w/v|_{G_v}.$$
However, since $\cO_v/v\simeq (A/v_0)^{f(v/v_0)}$, we have isomorphisms of $[G_v]$--modules
$$\CO_w/v\simeq \CO_v/v[G_v]\simeq \left(A[G_v]/v_0\right)^{f(v/v_0)}.$$
Consequently, we have the following equalities, which conclude the proof of part (2).
$${\rm Fitt}_{A[G_v]}(\CO_w/v)=v_0^{f(v/v_0)}=(Nv), \quad |\CO_K/v|_{A[G]}=|\CO_w/v|_{A[G_v]}=Nv.$$

(3) Since we also have an isomorphism of $A[G]$--modules.
$$C^{G}(\CO_K/v)\simeq C^{G_v}(\CO_w/v)\otimes_{A[G_v]}A[G],$$
we have equalities of ideals and monic elements, respectively.
\begin{eqnarray}
\nonumber
\quad {\rm Fitt}_{A[G]}C^{G}(\CO_K/v) &=& {\rm Fitt}_{A[G_v]}C^{G_v}(\CO_w/v)\cdot A[G] \\
\nonumber |C^G(\CO_K/v)|_{G} &=& |C^{G_v}(\CO_w/v)|_{G_v}.
\end{eqnarray}
According to Proposition \ref{P:fittidealgen}, the definition of $C$ and part (1), we have an equality
$$|C^{G_v}(\CO_w/v)|_{G_v}={\rm det}_{A[G_v]}(t\cdot I_{n_v}- (M_t+M_\tau)),$$
where $M_t$ and $M_\tau$ are matrices in $M_{n_v}(A[G_v])$ associated to multiplication by $t$ and action by $\tau$ on any
$\Fq[G_v]$--basis of $\CO_w/v$. Now, from part (2) we already know that
\begin{equation}\label{t-action}{\rm det}_{A[G_v]}(t\cdot I_{n_v}- M_t)= Nv.
\end{equation}

So, we need to analyze the matrix $M_\tau$. Let $K':=K^{I_v}$ be the maximal unramified extension of $F$ inside $K$. Let $w'$ be the prime in $\CO_{K'}$ sitting below $w$, and let $K'_{w'}$ and $\CO_{w'}$ be the usual completions. The isomorphism of $\cO_v/v[G_v]$--modules $\cO/w\simeq\cO_v/v[G_v]$ implies that
$${\bf e}_v(\CO_w/v)=\CO_{w'}/w', \qquad (1-{\bf e}_v)(\CO_w/v)=w/w^{e_v}.$$
Indeed, note that $O_{w'}/v\subseteq {\bf e}_v(\CO_w/v)$ and $w/w^{e_v}\cap \CO_{w'}/v=\{0\}$, then count dimensions of $\F_q$--vector spaces.
The last equalities combined with the fact that $e_v|(q^{n_v}-1)$ (as the extension $K_w/F_v$ is tame and $|\CO_v/v|=q^{n_v}$) and the equality $\tau^{n_v}=\sigma_v$ on $\CO_{w'}/w'$ give
\begin{equation}
\nonumber \tau^{n_v}={\bf e}_v\cdot\sigma_v \text{ on } {\bf e}_v(\CO_w/v),
\qquad \tau^{n_v}=0 \text{ on } (1-{\bf e}_v)(\CO_w/v)=w/w^e.
\end{equation}
In other words, we have
$$\tau^{n_v}={\bf e}_v\cdot\sigma_v \text{ on }\CO_w/v=\CO_v/v [G_v]\cdot{\overline\rho}.$$
If combined with \eqref{Frobenius}, this is equivalent to the following equality in $\CO_v/v[G_v]$
\begin{equation}\label{tau-nilpotent} \overline{\alpha_\tau}\cdot \overline{\alpha_\tau}^{(1)}\cdot \dots\cdot \overline{\alpha_\tau}^{(n_v-1)}={\bf e}_v\cdot\sigma_v,
\end{equation}
where $\overline{\alpha_\tau}$ is the image of $\alpha_\tau$ via the projection
$\CO_v[G_v]\twoheadrightarrow \CO_v/v[G_v]$.

Now, we extend scalars from $\Fq[G_v]$ to $\overline{\Fq}[G_v]$, where $\overline{\Fq}$ is the algebraic closure of $\Fq$.This will not alter the determinants in question. Below, we identify $\CO_w/v=\Fq(v)[G_v]\cdot\overline{\rho}$.
The ``Frobenius'' isomorphism of $\overline{\Fq}$--modules
$$\CO_v/v\otimes_{\Fq}\overline{\Fq}\simeq \overline{\Fq}^{n_v}, \quad y\otimes1\to(y,\, \tau(y),\, \dots,\, \tau^{n_v-1}(y))$$
leads to an isomorphism of $\overline{\Fq}[G_v]$--modules
\begin{eqnarray}
\nonumber \CO_w/v\otimes_{\F_q}\overline{\F}_q&\simeq&\overline{\F}_q[G_v]^{n_v}\\
\nonumber (x\cdot\overline{\rho})\otimes 1&\to& (x,\, x^{(1)},\,x^{(2)},\, \dots,\, x^{(n_v-1)}),
\end{eqnarray}
where $x\in \CO_v/v[G_v]$. Note that via the above isomorphism
$$\tau(x\cdot\overline\rho\otimes 1)\to (\overline{\alpha_\tau}\cdot x^{(1)},\overline{\alpha_\tau}^{(1)}\cdot x^{(2)},\, \dots,\, \overline{\alpha_\tau}^{(n_v-2)}\cdot x^{(n_v-1)},\, \overline{\alpha_\tau}^{(n_v-1)}\cdot x).$$

Let $\{e_i\}_i$ be the $\overline{\F}_q[G_v]$--basis of $\CO_w/v\otimes_{\F_q}\overline{\F}_q$ corresponding via this isomorphism to the standard $\overline{\F}_q[G_v]$--basis ($1$ in slot $i$ and $0$ outside) of $\overline{\F}_q[G_v]^{n_v}$. In basis $\{e_i\}_i$, the matrices $M_t$ and $M_\tau$ of the $t$ and $\tau$--actions on $(\CO_w/v\otimes_{\F_q}\overline{\F}_q)$ are given by
$$M_t=\begin{bmatrix}
    \beta_1 & 0 & \dots  & 0 &0\\
    0 & \beta_2 & \dots  & 0 &0\\
    \vdots & \vdots & \ddots & \vdots& \vdots \\
    0&0&\dots&\beta_{n_v-1}&0\\
    0 & 0 & \dots  & 0&\beta_{n_v}
\end{bmatrix}
\quad M_\tau=\begin{bmatrix}
    0 & \overline{\alpha_\tau} & 0 &\dots  & 0\\
    0 & 0 & \overline{\alpha_\tau}^{(1)}&\dots  & 0\\
    \vdots & \vdots & \vdots & \ddots& \vdots\\
    0&0&0&\dots&\quad\overline{\alpha_\tau}^{(n_v-2)}\\
    \overline{\alpha_\tau}^{(n_v-1)} & 0 & 0 &\dots&0\end{bmatrix},
$$
for some $\beta_i\in\overline{\F}_q[G_v].$ Consequently, \eqref{t-action} and \eqref{tau-nilpotent} imply that we have equalities
\begin{eqnarray}
\nonumber {\rm det}_{\overline{\F}_q[t][G_v]}(t\cdot I_n- (M_t+M_\tau))&=&\prod_{i=1}^{n_v}(t-\beta_i)-\prod_{i=1}^{n_v}\overline{\alpha_\tau}^{(i-1)}=\\
\nonumber &=&{\rm det}_{\overline{\F}_q[t][G_v]}(t\cdot I_{n_v}- M_t)-{\bf e}_v\cdot\sigma_v=\\
\nonumber &=&Nv-{\bf e}_v\cdot\sigma_v.
\end{eqnarray}
This concludes the proof of the Proposition.
\end{proof}

\bibliographystyle{amsplain}
\bibliography{DrinfeldSpecialLValuesBib}

\end{document}